\theoremstyle{plain}
\newtheorem{thm}{Theorem}[section]
\newtheorem{lem}[thm]{Lemma}
\newtheorem{prop}[thm]{Proposition}
\newtheorem{cor}[thm]{Corollary}
\newtheorem*{mainresult}{Main result}
\newtheorem{question}{Question}
\newtheorem{conclusion}[thm]{Conclusion}
\theoremstyle{definition}
\newtheorem{defn}[thm]{Definition}
\newtheorem{example}[thm]{Example}
\newtheorem*{acknowledgements}{Acknowledgements}
\theoremstyle{remark}
\newtheorem{rem}[thm]{Remark}
\newcommand{\N}{\mathbb{N}}
\newcommand{\Z}{\mathbb{Z}}
\newcommand{\Q}{\mathbb{Q}}
\newcommand{\R}{\mathbb{R}}
\newcommand{\C}{\mathbb{C}}
\newcommand{\cZ}{\mathcal{Z}}
\newcommand{\cC}{\mathcal{C}}
\newcommand{\cD}{\mathcal{D}}
\newcommand{\cI}{\mathcal{I}}
\newcommand{\cCW}{\mathcal{CW}}
\newcommand{\Ct}{\mathrm{C}}
\newcommand{\Cz}{\Ct_0}
\newcommand{\Cc}{\Ct_c}
\newcommand{\Cb}{\Ct_b}
\newcommand{\Ca}{\Ct_a}
\newcommand{\fk}{\mathfrak{k}}
\newcommand{\fl}{\mathfrak{l}}
\newcommand{\fm}{\mathfrak{m}}
\newcommand{\fn}{\mathfrak{n}}
\newcommand{\fo}{\mathfrak{o}}
\newcommand{\ab}{\triangleright}
\newcommand{\filter}{\mathcal{F}}
\newcommand{\filterbasis}{\mathcal{B}}
\newcommand{\Hilbert}{\mathcal{H}}
\newcommand{\grS}{\mathcal{S}}
\newcommand*{\grtensor}{\mathbin{\widehat{\otimes}}}
\newcommand{\Cl}{\mathbb{C}\ell}
\DeclareMathOperator{\Mor}{Mor}
\newcommand{\op}{\mathrm{op}}
\newcommand{\Simpl}{\mathfrak{S}}
\newcommand{\SimplMor}[3][]{\llbracket #2,#3\rrbracket_{#1}}
\DeclareMathOperator{\id}{id}
\DeclareMathOperator{\Cone}{Cone}
\DeclareMathOperator{\Zyl}{Zyl}
\DeclareMathOperator{\Ad}{Ad}
\newcommand{\ev}[1]{\mathop{\textrm{ev}_{#1}}}
\newcommand{\K}{\mathrm{K}}
\newcommand{\KK}{\mathrm{KK}}
\newcommand{\EE}{\mathrm{E}}
\newcommand{\textCstar}{\ensuremath{\mathrm{C}^*\!}}
\newcommand{\Cstar}{\mathrm{C}^*}
\newcommand{\Kom}{{\mathfrak{K}}}
\newcommand{\Asymp}{\mathfrak{A}}
\newcommand{\blank}{-}
\crefname{thm}{Theorem}{Theorems}
\crefname{lem}{Lemma}{Lemmas}
\crefname{defn}{Definition}{Definitions}
\crefname{prop}{Proposition}{Propositions}
\crefname{cor}{Corollary}{Corollaries}
\crefname{equation}{}{}
\author{Christopher Wulff\thanks{Supported by the DFG through the Priority Programme ``Geometry at Infinity'' (SPP 2026; individual project ``Duality and the coarse assembly map'', WU 869/1-1, WU 869/1-2).}}
\title{Generalized asymptotic algebras and $\EE$-theory for non-separable \textCstar-algebras}
\begin{document}
\maketitle

\begin{abstract}
In previous definition of $\EE$-theory, separability of the \textCstar-algebras is needed either to construct the composition product or to prove the long exact sequences.
Considering the latter, the potential failure of the long exact sequences can be traced back to the fact that these $\EE$-theory groups accommodate information about asymptotic processes in which one real parameter goes to infinity, but not about more complicated asymptotics parametrized by directed sets.

We propose a definition for $\EE$-theory which also incorporates this additional information by generalizing the notion of asymptotic algebras. As a consequence, it not only has all desirable products but also all long exact sequences, even for non-separable \textCstar-algebras.

More precisely, our construction yields equivariant $\EE$-theory for $\Z_2$-graded $G$-\textCstar-algebras for arbitrary discrete groups $G$. 

We suspect that our model for $\EE$-theory could be the right entity to investigate index theory on infinite dimensional manifolds.
\end{abstract}

\tableofcontents

\section{Introduction}

Ever since its invention by Kasparov in \cite{KasparovOperatorKFunctor}, $\KK$-theory has become an overpowered tool for computing operator $\K$-theory.
Subsequently, numerous alternative definitions of $\KK$-theory have been given by various authors (we refrain from trying to give a complete list) which are isomorphic to Kasparov's original one for separable \textCstar-algebras.
Most notably, Higson characterized $\KK$-theory of separable \textCstar-algebras as the universal functor from the category of separable \textCstar-algebras to an additive category which is homotopy invariant, split exact and stable \cite{HigsonKKchar}.

Whilst most sub-fields of non-commutative topology only treat separable \textCstar-algebras, there are areas where non-separable ones appear quite natural and are in fact unavoidable. For example, in coarse geometry there are two \textCstar-algebras of particular importance, namely the Roe algebra (cf.\ \cite[Chapter 6]{HigRoe}) and the stable Higson corona (cf.\ \cite{EmeMeyDualizing}), which are essentially never separable.

Unfortunately, most definitions of $\KK$-theory do not work well for non-separable \textCstar-algebras. Usually, separability is either needed to define the composition product, as in Kasparov's original work, or to obtain the long exact sequences. 
For example, Skandalis' prominent definition 
\[\KK_{\mathrm{sep}}(A,B)\coloneqq\varprojlim_{A_1\subset A\text{ separable}}\KK(A_1,B)\]
(cf. \cite[Definition 5.5]{Skandalis_unenotion}), where $\KK(A_1,B)=\varinjlim_{B_1\subset B\text{ separable}}\KK(A_1,B_1)$ already holds with Kasparov's original definition,
 has all products, but taking the inverse limit can completely destroy the long exact sequences.
The only version of $\KK$-theory known to the author that has all desirable properties is the spectral picture developed recently by Bunke, Engel and Land \cite{BunkeEngelLand}, which even satisfies a spectral version of the universal characterization.

Another instance of bivariant $\K$-theory is $\KK$'s sibling $\EE$-theory (cf.\ \cite{ConnesHigson_french,ConHig,GueHigTro,HigGue}).
Up until now, it faced similar difficulties with non-separability: either separability was needed to define the composition product or to obtain the long exact sequences.
\emph{The purpose of the present paper is to resolve these issues.}
In order to understand the idea of our construction, we briefly have to dive into the established definitions of $\EE$-theory and its problems with non-separability.

\subsection{The problem with non-separability in $\EE$-theory and the idea of how to solve it}

Originally, $\EE(A,B)$ was defined in \cite{ConnesHigson_french,ConHig} as the set of homotopy classes of asymptotic morphisms from $\Cz(\R)\otimes A\otimes\Kom$ to $\Cz(\R)\otimes B\otimes\Kom$, where $\Kom$ denotes the \textCstar-algebra of compact operators on a fixed infinite dimensional separable Hilbert space. Recall that here an asymptotic morphism between two \textCstar-algebras $A,B$ was a continuous family of maps $\{\varphi_t\colon A\to B\}_{t\geq 1}$ which asymptotically behave like $*$-ho\-mo\-mor\-phisms as $t\to\infty$. The composition product in $\EE$-theory is then given by composition of homotopy classes of asymptotic morphisms, which works as follows: If $\{\psi_t\colon B\to C\}_{t\geq 1}$ is another asymptotic morphism, then its composition with $\{\varphi_t\}$ is represented by the family $\{\psi_{s(t)}\circ\varphi_t\}$ for a suitable sufficiently fast increasing reparametrization $s\colon [1,\infty)\to [1,\infty)$. The reparametrization is necessary, because for $s=\id$ the family can fail to be an asymptotic morphism.
In this construction of the product, separability of the \textCstar-algebra $A$ is needed to show that suitable reparametrizations exist. 

The newer versions of $\EE$-theory introduced in \cite{GueHigTro,HigGue} fix this problem in a very elegant way. 
Instead of using only asymptotic morphisms from $A$ to $B$, which correspond to $*$-ho\-mo\-mor\-phisms from $A$ into the so-called \emph{asymptotic algebra} $\Asymp B\coloneqq\Cb([1,\infty);B)/\Cz([1,\infty);B)$ of $B$, these articles also consider $*$-ho\-mo\-mor\-phisms into the $n$-fold asymptotic algebra $\Asymp^nB\coloneqq\Asymp\dots\Asymp B$.
An equivalence relation, called $n$-homotopy, is then defined on the set of all $*$-ho\-mo\-mor\-phisms $\varphi\colon A\to\Asymp^nB$ and one obtains the set $\SimplMor[n]{A}{B}$ of all such $n$-homotopy classes $\llbracket\varphi\rrbracket$. They fit into a canonical directed system 
\[\SimplMor[1]{A}{B}\to\SimplMor[2]{A}{B}\to\SimplMor[3]{A}{B}\to\dots\]
whose direct limit will be denoted by $\SimplMor[\infty]{A}{B}$ in our work.
The canonical map from $\SimplMor[1]{A}{B}$, i.\,e.\ the set of homotopy classes of asymptotic morphisms, into the direct limit is a bijection whenever $A$ is separable.
There are canonical composition products
\[\SimplMor[n]{A}{B}\times \SimplMor[m]{B}{C}\to \SimplMor[n+m]{A}{C}\]
which map the classes $\llbracket\varphi\colon A\to\Asymp^nB\rrbracket$ and $\llbracket\psi\colon B\to\Asymp^mC\rrbracket$ to the class represented by the composition 
\[A\xrightarrow{\varphi}\Asymp^nB\xrightarrow{\Asymp^n(\psi)}\Asymp^{m+n}C\,.\]
These products pass to the direct limit, turning the sets $\SimplMor[\infty]{A}{B}$ into the morphism sets of a category whose objects are the \textCstar-algebras.
Defining $\EE$-theory using the sets $\SimplMor[\infty]{\blank}{\blank}$ instead of the sets $\SimplMor[1]{\blank}{\blank}$ of homotopy classes of asymptotic morphisms, one obtains a version where composition products always exist and which agrees with the original one if the first \textCstar-algebra is separable.

In this picture of $\EE$-theory, non-separability only causes problems for exactness for the following reason. Given a short exact sequence 
\[0\to J\to B\xrightarrow{\pi} A\to 0\]
of separable \textCstar-algebras, one can choose a quasicentral approximate unit $\{u_n\}_{n\in\N}$ for $J$ in $B$ and extend it by affine linear interpolation to a continuous function $[1,\infty)\to J,t\mapsto u_t$. Then there is an asymptotic morphism 
\[\sigma\colon\Cz((0,1))\otimes A\to\Asymp J\,,\quad f\otimes\pi(b)\mapsto [t\mapsto f(u_t)b]\]
and the boundary maps $\EE(D,\Cz((0,1))\otimes A)\to \EE(D,J)$ and $\EE(J,D)\to \EE(\Cz((0,1))\otimes A,D)$ in the long exact sequences of $\EE$-theory are given by composition product with the $\EE$-theory class of $\sigma$.
It turns out that this asymptotic morphism not only provides a nice description of the boundary maps, but it actually plays an essential role even in the proof of half-exactness.

In the case of non-separable \textCstar-algebras, however, there are in general no sequential quasicentral approximate units, but only ones which are indexed by general directed sets $\fm$. 
Therefore, we do not obtain an associated asymptotic morphism as in the separable case above. Nevertheless, it inspired us to perform the following analogous construction instead: We can extend a quasicentral approximate unit $\{u_m\}_{m\in\fm}$ by affine linear interpolation to a bounded continuous map $|\Delta^\fm|\to J, x\mapsto u_x$ on the geometric realization $|\Delta^\fm|$ of the full simplical complex with vertex set $\fm$.
Then the exact same formula as in the separable case defines a $*$-ho\-mo\-mor\-phism
\[\sigma\colon\Cz((0,1))\otimes A\to\Simpl^\fm J\coloneqq \frac{\Cb(|\Delta^\fm|;J)}{\Cz(|\Delta^\fm|;J)}\]
(cf.\ \Cref{prop:extensionsimpletoticmorphism}).
We call the \textCstar-algebras $\Simpl^\fm J$ \emph{simpltotic algebras}, see \Cref{defn:simpltoticalgebra}, since they are analogous to the asymptotic algebras but modeled on simplices. 
The ideal $\Cz(|\Delta^\fm|;J)$ consists, as usual, of those bounded continuous functions which ``vanishing at infinity''.
However, what the latter means has to be redefined in this context, because $|\Delta^\fm|$ is in general not a locally compact space.
We will do this in larger generality for topological spaces equipped with a filter in \Cref{sec:funcalg}, which allows a unified treatment of asymptotic and simpletotic algebras as \emph{generalized asymptotic algebras} and is also necessary for a convenient discussion of the composition product.

The main idea of our work is now evident: If we want to have a version of $\EE$-theory which has long exact sequences also in the non-separable case, then we have to define it in such a way that $\sigma$ defines an element in it. 
In other words, instead of defining the $\EE$-theory groups as morphism sets in the asymptotic category, we want to define them as morphism sets in a category whose morphisms are represented by $*$-ho\-mo\-mor\-phisms $A\to\Simpl^\fm B$ instead of $*$-ho\-mo\-mor\-phisms $A\to\Asymp^nB$.
To this end, we define an equivalence relation called $\fm$-homotopy between $*$-ho\-mo\-mor\-phisms $A\to\Simpl^\fm$ and denote the set of $\fm$-homotopy classes by $\SimplMor[\fm]{A}{B}$ (\Cref{defn:Fhomotopies,defn:Fhomotopysets}).
We will then define the morphism sets of our new category as a direct limit 
\[\SimplMor{A}{B}\coloneqq \varinjlim\SimplMor[\fm]{A}{B}\]
over increasingly large directed sets $\fm$ (\Cref{defn:morphismlimit}).
This category will be called the \emph{simpltotic category} and we use it in \Cref{sec:Etheory} to define our new version of $\EE$-theory.

As simple and clear the idea may be, its technical implementation is the complete opposite. In particular, defining the composition in the simpltotic category and proving its main properties turns out to be quite complicated and will take up the whole of \Cref{sec:compositionproduct}.
The problem behind it is that we do not incorporate iterated simpltotic algebras in our definition of the simpletotic category, as \cite{GueHigTro} did with asymptotic algebras. Therefore, we cannot use their elegant construction of the composition and have to use something like a reparametrization similar to the one in \cite{ConnesHigson_french,ConHig} instead.
The product will then be implemented by maps $\SimplMor[\fm]{A}{B}\times \SimplMor[\fn]{B}{C}\to \SimplMor[\fk]{A}{C}$, where unfortunately the directed set $\fk$ will be chosen awfully huge compared to the sizes of $\fm$ and $\fn$ in order to have enough space for the reparametrization.
This is rather impractical for computations and thus it would be interesting to know if the construction can be simplified.

\subsection{Overview of main results and outlook}

The big novelty of this publication is the development of the \emph{simpltotic category}. 
We will do this in even greater generality than in the motivation we gave above, namely for all $\Z_2$-graded $G$-\textCstar-algebras, where $G$ is a discrete group.
This will take up \Cref{sec:funcalg,sec:simpltoticmorphismsets,sec:compositionproduct}.
In the subsequent \Cref{sec:functors,sec:extensions} we will prove some properties of the simpltotic category which will be relevant for the development of $\EE$-theory.

In \Cref{sec:Etheory} we can finally develop our new version of $\EE$-theory. 
This will be done essentially in complete analogy to \cite{HigGue}, but using our simpltotic category instead of the asymptotic category and performing a few further minor modifications.
Most proves will be omitted, because they are almost identical to the ones given in \cite{GueHigTro}. One just needs to replace the properties of the asymptotic category by the corresponding ones of the simpltotic category that we had proven in \Cref{sec:funcalg,sec:simpltoticmorphismsets,sec:compositionproduct,sec:functors,sec:extensions}.
More concretely:
\begin{mainresult}
We will:
\begin{itemize}
\item define $\EE$-theory for all $G$-\textCstar-algebras, where $G$ is a fixed discrete group;
\item show that it has all the products, long exact sequences and further properties that one would like to have;
\item prove that it agrees with the usual definition of $\EE$-theory if $G$ is countable and the \textCstar-algebra in the first entry is separable.
\end{itemize}
\end{mainresult}

The focus of our work lies on getting the products and the long exact sequences right, because these are the most important properties for doing computations and hence for concrete applications.
That said, there are a few questions which we left open, although their answers would be interesting from a theoretical point of view. 

\begin{question}
Does our model of $\EE$-theory satisfy Higson's characterization, that is, is it the universal one with the properties of homotopy invariance, half-exactness and stablility?
\end{question}

The following question was raised by Rufus Willett.

\begin{question}
How far away is our $\EE$-theory group $\EE(A,B)$ for non-separable $A$ away from the group
\[\EE_{\mathrm{sep}}(A,B)\coloneqq\varprojlim_{A_1\subset A\text{ separable}}\EE(A_1,B)\]
\`a la Skandalis \cite{Skandalis_unenotion}? 
Is there some kind of $\varprojlim^1$-sequence relating them?
\end{question}

In contrast to \cite{GueHigTro}, we restrict ourselves to discrete groups $G$ instead of second countable, locally compact groups. The reason is that for topological groups we would need a simpltotic version of \cite[Lemma 1.8]{GueHigTro}, that is, that the sub-\textCstar-algebra of $G$-continuous elements of $\Simpl^\fm B$ is exactly the quotient of the sub-\textCstar-algebra of $G$-continuous elements of $\Cb(|\Delta^\fm|;J)$ by the sub-\textCstar-algebra of $G$-continuous elements of $\Cz(|\Delta^\fm|;J)$. However, the proof of the cited lemma cannot be adapted: It makes heavy use of the Baire category theorem, applied for each $n\in\N$ to a countable family $\{W_{n,m}\}_{m\in\N}$ of closed subsets of $G$. In the simpltotic case, the corresponding family $\{W_{n,m}\}_{m\in\fm}$ is indexed by the directed set $\fm$, and hence Baire cannot be applied if $\fm$ is uncountable.

\begin{question}
Can our construction of $G$-equivariant $\EE$-theory be modified such that it works for non-discrete topological groups $G$, too?
\end{question}

Finally, one should also ask the question of concrete applications of our new model for $\EE$-theory. 
Whilst the previous definitions comprise informations about asymptotic processes in finitely many real variables, the new definition also involves more complex asymptotics over directed sets.
Are there more situations where cycles of our new model arise in a canonical fashion, besides the $*$-ho\-mo\-mor\-phisms $\sigma\colon\Cz((0,1))\otimes A\to\Simpl^\fm J$ mentioned in the previous subsection?

One somewhat speculative idea would be to look for cycles coming from elliptic operators on infinite dimensional manifolds.
Recall from \cite{Dumitrascu} that a generalized Dirac operator $D\colon\Gamma(S)\to\Gamma(S)$ over a complete even dimensional manifold $M$ has a canonical class in $\EE(\Cz(M),\C)$ represented by the asymptotic morphism 
\[\grS\grtensor\Cz(M)\to\Asymp(\Kom(L^2(S)))\,,\qquad f\grtensor g\mapsto [t\mapsto f(t^{-1}D)f]\]
where $\grS$ denotes the \textCstar-algebra $\Cz(\R)$ but equiped with the grading in even and odd functions.\footnote{Similar $\EE$-theory elements exist in longitudinal index theory of foliations, see \cite[Proposition 8.17]{WulffFoliations}, and in coarse index theory, see \cite[Lemma 4.5 \& Definition 4.6]{WulffTwisted}.}
If one is looking for a similar formula on infinite dimensional manifolds, it might not be appropriate to multiply the operator $D$ by $t^{-1}$ homogeneously in all directions. 
Our model would allow to look for cycles in which the rescaling is done with different factors on different finite dimensional subspaces, which is an asymptotic process over a more complicated parameter space than $[1,\infty)$.
The discovery of such classes could prove to be very helpful to index theory on infinite dimensional manifolds.

To demonstrate that scaling differently in different directions can indeed work, we are going to revisit the infinite dimensional Bott periodicity by \cite{HigsonKasparovTrout} in \Cref{sec:infinitedimBott}.

\begin{acknowledgements}
The author heartly thanks Rufus Willett and Erik Guentner for their hospitality during his research visit to the University of Hawaii and for extensive inspiring conversations.
Thanks also goes to Ralf Meyer for giving a helpful hint.
\end{acknowledgements}

\section{Fixing some notation}
\label{sec:notationsandconventions}
Lowercase Fraktur letters $\fk,\fl,\fm,\fn,\dots$ will always denote non-empty directed sets. For each $m\in\fm$ we denote by $\fm\ab m$ the tail $\{n\in\fm\mid n\geq m\}$ of $\fm$ starting at $m$.

Let $\Delta^S$ denote the full simplicial complex with vertex set $S$ for arbitrary $S$. We use the same symbol for the set of all of its simplices, i.\,e.\ the set of all finite nonempty subsets of $S$.
Its geometric realization will be denoted by $|\Delta^S|$. Points of the latter are formal finite convex combinations of the vertices written as $\sum_{s\in S}\lambda_s[s]$.
Note that in this terminology $|\Delta^\sigma|\subset|\Delta^S|$ is the geometric realization of a simplex $\sigma\in\Delta^S$.

Let $G$ denote a fixed discrete group.
Unless stated otherwise, all \textCstar-algebras appearing will be $\Z_2$-graded $G$-\textCstar-algebras. Ungraded \textCstar-algebras are considered as being equipped with the trivial grading. All $*$-ho\-mo\-mor\-phisms are required to preserve the grading and be $G$-equivariant.
The letters $A,B,C,D$ will usually stand for such \textCstar-algebras.
We use the symbol $\otimes$ (resp. $\grtensor$) for the \emph{maximal} (graded) tensor product instead of the minimal one, because all tensor products of \textCstar-algebras under consideration are maximal ones.

\section{Asymptotic and simpltotic algebras}
\label{sec:funcalg}

Generalizing the asymptotic algebra, we make the following definition which will also include the simpltotic algebras.

\begin{defn}
\label{defn:generalizedasymptoticalgebras}
Let $X$ be a topological space equipped with a filter $\filter$ on the underlying set and let $B$ be a \textCstar-algebra. 
As usual, let $\Cb(X;B)$ denote the \textCstar-algebra of bounded continuous functions on $X$ with values in $B$. 
We define
\[\Cz(X;B)\coloneqq\{f\in\Cb(X;B)\mid\forall\varepsilon>0\exists U\in\filter\colon \|f|_U\|<\varepsilon\}\]
and note that the definition remains unchanged if we replace $\filter$ by any of its filter bases. 
In other words: $\Cz(X;B)$ consists of all functions in $\Cb(X;B)$ which converge to zero along the filter.
Associated to $(X,\filter)$ is the following \emph{generalized asymptotic algebra:}
\[\Ca(X;B)\coloneqq \Cb(X;B)/\Cz(X;B)\]
\end{defn}

We have supressed the filter from the notation, because we will consider it as an integral part of the space $X$ itself.

Recall from \ref{sec:notationsandconventions} that all \textCstar-algebras under consideration are implicitely assumed to be graded $G$-\textCstar-algebras. Here, the space $X$ does not carry a $G$-action and hence the $G$-action (and grading) on $\Cb(X;B)$, $\Cz(X;B)$ and $\Ca(X;B)$ comes from the $G$-action (and grading) on $B$ alone.

\begin{example}\label{ex:loccomp}
If we equip a locally compact space $X$ with the filter generated by the complement of all compact subsets, then $\Cz(X;B)$ will be the \textCstar-algebra of functions vanishing at infinity in the classical sense.

In particular, if $T$ denotes the half-line $[1,\infty)$ equipped with this filter, then 
$\Ca(T;B)$ is simply the asymptotic algebra $\Asymp B$.
\end{example}

For any directed set $\fm$ we equip the space $|\Delta^\fm|$ with a canonical filter, namely the one generated by the subsets $|\Delta^{\fm\ab m}|$ with $m\in\fm$. Note that these subsets actually consitute a filter basis and hence we have 
\begin{align*}
\Cz(|\Delta^\fm|;B)&=\{f\in\Cb(|\Delta^\fm|;B)\mid\forall\varepsilon>0\exists m\in\fm\colon \|f|_{|\Delta^{\fm\ab m}|}\|<\varepsilon\}
\\&=\overline{\bigcup_{m\in\fm}\{f\in\Cb(|\Delta^\fm|;B)\mid f|_{|\Delta^{\fm\ab m}|}=0\}}\,.
\end{align*}

\begin{defn}
\label{defn:simpltoticalgebra}
The \emph{$\fm$-simpltotic algebra} of a \textCstar-algebra $B$ is the quotient \textCstar-algebra
\[\Simpl^\fm B\coloneqq \Ca(|\Delta^\fm|;B)= \Cb(|\Delta^\fm|;B)/\Cz(|\Delta^\fm|;B)\,.\]
\end{defn}

The important relation between the asymptotic algebras and the simpletotic algebras will be discussed in \Cref{ex:asymptoticsimpltoticrelation} below. For now we just give a very simple example.
\begin{example}
If $\fm$ has a maximal element $m\coloneqq\max\fm\in\fm$, then $\Simpl^\fm B\cong B$ by evaluation at $m\in|\Delta^\fm|$.
\end{example}

Clearly $\Cb(X;\blank)$, $\Cz(X;\blank)$, $\Ca(X;\blank)$ and hence in particular $\Asymp$ and $\Simpl^\fm$ are functors from the category of \textCstar-algebras to itself.

If $F$ is a functor from the category of \textCstar-algebras to itself and $Y$ is a topological space and $y\in Y$, then the evaluation $*$-ho\-mo\-mor\-phism $\ev{y}\colon \Cb(Y;B)\to B,f\mapsto f(y)$ induces a $*$-ho\-mo\-mor\-phism
\[F(\ev{y})\colon F(\Cb(Y;B))\to F(B)\]
which we also call evaluation at $x$.

\begin{defn}
\label{defn:Fhomotopies}
Let $F$ be a functor from the category of \textCstar-algebras to itself.
We say that two $*$-ho\-mo\-mor\-phisms $\varphi_0,\varphi_1\colon A\to F(B)$ are $F$-homotopic if there is a $*$-ho\-mo\-mor\-phism $\Phi\colon A\to F(\Ct([0,1];B))$ which evaluates to $\varphi_0,\varphi_1$ at $0,1$: $\varphi_i=F(\ev{i})\circ\Phi$.
Such a $\Phi$ is called an \emph{$F$-homotopy} between $\varphi_0$ and $\varphi_1$.

For $F=\Simpl^\fm$ we often say \emph{$\fm$-homotopy/-ic} instead of $\Simpl^\fm$-homotopy/-ic and for $F=\Asymp^n$ ($n\in\N$) we often say \emph{$n$-homotopy/-ic} instead of $\Asymp^n$-homotopy/-ic.
\end{defn}

It is clear that being $\Cb(X;\blank)$-homotopic or $\Cz(X;\blank)$-homotopic are equivalence relations.
Moreover, being $\Asymp^n$-asymptotic is an equivalence relation according to \cite[Proposition 2.3]{GueHigTro}.
Along those lines we can prove the following.

\begin{prop}\label{prop:homotopyequivalencerelation}
Let $F=F_1\circ\dots\circ F_k$, where each $F_i$ is a functor of the type $\Cb(X_i;\blank)$, $\Cz(X_i;\blank)$ or $\Ca(X_i;\blank)$. Additionally we assume that each $X_i$ is a CW-complex and that the filter on it has a basis consisting of subcomplexes. Then being $F$-homotopic is an equivalence relation.
\end{prop}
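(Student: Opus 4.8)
The plan is to check the three defining properties of an equivalence relation separately, with reflexivity and symmetry being immediate consequences of the functoriality of $F$, and transitivity carrying all the weight. For reflexivity, given $\varphi\colon A\to F(B)$ I would feed it through the constant-path inclusion $\iota\colon B\to\Ct([0,1];B)$: since $\ev{0}\circ\iota=\ev{1}\circ\iota=\id_B$, the $*$-homomorphism $F(\iota)\circ\varphi$ is an $F$-homotopy from $\varphi$ to itself. For symmetry, I would precompose a given $F$-homotopy $\Phi$ with $F(r^\ast)$, where $r^\ast\colon\Ct([0,1];B)\to\Ct([0,1];B)$ is induced by the flip $r(t)=1-t$; because $\ev{0}\circ r^\ast=\ev{1}$ and $\ev{1}\circ r^\ast=\ev{0}$, this interchanges the two endpoints. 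Neither step uses anything about the $X_i$.

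The substance is transitivity, and the strategy is to reduce it to a single statement: that $F$ sends the gluing decomposition
\[\Ct([0,2];B)\;\cong\;\Ct([0,1];B)\times_B\Ct([1,2];B)\]
(a pullback of \textCstar-algebras along the two evaluations at the common endpoint $t=1$) to the corresponding pullback of its values. Granting this, if $\Phi,\Psi\colon A\to F(\Ct([0,1];B))$ are $F$-homotopies from $\varphi_0$ to $\varphi_1$ and from $\varphi_1$ to $\varphi_2$, then after reparametrizing the second onto $[1,2]$ the matching condition reads $F(\ev{1})\circ\Phi=F(\ev{1})\circ\Psi=\varphi_1$, which says precisely that $(\Phi,\Psi)$ lands in the pullback and hence defines a $*$-homomorphism $\Theta\colon A\to F(\Ct([0,2];B))$. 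Composing $\Theta$ with $F$ applied to the reparametrization $[0,2]\cong[0,1]$ and evaluating at the endpoints $t=0,2$ then produces an $F$-homotopy from $\varphi_0$ to $\varphi_2$. Thus I would first isolate ``$F$ preserves this pullback'' and prove it, then deduce transitivity formally.

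Pullback preservation I would establish factor by factor along $F=F_1\circ\dots\circ F_k$, using one crucial observation: both evaluations $\ev{1}\colon\Ct([0,1];B)\to B$ are \emph{split} surjective, the section being the inclusion of constant functions, and functoriality propagates a $*$-homomorphism section through any functor, so at every stage of the composition the relevant surjection remains split. For a split surjective pullback $P=M\times_B N$ the three cases are then checked directly: $\Cb(X_i;\blank)$ commutes with the pullback by the pasting lemma; $\Cz(X_i;\blank)$ commutes with it because, with the max-norm on $P$, a single filter set witnesses the smallness of both components at once; and the case $\Ca(X_i;\blank)=\Cb(X_i;\blank)/\Cz(X_i;\blank)$ follows from these two, injectivity of the comparison map being exactly the $\Cz$-statement. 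An induction along the composition (each $F_i$ preserving split surjective pullbacks and producing again a split surjective pullback) then gives preservation for the full $F$.

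The one genuinely delicate point, and the step I expect to be the main obstacle, is the surjectivity of the comparison map in the $\Ca$-case: given classes in the two $\Ca$-factors that agree in $\Ca(X_i;B)$, one must correct the chosen representatives by elements of $\Cz$ so that they agree \emph{on the nose} in $\Cb(X_i;B)$ before they can be glued pointwise. Here the splitting $s$ of the evaluation does the work: the difference of the two images lies in $\Cz(X_i;B)$, and $s$ carries it to a correction in $\Cz(X_i;M)$ that makes the representatives match. This lifting is exactly the additional input beyond the $\Cb$- and $\Cz$-cases, and is the analogue of the device used for $\Asymp^n$ in \cite[Proposition 2.3]{GueHigTro}; I expect the hypothesis that each $X_i$ is a CW-complex whose filter admits a basis of subcomplexes to be what secures this step uniformly and keeps the sections and restrictions inside the respective function algebras, even though the constant-function splitting already suffices to carry it out concretely.
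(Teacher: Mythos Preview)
Your approach is correct and genuinely different from the paper's. Both reduce transitivity to the statement that $F$ preserves the pullback $\Ct([0,2];B)\cong\Ct([0,1];B)\times_B\Ct([1,2];B)$, but you and the paper establish that preservation by different mechanisms. The paper first proves (\Cref{lem:asymptoticalgebrasexact}) that each $\Cb(X_i;\blank)$, $\Cz(X_i;\blank)$, $\Ca(X_i;\blank)$ is an \emph{exact} functor---this is where the CW hypothesis does real work, via a cell-by-cell lifting argument---and then (\Cref{lem:asympalgebrapullback}) deduces pullback preservation for any surjection from a diagram chase with exact rows. You instead exploit that the specific surjection $\ev{1}\colon\Ct([0,1];B)\to B$ is \emph{split} by the constant-function inclusion, so that it stays surjective under any functor whatsoever; your correction-by-section argument for $\Ca$-surjectivity then goes through without appealing to exactness at all. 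Your route is more elementary and, as you half-suspect in your final paragraph, does not actually need the CW hypothesis for this proposition. What the paper's route buys is a pullback-preservation lemma valid for arbitrary (non-split) surjections, and an exactness lemma (\Cref{lem:asymptoticalgebrasexact}) that is reused elsewhere in the paper; but for \Cref{prop:homotopyequivalencerelation} itself your argument is both sufficient and cleaner.
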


In its proof we will use the following two lemmas, compare \cite[Lemmas 2.4,2.5]{GueHigTro}.

Recall that a functor $F$ from the category of \textCstar-algebras to itself is called \emph{exact} if for each short exact sequence $0\to J\to B\to B/J\to 0$ the induced sequence $0\to F(J)\to F(B)\to F(B/J)\to 0$ is also exact.

\begin{lem}[{cf.\ \cite[Lemma 2.4]{GueHigTro}}]\label{lem:asymptoticalgebrasexact}
Let $X$ be a CW-complex as in \Cref{prop:homotopyequivalencerelation}.
Then the functors $\Cz(X;\blank)$, $\Cb(X;\blank)$ and $\Ca(X;\blank)$ are exact.
\end{lem}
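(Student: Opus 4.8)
The plan is to prove exactness of each of the three functors separately, since the hypotheses on $X$ are identical for all three and exactness of a composition follows trivially once we know each factor is exact. Fix a short exact sequence $0\to J\to B\xrightarrow{\pi} B/J\to 0$.

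First I would dispatch $\Cb(X;\blank)$. The only nontrivial point is surjectivity of the induced map $\Cb(X;B)\to\Cb(X;B/J)$, since injectivity and exactness in the middle are immediate from working pointwise together with the fact that $\Cz$ and $\Cb$ are closed under the pointwise operations. Given $\bar g\in\Cb(X;B/J)$, I want a bounded continuous lift $g\in\Cb(X;B)$ with $\pi\circ g=\bar g$. Pointwise lifts always exist, but I need a \emph{continuous} and \emph{bounded} choice; this is where the CW-structure of $X$ enters. The strategy is a partition-of-unity / Bartle--Graves-type argument: one uses that $\pi$ admits a continuous (not linear) section that is bounded on bounded sets, or builds the lift cell by cell over the skeleta of $X$ using the fact that $X$ is paracompact and the norm-open surjection $\pi$ allows continuous selections over the contractible cells. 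The boundedness of $\bar g$ controls the norm of the lift. I expect this to be essentially the content of \cite[Lemma 2.4]{GueHigTro} transcribed to the present setting, so I would cite that argument and only indicate the adaptation.

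Next I would handle $\Cz(X;\blank)$. Injectivity and middle-exactness again hold pointwise. For surjectivity, suppose $\bar g\in\Cz(X;B/J)$, so for every $\varepsilon>0$ there is a subcomplex $U\in\filter$ (here the hypothesis that $\filter$ has a basis of subcomplexes is essential) with $\|\bar g|_U\|<\varepsilon$. Starting from a bounded continuous lift $g_0$ of $\bar g$ produced by the $\Cb$-case, I would correct it so that it also decays along $\filter$: the point is that on the subcomplex $U$ the function $\bar g$ is small, so I can multiply $g_0$ by a suitable continuous cutoff that equals $1$ far from $U$ and is small on $U$, or more carefully take a sequence of subcomplexes $U_1\supset U_2\supset\cdots$ from the filter basis with $\|\bar g|_{U_n}\|\to 0$ and patch lifts together to force $g\in\Cz(X;B)$. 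This is again the $\Cz$-analogue of \cite[Lemma 2.4]{GueHigTro}.

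Finally, exactness of $\Ca(X;\blank)=\Cb(X;\blank)/\Cz(X;\blank)$ follows formally: given the two short exact sequences of functors $0\to\Cz\to\Cb\to\Ca\to 0$ applied to $J$, $B$, $B/J$, one has a $3\times 3$ commuting diagram of short exact sequences (the rows being the $\Cz,\Cb,\Ca$ sequences for the three algebras, the columns being the functor applied to $0\to J\to B\to B/J\to 0$). Since the $\Cz$- and $\Cb$-columns are exact by the first two parts, the nine lemma (the $3\times 3$ lemma) forces the $\Ca$-column to be exact as well. I would assemble this diagram and invoke the nine lemma to conclude.

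The main obstacle is the surjectivity/lifting step in the $\Cb$ and $\Cz$ cases, i.e.\ producing a globally bounded continuous lift of a bounded continuous section of the quotient over a general CW-complex $X$. Everything else (injectivity, exactness in the middle, and the passage to $\Ca$) is either pointwise or purely diagrammatic. The CW-hypothesis on $X$ and the subcomplex-basis hypothesis on $\filter$ are used precisely to make these continuous selections work, exactly paralleling \cite[Lemma 2.4]{GueHigTro}, so the bulk of the proof can be delegated to that reference with the filter-decay condition checked by hand.
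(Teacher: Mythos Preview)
Your overall architecture matches the paper: treat $\Cb$ and $\Cz$ first (the issue being surjectivity), then get $\Ca$ by a diagram chase. Your $\Cb$ sketch (lift cell by cell over the skeleta) is exactly what the paper does.

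The gap is in your $\Cz$ step. Neither of your two correction mechanisms works as stated. Multiplying a $\Cb$-lift $g_0$ by a scalar cutoff destroys the lifting property $\pi\circ g=\bar g$, so that route is a dead end unless you do something much more delicate. And ``take a sequence of subcomplexes $U_1\supset U_2\supset\cdots$ from the filter basis'' presumes the filter basis is countable, which is precisely the hypothesis this paper is set up to avoid (the whole point of the simpltotic formalism is to handle directed sets $\fm$ that are not $\N$). So the patching-along-a-sequence idea does not go through here.

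The paper sidesteps this entirely: during the skeleton-by-skeleton construction of the $\Cb$-lift, it arranges at each stage that the lift has \emph{the same norm as the original on every subcomplex}, i.e.\ $\|\bar f|_Y\|=\|f|_Y\|$ for all subcomplexes $Y\subset X$. (On the $0$-skeleton one lifts with matching pointwise norm; on a closed cell $Z$ one uses that $\Ct(Z;B)\to\Ct(\partial Z;B)\times_{\Ct(\partial Z;B/J)}\Ct(Z;B/J)$ is a surjection onto the pullback, so one can pick a preimage with norm $\max\{\|\bar f_i|_{\partial Z}\|,\|f|_Z\|\}$.) With this norm control in hand, if $f\in\Cz(X;B/J)$ then the very same lift $\bar f$ already lies in $\Cz(X;B)$, because the filter basis consists of subcomplexes. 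No separate correction step is needed, and no countability enters.
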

\begin{proof}
Exactness of the latter follows from exactness of the first two by a diagram chase. For the first two, the not totally obvious part is surjectivity.

Let $f\in \Cb(X;B/J)$. We construct a preimage $\bar f\in \Cb(X;B)$ by recursion over the $i$-skeleton $X_i$.

For $i=0$ we choose $\bar f_0\colon X_0\to B$ such that $\|\bar f_0(x)\|=\|f(x)\|$ for all $x\in X_0$.

Now assume that the lift $\bar f_i\colon X_i\to B$ of $f|_{X_i}$ has already been constructed and consider a closed $(i+1)$-cell $Z\subset X$. 
Since 
\[\xymatrix{\Ct(Z;B)\ar[r]\ar[d] &\Ct(Z;B/J)\ar[d]\\\Ct(\partial Z;B)\ar[r]&\Ct(\partial Z;B/J)} \]
is a pullback diagram,
 the pair $(\bar f_i|_{\partial Z},f|_Z)$ lies in the image of the canonical $*$-ho\-mo\-mor\-phism $\Ct(Z;B)\to \Ct(\partial Z;B)\oplus \Ct(Z;B/J)$ and hence we can find a preimage $\bar f_Z$ with norm $\|\bar f_Z\|=\max\{\|\bar f_i|_{\partial Z}\|,\|f|_Z\|\}$. These extensions fit together to yield the lift $\bar f_{i+1}\colon X_{i+1}\to B$ on the $(i+1)$-skeleton.

All $\bar f_i$ together unite to give a continuous map $\bar f\colon X\to B$ which lifts $f$.
Note that by construction we have $\|\bar f|_Y\|=\|f|_Y\|$ for all subcomplexes $Y\subset X$.
This shows $\bar f\in\Cb(X;B)$ and if $f\in\Cz(X;B/J)$ then also $\bar f\in\Cz(X;B)$
\end{proof}

\begin{lem}[{cf.\ \cite[Lemma 2.5]{GueHigTro}}]\label{lem:asympalgebrapullback}
Let $\varphi_1\colon B_1\to B$ and $\varphi_2\colon B_2\to B$ be $*$-ho\-mo\-mor\-phisms, one of which is surjective, and let 
\[B_1\oplus_BB_2\coloneqq\{(b_1,b_2)\in B_1\oplus B_2\mid\varphi_1(b_1)=\varphi_2(b_2)\}\,.\]
Then for any functor $F=F_1\circ\dots\circ F_k$ as in \Cref{prop:homotopyequivalencerelation} the canonical $*$-ho\-mo\-mor\-phism
\[F(B_1\oplus_BB_2)\to F(B_1)\oplus_{F(B)}F(B_2)\]
induced by the projections from $B_1\oplus_BB_2$ to $B_1$ and $B_2$ is an isomorphism.
\end{lem}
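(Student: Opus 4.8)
The plan is to deduce the claim from the five lemma, using that $F$ is exact. Since the two factors play symmetric roles, I may assume without loss of generality that $\varphi_2$ is surjective. The key observation is that the pullback then sits in a short exact sequence: the projection $\pi_1\colon B_1\oplus_BB_2\to B_1$, $(b_1,b_2)\mapsto b_1$, is surjective precisely because $\varphi_2$ is (given $b_1$, choose any $b_2$ with $\varphi_2(b_2)=\varphi_1(b_1)$), and its kernel is $\{(0,b_2)\mid\varphi_2(b_2)=0\}$, which I identify with $\ker\varphi_2$. This yields
\[0\to\ker\varphi_2\to B_1\oplus_BB_2\xrightarrow{\pi_1}B_1\to 0\,,\]
sitting alongside the defining sequence $0\to\ker\varphi_2\to B_2\xrightarrow{\varphi_2}B\to 0$.

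First I would check that $F$ is exact. Each $F_i$ is exact by \Cref{lem:asymptoticalgebrasexact}, and a composition of exact functors is exact, so $F$ sends both of the above short exact sequences to short exact sequences; in particular $F(\varphi_2)$ is surjective with $\ker F(\varphi_2)=F(\ker\varphi_2)$. I would then observe that the target pullback fits into the analogous sequence
\[0\to F(\ker\varphi_2)\to F(B_1)\oplus_{F(B)}F(B_2)\xrightarrow{\mathrm{pr}_1}F(B_1)\to 0\,,\]
whose exactness is purely formal: $\mathrm{pr}_1$ is surjective because $F(\varphi_2)$ is, and its kernel $\{(0,y)\mid F(\varphi_2)(y)=0\}$ equals $F(\ker\varphi_2)$ by the previous sentence. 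Since the canonical map is induced by the two projections, composing it with $\mathrm{pr}_1$ gives exactly $F(\pi_1)$, and it carries the copy of $F(\ker\varphi_2)$ in the domain to the copy in the target via the identity. Hence the canonical map fits into a morphism of short exact sequences in which the two outer vertical maps are identities, and the five lemma forces the middle map to be an isomorphism.

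The routine but slightly delicate points, rather than any genuine obstacle, are the two identifications of $\ker\varphi_2$ with the relevant kernels and the verification that the resulting square on the kernel side commutes; both come down to unwinding the definitions of the pullback and of the canonical map and then invoking exactness of $F$ to identify $\ker F(\varphi_2)$ with $F(\ker\varphi_2)$. Note that no special feature of the CW-complex or filter structure is needed here beyond the exactness supplied by \Cref{lem:asymptoticalgebrasexact}.
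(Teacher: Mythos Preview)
Your argument is correct. You use only the abstract exactness of $F$ (which follows from \Cref{lem:asymptoticalgebrasexact} and closure under composition), set up the short exact sequence $0\to\ker\varphi_2\to B_1\oplus_BB_2\to B_1\to 0$ of the pullback, apply $F$, and compare with the analogous sequence for the target pullback via the five lemma.

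This is a genuinely different route from the paper's. The paper first reduces to $k=1$ by induction (using that each $F_i$ preserves surjections, so the induction hypothesis applies to the inner pullback), then verifies the claim directly for $\Cb(X;\blank)$ and $\Cz(X;\blank)$, and finally handles $\Ca(X;\blank)$ by a diagram chase on the defining short exact sequence $0\to\Cz(X;\blank)\to\Cb(X;\blank)\to\Ca(X;\blank)\to 0$ applied to each of the four algebras. Your approach is cleaner and more general: it works uniformly for any exact functor, avoids the induction and the case distinction, and never touches the explicit description of the $F_i$. The paper's route, on the other hand, makes the base cases $\Cb$ and $\Cz$ transparent (the isomorphism is literally pointwise) and parallels the reference \cite{GueHigTro} closely, which may be why it was chosen.
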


\begin{proof}
The proof works exactly as the proof of \cite[Lemmas 2.5]{GueHigTro}.
Due to the previous lemma, each $F_i$ transforms surjections into surjections, so by using induction it suffices to prove the claim for $k=1$.
For the functors $F_b\coloneqq\Cb(X;\blank)$ and $F_0\coloneqq\Cz(X;\blank)$ it is clearly true.

Consider now $F_a\coloneqq\Ca(X;\blank)$.
There is a commutative diagram 
\[\xymatrix{
0\ar[r]&F_0(B_1\oplus_BB_2)\ar[r]\ar[d]^{\cong}&F_b(B_1\oplus_BB_2)\ar[r]\ar[d]^{\cong}&F_a(B_1\oplus_BB_2)\ar[r]\ar[d]&0
\\0\ar[r]&F_0B_1\oplus_{F_0B}F_0B_2\ar[r]&F_bB_1\oplus_{F_bB}F_bB_2\ar[r]&F_aB_1\oplus_{F_aB}F_aB_2\ar[r]&0
}\]
with exact rows, where surjectivity in the lower row is due to the surjectivity assumption and the previous lemma.
The vertical arrows are induce by the projections onto the components and the first two of them are isomorphisms, as we have just noticed. Thus, the third one is also an isomorphism.
\end{proof}

\begin{proof}[Proof of \Cref{prop:homotopyequivalencerelation}]
The proof is completely analogue to \cite[Proposition 2.3]{GueHigTro}. Reflexivity and surjectivity is trivial.

For transitivity, if $\Phi_0\colon A\to F(\Ct([0,1];B))$ is an $F$-homotopy between $\varphi_0$ and $\varphi_1$ and $\Phi_1\colon A\to F(\Ct([1,2];B))$ is an $F$-homotopy between $\varphi_1$ and $\varphi_2$, then applying the previous lemma provides us with an $F$-homotopy
\[A\xrightarrow{(\Phi_0,\Phi_1)}F(\Ct([0,1];B))\oplus_{FB}F(\Ct([0,1];B))\cong F(\Ct([0,2];B))\]
between $\varphi_0$ and $\varphi_2$.
\end{proof}

\begin{defn}
\label{defn:Fhomotopysets}
Let $F$ be a functor as in \Cref{prop:homotopyequivalencerelation}. We denote by $\SimplMor[F]{A}{B}$ denote the set of $F$-homotopy classes of $*$-ho\-mo\-mor\-phisms $A\to F(B)$.
Furthermore, we shall use the following abbreviations: 
\begin{enumerate}
\item In case $F=\Ca(X;\blank)$ we will often write $\SimplMor[X]{A}{B}$ instead of $\SimplMor[\Ca(X;\blank)]{A}{B}$.

\item Even more specifically, if $X=|\Delta^\fm|$ then $\SimplMor[\fm]{A}{B}$ stands for $\SimplMor[|\Delta^\fm|]{A}{B}$.

\item If $n\in\N$, we will write $\SimplMor[n]{A}{B}$ for the set $\SimplMor[\Asymp^n]{A}{B}$, cf.\ \cite[Definition 2.6]{GueHigTro}.
\end{enumerate}
In all cases, the class represented by a $*$-ho\-mo\-mor\-phism $\varphi$ will be denoted by $\llbracket \varphi\rrbracket$.
\end{defn}

\begin{example}\label{example:oneelementdirectedset}
Let $\fo$ denote the one-element directed set. Then $\Simpl^\fo$ is naturally isomorphic to the identity functor on \textCstar-algebras and hence $\SimplMor[\fo]{A}{B}=[A,B]$ is the set of homotopy classes of $*$-ho\-mo\-mor\-phisms $A\to B$.
\end{example}

\begin{lem}[{cf.\ \cite[Lemmas 2.9]{GueHigTro}}]\label{lem:FFprimehomotopic}
Let $F,F'$ be two functors as in \Cref{prop:homotopyequivalencerelation}.
\begin{enumerate}
\item If $\varphi_0,\varphi_1\colon A\to F'B$ are $F'$-homotopic, then $F\varphi_0,F\varphi_1\colon FA\to F\circ F'(B)$ are $F\circ F'$-homotopic.
\item If $\varphi_0,\varphi_1\colon A\to F\circ F'(B)$ are $F$-homotopic, then they are also $F\circ F'$-homotopic.
\end{enumerate}
\end{lem}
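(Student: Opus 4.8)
The plan is to derive the first statement directly from functoriality. Suppose $\Phi\colon A\to F'(\Ct([0,1];B))$ is an $F'$-homotopy, so that $\varphi_i=F'(\ev{i})\circ\Phi$ for $i=0,1$, where $\ev{i}\colon\Ct([0,1];B)\to B$ denotes evaluation. I would simply put $\Psi\coloneqq F(\Phi)\colon FA\to F(F'(\Ct([0,1];B)))=(F\circ F')(\Ct([0,1];B))$ and check, using $(F\circ F')(\ev{i})=F(F'(\ev{i}))$ together with functoriality of $F$, that
\[(F\circ F')(\ev{i})\circ\Psi=F\bigl(F'(\ev{i})\circ\Phi\bigr)=F(\varphi_i)=F\varphi_i,\]
so that $\Psi$ is the desired $F\circ F'$-homotopy.

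The second statement is the substantial one. The issue is that an $F$-homotopy $\Phi\colon A\to F(\Ct([0,1];F'(B)))$ lives over $\Ct([0,1];F'(B))$, whereas an $F\circ F'$-homotopy must live over $F'(\Ct([0,1];B))$. The plan is to produce a natural $*$-ho\-mo\-mor\-phism
\[\theta^{F'}_B\colon\Ct([0,1];F'(B))\to F'(\Ct([0,1];B))\]
which is compatible with evaluation at the endpoints, i.e.\ $F'(\ev{i})\circ\theta^{F'}_B=\ev{i}$ for $i=0,1$ (the left-hand $\ev{i}$ evaluating $\Ct([0,1];F'(B))\to F'(B)$, the right-hand one being $\ev{i}\colon\Ct([0,1];B)\to B$). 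Granting such a $\theta^{F'}$, I would set $\Psi\coloneqq F(\theta^{F'}_B)\circ\Phi$ and compute, exactly as above,
\[(F\circ F')(\ev{i})\circ\Psi=F\bigl(F'(\ev{i})\circ\theta^{F'}_B\bigr)\circ\Phi=F(\ev{i})\circ\Phi=\varphi_i,\]
exhibiting $\Psi$ as an $F\circ F'$-homotopy between $\varphi_0$ and $\varphi_1$.

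Constructing $\theta^{F'}$ is where the work lies, and I would do it by induction on the number of factors in $F'=F'_1\circ\dots\circ F'_k$. For a single factor of type $\Cb(X;\blank)$ or $\Cz(X;\blank)$ the map is the variable swap $\theta(g)(x)(t)=g(t)(x)$, and the only non-formal point — which I expect to be the main obstacle — is checking that it lands in the correct generalized asymptotic algebra. For $\Cb$ one must verify that swapping a continuous $g\colon[0,1]\to\Cb(X;B)$ yields a genuinely continuous map $X\to\Ct([0,1];B)$, and for $\Cz$ one must upgrade the pointwise-in-$t$ decay to a single filter set $U\in\filter$ that works uniformly in $t$. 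Both follow from compactness of $[0,1]$: the image $g([0,1])$ is norm-compact, so a finite subcover reduces each claim to finitely many conditions, which can then be intersected (using that $\filter$ is closed under finite intersections in the $\Cz$ case). The case $\Ca(X;\blank)=\Cb(X;\blank)/\Cz(X;\blank)$ is then obtained by passing the two maps just built to the quotient; this is legitimate because $\Ct([0,1];\blank)=\Cb([0,1];\blank)$ is exact by \Cref{lem:asymptoticalgebrasexact} (as $[0,1]$ is a CW-complex), so $\Ct([0,1];\Ca(X;B))$ really is the quotient of $\Ct([0,1];\Cb(X;B))$ by $\Ct([0,1];\Cz(X;B))$, and the endpoint compatibility descends along the quotient. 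Finally, for a composite I would set $\theta^{F'_1\circ F''}_B=F'_1(\theta^{F''}_B)\circ\theta^{F'_1}_{F''(B)}$, where naturality of $\theta^{F'_1}$ together with the inductive compatibility of $\theta^{F''}$ gives compatibility with evaluation for the composite. Each $\theta$ is visibly a graded, $G$-equivariant $*$-ho\-mo\-mor\-phism, since the $G$-action and grading act only through $B$ and commute with the reordering of variables.
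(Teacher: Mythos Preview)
Your proof is correct and follows essentially the same approach as the paper: part~1 is dismissed as trivial (functoriality), and part~2 is reduced to constructing the natural transformation $\Ct([0,1];F'(\blank))\to F'(\Ct([0,1];\blank))$ compatible with evaluation, built via the variable swap for $\Cb$ and $\Cz$, passed to the quotient for $\Ca$, and extended to composites by induction. You supply more detail than the paper on why the swap is well-defined (the compactness argument for continuity and for uniform decay along the filter), but the strategy is identical.
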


\begin{proof}
The first part is trivial and the second part follows from the existence of canonical natural transformations $\Ct([0,1];F'(\blank))\to F'(\Ct([0,1];\blank)$ such that the diagrams
\[\xymatrix{\Ct([0,1];F'B)\ar[r]\ar[dr]^{\ev{t}}&F'(\Ct([0,1];B))\ar[d]^{F'(\ev{t})}\\&F'B}\]
commute for all $t\in[0,1]$.
For $F'=F_b$ and $F'=F_0$ (notation as in the proof of \Cref{lem:asympalgebrapullback}) these map $f\in \Ct([0,1];F'B)$ to the function $\hat f\in F'(\Ct([0,1];B))$ defined by $\hat f(x)(t)=f(t)(x)$.
For $F'=F_a$ we obtain it as the induced right vertical arrow in 
\[\xymatrix{
0\ar[r]&\Ct([0,1];F_0B)\ar[r]\ar[d]&\Ct([0,1];F_bB)\ar[r]\ar[d]&\Ct([0,1];F_aB)\ar[r]\ar@{-->}[d]&0
\\0\ar[r]&F_0(\Ct([0,1];B))\ar[r]&F_b(\Ct([0,1];B))\ar[r]&F_a(\Ct([0,1];B))\ar[r]&0
}\]
and the general case follows by induction.
\end{proof}

\begin{cor}\label{cor:Fhomotopyclasscompositionproduct}
For two functors $F,F'$ as in \Cref{prop:homotopyequivalencerelation} there are canonical maps
\[\SimplMor[F]{A}{B} \times \SimplMor[F']{B}{C} \to \SimplMor[F\circ F']{A}{C}\,,\quad(\llbracket\varphi\rrbracket,\llbracket\psi\rrbracket)\mapsto \llbracket F(\psi)\circ\varphi\rrbracket\,.\]
These products are clearly associative.

In particular, setting $F=\id$ or $F'=\id$ we see that the sets $\SimplMor[F]{A}{B}$ are contravariantly functorial in $A$ and covariantly functorial in $B$ under homotopy classes of $*$-ho\-mo\-mor\-phisms.\qed
\end{cor}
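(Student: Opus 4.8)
The plan is to verify that the assignment $(\llbracket\varphi\rrbracket,\llbracket\psi\rrbracket)\mapsto\llbracket F(\psi)\circ\varphi\rrbracket$ is well defined, since everything else is formal. For representatives $\varphi\colon A\to F(B)$ and $\psi\colon B\to F'(C)$ the composite $F(\psi)\circ\varphi\colon A\to F\circ F'(C)$ is again a $*$-ho\-mo\-mor\-phism, and $F\circ F'$ is once more a finite composition of functors of the type allowed in \Cref{prop:homotopyequivalencerelation}; hence $\SimplMor[F\circ F']{A}{C}$ is defined and the $F\circ F'$-homotopy relation on it is an equivalence relation. It therefore suffices to prove independence of the chosen representatives, which I would split into two separate steps: varying $\varphi$ with $\psi$ held fixed, and varying $\psi$ with $\varphi$ held fixed. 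Combining the two via transitivity then yields full well-definedness.

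For the first step, suppose $\Phi\colon A\to F(\Ct([0,1];B))$ is an $F$-homotopy between $\varphi_0$ and $\varphi_1$, and fix $\psi$. Let $\eta_C\colon\Ct([0,1];F'C)\to F'(\Ct([0,1];C))$ denote the canonical natural transformation produced in the proof of \Cref{lem:FFprimehomotopic}, and let $\Ct([0,1];\psi)$ be post-composition with $\psi$. I would then set
\[\Psi\coloneqq F(\eta_C)\circ F(\Ct([0,1];\psi))\circ\Phi\colon A\to F\circ F'(\Ct([0,1];C))\,.\]
The crux is the endpoint computation. Using that $F'(\ev{i})\circ\eta_C=\ev{i}$ (the commuting triangle of \Cref{lem:FFprimehomotopic}) together with $\ev{i}\circ\Ct([0,1];\psi)=\psi\circ\ev{i}$, functoriality of $F$ gives $F\circ F'(\ev{i})\circ\Psi=F(\psi)\circ F(\ev{i})\circ\Phi=F(\psi)\circ\varphi_i$, so $\Psi$ is an $F\circ F'$-homotopy between $F(\psi)\circ\varphi_0$ and $F(\psi)\circ\varphi_1$. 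This endpoint verification, hinging on the natural transformation from \Cref{lem:FFprimehomotopic}, is the only non-formal ingredient and is where I expect the main obstacle to lie.

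For the second step, fix $\varphi$ and suppose $\psi_0,\psi_1\colon B\to F'(C)$ are $F'$-homotopic. By \Cref{lem:FFprimehomotopic} the maps $F(\psi_0),F(\psi_1)\colon F(B)\to F\circ F'(C)$ are $F\circ F'$-homotopic via some $\Theta\colon F(B)\to F\circ F'(\Ct([0,1];C))$; precomposing with $\varphi$ yields the $F\circ F'$-homotopy $\Theta\circ\varphi$ between $F(\psi_0)\circ\varphi$ and $F(\psi_1)\circ\varphi$. Given now $F$-homotopic $\varphi_0,\varphi_1$ and $F'$-homotopic $\psi_0,\psi_1$, the first step yields that $F(\psi_0)\circ\varphi_0$ and $F(\psi_0)\circ\varphi_1$ are $F\circ F'$-homotopic, the second that $F(\psi_0)\circ\varphi_1$ and $F(\psi_1)\circ\varphi_1$ are, and transitivity (\Cref{prop:homotopyequivalencerelation}) closes the chain.

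Associativity is then immediate: for $\varphi\colon A\to F(B)$, $\psi\colon B\to F'(C)$ and $\chi\colon C\to F''(D)$ both bracketings equal $\llbracket F(F'(\chi)\circ\psi)\circ\varphi\rrbracket$ by functoriality of $F$ and associativity of $*$-ho\-mo\-mor\-phism composition. Finally, specializing $F$ or $F'$ to $\Simpl^\fo$, which is naturally isomorphic to the identity functor with $\SimplMor[\fo]{A}{B}=[A,B]$ by \Cref{example:oneelementdirectedset}, the product reduces to precomposition $\llbracket\psi\rrbracket\mapsto\llbracket\psi\circ\alpha\rrbracket$ for $\alpha\colon A\to B$, respectively postcomposition $\llbracket\varphi\rrbracket\mapsto\llbracket F(\beta)\circ\varphi\rrbracket$ for $\beta\colon B\to C$, by homotopy classes of $*$-ho\-mo\-mor\-phisms, yielding the claimed contra- and covariant functoriality.
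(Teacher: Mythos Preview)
Your proof is correct and follows the paper's intended approach: the corollary is marked with \qed\ immediately after its statement, so the paper treats it as an immediate consequence of \Cref{lem:FFprimehomotopic}, and that is exactly what you verify. One small simplification: in your first step you essentially re-derive part~2 of \Cref{lem:FFprimehomotopic} by hand via the natural transformation $\eta_C$; you could instead observe that $F(\Ct([0,1];\psi))\circ\Phi$ is an $F$-homotopy between $F(\psi)\circ\varphi_0$ and $F(\psi)\circ\varphi_1$ (as maps into $F\circ F'(C)$) and then cite part~2 directly to upgrade this to an $F\circ F'$-homotopy.
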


Furthermore, we need the functoriality of the sets $\SimplMor[X]{A}{B}$ in $X$ and, more generally, of $\SimplMor[F]{A}{B}$ in $F$.
\begin{lem}\label{lem:functorialityundernatrualtransformationsoffunctors}
For any natural transformation $\eta\colon F\to F'$ between two functors $F,F'$ as in \Cref{prop:homotopyequivalencerelation} there are induced maps
\[\eta_*\colon\SimplMor[F]{A}{B}\to \SimplMor[F']{A}{B}\,,\quad \llbracket \varphi\rrbracket\mapsto\llbracket \eta_B\circ\varphi\rrbracket\,.\]
If two such natural transformations $\eta_0,\eta_1$ are $F'$-homotopic in the sense that there is a natural transformation $\tilde\eta\colon F\to F'(\Ct([0,1];\blank))$ such that $\eta_t=F'(\ev{t})\circ\tilde\eta$ for $t=0,1$, then $(\eta_0)_*=(\eta_1)_*$
\end{lem}

\begin{proof}
If $\Phi\colon A\to F(\Ct([0,1];B))$ is an $F$-homotopy between $\varphi_0$ and $\varphi_1$, then $\eta_{\Ct([0,1];B)}\circ\Phi$ is an $F'$-homotopy between $\eta_B\circ\varphi_0$ and $\eta_B\circ\varphi_1$. This shows the first part.
For the second part, note that $\tilde\eta_B\circ\varphi$ is an $F'$-homotopy between $(\eta_0)_B\circ\varphi$ and $(\eta_1)_B\circ\varphi$.
\end{proof}

Specializing it to functoriality in $X$ requires the following notion of maps.

\begin{defn}
If $X,X'$ are topological spaces equipped with filters $\filter,\filter'$, then we call a map $f\colon X\to X'$ \emph{filter-proper} if $f^{-1}(\filter')\subset\filter$.

Two continuous filter-proper maps $f_0,f_1$ are called homotopic if there is a continuous and filter-proper homotopy $h\colon X\times[0,1]\to X'$ between them, where the domain is equipped with the canonical filter generated by the sets $U'\times[0,1]$ with $U'\in\filter'$.
\end{defn}

If fixed filter bases $\filterbasis,\filterbasis'$ of $\filter,\filter'$ are given, then being filter-proper is equivalent to demanding that for each $U'\in\filterbasis'$ there exists $U\in\filterbasis$ such that $f(U)\subset U'$.

For locally compact Hausdorff spaces and filters generated by the complements of compact subsets, filter-properness is equivalent to properness.

The following lemma is obvious.

\begin{lem}\label{lem:spacefunctoriality}
Every continuous filter-proper map $f\colon X\to X'$ induces a natural transformation $f^*\colon \Ca(X';\blank)\to \Ca(X;\blank)$ and hence yields a map 
\[f^*\colon \SimplMor[X']{A}{B}\to \SimplMor[X]{A}{B}\,.\]
If two such maps $f_0,f_1$ are homotopic via a continuous filter-proper homotopy $h\colon X\times[0,1]\to X'$, then the induced natural transformation $h^*\colon\Ca(X';\blank)\to \Ca(X;\Ct([0,1];\blank))$ is a $\Ca(X;\blank)$-homotopy between $f_0^*$ and $f_1^*$ and hence the induced maps $f^*\colon \SimplMor[X']{A}{B}\to \SimplMor[X]{A}{B}$ agree.

This constitutes the contravariant functoriality of the sets $\SimplMor[X]{A}{B}$ under homotopy classes of continuous filter-proper maps.\qed
\end{lem}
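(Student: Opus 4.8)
The plan is to reduce the whole statement to the construction of two pullback natural transformations together with \Cref{lem:functorialityundernatrualtransformationsoffunctors}, so that essentially nothing new has to be proven about homotopy classes.

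First I would construct $f^*$. The continuous map $f$ induces the pullback $*$-ho\-mo\-mor\-phism $\Cb(X';B)\to\Cb(X;B)$, $g\mapsto g\circ f$, which is manifestly natural in $B$. The only thing to verify is that it carries $\Cz(X';B)$ into $\Cz(X;B)$, and this is exactly where filter-properness enters: given $g\in\Cz(X';B)$ and $\varepsilon>0$, pick $U'\in\filter'$ with $\|g|_{U'}\|<\varepsilon$; then $f^{-1}(U')\in\filter$ by filter-properness and $\|(g\circ f)|_{f^{-1}(U')}\|\leq\|g|_{U'}\|<\varepsilon$. Hence the pullback descends to a natural transformation $f^*\colon\Ca(X';\blank)\to\Ca(X;\blank)$; the relations $(\id)^*=\id$ and $(g\circ f)^*=f^*\circ g^*$ are immediate from the formula. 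The induced map $f^*\colon\SimplMor[X']{A}{B}\to\SimplMor[X]{A}{B}$ is then precisely the map supplied by \Cref{lem:functorialityundernatrualtransformationsoffunctors} applied to this natural transformation.

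For the homotopy invariance I would turn $h$ into a natural transformation landing in $\Ca(X;\Ct([0,1];\blank))$. Since $[0,1]$ is compact, the exponential adjunction furnishes a natural isometric isomorphism $\Cb(X\times[0,1];B)\cong\Cb(X;\Ct([0,1];B))$ sending $k$ to $x\mapsto(t\mapsto k(x,t))$. Composing the pullback along $h$ with this identification yields a $*$-ho\-mo\-mor\-phism $\Cb(X';B)\to\Cb(X;\Ct([0,1];B))$, natural in $B$. Filter-properness of $h$, with respect to the product filter on $X\times[0,1]$ having basis $\{U\times[0,1]\mid U\in\filter\}$, again guarantees that this sends $\Cz(X';B)$ into $\Cz(X;\Ct([0,1];B))$: if $\|g|_{U'}\|<\varepsilon$ and $U\times[0,1]\subseteq h^{-1}(U')$ with $U\in\filter$, then the pulled-back function restricted to $U$ has norm (the supremum over $t\in[0,1]$) bounded by $\varepsilon$. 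This produces the asserted $h^*\colon\Ca(X';\blank)\to\Ca(X;\Ct([0,1];\blank))$.

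It then remains to check that $h^*$ is a $\Ca(X;\blank)$-homotopy from $f_0^*$ to $f_1^*$ in the sense of \Cref{lem:functorialityundernatrualtransformationsoffunctors}, i.e.\ that $\Ca(X;\ev{t})\circ h^*=f_t^*$ for $t=0,1$. This is a one-line computation on representatives: for $g\in\Cb(X';B)$ the element $h^*(g)$ is $x\mapsto(t\mapsto g(h(x,t)))$, and evaluating at $t$ gives $x\mapsto g(h(x,t))$; for $t=0,1$ this is exactly $x\mapsto g(f_t(x))=f_t^*(g)$, since $h(\blank,t)=f_t$. The second assertion of \Cref{lem:functorialityundernatrualtransformationsoffunctors} then immediately gives that the induced maps $\SimplMor[X']{A}{B}\to\SimplMor[X]{A}{B}$ coincide, and contravariant functoriality under homotopy classes follows by combining this with $(\id)^*=\id$ and $(g\circ f)^*=f^*\circ g^*$. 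The only points requiring genuine care — and the only places where the hypotheses are actually used — are the two verifications that the pullbacks respect the vanishing ideals (the content of filter-properness) and the compactness of $[0,1]$ needed for the adjunction isomorphism; I expect no obstacle beyond this bookkeeping, consistent with the statement being labelled obvious.
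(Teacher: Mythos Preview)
Your proof is correct and is exactly the argument the paper has in mind: the lemma is stated with a \qed{} and introduced by ``The following lemma is obvious'', so the paper gives no proof at all. Your write-up simply spells out the two routine verifications (filter-properness ensures the pullbacks preserve the $\Cz$-ideals, and the exponential adjunction turns $h^*$ into a $\Ca(X;\blank)$-homotopy) and then invokes \Cref{lem:functorialityundernatrualtransformationsoffunctors}, which is precisely the intended route.
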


The three functorialities of the sets $\SimplMor[F]{A}{B}$ and $\SimplMor[X]{A}{B}$ in the variables $A,B,F$ and $A,B,X$, respectively, clearly commute with each other.

We can now discuss the fundamental relation between asymptotic and simpltotic morphisms.
\begin{example}\label{ex:asymptoticsimpltoticrelation}
Recall from \Cref{ex:loccomp} that we denote by $T$ the half-line $[1,\infty)$ equipped with the filter generated by the complements of the compact subsets.
Furthermore, let $\N_+$ denote the set of positive integers directed by the usual order. We consider the following two continuous maps:
\begin{align*}
\tau&\colon |\Delta^{\N_+}|\to T\,,\quad\sum_{n\in{\N_+}}\lambda_n[n]\mapsto\sum_{n\in{\N_+}}\lambda_n\cdot n
\\\iota&\colon T\to |\Delta^{\N_+}|\,,\quad n+t\mapsto (1-t)[n]+t[n+1]\qquad\text{for }n\in{\N_+},t\in[0,1]\,.
\end{align*}
Note that $\tau(|\Delta^{{\N_+}\ab n}|)=[n,\infty)$ and $\iota([n,\infty))\subset |\Delta^{{\N_+}\ab n}|$ for all $n$, so these maps are filter-proper and therefore induce $*$-ho\-mo\-mor\-phisms
\[\xymatrix{\Asymp B\ar@<+.5ex>[r]^{\tau^*}&\Simpl^{\N_+}B\ar@<+.5ex>[l]^{\iota^*}}\]
and hence also maps
\[\xymatrix{\SimplMor[1]{A}{B}\ar@<+.5ex>[r]^{\tau^*}&\SimplMor[\N_+]{A}{B}\ar@<+.5ex>[l]^{\iota^*}}\]
which are clearly natural in both $A,B$.

Obviously $\tau\circ\iota=\id_{T}$ and hence $\iota^*\circ\tau^*=\id$. Furthermore, $\iota\circ\tau$ is homotopic to $\id_{|\Delta^{\N_+}|}$ via the canonical affine linear homotopy $h\colon |\Delta^{\N_+}|\times[0,1]\to |\Delta^{\N_+}|$.
Note that $h(|\Delta^{{\N_+}\ab n}|\times[0,1])\subset|\Delta^{{\N_+}\ab n}|$, so this homotopy is filter-proper.
Therefore, the maps $\iota^*,\tau^*$ are natural bijections between \(\SimplMor[1]{A}{B}\) and \(\SimplMor[\N_+]{A}{B}\)
which are inverse to each other.
\end{example}

\section{The morphism sets of the simpltotic category}
\label{sec:simpltoticmorphismsets}

Any map $\alpha\colon\fm\to\fn$ extends affine linearly to a continuous map $|\alpha|\colon |\Delta^\fm|\to|\Delta^\fn|$. If $\alpha$ is cofinal, then for each $n\in\fn$ there is $m\in\fm$ with $\forall k\geq m\colon\alpha(k)\geq n$ and hence $|\alpha|$ maps $|\Delta^{\fm\ab m}|$ to $|\Delta^{\fn\ab n}|$, i.\,e.\ $|\alpha|$ is filter-proper.
Thus the $*$-ho\-mo\-mor\-phism \(\alpha^*\coloneqq|\alpha|^*\colon \Cb(|\Delta^\fn|;B)\to \Cb(|\Delta^\fm|;B)\) restricts to \(\Cz(|\Delta^\fn|;B)\to \Cz(|\Delta^\fm|;B)\) and thus we obtain the induced $*$-ho\-mo\-mor\-phisms $\alpha^*\coloneqq|\alpha|^*\colon\Simpl^\fn B\to\Simpl^\fm B$ as a special case of the one in \Cref{lem:spacefunctoriality}.

\begin{lem}\label{lem:independenceofcofinalmap}
If a cofinal map $\alpha\colon\fm\to\fn$ between two directed sets $\fm,\fn$ exists, then the induced maps
\[\alpha^*\coloneqq|\alpha|^*\colon \SimplMor[\fn]{A}{B}\to\SimplMor[\fm]{A}{B}
\]
are independent of the choice of the cofinal map $\alpha$.
\end{lem}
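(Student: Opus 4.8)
The plan is to reduce the claim to the homotopy invariance established in \Cref{lem:spacefunctoriality}. Let $\alpha,\beta\colon\fm\to\fn$ be two cofinal maps. By \Cref{lem:spacefunctoriality} it suffices to construct a continuous filter-proper homotopy between the affine-linear realizations $|\alpha|,|\beta|\colon|\Delta^\fm|\to|\Delta^\fn|$; this then forces $\alpha^*=|\alpha|^*=|\beta|^*=\beta^*$ on $\SimplMor[\fn]{A}{B}$.

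The candidate homotopy is the straight-line homotopy
\[
h\colon|\Delta^\fm|\times[0,1]\to|\Delta^\fn|\,,\qquad h(x,s)=(1-s)\,|\alpha|(x)+s\,|\beta|(x)\,,
\]
in exact analogy with the affine-linear homotopy used in \Cref{ex:asymptoticsimpltoticrelation}. First I would verify that $h$ is well defined and continuous. Since $\Delta^\fn$ is the \emph{full} simplicial complex on $\fn$, the realization $|\Delta^\fn|$ is convex, so the convex combination of the two points $|\alpha|(x),|\beta|(x)\in|\Delta^\fn|$ again lies in $|\Delta^\fn|$; thus $h$ takes values where claimed. For continuity one observes that on each closed cell $|\Delta^\sigma|\times[0,1]$ (with $\sigma\in\Delta^\fm$) the map $h$ takes values in the single simplex spanned by the finite vertex set $\alpha(\sigma)\cup\beta(\sigma)\subset\fn$, which is a genuine simplex of $\Delta^\fn$ precisely because the complex is full; there $h$ is manifestly continuous, and continuity on all of $|\Delta^\fm|\times[0,1]$ follows.

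The heart of the matter is filter-properness, and here I would exploit that the basic filter sets $|\Delta^{\fn\ab n}|$ are themselves convex. Fix $n\in\fn$. Cofinality of $\alpha$ yields $m_\alpha\in\fm$ with $|\alpha|(|\Delta^{\fm\ab m_\alpha}|)\subset|\Delta^{\fn\ab n}|$, and cofinality of $\beta$ yields $m_\beta\in\fm$ with $|\beta|(|\Delta^{\fm\ab m_\beta}|)\subset|\Delta^{\fn\ab n}|$. Choosing $m\in\fm$ above both $m_\alpha$ and $m_\beta$ (possible since $\fm$ is directed), both $|\alpha|$ and $|\beta|$ map $|\Delta^{\fm\ab m}|$ into $|\Delta^{\fn\ab n}|$, and by convexity of the latter we obtain $h(|\Delta^{\fm\ab m}|\times[0,1])\subset|\Delta^{\fn\ab n}|$. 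As the sets $|\Delta^{\fm\ab m}|\times[0,1]$ generate the product filter on the domain, this is exactly the filter-properness of $h$.

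Finally, applying \Cref{lem:spacefunctoriality} to the filter-proper homotopy $h$ gives $|\alpha|^*=|\beta|^*$, hence $\alpha^*=\beta^*$, as desired. I expect the only subtle point to be the continuity of $h$ on the infinite complexes; this is precisely what the fullness of $\Delta^\fn$ resolves, since it makes $h$ restrict to an affine map into a single finite-dimensional simplex on each cell.
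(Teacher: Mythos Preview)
Your proof is correct and follows essentially the same approach as the paper: both use the straight-line (affine linear) homotopy between $|\alpha|$ and $|\beta|$, verify its filter-properness by choosing $m\in\fm$ large enough that $\alpha(k),\beta(k)\geq n$ for all $k\geq m$, and then invoke \Cref{lem:spacefunctoriality}. Your version is more explicit about well-definedness and continuity via the fullness of $\Delta^\fn$, but the argument is the same.
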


\begin{proof}
If $\alpha,\beta\colon\fm\to\fn$ are two cofinal maps, then the canonical affine linear homotopy $h\colon |\Delta^{\fm}|\times[0,1]\to|\Delta^{\fn}|$ between $|\alpha|$ and $|\beta|$ maps $|\Delta^{\fm\ab m}|\times[0,1]$ to $|\Delta^{\fn\ab n}|$ if $m\in\fm$ has been chosen large enough such that $\alpha(k),\beta(k)\geq n$ for all $k\geq m$.
Thus, $h$ is filter-proper and the claim follows from \Cref{lem:spacefunctoriality}.
\end{proof}

Let $\cD$ denote the category whose objects are the directed sets and whose morphism sets $\Mor_\cD(\fm,\fn)$ consist of exactly one arrow if a cofinal map $\fm\to\fn$ exists and are empty otherwise. Note that $\cD$ is cofiltered, because for any two $\fm,\fn$ we can equip $\fm\times\fn$ with the canonical order and then the projections onto the factors are cofinal.
Due to \Cref{lem:independenceofcofinalmap}, the sets $\SimplMor[\fm]{A}{B}$ are contravariantly functorial on $\cD$.
This almost allows us to make the following definition.
\begin{defn}\label{defn:morphismlimit}
The \emph{simpltotic morphism sets} are defined as 
\[\SimplMor{A}{B}\coloneqq \varinjlim_{\fm\in\cD^\op}\SimplMor[\fm]{A}{B}\]
and these are clearly contravariantly functorial in $A$ and covariantly functorial in $B$.
\end{defn}

The problem with this definition is that the category $\cD^\op$ is not small. Therefore, we have to justify why the direct limits exist.  The way to prove this is to show that the directed system of $\SimplMor[\fm]{A}{B}$ becomes stationary for $\fm$ large enough. This is a corollary of the following lemma.

\begin{lem}\label{lem:directedsystemstationary}
Let $A$ be a \textCstar-algebra, $A'\subset A$ dense and invariant under the involution, the $G$-action and the grading automorphism of $A$.
Let $(\Q+i\Q)\langle A'\rangle$ denote the free $(\Q+i\Q)$-algebra generated by $A'$ and define
\[\fm(A')\coloneqq\{(F,k)\mid F\subset (\Q+i\Q)\langle A'\rangle\text{ finite}, k\in\N_+\}\,.\]
This set is directed by the partial order $(E,k)\leq (F,l)\mathrel{{:}{\Leftrightarrow}}E\subset F\wedge k\leq l$.
Let $B$ be another \textCstar-algebra and $\alpha\colon \fn\to\fm(A')$ a cofinal map.
\begin{enumerate}
\item If $\varphi\colon A\to\Simpl^\fn B$ is a $*$-ho\-mo\-mor\-phism, then there exists a $*$-ho\-mo\-mor\-phism $\psi\colon A\to \Simpl^{\fm(A')} B$ such that $\varphi$ is $\fn$-homotopic to $\alpha^*\circ\psi$.
\item If $\varphi,\psi\colon A\to\Simpl^{\fm(A')} B$ are $*$-ho\-mo\-mor\-phisms such that $\alpha^*\circ\varphi$ is $\fn$-homotopic to $\alpha^*\circ\psi$, then $\varphi$ is $\fm(A')$-homotopic to $\psi$.
\end{enumerate}
\end{lem}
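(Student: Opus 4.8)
The plan is to prove both parts by a single diagonal-type construction over $\fm(A')$, viewing (2) as the relative version of (1) obtained by replacing $B$ with $\Ct([0,1];B)$ and tracking the two endpoint evaluations; thus I would concentrate on (1) and indicate the modifications for (2) at the end. The guiding intuition is that the two coordinates of $\fm(A')=\{(F,k)\}$ play complementary roles: $F$ records which polynomial relations of $A$ we want realised, and $k$ records the precision $1/k$ to which we want them realised, so that a genuine $*$-ho\-mo\-mor\-phism into $\Simpl^{\fm(A')}B$ is precisely a coherent family of approximate homomorphisms that becomes exact along the filter. Throughout I would use \Cref{prop:homotopyequivalencerelation} to know that $\fn$- and $\fm(A')$-homotopy are genuine equivalence relations and may be spliced.

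First I would fix a bounded linear set-theoretic lift $\Phi\colon A\to\Cb(|\Delta^\fn|;B)$ of $\varphi$. Since $\varphi$ is an honest $*$-ho\-mo\-mor\-phism, for each finite $F\subset(\Q+i\Q)\langle A'\rangle$ the finitely many defect functions measuring the failure of linearity, multiplicativity, $*$-invariance and $G$-equivariance of $\Phi$ on the elements named by $F$ all lie in $\Cz(|\Delta^\fn|;B)$. Hence for every vertex $(F,k)$ one may select a tail $|\Delta^{\fn\ab m(F,k)}|$ on which all of these defects, together with the relevant norm discrepancies, are smaller than $1/k$, and the selection can be arranged monotone in $(F,k)$. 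Next I would build a representative of $\psi$ on $|\Delta^{\fm(A')}|$ by induction over its skeleta. Exploiting the free presentation, so that multiplicativity of the resulting map out of $(\Q+i\Q)\langle A'\rangle$ is automatic, it suffices to specify $*$-compatible values $g_a$ for the generators $a\in A'$: at the vertex $(F,k)$ one sets $g_a$ to be $\Phi(a)$ evaluated at a point of $|\Delta^{\fn\ab m(F,k)}|$, and then extends continuously over each cell. Here \Cref{lem:asymptoticalgebrasexact} supplies the continuous, norm-controlled extensions for free; the content is to organise them so that, over the tail $|\Delta^{\fm(A')\ab(F,k)}|$, every relation named by $F$ is realised to within $1/k$. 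Granting this, the associated $*$-ho\-mo\-mor\-phism of the free algebra descends modulo $\Cz$: for a fixed relation and $\varepsilon>0$ one chooses $(F,k)$ with that relation in $F$ and $1/k<\varepsilon$, whereupon the defect is below $\varepsilon$ on the corresponding tail, so the relation-defect lies in $\Cz(|\Delta^{\fm(A')}|;B)$. By density of the subalgebra generated by $A'$ and contractivity this yields the desired $\psi\colon A\to\Simpl^{\fm(A')}B$. That $\alpha^*\psi$ is $\fn$-homotopic to $\varphi$ I would verify by exhibiting an explicit $\Ca(|\Delta^\fn|;\blank)$-homotopy that interpolates affinely between $\Phi$ and the pulled-back vertex data along $|\alpha|$, exactly in the spirit of \Cref{ex:asymptoticsimpltoticrelation}, using \Cref{lem:asympalgebrapullback} to splice and \Cref{prop:homotopyequivalencerelation} to conclude; it is here that cofinality of $\alpha$ enters, guaranteeing filter-properness of the relevant homotopy.

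The main obstacle is precisely the inductive extension over cells while keeping the multiplicativity defects controlled by the depth of the cell. The tempting shortcut of realising $\psi$ as a pullback $f^\ast\varphi$ along a continuous filter-proper map $f\colon|\Delta^{\fm(A')}|\to|\Delta^\fn|$ is \emph{not} available: such an $f$ would amount to a cofinal map $\fm(A')\to\fn$, and none need exist, since the cofinality of $\fn$ can strictly exceed that of $\fm(A')$ even when a cofinal map $\alpha\colon\fn\to\fm(A')$ is given. Consequently $\psi$ must be constructed by hand, and crude linear interpolation of the vertex data introduces oscillation cross-terms in products which do not vanish simply because the chosen vertices sit deep in $\fn$. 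The way around this is to lean on the double indexing: over a cell lying past $(F,k)$ only the finitely many relations named by $F$ need be controlled, and only to precision $1/k$, so each inductive extension is a \emph{finite} approximation problem solvable by a small perturbation supported on that cell, after which passage to the filter upgrades approximate multiplicativity to an exact homomorphism into the quotient.

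For part (2) the identical scheme is carried out with $B$ replaced by $\Ct([0,1];B)$ and performed relative to the two ends: the given $\fn$-homotopy $H\colon A\to\Ca(|\Delta^\fn|;\Ct([0,1];B))$ is extended to an $\fm(A')$-homotopy, and the prescribed endpoint homomorphisms $\varphi,\psi$ provide the boundary data at $t=0,1$ that the cellwise extension must respect. Since these endpoints already come from $\fm(A')$, the relative extension problem is of the same finite, perturbative type as in part (1), and \Cref{lem:asympalgebrapullback} together with \Cref{prop:homotopyequivalencerelation} again assemble the pieces into the required $\fm(A')$-homotopy between $\varphi$ and $\psi$.
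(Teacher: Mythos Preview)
Your proposal correctly identifies the diagonal role of $(F,k)$ and the need to pass through the free algebra $\tilde A=(\Q+i\Q)\langle A'\rangle$, but it dismisses precisely the mechanism the paper uses and then gets tangled in an unnecessary substitute.

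You assert that realising $\psi$ as a pullback along a map $|\Delta^{\fm(A')}|\to|\Delta^\fn|$ is unavailable because no \emph{cofinal} $\fm(A')\to\fn$ need exist. That is true, but cofinality is not needed. The paper constructs an order-preserving (not cofinal) $\beta\colon\fm(A')\to\fn$ with the single property
\[
\forall\tilde a\in F\colon\quad \bigl\|\tilde\varphi(\tilde a)|_{|\Delta^{\fn\ab\beta(F,k)}|}\bigr\|\leq\|\varphi(\kappa(\tilde a))\|+\tfrac1k,
\]
where $\tilde\varphi\colon\tilde A\to\Cb(|\Delta^\fn|;B)$ is the free $*$-extension of a lift of $\varphi|_{A'}$. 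Because $\tilde\varphi$ is already a genuine $*$-ho\-mo\-mor\-phism at the $\Cb$ level, the composite $\beta^*\circ\tilde\varphi$ is too; there are no multiplicativity defects to control. What must be controlled is only the \emph{norm} in the quotient, and the displayed inequality gives $\|\pi_{\fm(A')}(\beta^*f)\|\leq\|\pi_\fn(f)\|$ for $f\in\tilde\varphi(\tilde A)$, so $\beta^*$ descends to a $*$-ho\-mo\-mor\-phism on the sub-\textCstar-algebra $\varphi(A)\subset\Simpl^\fn B$, and one sets $\psi\coloneqq\beta^*\circ\varphi$. Your worry about ``oscillation cross-terms'' arises because you interpolate the \emph{values} $g_a$ affinely over cells; if instead one interpolates the \emph{vertex map} $\beta$ affinely to $|\beta|$ and pulls back through it, multiplicativity is automatic. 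The homotopy $\alpha^*\psi\sim\varphi$ then goes via any $\gamma\geq\id_\fn$, $\gamma\geq\beta\circ\alpha$: the affine homotopy from $|\beta\alpha|$ to $|\gamma|$ induces a map on $\varphi(A)$ by the same norm estimate, and $\gamma^*\sim\id$ by \Cref{lem:independenceofcofinalmap}.

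For part~(2) your ``relative'' scheme with prescribed endpoints is also harder than necessary. The paper simply reruns the construction of part~(1) on the given $\fn$-homotopy $\Phi$, choosing $\beta$ additionally so that $\alpha\circ\beta\geq\id_{\fm(A')}$ (possible since $\alpha$ is cofinal). Then $\beta^*\Phi$ is an $\fm(A')$-homotopy between $(\alpha\beta)^*\varphi$ and $(\alpha\beta)^*\psi$, and since $\alpha\beta$ is cofinal these are $\fm(A')$-homotopic to $\varphi,\psi$ by \Cref{lem:independenceofcofinalmap}. No endpoint-matching is required.
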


\begin{cor}\label{cor:existenceofsimpletoticmorphismset}
The limit in \Cref{defn:morphismlimit} exists and the canonical map 
\[\SimplMor[\fm(A')]{A}{B}\to\SimplMor{A}{B}\]
is a natural bijection.\qed
\end{cor}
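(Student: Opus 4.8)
The whole analytic content of this corollary is already contained in \Cref{lem:directedsystemstationary}; what remains is to repackage it categorically, being careful about the fact (flagged after \Cref{defn:morphismlimit}) that $\cD^\op$ is a proper class. The plan is therefore to first restate the lemma as a single bijectivity statement, then to use the cofilteredness of $\cD$ to show that the system stabilises at $\fm(A')$, and finally to exhibit $\SimplMor[\fm(A')]{A}{B}$ together with an explicit cocone as the colimit, which proves existence and bijectivity simultaneously. I begin with the reformulation: whenever a cofinal map $\alpha\colon\fn\to\fm(A')$ exists, part~(1) of \Cref{lem:directedsystemstationary} says exactly that the induced transition map $\alpha^*\colon\SimplMor[\fm(A')]{A}{B}\to\SimplMor[\fn]{A}{B}$ is surjective and part~(2) says it is injective, while \Cref{lem:independenceofcofinalmap} makes it independent of the choice of $\alpha$. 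Thus $\alpha^*$ is a \emph{canonical bijection} whenever $\fn$ admits a cofinal map to $\fm(A')$.

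Next I record the stabilisation. Let $\cD_0\subset\cD^\op$ be the full subcategory of directed sets admitting a cofinal map to $\fm(A')$; note $\fm(A')\in\cD_0$. For an arbitrary $\fn$, the product $\fn\times\fm(A')$ with its canonical order projects cofinally onto both factors, so $\fn\times\fm(A')\in\cD_0$ and carries a cofinal projection onto $\fn$; hence $\cD_0$ is cofinal in $\cD^\op$. Since $\cD^\op$ has at most one arrow between any two objects, $\fm(A')$ is an \emph{initial} object of $\cD_0$, and the unique arrow $\fm(A')\to\fn$ induces precisely the bijection $\alpha^*$ from the previous paragraph. In other words, along $\cD_0$ the functor $\SimplMor[-]{A}{B}$ is canonically constant with value $\SimplMor[\fm(A')]{A}{B}$.

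To conclude without presupposing that a colimit over the large category $\cD^\op$ exists, I would verify the universal property of $\SimplMor[\fm(A')]{A}{B}$ directly. For each $\fn$ choose the cofinal projections $p\colon\fn\times\fm(A')\to\fm(A')$ and $q\colon\fn\times\fm(A')\to\fn$, and set $\lambda_\fn\coloneqq(p^*)^{-1}\circ q^*$, where $p^*$ is invertible by the first paragraph; for $\fn=\fm(A')$ the two projections agree after applying \Cref{lem:independenceofcofinalmap}, so $\lambda_{\fm(A')}=\id$. That $\{\lambda_\fn\}$ is a cocone is a short chase: given a cofinal $\beta\colon\fn\to\fm$, the relations $p_\fn=p_\fm\circ(\beta\times\id)$ and $q_\fm\circ(\beta\times\id)=\beta\circ q_\fn$ together with functoriality of $\gamma\mapsto\gamma^*$ on $\cD$ give $\lambda_\fn\circ\beta^*=\lambda_\fm$. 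Finally, given any competing cocone $\{\mu_\fn\colon\SimplMor[\fn]{A}{B}\to Z\}$, the cocone conditions applied to $p$ and $q$ force $\mu_{\fm(A')}\circ\lambda_\fn=\mu_\fn$ for all $\fn$, while $\lambda_{\fm(A')}=\id$ forces the factorisation to equal $\mu_{\fm(A')}$, which is therefore the unique one. Hence the colimit exists, equals $\SimplMor[\fm(A')]{A}{B}$, and the canonical map of \Cref{defn:morphismlimit} is the cocone component $\lambda_{\fm(A')}=\id$, a bijection. Naturality in $A$ and $B$ is inherited from the naturality of all the maps $\alpha^*,p^*,q^*$ recorded in \Cref{cor:Fhomotopyclasscompositionproduct} and \Cref{lem:spacefunctoriality}.

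I expect the only real obstacle to be keeping the variance and the size issue straight at the same time: one must notice that $\fm(A')$ is \emph{initial} rather than terminal in the cofinal part $\cD_0$, so that it is the transition maps \emph{out of} $\fm(A')$ that are bijections, and that the correct move is to check the universal property on the honest set $\SimplMor[\fm(A')]{A}{B}$ directly rather than to invoke existence of a colimit over the proper class $\cD^\op$. Once this viewpoint is adopted, every verification reduces to functoriality and to the bijectivity supplied by \Cref{lem:directedsystemstationary}, so no further genuine difficulty arises.
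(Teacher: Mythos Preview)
Your proof is correct and is precisely the argument the paper intends: it states the corollary with a bare \qed\ after \Cref{lem:directedsystemstationary}, having only remarked beforehand that the system becomes stationary for large enough $\fm$. You have carefully spelled out this stationarity and, in particular, dealt explicitly with the size issue (that $\cD^\op$ is a proper class) by verifying the universal property directly on the set $\SimplMor[\fm(A')]{A}{B}$, which the paper flags but leaves implicit.
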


\begin{proof}[Proof of \Cref{lem:directedsystemstationary}]
Let us abbreviate $\tilde A\coloneqq (\Q+i\Q)\langle A'\rangle$ and note that it inherits the structure of a graded involutive $G$-$(\Q+i\Q)$-algebra from $A$ by the choice of $A'$.

For the first part, let $\varphi\colon A\to\Simpl^\fn B$ be a $*$-ho\-mo\-mor\-phism. We can choose a lift $\varphi'\colon A'\to\Cb(\Delta^\fn;B)$ of $\varphi|_{A'}$ which commutes with involution, grading and $G$-action.
It induces a graded $*$-ho\-mo\-mor\-phism $\tilde\varphi$ making the diagram
\[\xymatrix{
\tilde A\ar[r]^-{\tilde\varphi}\ar[d]^{\kappa}&\Cb(\Delta^\fn;B)\ar[d]^{\pi_{\fn}}
\\A\ar[r]^-{\varphi}&\Simpl^\fn B
}\]
commute, where $\kappa$ denotes the canonical $*$-ho\-mo\-mor\-phism which restricts to the inclusion of $A'$ into $A$ and $\pi_\fn$ denotes the quotient map.

Using the definition of the quotient norm on $\Simpl^\fn B$ and the directedness of $\fn$, we find for each $(F,k)\in\fm(A')$ an element $\beta(F,k)\in\fn$ such that
\begin{equation}\label{eq:quotientnorminequality}
\forall \tilde a\in F\colon \left\|\tilde\varphi(\tilde a)|_{|\Delta^{\fn\ab g(F,k)}|}\right\|\leq\left\|\pi_\fn(\tilde\varphi(\tilde a))\right\|+\frac1k=\left\|\varphi(\kappa(\tilde a))\right\|+\frac1k\,.
\end{equation}
Since the order relation on $\fm(A')$ is clearly well-founded, we can furthermore use Noetherian recursion to choose the $g(F,k)$ in such a way that the resulting map $\beta\colon\fm(A')\to\fn$ is order-preserving.
This map induces a $*$-ho\-mo\-mor\-phism $\beta^*\colon\Cb(\Delta^\fn;B)\to \Cb(\Delta^{\fm(A')};B)$ as before, but it might not map $\Cz(\Delta^\fn;B)$ into $\Cz(\Delta^{\fm(A')};B)$, because $\beta$ does not have to be cofinal.
Therefore, we do not get an induced $*$-ho\-mo\-mor\-phism $\Simpl^\fn B\to \Simpl^{\fm(A')} B$.

Nevertheless, we claim that this $*$-ho\-mo\-mor\-phism can at least be defined on the sub-\textCstar-algebra $\varphi(A)$ of $\Simpl^{\fn} B$. More precisely, note that $\pi_\fn(\tilde\varphi(\tilde A))$ is dense in $\varphi(A)$ and we claim that there is a $*$-ho\-mo\-mor\-phism $\beta^*\colon \varphi(A)\to \Simpl^{\fm(A')} B$ such that the diagram
\[\xymatrix@C=10ex{
\tilde\varphi(\tilde A)\ar[r]^-{\beta^*|_{\tilde\varphi(\tilde A)}}\ar[d]^{\pi_\fn|_{\tilde\varphi(\tilde A)}}&\Cb(\Delta^{\fm(A')};B)\ar[d]^{\pi_{\fm(A')}}
\\\varphi(A)\ar[r]^-{\beta^*}&\Simpl^{\fm(A')} B
}\]
commutes. Indeed, for $f\in\tilde\varphi(\tilde A)$ we have
\begin{align*}
\|\pi_{\fm(A')}(\beta^*(f))\|&=\lim_{(F,k)\in\fm(A')}\|\beta^*f|_{|\Delta^{\fm(A')\ab(F,k)}|}\|
\\&\leq \lim_{(F,k)\in\fm(A')}\|f|_{|\Delta^{\fn\ab \beta(F,k)}|}\|
\\&\leq \lim_{k\in\N_+}\|\pi_{\fn}(f)\|+\frac1k=\|\pi_{\fn}(f)\|\,,
\end{align*}
where the first inequality uses that $\beta$ was chosen in an order-preserving fashion and the second inequality comes from \eqref{eq:quotientnorminequality}. Thus, we obtain a $1$-Lipschitz graded $*$-ho\-mo\-mor\-phism $\pi_\fn(\tilde\varphi(\tilde A))\to \Simpl^{\fm(A')} B$ and its extension to the closure $\varphi(A)$ is the desired $*$-ho\-mo\-mor\-phism $\beta^*$.

We can now define $\psi\coloneqq \beta^*\circ\varphi$ and it remains to show that $\alpha^*\circ\psi=\alpha^*\circ\beta^*\circ\varphi$ is $\fn$-homotopic to $\varphi$.
To this end, we choose any $\gamma\colon\fn\to\fn$ such that $\gamma\geq\beta\circ\alpha$ and $\gamma\geq\id_{\fn}$. 
The second property implies that $\gamma$ is cofinal and hence $\gamma^*\colon\Simpl^\fn B\to\Simpl^\fn B$ is $\fn$-homotopic to the identity by \Cref{lem:independenceofcofinalmap}.
Let $h\colon \Delta^\fn B\times[0,1]\to\Delta^\fn B$ be the affine linear homotopy between $|\beta|\circ|\alpha|$ and $|\gamma|$. It maps $\Delta^{\fn\ab n}\times[0,1]$ to $\Delta^{\fn\ab\beta(\alpha(n))}$, because $\gamma\geq\beta\circ\alpha$, and hence a similar calculation to the one above shows that the map $h^*\colon \Cb(\Delta^\fn;B)\to\Cb(\Delta^\fn;\Ct([0,1];B))$
gives rise to a $*$-ho\-mo\-mor\-phism $\varphi(A)\to\Simpl^{\fm(A')}(\Ct([0,1];B))$, which is an $\fn$-homotopy between $\alpha^*\circ\beta^*$ and $\gamma^*|_{\varphi(A)}$.
In particular this shows that $\varphi$ and $\alpha^*\circ\psi=\alpha^*\circ\beta^*\circ\varphi$ are both $\fn$-homotopic to $\gamma^*\varphi$, finishing the proof of the first part.

To prove the second part, let $\Phi\colon A\to \Simpl^\fn(\Ct([0,1];B))$ be a $\fn$-homotopy between $\alpha^*\circ\varphi$ and $\alpha^*\circ\psi$. Performing the same construction as in the first part but for $\Phi$, we obtain a order-preserving map $\beta\colon\fm(A')\to \fn$ which induces a $*$-ho\-mo\-mor\-phism $\beta^*\colon\Phi(A)\to\Simpl^{\fm(A')}(\Ct([0,1];B))$. Moreover, since $\alpha$ is cofinal, $\beta$ can be chosen in such a way that it satisfies the additional property $\alpha\circ\beta\geq\id_{\fm(A')}$ and hence $\alpha\circ\beta$ is cofinal.
Then $\beta^*\circ\Phi$ is a $\fm(A')$-homotopy between $\beta^*\circ\alpha^*\circ\varphi=(\alpha\circ\beta)^*\circ\varphi$ and $\beta^*\circ\alpha^*\circ\psi=(\alpha\circ\beta)^*\circ\psi$. The claim now follows from \Cref{lem:independenceofcofinalmap}, which says that $(\alpha\circ\beta)^*\circ\varphi$ and $(\alpha\circ\beta)^*\circ\psi$ are $\fm(A')$-homotopic to $\varphi$ and $\psi$, respectively.
\end{proof}

\begin{cor}\label{cor:asymptoticsimpltoticrelation}
If $A'$ is countable, then the canonical map $\SimplMor[\N_+]{A}{B}\to\SimplMor{A}{B}$ is a natural bijection. 
Combining this with \Cref{ex:asymptoticsimpltoticrelation}, we get a natural bijection $\SimplMor[1]{A}{B}\to\SimplMor{A}{B}$. Note that such a countable $A'$ exists in particular whenever $G$ is countable and $A$ is separable.
\end{cor}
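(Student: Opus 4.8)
The plan is to reduce the statement entirely to \Cref{lem:directedsystemstationary} and \Cref{cor:existenceofsimpletoticmorphismset} by exploiting that the directed set $\fm(A')$ becomes \emph{countable} as soon as $A'$ is. First I would verify this countability: if $A'$ is countable, then the free algebra $\tilde A\coloneqq(\Q+i\Q)\langle A'\rangle$ is the set of finite $(\Q+i\Q)$-linear combinations of words in the countable alphabet $A'$ and is therefore countable; since the collection of finite subsets of a countable set is again countable and $\N_+$ is countable, the set $\fm(A')=\{(F,k)\}$ is countable.

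The key step is then the elementary fact that a countable directed set admits a cofinal sequence. Enumerating $\fm(A')=\{p_1,p_2,\dots\}$ and choosing recursively $q_1\coloneqq p_1$ and some $q_{j+1}\geq q_j$ with $q_{j+1}\geq p_{j+1}$ (possible by directedness), the assignment $\alpha\colon\N_+\to\fm(A')$, $j\mapsto q_j$, is order-preserving and cofinal. Feeding this $\alpha$ into \Cref{lem:directedsystemstationary} with $\fn=\N_+$, part~(1) shows that $\alpha^*\colon\SimplMor[\fm(A')]{A}{B}\to\SimplMor[\N_+]{A}{B}$ is surjective on homotopy classes and part~(2) shows it is injective, so $\alpha^*$ is a bijection.

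It remains to identify the canonical map into the limit. By \Cref{cor:existenceofsimpletoticmorphismset} the structure map $\SimplMor[\fm(A')]{A}{B}\to\SimplMor{A}{B}$ is a bijection, and since $\alpha$ represents a morphism $\N_+\to\fm(A')$ in $\cD$, the defining cocone relation of the direct limit gives that this structure map equals the canonical map $\SimplMor[\N_+]{A}{B}\to\SimplMor{A}{B}$ precomposed with $\alpha^*$. As both $\alpha^*$ and the structure map out of $\fm(A')$ are bijections, the canonical map out of $\N_+$ is a bijection; all maps involved are natural in $A$ and $B$, so it is a natural bijection. Composing with the natural bijection $\SimplMor[1]{A}{B}\cong\SimplMor[\N_+]{A}{B}$ from \Cref{ex:asymptoticsimpltoticrelation} yields the second assertion.

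For the closing remark, if $G$ is countable and $A$ is separable I would begin with a countable dense subset $A_0\subset A$ and enlarge it to a countable set invariant under the involution, the grading automorphism $\gamma$ and the $G$-action: because $*$ and $\gamma$ generate a finite group of automorphisms of $A$ and $G$ is countable, the set $A'\coloneqq\{g\cdot\sigma(a)\mid g\in G,\ \sigma\in\langle *,\gamma\rangle,\ a\in A_0\}$ is a countable union of countable sets, hence countable, and it is visibly dense and invariant as required by \Cref{lem:directedsystemstationary}. The only genuinely delicate point I expect is the bookkeeping with the colimit cocone, namely checking that the canonical map really factors through $\alpha^*$; the rest is a routine cardinality count together with the cofinal-sequence construction.
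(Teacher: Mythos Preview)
Your proof is correct and follows essentially the same approach as the paper. The only cosmetic difference is that the paper exhibits explicit cofinal maps in both directions---the projection $\fm(A')\to\N_+,\ (F,k)\mapsto k$, and, after enumerating $(\Q+i\Q)\langle A'\rangle=\{a_1,a_2,\dots\}$, the map $\N_+\to\fm(A'),\ k\mapsto(\{a_1,\dots,a_k\},k)$---thereby showing directly that $\N_+$ and $\fm(A')$ are isomorphic in $\cD$; you instead invoke the general fact that every countable directed set admits a cofinal sequence and then apply \Cref{lem:directedsystemstationary} to the resulting $\alpha$, which amounts to the same thing.
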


\begin{proof}
There is a canonical cofinal map $\fm(A')\to\N_+, (F,k)\mapsto k$. Since $A'$ is countable, so is $(\Q+i\Q)\langle A'\rangle$, say $(\Q+i\Q)\langle A'\rangle=\{a_1,a_2,a_3,\dots\}$. Then $\N_+\to\fm(A'),k\mapsto (\{a_1,\dots,a_k\},k)$ is also cofinal. Thus, $\N_+$ and $\fm(A')$ are isomorphic in $\cD$ and the claim follows from \Cref{lem:independenceofcofinalmap,lem:directedsystemstationary}.
\end{proof}

In this context, the following lemma is also useful. It is the simpltotic analogue of the fact that if an asymptotic morphism is given by a family of $*$-ho\-mo\-mor\-phisms, then it is $1$-homotopic to each of them.
Recall from \Cref{example:oneelementdirectedset} that we denoted by $\fo$ the one-element directed set, which is clearly a terminal element of $\cD$.

\begin{lem}
\label{lem:simpltoticmorphismgivenbystarhomomorphism}
For every $*$-ho\-mo\-mor\-phism $\varphi\colon A\to\Cb(|\Delta^\fm|;B)$, the class in $\SimplMor[\fm]{A}{B}$ represented by its quotient is the image of the homotopy class of $\ev{x}\circ\varphi$ for any $x\in|\Delta^\fm|$ under the canonical map $[A,B]=\SimplMor[\fo]{A}{B}\to\SimplMor[\fm]{A}{B}$.
\end{lem}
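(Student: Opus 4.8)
First I would make the canonical map $\SimplMor[\fo]{A}{B}\to\SimplMor[\fm]{A}{B}$ completely explicit. Since $\fo$ is terminal in $\cD$, this map is $c^*$ for the unique cofinal map $c\colon\fm\to\fo$, whose affine realization $|c|\colon|\Delta^\fm|\to|\Delta^\fo|$ is the constant map onto the single vertex. Under the isomorphism $\Simpl^\fo B\cong B$ of \Cref{example:oneelementdirectedset}, the induced $c^*\colon B\to\Simpl^\fm B$ sends $b$ to the class of the constant function with value $b$. Hence $c^*(\llbracket\ev{x}\circ\varphi\rrbracket)$ is represented by the $*$-ho\-mo\-mor\-phism $a\mapsto\pi_\fm(\mathrm{const}_{\varphi(a)(x)})$, where $\pi_\fm\colon\Cb(|\Delta^\fm|;B)\to\Simpl^\fm B$ is the quotient map. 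So the claim reduces to exhibiting an $\fm$-homotopy between $\pi_\fm\circ\varphi$ and this constant-function morphism.

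\textbf{The homotopy.} The plan is to contract the simplex onto $x$. Let $H\colon|\Delta^\fm|\times[0,1]\to|\Delta^\fm|$, $H(y,s)=(1-s)y+sx$, be the straight-line homotopy from $\id$ to the constant map at $x$; it is continuous because $|\Delta^\fm|$ is convex. Define $\hat\Phi\colon A\to\Cb(|\Delta^\fm|\times[0,1];B)\cong\Cb(|\Delta^\fm|;\Ct([0,1];B))$ by $\hat\Phi(a)=\varphi(a)\circ H$. This is a graded, $G$-equivariant $*$-ho\-mo\-mor\-phism: precomposition with the fixed continuous map $H$ is multiplicative, $*$-preserving and norm-decreasing, and since the grading and $G$-action on these function algebras come from $B$ alone, they are untouched by $H$. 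Post-composing with the quotient map yields the candidate $\fm$-homotopy $\Phi\colon A\to\Simpl^\fm(\Ct([0,1];B))$.

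\textbf{Checking the endpoints.} It remains to compute $\Simpl^\fm(\ev{i})\circ\Phi$ for $i=0,1$. At the level of $\Cb$, the map $\Simpl^\fm(\ev{i})$ evaluates the $[0,1]$-coordinate at $i$, so $\Simpl^\fm(\ev{0})\circ\Phi$ sends $a$ to $\pi_\fm(y\mapsto\varphi(a)(H(y,0)))=\pi_\fm(\varphi(a))$ while $\Simpl^\fm(\ev{1})\circ\Phi$ sends $a$ to $\pi_\fm(y\mapsto\varphi(a)(H(y,1)))=\pi_\fm(\mathrm{const}_{\varphi(a)(x)})$. These are exactly $\pi_\fm\circ\varphi$ and the representative of $c^*(\llbracket\ev{x}\circ\varphi\rrbracket)$ found above, so $\Phi$ is the desired $\fm$-homotopy and the two classes agree.

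\textbf{Main obstacle.} The one point that must be handled with care is that $H$ is \emph{not} filter-proper: it collapses the whole simplex onto the single point $x$, which eventually leaves every tail $|\Delta^{\fm\ab m}|$. Consequently $H$ does not induce any map on $\Simpl^\fm B$, and one cannot argue via \Cref{lem:spacefunctoriality} as in \Cref{ex:asymptoticsimpltoticrelation}. The resolution, and the conceptual heart of the argument, is that the homotopy parameter lives in the coefficient factor $\Ct([0,1];\blank)$ rather than in the base space: we only ever use $H$ to build the honest $*$-ho\-mo\-mor\-phism $\hat\Phi$ into $\Cb$ and then pass to the quotient, so filter-properness of $H$ is never required.
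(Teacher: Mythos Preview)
Your proof is correct and follows exactly the paper's approach: both use the affine linear homotopy $H(y,s)=(1-s)y+sx$ to build a $*$-homomorphism into $\Cb(|\Delta^\fm|;\Ct([0,1];B))$ and then pass to the quotient, and both emphasize that filter-properness of $H$ is irrelevant precisely because the construction happens at the $\Cb$ level. Your write-up is simply more explicit about the canonical map $\SimplMor[\fo]{A}{B}\to\SimplMor[\fm]{A}{B}$ and the endpoint verification, but the argument is the same.
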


\begin{proof}
The affine linear homotopy between the constant map $|\Delta^\fm|\to |\Delta^\fm|,y\mapsto x$ and the identity map induces a $*$-ho\-mo\-mor\-phism \(A\to \Cb(|\Delta^\fm|;\Ct([0,1];B))\) whose quotient is an $\fm$-homotopy between the two elements.
Note that no cofinality of the homotopy is needed, because we are constructing the homotopy at the level of $\Cb$ and not at the level of the quotient by $\Cz$.
\end{proof}

\section{The composition product}
\label{sec:compositionproduct}

\subsection{Preliminary observations}\label{sec:productpreliminary}

Before going into the construction of the composition product 
\[\SimplMor{A}{B}\times \SimplMor{B}{C}\to \SimplMor{A}{C}\,,\]
we briefly sketch its rough outline first.
\Cref{cor:Fhomotopyclasscompositionproduct} provides us with a product \(\SimplMor[\fm]{A}{B}\times \SimplMor[\fn]{B}{C}\to \SimplMor[\Simpl^\fm\Simpl^\fn]{A}{C}\) and then we need the postcomposition with 
a $*$-ho\-mo\-mor\-phism $\Simpl^\fm\Simpl^\fn C\to\Simpl^\fk C$ for some directed set $\fk$ to go further into $\SimplMor[\fk]{A}{C}$.
To this end, we use that the canonical $*$-ho\-mo\-mor\-phism $\Cb(|\Delta^\fm|;\Cb(|\Delta^\fn|;C))\to \Simpl^\fm\Simpl^\fn C$ is surjective by \Cref{lem:asymptoticalgebrasexact} and we define a continuous map $\Theta\colon |\Delta^\fk|\to |\Delta^\fm|\times|\Delta^\fn|$ such that the $*$-ho\-mo\-mor\-phism 
\[\Cb(|\Delta^\fm|;\Cb(|\Delta^\fn|;C))\subset\Cb(|\Delta^\fm|\times|\Delta^\fn|;C)\xrightarrow{\Theta^*}\Cb(|\Delta^\fk|;C) \to\Simpl^\fk C\]
factors through $\Simpl^\fm\Simpl^\fn C$.

The difficult part is to define $\Theta$ in such a way that, among other properties, 
$\Theta^*$ maps $\Cb(|\Delta^\fm|;\Cz(|\Delta^\fn|;C))$ into $\Cz(|\Delta^\fk|;C)$.
Recall that we also used the symbol $\Delta^\fm$ for the set of all its simplices.
\begin{lem}\label{lem:CbCzcharacterization}
A bounded continuous function $f\in\Cb(|\Delta^\fm|;\Cb(|\Delta^\fn|;C))$ lies in the ideal $\Cb(|\Delta^\fm|;\Cz(|\Delta^\fn|;C))$ if and only if 
\[\forall\sigma\in\Delta^\fm\forall\varepsilon>0\exists n\in\fn\colon\|f|_{|\Delta^\sigma|\times|\Delta^{\fn\ab n}|}\|<\varepsilon\,.\]
\end{lem}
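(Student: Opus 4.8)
The plan is to prove the two implications separately, with the ``if'' direction being essentially a pointwise evaluation and the ``only if'' direction requiring a genuine uniformity argument, which is where the real work lies. Throughout I use the inclusion $\Cb(|\Delta^\fm|;\Cb(|\Delta^\fn|;C))\subset\Cb(|\Delta^\fm|\times|\Delta^\fn|;C)$, so that $\|f|_{|\Delta^\sigma|\times|\Delta^{\fn\ab n}|}\|=\sup_{x\in|\Delta^\sigma|}\|f(x)|_{|\Delta^{\fn\ab n}|}\|$, and recall that membership of $f$ in $\Cb(|\Delta^\fm|;\Cz(|\Delta^\fn|;C))$ means exactly that $f(x)\in\Cz(|\Delta^\fn|;C)$ for every $x\in|\Delta^\fm|$.

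For the easy direction I would assume the displayed condition and fix a point $x\in|\Delta^\fm|$, aiming to show $f(x)\in\Cz(|\Delta^\fn|;C)$. Since every point of $|\Delta^\fm|$ is a finite convex combination of vertices, $x$ lies in $|\Delta^\sigma|$ where $\sigma\in\Delta^\fm$ is its (finite, nonempty) support. Given $\varepsilon>0$, applying the hypothesis to this $\sigma$ produces $n\in\fn$ with $\|f|_{|\Delta^\sigma|\times|\Delta^{\fn\ab n}|}\|<\varepsilon$, and restricting the supremum to the single point $x$ gives $\|f(x)|_{|\Delta^{\fn\ab n}|}\|<\varepsilon$. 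As $\varepsilon$ was arbitrary, $f(x)$ converges to zero along the filter on $|\Delta^\fn|$, as required.

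For the converse I would fix $\sigma\in\Delta^\fm$ and $\varepsilon>0$ and exploit that $|\Delta^\sigma|$ is \emph{compact}, being the geometric realization of a single finite simplex. For each $x\in|\Delta^\sigma|$ the hypothesis $f(x)\in\Cz(|\Delta^\fn|;C)$ yields $n_x\in\fn$ with $\|f(x)|_{|\Delta^{\fn\ab n_x}|}\|<\varepsilon/2$, and norm-continuity of $f$ provides an open neighbourhood $U_x$ of $x$ in $|\Delta^\sigma|$ on which $f$ stays within $\varepsilon/2$ of $f(x)$ in the supremum norm; the triangle inequality then gives $\|f|_{U_x\times|\Delta^{\fn\ab n_x}|}\|\le\varepsilon$. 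I would extract a finite subcover $U_{x_1},\dots,U_{x_p}$ of $|\Delta^\sigma|$ and invoke the directedness of $\fn$ to choose a single $n\in\fn$ dominating all of $n_{x_1},\dots,n_{x_p}$. Since $n\ge n_{x_i}$ gives $\fn\ab n\subset\fn\ab n_{x_i}$ and hence $|\Delta^{\fn\ab n}|\subset|\Delta^{\fn\ab n_{x_i}}|$, every $x'\in|\Delta^\sigma|$ lies in some $U_{x_i}$ and satisfies $\|f(x')|_{|\Delta^{\fn\ab n}|}\|\le\|f|_{U_{x_i}\times|\Delta^{\fn\ab n_{x_i}}|}\|\le\varepsilon$, whence $\|f|_{|\Delta^\sigma|\times|\Delta^{\fn\ab n}|}\|\le\varepsilon$. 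To recover the strict inequality demanded by the statement I would simply run the argument with $\varepsilon/2$ in place of $\varepsilon$ from the outset.

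The main obstacle is precisely this uniform choice of $n$ in the converse: the hypothesis only supplies, for each fixed $x$, an index $n_x$ past which $f(x)$ is small, and a priori these indices need not admit a common bound. The two features that resolve this are the compactness of the single simplex $|\Delta^\sigma|$---which is exactly why the condition is phrased simplex-by-simplex rather than on the non-locally-compact space $|\Delta^\fm|$---together with the directedness of $\fn$, which converts the finitely many indices from a finite subcover into one dominating index. I expect no further complications; the nesting $|\Delta^{\fn\ab n}|\subset|\Delta^{\fn\ab n_{x_i}}|$ and the passage from $\le$ to $<$ are routine.
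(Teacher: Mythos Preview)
Your proposal is correct and follows essentially the same approach as the paper: both treat the ``if'' direction as trivial and handle the ``only if'' direction via compactness of $|\Delta^\sigma|$ together with directedness of $\fn$. The paper's version is marginally slicker---it defines the open cover $U_n=\{x\in|\Delta^\fm|:\|f(x)|_{|\Delta^{\fn\ab n}|}\|<\varepsilon\}$ directly, observes that these sets are nested upward in $n$, and extracts a finite subcover of $|\Delta^\sigma|$ which is then contained in a single $U_n$---thereby avoiding your $\varepsilon/2$ and triangle-inequality step; but the underlying idea is identical.
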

\begin{proof}
Given $f\in\Cb(|\Delta^\fm|;\Cz(|\Delta^\fn|;C))$ and $\varepsilon>0$, the sets 
\[U_n\coloneqq\{x\in|\Delta^\fm|\mid \|f(x)|_{\Delta^{\fn\ab n}}\|<\varepsilon\}\]
for $n\in\fn$ are open and cover $|\Delta^\fm|$. Therefore, each of the compact subsets $|\Delta^\sigma|$ is contained in a finite union $U_{n_1}\cup\dots\cup U_{n_l}$ which in turn is contained in $U_n$ for $n\geq n_1,\dots,n_l$. This implies one direction of the claim. 

The other direction is trivial.
\end{proof}

The lemma shows that the map $\Theta$ should\footnote{A short consideration, which we leave to the reader, even shows that $\Theta$ \emph{must} satisfy this property.} satisfy the following property:
For all maps $\xi\colon\Delta^\fm\to\fn$ there exists $k\in\fk$ such that $\Theta(\Delta^{\fk\ab k})\subset \bigcup_{\sigma\in \Delta^\fm}|\Delta^\sigma|\times|\Delta^{\fn\ab \xi(\sigma)}|$.
Unfortunately, this cannot be satisfied by a product of two simplicial maps, which is disappointing and makes the construction of $\Theta$ a lot more complicated.
Instead we construct a subspace $X\subset|\Delta^\fk|\times|\Delta^\fn|$ equipped with a filter such that the projection onto its first factor $|\Delta^\fk|$ is a homotopy equivalence and then $\Theta$ will be defined as the composition $|\Delta^\fk|\simeq X\xrightarrow{|\mu|\times\id}|\Delta^\fm|\times|\Delta^\fn|$, where $\mu\colon \fk\to\fm$ is a simplicial map.
As a result, we will obtain a natural map
\[\Theta^*\colon \SimplMor[\Simpl^\fm\Simpl^\fn]{A}{C}\xrightarrow{(|\mu|\times\id)^*} \SimplMor[X]{A}{C}\cong \SimplMor[\fk]{A}{C}\]
that we can use to define the composition product.

\subsection{Construction}\label{sec:productconstruction}

Now that we have an idea about what we want, let us start with the construction.

\begin{defn}
Let $\fn^{\Delta^\fm}$ be the set of maps from the set of simplices of $\Delta^\fm$ (which we also denoted by $\Delta^\fm$) 
to $\fn$ and define $\fm\sharp\fn\coloneqq\fm\times \fn^{\Delta^\fm}$, which becomes a directed set when equipped with the canonical partial order $(m,\xi)\geq(m',\xi')\mathrel{{:}{\Leftrightarrow}}m\geq m'\wedge \xi\geq\xi'$.
\end{defn}

This is the directed set which takes the role of the $\fk$ mentioned in \Cref{sec:productpreliminary}. 
The simplicial map $\mu=\mu_{\fm,\fn}\colon \fm\sharp\fn\to\fm$ is defined as the projection onto the first component.

\begin{defn}
For any map $\vartheta\colon\Delta^\fk\to\fn$ we define the space 
\[X_\vartheta\coloneqq\bigcup_{\sigma\in \Delta^{\fk}}|\Delta^\sigma|\times|\Delta^{\fn\ab\vartheta(\sigma)}|\subset |\Delta^{\fk}|\times|\Delta^\fn|\]
and equip it with the filter generated by the subsets 
\[X_\vartheta^{k}\coloneqq X_\vartheta\cap (|\Delta^{\fk\ab k}|\times|\Delta^\fn|)\]
with $k\in\fk$.
\end{defn}

For $\fk=\fm\sharp\fn$ and a special choice of $\vartheta$, the space $X_\vartheta$ will be the space $X$ mentioned at the end of \Cref{sec:productpreliminary}. Before we construct this specific $\vartheta$, let us prove the following general fact.

\begin{lem}\label{lem:Xhomotopyequivalence}
Assume that the map $\vartheta\colon\Delta^\fk\to\fn$ is monotone in the sense that $\vartheta(\sigma)\geq\vartheta(\varsigma)$ for all $\emptyset\not=\varsigma\subset\sigma\in\Delta^\fk$. Then the projection $P_\vartheta\colon X_\vartheta\to |\Delta^\fk|$ onto the first component is a homotopy equivalence.
\end{lem}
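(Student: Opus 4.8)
The plan is to deduce the statement from the contractibility of both the base and the total space. First I would record the elementary fact that \emph{any continuous map between contractible spaces is a homotopy equivalence}: if $X,Y$ are contractible, then every self-map of $X$ (resp.\ $Y$) is homotopic to the identity, so for a given $f\colon X\to Y$ any choice of $g\colon Y\to X$ satisfies $g\circ f\simeq\id_X$ and $f\circ g\simeq\id_Y$. Since $|\Delta^\fk|$ is convex, hence contractible, it therefore suffices to prove that $X_\vartheta$ is contractible.

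Writing $\operatorname{supp}(x)\in\Delta^\fk$ for the (finite) support of a point $x\in|\Delta^\fk|$, I would first record the clean description
\[X_\vartheta=\{(x,y)\in|\Delta^\fk|\times|\Delta^\fn|\mid y\in|\Delta^{\fn\ab\vartheta(\operatorname{supp}(x))}|\}\,,\]
which holds because $x\in|\Delta^\tau|$ forces $\tau\supseteq\operatorname{supp}(x)$, and monotonicity of $\vartheta$ then makes $\tau=\operatorname{supp}(x)$ the most permissive choice. Now $X_\vartheta$ is a subcomplex of the product CW complex $|\Delta^\fk|\times|\Delta^\fn|$, hence itself a CW complex, so by Whitehead's theorem it is enough to show that $X_\vartheta$ is weakly contractible.

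For weak contractibility I would fix a vertex $[k_0]$ and show that every map $f\colon S^n\to X_\vartheta$ is null-homotopic. Its image is compact, so the image of $P_\vartheta\circ f$ lies in a finite simplex $|\Delta^{\sigma_0}|$, and I may enlarge $\sigma_0$ so that $k_0\in\sigma_0$; thus $f(S^n)\subseteq(|\Delta^{\sigma_0}|\times|\Delta^\fn|)\cap X_\vartheta$. Since $\fn$ is directed, there is an $n_*\in\fn$ with $n_*\geq\vartheta(\sigma)$ for all faces $\sigma\subseteq\sigma_0$. I would then contract this compact piece to $([k_0],[n_*])$ inside $X_\vartheta$ by a two-phase homotopy: first move the second coordinate along the straight segment from $y$ to $[n_*]$ while fixing $x$ — this stays in $X_\vartheta$ because the convex set $|\Delta^{\fn\ab\vartheta(\operatorname{supp}(x))}|$ contains both $y$ and $[n_*]$ (as $\operatorname{supp}(x)\subseteq\sigma_0$); then move the first coordinate along the straight segment from $x$ to $[k_0]$ while keeping the second coordinate equal to $[n_*]$ — this stays in $X_\vartheta$ because every support occurring is $\subseteq\sigma_0$ and $n_*\geq\vartheta(\sigma)$ for all such faces. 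This null-homotopes $f$, and the same construction also yields path-connectedness, so $X_\vartheta$ is weakly contractible and the lemma follows.

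The main obstacle, and the reason one cannot simply deformation retract $X_\vartheta$ onto the fibre $P_\vartheta^{-1}([k_0])$, is that $x\mapsto\vartheta(\operatorname{supp}(x))$ is only lower semicontinuous, so the closed constraint set $|\Delta^{\fn\ab\vartheta(\operatorname{supp}(x))}|$ shrinks as one passes into the interior of a simplex. Consequently $P_\vartheta$ admits in general \emph{no continuous global section} and the straight-line contraction of $|\Delta^\fk|$ does not lift. The device that resolves this is to argue one compact piece at a time and to exploit directedness of $\fn$ to absorb the finitely many relevant values $\vartheta(\sigma)$ into a single $n_*$; this is exactly what keeps the two-phase homotopy inside $X_\vartheta$.
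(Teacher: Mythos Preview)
Your argument establishes only an \emph{ordinary} topological homotopy equivalence, but that is not what the lemma is claiming nor what the paper needs. Recall that both $X_\vartheta$ and $|\Delta^{\fk}|$ carry filters, and that in this paper ``homotopic'' between filter-proper maps means via a filter-proper homotopy (\Cref{lem:spacefunctoriality} and the definition preceding it). The whole point of the lemma is that $P_\vartheta$ is a homotopy equivalence \emph{in this category}, so that it induces a bijection $P_\vartheta^*\colon\SimplMor[\fk]{A}{C}\to\SimplMor[X_\vartheta]{A}{C}$; this is exactly how the lemma is used immediately afterwards to define $\Theta^*=(P_\vartheta^*)^{-1}\circ(|\mu|\times\id)^*$. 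Your route through weak contractibility and Whitehead's theorem gives no control over filters: the homotopy inverse you obtain is just ``some map $g\colon|\Delta^\fk|\to X_\vartheta$'', and the homotopies $g\circ P_\vartheta\simeq\id$, $P_\vartheta\circ g\simeq\id$ come from abstract contractions of $X_\vartheta$ and $|\Delta^\fk|$, with no reason to be filter-proper. In particular, a contraction of $|\Delta^\fk|$ to a single vertex is \emph{never} filter-proper unless $\fk$ has a maximum.

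The paper's proof is therefore not interchangeable with yours: it constructs an explicit homotopy inverse $I_\vartheta$ by recursion over the skeleta, designed so that $I_\vartheta(|\Delta^\sigma|)\subset X_\vartheta\cap(|\Delta^\sigma|\times|\Delta^\fn|)$ for every simplex $\sigma$, which gives filter-properness; and the homotopies to the identities are the obvious affine interpolations, which are likewise filter-proper because they never move the $|\Delta^\fk|$-component out of the simplex it started in. (As a secondary point, your appeal to Whitehead also requires $X_\vartheta$ to be a CW complex, and for uncountable $\fk,\fn$ the product $|\Delta^\fk|\times|\Delta^\fn|$ need not carry a CW structure in the product topology --- but this issue is moot once you realise the filter-proper statement is what is required.)
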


\begin{proof}
The projection $P_\vartheta$ is clearly continuous and filter-proper. A homotopy inverse $I_\vartheta\colon |\Delta^\fk| \to X_\vartheta$ can be constructed by  recursion over the $i$-skeleta $\Delta_i^\fk$:
On the zero-skeleton $|\Delta^{\fk}_0|=\fk$ we can define \[I_\vartheta(k)\coloneqq (k,\vartheta(k))\in\fk\times\fn=|\Delta^{\fk}_0|\times|\Delta^\fn_0|\]
and on a higher $i$-simplex $\sigma$ we define it by precomposing 
\begin{align*}
I_\vartheta^\sigma\colon |\Delta^\sigma|\times\{1\}\cup|\partial\Delta^\sigma|\times[0,1]&\to \bigcup_{\varsigma\subset\sigma} |\Delta^\varsigma|\times|\Delta^{\fn\ab\vartheta(\varsigma)}|\subset X_\vartheta
\\(x,t)&\mapsto (1-t)I_\vartheta(x)+t(x,\vartheta(\sigma))
\end{align*}
with any continuous map $h_\sigma\colon|\Delta^\sigma|\to|\Delta^\sigma|\times\{1\}\cup|\partial\Delta^\sigma|\times[0,1]$ which restricts to the identity $|\partial\Delta^\sigma|\to |\partial\Delta^\sigma|\times\{0\}$ on the boundary.
Here, the monotonicity assumption on $\vartheta$ is used to ensure that $|\Delta^{\fn\ab\vartheta(\sigma)}|\subset|\Delta^{\fn\ab\vartheta(\varsigma)}|$, because otherwise the segment between $I_\vartheta(x)$ and $(x,\vartheta(\sigma))$ for $x\in|\partial\Delta^\sigma|$ might not lie in $X_\vartheta$.
Clearly, $I_\vartheta$ is continuous and it is also filter-proper, because $I_\vartheta(|\Delta^\sigma|)\subset X_\vartheta\cap|\Delta^\sigma|\times|\Delta^\fn|$ for all simplices $\sigma$ and hence $I_\vartheta(|\Delta^{\fk\ab k}|)\subset X_\vartheta^k$ for all $k\in\fk$.

Indeed, $P_\vartheta\circ I_\vartheta$ is homotopic to the identity via the affine linear interpolation $(1-s)\cdot P_\vartheta\circ I_\vartheta+s\cdot\id_{|\Delta^{\fk}|}$ and similarily the other composition $I_\vartheta\circ P_\vartheta$ can be homotoped to the identity on $X_\vartheta$ by first adjusting the $|\Delta^\fn|$-component and then the $|\Delta^\fk|$-component in an affine linear manner. Both homotopies are clearly continuous and filter-proper, too.
\end{proof}

The map $\vartheta=\vartheta_{\fm,\fn}\colon\Delta^{\fm\sharp\fn}\to\fn$ is now defined by recursion over the $i$-skeleta $\Delta^{\fm\sharp\fn}_i$ such that it satisfies the monotonicity assumption of \Cref{lem:Xhomotopyequivalence} and the inequalities
\begin{equation}
\vartheta_{\fm,\fn}(\sigma)\geq\xi(\mu_{\fm,\fn}(\sigma))\qquad\text{for all }\sigma\in\Delta^{\fm\sharp\fn}\text{ and }(m,\xi)\in\sigma\,.\label{eq:varthetabiggerxi}
\end{equation}
We will often omit the indices $\fm,\fn$ from $\mu,\vartheta$ if there is no ambiguity.
Although we will not need it, we point out that it is always possible to choose $\vartheta_{\fm,\fn}$ on the zero-skeleton as $\vartheta_{\fm,\fn}(m,\xi)\coloneqq\xi(m)$.

\begin{lem}\label{lem:doublesimpleasymp}
For $\vartheta=\vartheta_{\fm,\fn}$ and $\mu=\mu_{\fm,\fn}$, the $*$-ho\-mo\-mor\-phism 
\[\Cb(|\Delta^\fm|;\Cb(|\Delta^\fn|;C))\subset\Cb(|\Delta^\fm|\times|\Delta^\fn|;C)\xrightarrow{(|\mu|\times\id)^*}\Cb(X_\vartheta;C)\]
descends to a $*$-ho\-mo\-mor\-phism
\[(|\mu|\times\id)^*\colon \Simpl^\fm\Simpl^\fn C\to\Ca(X_\vartheta;C)\]
and postcomposition with it yields a map
\[(|\mu|\times\id)^*\colon \SimplMor[\Simpl^\fm\Simpl^\fn]{A}{C}\to \SimplMor[X_\vartheta]{A}{C}\]
which is natural in both $A$ and $C$.
\end{lem}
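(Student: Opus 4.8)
The plan is to exhibit the asserted $*$-ho\-mo\-mor\-phism as a factorization through the quotient map
\[\pi\colon\Cb(|\Delta^\fm|;\Cb(|\Delta^\fn|;C))\to\Simpl^\fm\Simpl^\fn C\,,\]
which is surjective by \Cref{lem:asymptoticalgebrasexact}. Writing $\Phi$ for the composite of the given $(|\mu|\times\id)^*\colon\Cb(|\Delta^\fm|;\Cb(|\Delta^\fn|;C))\to\Cb(X_\vartheta;C)$ with the quotient map onto $\Ca(X_\vartheta;C)$, it suffices to verify $\ker\pi\subseteq\ker\Phi$; I would in fact establish the sharper estimate $\|\Phi(f)\|_{\Ca(X_\vartheta;C)}\leq\|\pi(f)\|_{\Simpl^\fm\Simpl^\fn C}$, which yields the inclusion and shows at once that the induced map on the quotient is a $*$-ho\-mo\-mor\-phism. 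Throughout I use the standard description of the quotient norm in a generalized asymptotic algebra as the limit of the supremum along the defining filter. Thus, setting $N\coloneqq\|\pi(f)\|$, one has $N=\lim_{m\in\fm}\sup_{x\in|\Delta^{\fm\ab m}|}g(x)$ with $g(x)\coloneqq\lim_{n\in\fn}\|f(x)|_{|\Delta^{\fn\ab n}|}\|=\|f(x)\|_{\Simpl^\fn C}$, whereas $\|\Phi(f)\|=\lim_{k\in\fk}\sup_{(y,z)\in X_\vartheta^k}\|f(|\mu|(y),z)\|$.

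The heart of the argument is the construction, for a fixed $f$ and $\varepsilon>0$, of a \emph{single} $k\in\fk=\fm\sharp\fn$ controlling $\Phi(f)$ on $X_\vartheta^k$. First choose $m_1\in\fm$ with $g(x)<N+\varepsilon$ for all $x\in|\Delta^{\fm\ab m_1}|$. For each $n\in\fn$ the set $U_n\coloneqq\{x\in|\Delta^\fm|\mid\|f(x)|_{|\Delta^{\fn\ab n}|}\|<N+\varepsilon\}$ is open (the map $x\mapsto\|f(x)|_{|\Delta^{\fn\ab n}|}\|$ is continuous because $f\in\Cb(|\Delta^\fm|;\Cb(|\Delta^\fn|;C))$), these sets increase along $\fn$, and by the choice of $m_1$ they cover $|\Delta^{\fm\ab m_1}|$. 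Hence, exactly as in the proof of \Cref{lem:CbCzcharacterization}, compactness of each $|\Delta^\tau|$ supplies, for every simplex $\tau\in\Delta^\fm$ with $\tau\subseteq\fm\ab m_1$, an index $\xi(\tau)\in\fn$ with $|\Delta^\tau|\subseteq U_{\xi(\tau)}$; extending $\xi$ arbitrarily to the remaining simplices produces a map $\xi\colon\Delta^\fm\to\fn$, and I set $k\coloneqq(m_1,\xi)\in\fm\sharp\fn$.

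It remains to bound $\|f(|\mu|(y),z)\|$ for $(y,z)\in X_\vartheta^k$. Let $\sigma\in\Delta^\fk$ be the carrier simplex of $y$; since $y\in|\Delta^{\fk\ab k}|$, every vertex $(m,\xi')$ of $\sigma$ satisfies $(m,\xi')\geq(m_1,\xi)$, so that $\tau\coloneqq\mu(\sigma)\subseteq\fm\ab m_1$ and $\xi'\geq\xi$. By monotonicity of $\vartheta$ the membership $(y,z)\in X_\vartheta$ already forces $z\in|\Delta^{\fn\ab\vartheta(\sigma)}|$ (any simplex witnessing $(y,z)\in X_\vartheta$ contains $\sigma$, and its $\vartheta$-value dominates $\vartheta(\sigma)$), while the defining inequality \eqref{eq:varthetabiggerxi} together with $\xi'\geq\xi$ gives $\vartheta(\sigma)\geq\xi'(\tau)\geq\xi(\tau)$; hence $z\in|\Delta^{\fn\ab\xi(\tau)}|$. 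As $|\mu|(y)\in|\Delta^\tau|\subseteq U_{\xi(\tau)}$, this says precisely $\|f(|\mu|(y),z)\|<N+\varepsilon$. Therefore $\sup_{(y,z)\in X_\vartheta^k}\|f(|\mu|(y),z)\|\leq N+\varepsilon$, and letting $\varepsilon\to0$ proves the norm estimate; in particular $\ker\pi\subseteq\ker\Phi$, and the descent claim follows.

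For the final assertion I would first record that $\Ca(X_\vartheta;\blank)$ is a functor of the type admitted in \Cref{prop:homotopyequivalencerelation}: $X_\vartheta$ is a subcomplex of the product CW-complex $|\Delta^\fk|\times|\Delta^\fn|$ (a union of cells $|\Delta^\sigma|\times|\Delta^\varsigma|$), and each filter generator $X_\vartheta^k$ is again a subcomplex, so the filter has a basis of subcomplexes. Consequently $\SimplMor[X_\vartheta]{A}{C}$ is defined, and the $*$-ho\-mo\-mor\-phism $(|\mu|\times\id)^*\colon\Simpl^\fm\Simpl^\fn C\to\Ca(X_\vartheta;C)$ just constructed is manifestly natural in $C$, i.e.\ a natural transformation $\Simpl^\fm\Simpl^\fn\to\Ca(X_\vartheta;\blank)$ between functors of this type. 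The induced map on homotopy classes is then furnished by \Cref{lem:functorialityundernatrualtransformationsoffunctors}; naturality in $A$ is formal, and naturality in $C$ follows from the naturality of the transformation combined with the functorialities of \Cref{cor:Fhomotopyclasscompositionproduct}. The only genuine obstacle is the middle paragraph: the compactness argument yields $\xi$ only simplex-by-simplex rather than uniformly, and the entire point of the construction of $\vartheta$ is that its two defining properties—monotonicity and \eqref{eq:varthetabiggerxi}—are exactly what converts that simplexwise control into a single filter index $k$.
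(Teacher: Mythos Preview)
Your proof is correct and follows essentially the same approach as the paper: the key set-theoretic inclusion $(|\mu|\times\id)(X_\vartheta^{(m,\xi)})\subset\bigcup_{\rho\in\Delta^{\fm\ab m}}|\Delta^\rho|\times|\Delta^{\fn\ab\xi(\rho)}|$, established via property \eqref{eq:varthetabiggerxi}, is exactly what the paper invokes together with \Cref{lem:CbCzcharacterization}. Your version packages this as a single norm estimate rather than the paper's two-stage factorization (first through $\Cb(|\Delta^\fm|;\Simpl^\fn C)$, then through $\Simpl^\fm\Simpl^\fn C$), and is a bit more explicit about where the monotonicity of $\vartheta$ enters via the carrier-simplex argument, but the substance is the same.
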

\begin{proof}
Property \eqref{eq:varthetabiggerxi} of the map $\vartheta$ implies that we have
\begin{equation*}(|\mu|\times\id)(X_\vartheta^{(m,\xi)})=\bigcup_{\sigma\in \Delta^{\fm\sharp\fn\ab(m,\xi)}}|\Delta^{\mu(\sigma)}|\times|\Delta^{\fn\ab\vartheta(\sigma)}|\subset\bigcup_{\rho\in \Delta^{\fm\ab m}}|\Delta^{\rho}|\times|\Delta^{\fn\ab\xi(\rho)}|
\end{equation*}
and using \Cref{lem:CbCzcharacterization} we see that $(|\mu|\times\id)^*$ maps $\Cb(|\Delta^\fm|;\Cz(|\Delta^\fn|;C))$ into $\Cz(X_\vartheta;C)$ and hence descends to a $*$-ho\-mo\-mor\-phism 
\[\Cb(|\Delta^\fm|;\Simpl^\fn C)\xrightarrow{(|\mu|\times\id)^*}\Ca(X_\vartheta;C)\,.\]
Subsequently the same inclusion of sets implies that it also factors through $\Simpl^\fm\Simpl^\fn C$.
This construction gives us a natural transformation
\((|\mu|\times\id)^*\colon \Simpl^\fm\Simpl^\fn\to \Ca(X_\vartheta;\blank)\)
and hence a we obtain a map
\((|\mu|\times\id)^*\colon \SimplMor[\Simpl^\fm\Simpl^\fn]{A}{C}\to \SimplMor[X_\vartheta]{A}{C}\)
by \Cref{lem:functorialityundernatrualtransformationsoffunctors}.
\end{proof}

Defining $\Theta\coloneqq (|\mu|\times\id)\circ I_\vartheta$, we now obtain the map
\[\Theta^*\colon \SimplMor[\Simpl^\fm\Simpl^\fn]{A}{C}\to \SimplMor[\fm\sharp\fn]{A}{C}\,.\]
\begin{lem}
$\Theta^*$ is independent of all choices.
\end{lem}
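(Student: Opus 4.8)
Throughout, write $\fk=\fm\sharp\fn$, recall that $\mu$ is fixed as the projection onto the first factor, and let $R_\vartheta\colon\SimplMor[\Simpl^\fm\Simpl^\fn]{A}{C}\to\SimplMor[X_\vartheta]{A}{C}$ denote the map furnished by \Cref{lem:doublesimpleasymp}, so that $\Theta^*=I_\vartheta^*\circ R_\vartheta$. The choices to be eliminated are thus the monotone function $\vartheta$ satisfying \eqref{eq:varthetabiggerxi} and the homotopy inverse $I_\vartheta$.

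First I would eliminate $I_\vartheta$. Both homotopies constructed in the proof of \Cref{lem:Xhomotopyequivalence} are filter-proper, so $P_\vartheta$ is a filter-proper homotopy equivalence and by \Cref{lem:spacefunctoriality} the induced map $P_\vartheta^*\colon\SimplMor[\fm\sharp\fn]{A}{C}\to\SimplMor[X_\vartheta]{A}{C}$ is a bijection whose two-sided inverse is $I_\vartheta^*$. Hence $I_\vartheta^*=(P_\vartheta^*)^{-1}$ depends only on $P_\vartheta$, and therefore only on $\vartheta$, and not on the auxiliary maps $h_\sigma$ entering the construction of $I_\vartheta$. This yields $\Theta^*=(P_\vartheta^*)^{-1}\circ R_\vartheta$, and it remains to prove independence of $\vartheta$.

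Next I would reduce to comparable $\vartheta$'s. Given two admissible choices $\vartheta,\vartheta'$, I would build by the same skeletal recursion that produced $\vartheta$ a monotone $\vartheta''$ satisfying \eqref{eq:varthetabiggerxi} with $\vartheta''(\sigma)\geq\vartheta(\sigma)$ and $\vartheta''(\sigma)\geq\vartheta'(\sigma)$ for all $\sigma$; at each step one merely picks an upper bound in $\fn$ of finitely many prescribed elements, which exists by directedness. It then suffices to treat the case $\vartheta\leq\vartheta'$ pointwise and prove $\Theta^*=(\Theta')^*$ there, applying this twice (to $\vartheta\leq\vartheta''$ and to $\vartheta'\leq\vartheta''$) to conclude in general. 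In the comparable case we have $|\Delta^{\fn\ab\vartheta'(\sigma)}|\subset|\Delta^{\fn\ab\vartheta(\sigma)}|$, hence $X_{\vartheta'}\subset X_\vartheta$, and the inclusion $j\colon X_{\vartheta'}\hookrightarrow X_\vartheta$ is filter-proper because $j^{-1}(X_\vartheta^{k})=X_{\vartheta'}^{k}$. Two relations then hold: $P_\vartheta\circ j=P_{\vartheta'}$, since both are the projection onto $|\Delta^\fk|$; and $j^*\circ R_\vartheta=R_{\vartheta'}$, since already at the level of $\Cb$ both send $f$ to the restriction of $f\circ(|\mu|\times\id)$ to $X_{\vartheta'}$, so the underlying natural transformations agree. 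From $P_\vartheta\circ j=P_{\vartheta'}$ and \Cref{lem:spacefunctoriality} we get $j^*\circ P_\vartheta^*=P_{\vartheta'}^*$, whence $(P_{\vartheta'}^*)^{-1}\circ j^*=(P_\vartheta^*)^{-1}$, and combining,
\[(\Theta')^*=(P_{\vartheta'}^*)^{-1}\circ R_{\vartheta'}=(P_{\vartheta'}^*)^{-1}\circ j^*\circ R_\vartheta=(P_\vartheta^*)^{-1}\circ R_\vartheta=\Theta^*.\]

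The genuinely delicate points are the verification that the two homotopies in \Cref{lem:Xhomotopyequivalence} are filter-proper (which is what makes $P_\vartheta^*$ invertible and forces the value of $I_\vartheta^*$) and the construction of the common upper bound $\vartheta''$ while retaining both monotonicity and \eqref{eq:varthetabiggerxi}. Once these are in place the remainder is a formal manipulation of the contravariant functoriality of $\SimplMor[X]{A}{C}$ in $X$.
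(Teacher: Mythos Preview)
Your proof is correct and follows essentially the same route as the paper: first rewrite $\Theta^*$ as $(P_\vartheta^*)^{-1}\circ(|\mu|\times\id)^*$ to eliminate the choice of $I_\vartheta$ (i.e.\ the $h_\sigma$), then reduce to comparable $\vartheta\leq\vartheta'$ via a common upper bound and use the filter-proper inclusion $X_{\vartheta'}\subset X_\vartheta$ together with the evident commutative diagram. Your version is more explicit in justifying the ``without loss of generality $\vartheta'\geq\vartheta$'' step and in spelling out the final diagram chase, but the argument is the same.
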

\begin{proof}
Independence from the choice of the $h_\sigma$ follows from the fact that $\Theta^*$ is equal to $(P_\vartheta^*)^{-1}\circ(|\mu|\times\id)^*$. For independence from the concrete choice $\vartheta$ for $\vartheta_{\fm,\fn}$, note that if $\vartheta'$ is another choice then we can assume without loss of generality $\vartheta'\geq\vartheta$ and obtain a commutative diagram
\[\xymatrix@R=0ex@C=10ex{
&X_{\vartheta'}\ar[dl]_-{|\mu|\times\id}\ar[dr]^-{P_{\vartheta'}}&
\\|\Delta^\fm|\times|\Delta^\fn|&\cap&|\Delta^{\fm\sharp\fn}|
\\&X_{\vartheta}\ar[ul]^-{|\mu|\times\id}\ar[ur]_-{P_\vartheta}&
}\]
where the inclusion $X_{\vartheta'}\subset X_\vartheta$ is continuous and filter-proper. The claim follows by considering the induced diagram.
\end{proof}

By composing the product from \Cref{cor:Fhomotopyclasscompositionproduct} with $\Theta^*$ we can now define the composition product
\[\SimplMor[\fm]{A}{B}\times\SimplMor[\fn]{B}{C}\to\SimplMor[\fm\sharp\fn]{A}{C}\,,\quad(\llbracket\varphi\rrbracket,\llbracket\psi\rrbracket)\mapsto \llbracket\psi\rrbracket\circ\llbracket\varphi\rrbracket\coloneqq \Theta^*(\llbracket \Simpl^\fm(\psi)\circ\varphi\rrbracket)\,.\]

\begin{thm}\label{thm:compprodpassestolimits}
The composition product passes to the direct limits to yield a composition product \(\SimplMor{A}{B}\times\SimplMor{B}{C}\to\SimplMor{A}{C}\). More precisely:
\begin{enumerate}
\item Given a cofinal map $\beta\colon\fn'\to\fn$, the map 
\[\fm\sharp\beta\colon\fm\sharp\fn'\to\fm\sharp\fn\,,\quad (m,\xi)\mapsto (m,\beta\circ\xi)\]
is cofinal, too, and the diagram
\[\xymatrix{
\SimplMor[\fm]{A}{B}\times\SimplMor[\fn]{B}{C}\ar[r]^-{\circ}\ar[d]^{\id\times\beta^*}&\SimplMor[\fm\sharp\fn]{A}{C}\ar[d]^{(\fm\sharp\beta)^*}
\\\SimplMor[\fm]{A}{B}\times\SimplMor[\fn']{B}{C}\ar[r]^-{\circ}&\SimplMor[\fm\sharp\fn']{A}{C}
}\]
commutes.
\item Given a cofinal map $\alpha\colon\fm'\to\fm$, the set $\fk\coloneqq\fm'\times\fn^{\Delta^{\fm'}}\times\fn^{\Delta^{\fm}}$ equipped with the canonical partial order is directed. Moreover, the two maps 
\[\gamma\colon\fk\to\fm'\sharp\fn,(m,\xi,\zeta)\mapsto (m,\xi)\,,\quad\delta\colon\fk\to\fm\sharp\fn,(m,\xi,\zeta)\mapsto (\alpha(m),\zeta)\]
are cofinal and the diagram
\[\xymatrix@R=0.5ex{
\SimplMor[\fm]{A}{B}\times\SimplMor[\fn]{B}{C}\ar[r]^-{\circ}\ar[dd]^{\alpha^*\times\id}&\SimplMor[\fm\sharp\fn]{A}{C}\ar[dr]^{\delta^*}&
\\&&\SimplMor[\fk]{A}{C}
\\\SimplMor[\fm']{A}{B}\times\SimplMor[\fn]{B}{C}\ar[r]^-{\circ}&\SimplMor[\fm'\sharp\fn]{A}{C}\ar[ur]_{\gamma^*}&
}\]
commutes.

\end{enumerate}
\end{thm}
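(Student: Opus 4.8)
The plan is to settle the cofinality assertions by direct order-theoretic checks and then to reduce both commuting diagrams to a single structural principle, proved along the lines of the independence of $\Theta^*$ from its choices in \Cref{sec:productconstruction}: a composition-type map into $\SimplMor[\fk]{A}{C}$ depends only on an accompanying \emph{simplicial} map $\fk\to\fm$ and the iterated-filter condition of \Cref{lem:doublesimpleasymp}, not on the monotone function $\Delta^\fk\to\fn$ used to realise it. For the cofinality claims: in (1), given $(m,\xi)\in\fm\sharp\fn$, cofinality of $\beta$ yields for each $\sigma\in\Delta^\fm$ an element $\xi'(\sigma)\in\fn'$ with $\beta(q)\geq\xi(\sigma)$ whenever $q\geq\xi'(\sigma)$, and then $(m,\xi')$ is a threshold showing $\fm\sharp\beta$ cofinal; in (2), $\gamma$ is a coordinate projection and hence cofinal, while $\delta$ is cofinal because $\alpha$ is (given $(p,\eta)$ choose $m_0$ with $\alpha(q)\geq p$ for $q\geq m_0$ and use $(m_0,\xi_0,\eta)$ as threshold). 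Directedness of $\fk$ is inherited from its factors.

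Reduction to $\Theta^*$. Using naturality of the product of \Cref{cor:Fhomotopyclasscompositionproduct} and functoriality of $\Simpl^\fm$, I first strip away the ``$\psi$-halves''. In (1) one has $\Simpl^\fm(\beta^*\circ\psi)\circ\varphi=\Simpl^\fm(\beta^*)\circ(\Simpl^\fm(\psi)\circ\varphi)$, so the left vertical map becomes $(\Simpl^\fm\beta^*)_*$, the map induced by the natural transformation $\Simpl^\fm(\beta^*)$ and hence, on the level of filtered spaces, pullback along $\id\times|\beta|$. In (2) naturality of $\alpha^*\colon\Simpl^\fm\to\Simpl^{\fm'}$ gives $\Simpl^{\fm'}(\psi)\circ\alpha^*\circ\varphi=\alpha^*\circ(\Simpl^\fm(\psi)\circ\varphi)$, so the left vertical map is $(\alpha^*)_*$, i.e.\ pullback along $|\alpha|\times\id$. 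Both diagrams thereby reduce to equalities of maps out of $\SimplMor[\Simpl^\fm\Simpl^\fn]{A}{C}$, and since $\Theta^*=(P_\vartheta^*)^{-1}\circ(|\mu|\times\id)^*$ with $P_\vartheta$ a filter-proper homotopy equivalence (\Cref{lem:Xhomotopyequivalence}), these are assertions about the filtered spaces $X_\vartheta$ and the maps $P_\vartheta$, $|\mu|\times\id$, to which \Cref{lem:spacefunctoriality} applies.

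Part (1). Here the first factor $\fm$ is unchanged, so I work with the single filter-proper map $\Xi\coloneqq(|\fm\sharp\beta|\times|\beta|)\big|_{X_{\vartheta_{\fm,\fn'}}}$. Since $\mu_{\fm,\fn}\circ(\fm\sharp\beta)=\mu_{\fm,\fn'}$, the identities $P_{\vartheta_{\fm,\fn}}\circ\Xi=|\fm\sharp\beta|\circ P_{\vartheta_{\fm,\fn'}}$ and $(|\mu_{\fm,\fn}|\times\id)\circ\Xi=(\id\times|\beta|)\circ(|\mu_{\fm,\fn'}|\times\id)$ hold on the nose; inserting them into $\Theta^*=(P_\vartheta^*)^{-1}(|\mu|\times\id)^*$ yields exactly $(\fm\sharp\beta)^*\circ\Theta^*_{\fm,\fn}=\Theta^*_{\fm,\fn'}\circ(\Simpl^\fm\beta^*)_*$. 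The only point needing care is that $\Xi$ really lands in $X_{\vartheta_{\fm,\fn}}$, i.e.\ $\beta(\vartheta_{\fm,\fn'}(\sigma))\geq\vartheta_{\fm,\fn}((\fm\sharp\beta)_*\sigma)$ for all $\sigma$; this is arranged by choosing $\vartheta_{\fm,\fn'}$ large enough in its defining recursion, which is possible by cofinality of $\beta$ and legitimate because $\Theta^*_{\fm,\fn'}$ is independent of that choice.

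Part (2) and the main obstacle. Changing the first factor is the hard case and is precisely what forces the auxiliary $\fk=\fm'\times\fn^{\Delta^{\fm'}}\times\fn^{\Delta^\fm}$. Writing $\pi_1\colon\fk\to\fm'$ for the projection and $\lambda\coloneqq\alpha\circ\pi_1\colon\fk\to\fm$, one has $\mu_{\fm,\fn}\circ\delta=\lambda=\alpha\circ\mu_{\fm',\fn}\circ\gamma$. Pulling the two $\Theta$-constructions back along $|\delta|\times\id$ and $|\gamma|\times\id$ and using these relations, I identify both composites with one and the same generalized construction $(P_{\vartheta'}^*)^{-1}\circ(|\lambda|\times\id)^*$ attached to the \emph{fixed} simplicial map $\lambda$, the $\delta$-branch carrying $\vartheta'=\vartheta_{\fm,\fn}\circ\delta_*$ and the $\gamma$-branch carrying $\vartheta'=\vartheta_{\fm',\fn}\circ\gamma_*$ (both monotone on $\Delta^\fk$). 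The crux is to verify that each of these satisfies the iterated-filter condition of \Cref{lem:doublesimpleasymp} relative to $\lambda$: for every $\xi\colon\Delta^\fm\to\fn$ there must be $k\in\fk$ with $(|\lambda|\times\id)(X_{\vartheta'}^{k})\subseteq\bigcup_{\rho\in\Delta^\fm}|\Delta^\rho|\times|\Delta^{\fn\ab\xi(\rho)}|$. Taking the threshold $k=(m_0,\xi_0,\zeta_0)$ with $\zeta_0=\xi$ and $\xi_0=\xi\circ\alpha_*$, this follows from the defining inequality \eqref{eq:varthetabiggerxi} of $\vartheta_{\fm,\fn}$ respectively $\vartheta_{\fm',\fn}$; the point is that $\rho$ may be taken to be the \emph{full} image $\lambda_*\sigma=\mu_{\fm,\fn}(\delta_*\sigma)=\alpha_*(\mu_{\fm',\fn}(\gamma_*\sigma))$ of a simplex $\sigma$, so that \eqref{eq:varthetabiggerxi} controls $\vartheta'(\sigma)$ exactly against $\xi(\rho)$. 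This is where the two extra function-coordinates of $\fk$ earn their keep: $\zeta_0$ supplies the room on the $\delta$-branch and $\xi_0$ on the $\gamma$-branch. Once both $\vartheta'$ are admissible, a common monotone upper bound $\vartheta''\geq\vartheta_{\fm,\fn}\circ\delta_*,\vartheta_{\fm',\fn}\circ\gamma_*$, built by recursion over skeleta using directedness of $\fn$, gives filter-proper inclusions $X_{\vartheta''}\hookrightarrow X_{\vartheta'}$ commuting with the $P$'s and with $|\lambda|\times\id$, whence both branches induce the same map by the independence argument of \Cref{sec:productconstruction}. I expect this verification of the filter condition for both $\vartheta'$, together with the bookkeeping identifying the two composites with the $\lambda$-construction, to be the only genuinely delicate step.
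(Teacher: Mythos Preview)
Your argument is correct and follows essentially the same route as the paper: in both parts one builds commuting diagrams of the filtered spaces $X_\vartheta$, with part (1) handled by choosing $\vartheta_{\fm,\fn'}$ large enough so that $|\fm\sharp\beta|\times|\beta|$ maps $X_{\vartheta_{\fm,\fn'}}$ into $X_{\vartheta_{\fm,\fn}}$, and part (2) by introducing a common monotone upper bound $\tilde\vartheta\geq\vartheta_{\fm,\fn}\circ\delta_*,\,\vartheta_{\fm',\fn}\circ\gamma_*$ on $\Delta^\fk$ (your $\vartheta''$) so that $X_{\tilde\vartheta}$ maps filter-properly into both $X_{\vartheta_{\fm,\fn}}$ and $X_{\vartheta_{\fm',\fn}}$. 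The paper presents part (2) as a single commuting diagram of spaces rather than as your two-step ``identify both composites with a generalised $\lambda$-construction, then invoke independence'', and in particular does not separately verify the iterated-filter condition for your intermediate $\vartheta'$s: that descent already follows from \Cref{lem:doublesimpleasymp} for $\vartheta_{\fm,\fn}$ and $\vartheta_{\fm',\fn}$ composed with the filter-proper maps $|\delta|\times\id$ and $|\gamma|\times\id$, so your step (2) verification is correct but redundant.

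One small slip in part (1): the condition ensuring $\Xi$ lands in $X_{\vartheta_{\fm,\fn}}$ is that $\beta(n')\geq\vartheta_{\fm,\fn}((\fm\sharp\beta)_*\sigma)$ for \emph{all} $n'\geq\vartheta_{\fm,\fn'}(\sigma)$, not just for $n'=\vartheta_{\fm,\fn'}(\sigma)$; this is precisely what cofinality of $\beta$ provides, and is how the paper states it.
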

\begin{proof}
For the first part, it is clear that $\fm\sharp\beta$ is cofinal.

Let $\vartheta=\vartheta_{\fm,\fn}\colon\Delta^{\fm\sharp\fn}\to\fn$ be constructed as usual by recursion satisfying the monotonicity assumption of \Cref{lem:Xhomotopyequivalence} and \eqref{eq:varthetabiggerxi}.
Analogously we choose $\vartheta'=\vartheta_{\fm,\fn'}\colon\Delta^{\fm\sharp\fn'}\to\fn'$ with the only difference that we additionally demand $\beta(n)\geq \vartheta(\beta(\sigma))$ for all $n\geq\vartheta'(\sigma)$, which is possible because $\beta$ is cofinal.
This last property entails that $|\fm\sharp\beta|\times|\beta|$ maps $X_{\vartheta'}=\bigcup_{\sigma\in \Delta^{\fm\sharp\fn'}}|\Delta^\sigma|\times|\Delta^{\fn'\ab\vartheta'(\sigma)}|$ into 
$\bigcup_{\sigma\in \Delta^{\fm\sharp\fn'}}|\Delta^{\fm\sharp\beta(\sigma)}|\times|\Delta^{\fn\ab\vartheta(\beta(\sigma))}|\subset X_\vartheta$ and its restriction to a map $X_{\vartheta'}\to X_\vartheta$ is clearly continuous and filter-proper. 
Therefore, we get a commutative diagram of spaces
\[\xymatrix@C=10ex{
|\Delta^\fm|\times|\Delta^{\fn}|&\ar[l]_-{|\mu|\times\id}X_{\vartheta}\ar[r]^-{P_\vartheta}&|\Delta^{\fm\sharp\fn}|
\\|\Delta^\fm|\times|\Delta^{\fn'}|\ar[u]_{\id\times|\beta|}&\ar[l]_-{|\mu'|\times\id}X_{\vartheta'}\ar[r]^-{P_{\vartheta'}}\ar[u]_{|\fm\sharp\beta|\times|\beta|}&|\Delta^{\fm\sharp\fn'}|\ar[u]_{|\fm\sharp\beta|}
}\]
which induces the following commutative diagram of \textCstar-algebras
\[\xymatrix@C=10ex{
\Simpl^\fm B\ar[r]^-{\Simpl^\fm(\psi)}\ar@{=}[d]&\Simpl^\fm\Simpl^\fn C\ar[r]^-{(|\mu|\times\id)^*}\ar[d]^{\Simpl^\fm(\beta^*)}&\Ca(X_\vartheta;C)\ar[d]^{(|\fm\sharp\beta|\times|\beta|)^*}&\Simpl^{\fm\sharp\fn}C\ar[l]_-{P_\vartheta^*}\ar[d]^{(\fm\sharp\beta)^*}
\\\Simpl^\fm B\ar[r]^-{\Simpl^\fm(\beta^*\circ\psi)}&\Simpl^\fm\Simpl^{\fn'} C\ar[r]^-{(|\mu'|\times\id)^*}&\Ca(X_{\vartheta'};C)&\Simpl^{\fm\sharp\fn'}C\ar[l]_-{P_{\vartheta'}^*}
}\]
where $\mu=\mu_{\fm,\fn}$ and $\mu'=\mu_{\fm,\fn'}$.
The first part now follows by composing with $\varphi\colon A\to\Simpl^\fm B$ and taking the asymptotic homotopy classes.

For the second part, it is also obvious that $\fk$ is directed and $\gamma,\delta$ are cofinal. Similar to the first part, we find a commutative diagram
\[\xymatrix@C=10ex{
|\Delta^\fm|\times|\Delta^{\fn}|&\ar[l]_-{|\mu|\times\id}X_{\vartheta}\ar[r]^-{P_\vartheta}&|\Delta^{\fm\sharp\fn}|
\\&X_{\tilde\vartheta}\ar[u]_{|\delta|\times\id}\ar[d]^{|\gamma|\times\id}\ar[r]^{P_{\tilde\vartheta}}&|\Delta^\fk|\ar[u]_{\delta^*}\ar[d]^{\gamma^*}
\\|\Delta^{\fm'}|\times|\Delta^{\fn}|\ar[uu]_{|\alpha|\times\id}&\ar[l]_-{|\mu'|\times\id}X_{\vartheta'}\ar[r]^-{P_{\vartheta'}}&|\Delta^{\fm\sharp\fn'}|
}\]
where $\vartheta=\vartheta_{\fm,\fn}\colon\Delta^{\fm\sharp\fn}\to\fn,\vartheta'=\vartheta_{\fm',\fn}\colon\Delta^{\fm'\sharp\fn}\to\fn$ are constructed recursively according to the monotonicity assumption of \Cref{lem:Xhomotopyequivalence} and the inequalities \eqref{eq:varthetabiggerxi} and $\vartheta$ is constructed recursively such that it satisfies again the monotonicity assumption and furthermore the inequalities
$\tilde\vartheta\geq\delta^*\vartheta$ and $\tilde\vartheta\geq\gamma^*\vartheta'$.
Clearly, the two vertical maps leaving $X_{\tilde\vartheta}$ are continuous and filter-proper.
The second part now follows exactly as the first part by considering the induced diagram on \textCstar-algebras and passing to the asymptotic homotopy classes.
\end{proof}

\subsection{Properties}\label{sec:productproperties}

In this section we prove basic properties of the composition product defined in \Cref{thm:compprodpassestolimits}, in particular those which show that the \(\SimplMor{A}{B}\) are the morphism sets of a category.

\begin{thm}\label{thm:associativity}
The composition product is associative.
\end{thm}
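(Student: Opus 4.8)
The plan is to reduce associativity to the equality of two ways of ``flattening'' a single triple composite, and then to settle that equality geometrically. Fix representatives $\varphi\colon A\to\Simpl^\fm B$, $\psi\colon B\to\Simpl^\fn C$ and $\chi\colon C\to\Simpl^\fl D$ and form the triple composite
\[\Omega\coloneqq\Simpl^\fm\Simpl^\fn(\chi)\circ\Simpl^\fm(\psi)\circ\varphi\colon A\to\Simpl^\fm\Simpl^\fn\Simpl^\fl D\,,\]
representing a class $\llbracket\Omega\rrbracket\in\SimplMor[\Simpl^\fm\Simpl^\fn\Simpl^\fl]{A}{D}$. Because the natural transformation $(|\mu|\times\id)^*$ of \Cref{lem:doublesimpleasymp} and the inverse $I_\vartheta^*$ of the bijection $P_\vartheta^*$ of \Cref{lem:Xhomotopyequivalence} are natural in the coefficient \textCstar-algebra, and because $\Simpl^\fm$ sends filter-proper homotopies to filter-proper homotopies by \Cref{lem:FFprimehomotopic}, a direct check shows that the two bracketings are merely two orders of flattening $\llbracket\Omega\rrbracket$. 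The left-hand side is obtained from $\llbracket\Omega\rrbracket$ by first flattening the two outer factors $\Simpl^\fm\Simpl^\fn$ to $\Simpl^{\fm\sharp\fn}$ (treating $\Simpl^\fl D$ as coefficient) and then applying $\Theta_{\fm\sharp\fn,\fl}^*$; dually the right-hand side first flattens the two inner factors $\Simpl^\fn\Simpl^\fl$ to $\Simpl^{\fn\sharp\fl}$ inside $\Simpl^\fm$ and then applies $\Theta_{\fm,\fn\sharp\fl}^*$. Associativity thus becomes the statement that these two iterated flattenings of $\llbracket\Omega\rrbracket$ agree in $\SimplMor{A}{D}$.

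Since $\cD$ is cofiltered, I may pick a directed set $\fk$ admitting cofinal maps to both $(\fm\sharp\fn)\sharp\fl$ and $\fm\sharp(\fn\sharp\fl)$; by the filtered-colimit description underlying \Cref{defn:morphismlimit} it suffices to prove that the pullbacks of the two sides along these cofinal maps coincide in $\SimplMor[\fk]{A}{D}$. To produce comparable representatives I would iterate the construction of \Cref{sec:productconstruction} one level up. From the two flattening steps of the left-hand side one assembles the fibre product
\[Z_L\coloneqq X_{\vartheta_{\fm\sharp\fn,\fl}}\times_{|\Delta^{\fm\sharp\fn}|}X_{\vartheta_{\fm,\fn}}\subset|\Delta^{(\fm\sharp\fn)\sharp\fl}|\times|\Delta^\fn|\times|\Delta^\fl|\,,\]
formed over $|\mu_{\fm\sharp\fn,\fl}|$ on the one side and $P_{\vartheta_{\fm,\fn}}$ on the other, and symmetrically $Z_R\coloneqq X_{\vartheta_{\fm,\fn\sharp\fl}}\times_{|\Delta^{\fn\sharp\fl}|}X_{\vartheta_{\fn,\fl}}$. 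Using the explicit homotopy inverses $I_\vartheta$ and the affine-linear homotopies of \Cref{lem:Xhomotopyequivalence}, one checks that the projections $Z_L\to|\Delta^{(\fm\sharp\fn)\sharp\fl}|$ and $Z_R\to|\Delta^{\fm\sharp(\fn\sharp\fl)}|$ are filter-proper homotopy equivalences, and that each $Z_{L/R}$ carries a canonical filter-proper map to $|\Delta^\fm|\times|\Delta^\fn|\times|\Delta^\fl|$ assembled from the various $|\mu|\times\id$; restricting $\Omega$ along these maps and transporting back along the homotopy equivalences recovers precisely the two sides.

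The comparison is then carried out through a common triple space. Exactly as in the proof of \Cref{thm:compprodpassestolimits}, I would fix the monotone assignments $\vartheta_{\fm,\fn},\vartheta_{\fm\sharp\fn,\fl},\vartheta_{\fn,\fl},\vartheta_{\fm,\fn\sharp\fl}$ compatibly, so that both $Z_L$ and $Z_R$ map filter-properly over $|\Delta^\fk|$ into one triple space $X\subset|\Delta^\fk|\times|\Delta^\fn|\times|\Delta^\fl|$ whose filter simultaneously encodes, through \Cref{lem:CbCzcharacterization} applied in the $\fn$- and the $\fl$-coordinate, the nested vanishing conditions coming from $\Simpl^\fn$ and $\Simpl^\fl$; the descent of $\Omega$ through $X$ then computes both flattenings. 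The two pulled-back representatives appear as the compositions $|\Delta^\fk|\to X$ built from the respective homotopy inverses, and they are joined by the canonical affine-linear homotopy inside $X$, which stays filter-proper by the monotonicity of the $\vartheta$'s. The principal obstacle is exactly this bookkeeping: one must arrange the inequalities of type \eqref{eq:varthetabiggerxi} in both directions at once, which is what forces $\fk$ to be chosen very large. Granting it, \Cref{lem:spacefunctoriality} shows that the two maps $|\Delta^\fk|\to X$ induce the same map $\SimplMor[X]{A}{D}\to\SimplMor[\fk]{A}{D}$, whence the two bracketings agree in $\SimplMor[\fk]{A}{D}$ and therefore in $\SimplMor{A}{D}$.
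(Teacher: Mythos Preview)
Your proposal is correct and follows essentially the same route as the paper. Both arguments reduce associativity to showing that the two iterated flattening natural transformations $\Simpl^\fm\Simpl^\fn\Simpl^\fl\to\Simpl^\fk$ are $\Simpl^\fk$-homotopic for a suitably large $\fk$, and both establish this by building intermediate filtered spaces over $|\Delta^\fk|$ that receive compatible maps to the various $|\Delta^{\text{something}}|$ factors, with the final comparison coming from an affine-linear homotopy. The paper packages this as a separate lemma (\Cref{lem:Thetaassociativity}) with an explicit choice of $\fk$ and a single chain of spaces $X_{\vartheta_1}$, $X_{\vartheta_1,\vartheta_2}$ governed by maps $\vartheta_1,\vartheta_2$ satisfying six enumerated inequalities; your fibre-product spaces $Z_L,Z_R$ and the common triple space $X$ are a minor reorganization of the same data, and you correctly flag the simultaneous arrangement of the \eqref{eq:varthetabiggerxi}-type inequalities as the technical core that forces $\fk$ to be large.
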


The theorem follows readily from the following useful lemma.

\begin{lem}\label{lem:Thetaassociativity}
For all directed sets $\fl,\fm,\fn$ there exists a directed set $\fk$ and two cofinal maps $\alpha\colon\fk\to(\fl\sharp\fm)\sharp\fn$, $\beta\colon\fk\to \fl\sharp(\fm\sharp\fn)$ such that the two natural transformations 
$\alpha^*\circ \Theta_{\fl\sharp\fm,\fn}^*\circ(\Theta_{\fl,\fm}^*)_{\Simpl^\fn(\blank)}$ and $\beta^*\circ\Theta_{\fl,\fm\sharp\fn}\circ\Simpl^\fl(\Theta_{\fm,\fn}^*)$ from $\Simpl^\fl\Simpl^\fm\Simpl^\fn$ to $\Simpl^\fk$ are $\Simpl^\fk$-homotopic.
\end{lem}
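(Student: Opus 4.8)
The plan is to argue geometrically, exactly as in the proof of \Cref{thm:compprodpassestolimits}: translate the two composite natural transformations
\[
T_1\coloneqq\Theta_{\fl\sharp\fm,\fn}^*\circ(\Theta_{\fl,\fm}^*)_{\Simpl^\fn(\blank)}\,,\qquad
T_2\coloneqq\Theta_{\fl,\fm\sharp\fn}^*\circ\Simpl^\fl(\Theta_{\fm,\fn}^*)
\]
into pullbacks along explicit continuous filter-proper maps between geometric realizations, and then connect the relevant space-level maps by a single filter-proper homotopy; the descent of this homotopy to a genuine $\Simpl^\fk$-homotopy of natural transformations is handled, as in \Cref{lem:doublesimpleasymp}, by the appropriate vanishing characterization. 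First I would fix $\fk$ and the two cofinal maps. Using the canonical identifications
\[
(\fl\sharp\fm)\sharp\fn\cong\fl\times\fm^{\Delta^\fl}\times\fn^{\Delta^{\fl\sharp\fm}}\,,\qquad
\fl\sharp(\fm\sharp\fn)\cong\fl\times\fm^{\Delta^\fl}\times(\fn^{\Delta^\fm})^{\Delta^\fl}\,,
\]
I set $\fk\coloneqq\fl\times\fm^{\Delta^\fl}\times\fn^{\Delta^{\fl\sharp\fm}}\times(\fn^{\Delta^\fm})^{\Delta^\fl}$ and let $\alpha\colon\fk\to(\fl\sharp\fm)\sharp\fn$ and $\beta\colon\fk\to\fl\sharp(\fm\sharp\fn)$ be the coordinate projections forgetting the last and the third factor, respectively. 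A coordinate projection off a product of directed sets is cofinal in the sense used throughout (given a target element, any lift of it is dominated by all larger source elements), so $\alpha$ and $\beta$ are cofinal as required.

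Next I would make the two composites geometric. By construction every $\Theta^*$ equals $I_\vartheta^*\circ(|\mu|\times\id)^*$, i.e.\ it is pullback along the continuous filter-proper map $(|\mu|\times\id)\circ I_\vartheta$, factored through the quotient as in \Cref{lem:doublesimpleasymp}. Chaining these with $|\alpha|$ and $|\beta|$ and unfolding the coefficient directions ($\Simpl^\fn C=\Ca(|\Delta^\fn|;C)$ in $T_1$, and the $\fl$-factor in $T_2$) exactly as in the single-product case, the transformations $\alpha^*\circ T_1$ and $\beta^*\circ T_2$ are induced by the two continuous maps
\begin{align*}
\Xi_1&=(\Theta_{\fl,\fm}\times\id_{|\Delta^\fn|})\circ\Theta_{\fl\sharp\fm,\fn}\circ|\alpha|\,,\\
\Xi_2&=(\id_{|\Delta^\fl|}\times\Theta_{\fm,\fn})\circ\Theta_{\fl,\fm\sharp\fn}\circ|\beta|
\end{align*}
from $|\Delta^\fk|$ to $|\Delta^\fl|\times|\Delta^\fm|\times|\Delta^\fn|$. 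Since $\Theta^*$ is independent of the monotone map representing it (as in the independence argument following \Cref{lem:Xhomotopyequivalence}), I am free to pick all four auxiliary maps $\vartheta_{\fl,\fm}$, $\vartheta_{\fl\sharp\fm,\fn}$, $\vartheta_{\fm,\fn}$, $\vartheta_{\fl,\fm\sharp\fn}$ by Noetherian recursion with enough mutual domination (in the spirit of \eqref{eq:varthetabiggerxi} and of the conditions $\tilde\vartheta\geq\delta^*\vartheta$, $\tilde\vartheta\geq\gamma^*\vartheta'$ in \Cref{thm:compprodpassestolimits}) so that, for every $k\in\fk$, both $\Xi_1(|\Delta^{\fk\ab k}|)$ and $\Xi_2(|\Delta^{\fk\ab k}|)$ lie inside one common tail set of the doubly-nested shape governed by the twice-iterated version of \Cref{lem:CbCzcharacterization}.

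Finally I would build the homotopy. The triple product $|\Delta^\fl|\times|\Delta^\fm|\times|\Delta^\fn|$ is convex, but the nested tail sets are not, so --- just as in \Cref{lem:Xhomotopyequivalence}, where segments are kept inside $X_\vartheta$ by the monotonicity of $\vartheta$ --- I would connect $\Xi_1$ and $\Xi_2$ by adjusting the coordinates in stages (the innermost $\fn$-direction first, then $\fm$, then $\fl$), using the domination arranged above to keep the whole track inside the common nested tails for all homotopy times while remaining filter-proper down to $|\Delta^\fk|$ on the source. Composing the resulting homotopy $H\colon|\Delta^\fk|\times[0,1]\to|\Delta^\fl|\times|\Delta^\fm|\times|\Delta^\fn|$ into $\Cb$ and using the iterated characterization to check that $H$ descends on the source to the quotient $\Simpl^\fl\Simpl^\fm\Simpl^\fn$ and maps the relevant ideal into $\Cz(|\Delta^\fk|;\Ct([0,1];\blank))$, one obtains a $*$-homomorphism $\Simpl^\fl\Simpl^\fm\Simpl^\fn(\blank)\to\Simpl^\fk(\Ct([0,1];\blank))$, which is precisely a $\Simpl^\fk$-homotopy between $\alpha^*\circ T_1$ and $\beta^*\circ T_2$ (its endpoints being these two transformations, which are already known to descend). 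This is the assertion.

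I expect the main obstacle to be exactly the filter bookkeeping of the last two steps. Unlike the single-product situation, the target now carries a \emph{doubly}-nested ``vanishing along the filter'' condition (\Cref{lem:CbCzcharacterization} iterated), and one must choose the four maps $\vartheta_{\bullet,\bullet}$ with simultaneously compatible domination so that a \emph{single} staged homotopy stays inside the nested tails at \emph{all} homotopy times while still descending to $\Simpl^\fk$. Exploiting the monotonicity inequality \eqref{eq:varthetabiggerxi} at both nesting levels at once is the genuinely technical point; everything else is the formal naturality and pullback bookkeeping already rehearsed in \Cref{thm:compprodpassestolimits}.
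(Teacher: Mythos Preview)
Your setup is essentially the paper's: the same directed set $\fk=\fl\times\fm^{\Delta^\fl}\times\fn^{\Delta^{\fl\sharp\fm}}\times\fn^{\Delta^\fl\times\Delta^\fm}$ (your $(\fn^{\Delta^\fm})^{\Delta^\fl}$ is the same thing) and the same coordinate-projection cofinal maps $\alpha,\beta$. Where you diverge is in the execution. You propose to write both composites as pullbacks along explicit maps $\Xi_1,\Xi_2\colon|\Delta^\fk|\to|\Delta^\fl|\times|\Delta^\fm|\times|\Delta^\fn|$ and then connect them by a staged homotopy which you must keep inside doubly-nested tail sets at all times. You correctly flag this as the hard part, but you do not supply a mechanism that guarantees the three simultaneous descent conditions (one for each level of the triple quotient $\Simpl^\fl\Simpl^\fm\Simpl^\fn$) hold throughout the homotopy. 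Saying you will ``choose the four maps $\vartheta_{\bullet,\bullet}$ with enough mutual domination'' is not yet a proof: the $\Xi_i$ involve the homotopy inverses $I_\vartheta$, whose first coordinates are not the identity, so a naive coordinate-by-coordinate interpolation has no obvious reason to remain in the required nested tails.

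The paper avoids this difficulty altogether by a different organizational device. Rather than homotoping $\Xi_1$ and $\Xi_2$ directly, it introduces two auxiliary filtered spaces $X_{\vartheta_1}\subset|\Delta^\fk|\times|\Delta^{\fm\sharp\fn}|$ and $X_{\vartheta_1,\vartheta_2}\subset|\Delta^\fk|\times|\Delta^{\fm\sharp\fn}|\times|\Delta^\fn|$, governed by recursively chosen $\vartheta_1\colon\Delta^\fk\to\fm\sharp\fn$ and $\vartheta_2\colon\Delta^\fk\times\Delta^{\fm\sharp\fn}\to\fn$ satisfying a short explicit list of domination inequalities. These spaces come with projections that are filter-proper homotopy equivalences (as in \Cref{lem:Xhomotopyequivalence}), and both sides of the associativity diagram factor through $\Ca(X_{\vartheta_1,\vartheta_2};\blank)$. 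After this factorization almost everything commutes \emph{strictly}; only a single pentagon fails, and there the two competing maps differ only in the $|\Delta^\fn|$-coordinate, so the required homotopy is a single linear interpolation whose descent is immediate from the design of $X_{\vartheta_1,\vartheta_2}$. In other words, the intermediate space absorbs the triple-nesting bookkeeping once and for all, so that the actual homotopy is trivial. Your plan is not wrong in principle, but without an analogue of $X_{\vartheta_1,\vartheta_2}$ you are left to verify the triple descent by hand along an unspecified staged path, which is precisely the work the paper's construction is engineered to eliminate.
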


The directed set $\fk$ is necessary, because there is no obvious cofinal map between the two directed sets
\begin{align*}
(\fl\sharp\fm)\sharp\fn&=(\fl\sharp\fm)\times \fn^{\Delta^{\fl\sharp\fm}}=\fl\times\fm^{\Delta^\fl}\times\fn^{\Delta^{\fl\sharp\fm}}
\\\fl\sharp(\fm\sharp\fn)&=\fl\times(\fm\sharp\fn)^{\Delta^\fl}=\fl\times(\fm\times\fn^{\Delta^\fm})^{\Delta^\fl}=\fl\times\fm^{\Delta^\fl}\times\fn^{\Delta^\fl\times\Delta^\fm}\,.
\end{align*}
There is a canonical map $\Delta^{\fl\sharp\fm}\to \Delta^\fl\times\Delta^\fm$ defined on the vertices by $(l,\zeta)\mapsto (l,\zeta(l))$, but the induced map $\fl\sharp(\fm\sharp\fn)\to (\fl\sharp\fm)\sharp\fn$ is in general not cofinal.

\begin{proof}[Proof of \Cref{thm:associativity}]
Given $\llbracket\chi\rrbracket\in\SimplMor[\fl]{A}{B}$, $\llbracket\varphi\rrbracket\in\SimplMor[\fm]{B}{C}$ and $\llbracket\psi\rrbracket\in\SimplMor[\fn]{C}{D}$, the two compositions are 
$(\llbracket\chi\rrbracket\circ\llbracket\varphi\rrbracket)\circ\llbracket\psi\rrbracket\in\SimplMor[(\fl\sharp\fm)\sharp\fn]{A}{D}$ and $\llbracket\chi\rrbracket\circ(\llbracket\varphi\rrbracket\circ\llbracket\psi\rrbracket)\in \SimplMor[\fl\sharp(\fm\sharp\fn)]{A}{D}$.
These are mapped under $\alpha^*$ and $\beta^*$, respectively, to the classes obtained from 
\[\llbracket\chi\rrbracket \circ \llbracket\varphi\rrbracket \circ \llbracket\psi\rrbracket = \llbracket\Simpl^\fl\Simpl^\fm\psi\circ\Simpl^\fl\varphi\circ\chi\rrbracket \in\SimplMor[\Simpl^\fl\Simpl^\fm\Simpl^\fn]{A}{D}\]
by applying the functoriality under the two natural transformations from \Cref{lem:Thetaassociativity}. Thus, the claim follows from \Cref{lem:functorialityundernatrualtransformationsoffunctors}.
\end{proof}

\begin{proof}[Proof of \Cref{lem:Thetaassociativity}]
We define the directed set $\fk\coloneqq \fl\times\fm^{\Delta^\fl}\times\fn^{\Delta^{\fl\sharp\fm}}\times\fn^{\Delta^\fl\times\Delta^\fm}$ and consider the two canonical cofinal maps 
\begin{align*}
\alpha\colon\fk&\to(\fl\sharp\fm)\sharp\fn\,,\quad (l,\zeta,\eta,\kappa)\mapsto (l,\zeta,\eta)\,,
\\\beta\colon\fk&\to \fl\sharp(\fm\sharp\fn)\,,\quad (l,\zeta,\eta,\kappa)\mapsto (l,\zeta,\kappa)\,.
\end{align*}
The plan is to construct a diagram of natural transformations
\[\xymatrix@C=12ex{
\Ca(X_{\vartheta_{\fl,\fm}};\Simpl^\fn(\blank))\ar[dr]
&\Simpl^\fl\Simpl^\fm\Simpl^\fn
\ar[l]_-{(|\mu_{\fl,\fm}|\times\id)^*_{\Simpl^\fn(\blank)}}
\ar[r]^-{\Simpl^\fl((|\mu_{\fm,\fn}|\times\id)^*)}
&\Simpl^\fl\Ca(X_{\vartheta_{\fm,\fn}};\blank)\ar[dl]
\\\Simpl^{\fl\sharp\fm}\Simpl^\fn\ar[u]^{(P_{\vartheta_{\fl,\fm}}^*)_{\Simpl^\fn(\blank)}}_{\simeq}\ar[d]_{(|\mu_{\fl\sharp\fm,\fn}|\times\id)^*}
&\Ca(X_{\vartheta_1,\vartheta_2};\blank)
&\Simpl^\fl\Simpl^{\fm\sharp\fn}\ar[u]_{\Simpl^\fl(P_{\fm,\fn}^*)}^{\simeq}\ar[d]^{(|\mu_{\fl,\fm\sharp\fn}|\times\id)^*}
\\\Ca(X_{\vartheta_{\fl\sharp\fm,\fn}};\blank)\ar[r]^-{(|\alpha|\times |\nu_{\fm,\fn}|)^*}
&\Ca(X_{\vartheta_1};\blank)\ar[u]^{\simeq}_{P_{\vartheta_1,\vartheta_2}^*}
&\Ca(X_{\vartheta_{\fl,\fm\sharp\fn}};\blank)\ar[l]_{(|\beta|\times \id)^*}
\\\Simpl^{(\fl\sharp\fm)\sharp\fn}\ar[u]_{P_{\vartheta_{\fl\sharp\fm,\fn}}^*}^{\simeq}\ar[r]_{\alpha^*}
&\Simpl^\fk\ar[u]_{P_{\vartheta_1}^*}^{\simeq}
&\Simpl^{\fl\sharp(\fm\sharp\fn)}\ar[l]^{\beta^*}\ar[u]_{P_{\vartheta_{\fl,\fm\sharp\fn}}^*}^{\simeq}
}\]
in which the left pentagon commutes up to $\Ca(X_{\vartheta_1,\vartheta_2};\blank)$-homotopy and the rest commutes strictly. 
If we then replace the $P$-arrows by corresponding $I$-arrows in the other direction, the different parts of the diagram will still commute up to the appropiate notions of $F$-homotopies and the claim follows from \Cref{lem:functorialityundernatrualtransformationsoffunctors}.

The outer functors and natural transformations in the diagram above are clear. 
To fill in the middle part of the diagram, let $\nu_{\fm,\fn}\colon\fm\sharp\fn\to\fn, (m,\xi)\mapsto\xi(m)$ and  define by recursion over the skeleta a function $\vartheta_1\colon\Delta^\fk\to\fm\sharp\fn$ satisfying the monotonicity assumption of \Cref{lem:Xhomotopyequivalence} and the following four properties for all $\sigma\in\Delta^\fk$:
\begin{enumerate}
\item $\forall (m,\xi)\geq\vartheta_1(\sigma)\colon\nu_{\fm,\fn}(m,\xi)\geq\vartheta_{\fl\sharp\fm,\fn}(\alpha(\sigma))$.

This is possible, because $\nu_{\fm,\fn}$ is cofinal and it implies that $|\alpha|\times|\nu_{\fm,\fn}|$ maps $X_{\vartheta_1}$ into $X_{\vartheta_{\fl\sharp\fm,\fn}}$. Moreover, the restricted map is filter-proper, because $\alpha$ is cofinal.

\item $\vartheta_1(\sigma)\geq\vartheta_{\fl,\fm\sharp\fn}(\beta(\sigma))$.

This implies that $|\beta|\times\id$ maps $X_{\vartheta_1}$ into $X_{\vartheta_{\fl,\fm\sharp\fn}}$. Cofinality of $\beta$ implies filter-properness of the restriction.

\item\label{item:vartheta1toXvarthetalm} $\forall (m,\xi)\geq \vartheta_1(\sigma)\colon \mu_{\fm,\fn}(m,\xi)\geq\vartheta_{\fl,\fm}(\mu_{\fl\sharp\fm,\fn}\circ\alpha(\sigma))$.

This is possible, because $\mu_{\fm,\fn}$ is cofinal, and it implies that $|\mu_{\fl\sharp\fm,\fn}\circ\alpha|\times|\mu_{\fm,\fn}|$ maps $X_{\vartheta_1}$ into $X_{\vartheta_{\fl,\fm}}$. This restriction is also filter-proper, because $\mu_{\fl\sharp\fm,\fn}\circ\alpha$ is cofinal.

\item\label{item:vartheta1geqzetakappa} $\forall (l,\zeta,\eta,\kappa)\in\sigma\colon \vartheta_1(\sigma)\geq(\zeta,\kappa)(\mu_{\fl,\fm\sharp\fn}\circ\beta(\sigma))$, compare \eqref{eq:varthetabiggerxi}.
\end{enumerate}
The restricted maps from the first two properties establish the lower part of the diagram and commutativity of the two squares is clear.
The other two properties will be needed shortly.

The upper part of the diagram will be induced by the diagram
\[\xymatrix@C=5em{
X_{\vartheta_1}\ar[r]^{|\beta|\times\id}\ar[d]_{|\alpha|\times|\nu_{\fm,\fn}|}
&X_{\vartheta_{\fl,\fm\sharp\fn}}\ar[r]^-{|\mu_{\fl,\fm\sharp\fn}|\times\id}
&|\Delta^\fl|\times|\Delta^{\fm\sharp\fn}|
\\X_{\vartheta_{\fl\sharp\fm,\fn}}\ar[d]_{|\mu_{\fl\sharp\fm,\fn}|\times\id}
&X_{\vartheta_1,\vartheta_2}\ar[ul]^{\simeq}_{P_{\vartheta_1,\vartheta_2}}\ar[r]^-{|\mu_{\fl,\fm\sharp\fn}\circ\beta|\times\id\times\id}\ar[d]^{|\mu_{\fl\sharp\fm,\fn}\circ\alpha|\times|\mu_{\fm,\fn}|\times\id}
&|\Delta^\fl|\times X_{\vartheta_{\fm,\fn}}\ar[u]_{\id\times P_{\vartheta_{\fm,\fn}}}\ar[d]^{\id\times |\mu_{\fm,\fn}|\times\id}
\\|\Delta^{\fl\sharp\fm}|\times|\Delta^\fn|
&X_{\vartheta_{\fl,\fm}}\times|\Delta^\fn|\ar[l]_-{P_{\vartheta_{\fl,\fm}}\times\id}\ar[r]^-{|\mu_{\fl,\fm}|\times\id\times\id}
&|\Delta^\fl|\times|\Delta^\fm|\times|\Delta^\fn|
}\]
of spaces which we explain now.

For a function $\vartheta_2\colon\Delta^\fk\times\Delta^{\fm\sharp\fn}\to\fn$, which we choose further below, we define the space
\[X_{\vartheta_1,\vartheta_2}\coloneqq \bigcup_{\sigma\in\Delta^\fk}|\Delta^\sigma|\times\bigcup_{\tau\in\Delta^{\fm\sharp\fn\ab\vartheta_1(\sigma)}}|\Delta^\tau|\times|\Delta^{\fn\ab\vartheta_2(\sigma,\tau)}|\subset |\Delta^\fk|\times|\Delta^{\fm\sharp\fn}|\times|\Delta^\fn|\]
equipped with the filter generated by the subsets 
\[X_{\vartheta_1,\vartheta_2}^k\coloneqq X_{\vartheta_1,\vartheta_2}\cap |\Delta^{\fk\ab k}|\times|\Delta^{\fm\sharp\fn}|\times|\Delta^\fn|\]
and let $P_{\vartheta_1,\vartheta_2}\colon X_{\vartheta_1,\vartheta_2}\to X_{\vartheta_1}$ denote the projection onto the first two components, which is a continuous and filter proper map.

Of course, it would be sufficient to define $\vartheta_2$ on the set of cells of $X_{\vartheta_1}$, which is a subset of $\Delta^\fk\times\Delta^{\fm\sharp\fn}$, but for notational convenience we let it be defined on the whole product set.
It should satisfy a monotonicity assumption analogous to that one of \Cref{lem:Xhomotopyequivalence}: If $\emptyset\not=\varsigma\subset\sigma\in\Delta^\fk$ and $\emptyset\not=\rho\subset\tau\in\Delta^{\fm\sharp\fn\ab\vartheta_1(\sigma)}$, then $\vartheta_2(\sigma,\tau)\geq\vartheta_2(\varsigma,\rho)$.
Then a construction analogous to the one in the proof of the aforementioned lemma implies that $P_{\vartheta_1,\vartheta_2}$ is a homotopy equivalence.

Moreover, $\vartheta_2$ should satisfy the following properties for all $(\sigma,\tau)\in\Delta^\fk\times\Delta^{\fm\sharp\fn}$.
\begin{enumerate}\setcounter{enumi}{4}
\item $\vartheta_2(\sigma,\tau)\geq\vartheta_{\fm,\fn}(\tau)$.

This implies that $|\mu_{\fl,\fm\sharp\fn}\circ\beta|\times\id\times\id$ maps $X_{\vartheta_1,\vartheta_2}$ into $|\Delta^\fl|\times X_{\vartheta_{\fm,\fn}}$.

Moreover, using property \ref{item:vartheta1geqzetakappa} and the obvious analogues of \Cref{lem:CbCzcharacterization,lem:doublesimpleasymp} for the spaces $|\Delta^\fl|$, $X_{\vartheta_{\fm,\fn}}$, $X_{\vartheta_1,\vartheta_2}$ instead of $\Delta^\fm,\Delta^\fn, X_\vartheta$ one shows that $|\mu_{\fl,\fm\sharp\fn}\circ\beta|\times\id\times\id$ induces a natural transformation $\Simpl^\fl\Ca(X_{\vartheta_{\fm,\fn}};\blank)\to \Ca(X_{\vartheta_1,\vartheta_2};\blank)$.

\item $\forall(l,\zeta,\eta,\kappa)\in\sigma\colon \vartheta_2(\sigma,\tau)\geq\kappa (\mu_{\fl\sharp\fm,\fn}\circ\alpha(\sigma),\mu_{\fm,\fn}(\tau))$, compare \eqref{eq:varthetabiggerxi}.

This property implies, again using obvious analogues of \Cref{lem:CbCzcharacterization,lem:doublesimpleasymp}, that the map $|\mu_{\fl\sharp\fm,\fn}\circ\alpha|\times|\mu_{\fm,\fn}|\times\id\colon X_{\vartheta_1,\vartheta_2}\to X_{\vartheta_{\fl,\fm}}\times|\Delta^\fn|$, which exists because of property \ref{item:vartheta1toXvarthetalm}, induces a natural transformation $\Ca(X_{\vartheta_{\fl,\fm}};\Simpl^\fn(\blank))\to \Ca(X_{\vartheta_1,\vartheta_2};\blank)$.
\end{enumerate}

A function $\vartheta_2$ with all of these properties can clearly be constructed as before by induction over the $i$-skeleta of $\Delta^\fk\times\Delta^{\fm\sharp\fn}$. Thus, we obtain the diagram of spaces above and it induces the upper part of the diagram of morphism sets.

Commutativity holds trivially except at one pentagon: The two compositions $X_{\vartheta_1,\vartheta_2}\to|\Delta^{\fl\sharp\fm}|\times|\Delta^\fn|$ are given on the vertices as
\begin{align*}
(\mu_{\fl\sharp\fm,\fn}\times\id)\circ(\alpha\times\nu_{\fm,\fn})\circ P_{\vartheta_1,\vartheta_2}(l,\zeta,\eta,\kappa,m,\xi,n)&=(l,\zeta,\xi(m))
\\(P_{\vartheta_{\fl,\fm}}\times\id)\circ(\mu_{\fl\sharp\fm,\fn}\circ\alpha\times\id\times\id)&=(l,\zeta,n)
\end{align*}
and since the first two components of these triples agree, it is straightforward to see that their linear interpolation induces a $\Ca(X_{\vartheta_1,\vartheta_2},\blank)$-homotopy 
\[\Simpl^{\fl\sharp\fm}\Simpl^\fn \to\Ca(X_{\vartheta_1,\vartheta_2},\Ct([0,1],\blank))\,.\]
The claim follows.
\end{proof}

Let $\fo$ denote the one-element directed set, see \Cref{example:oneelementdirectedset}
The directed set $\fm\sharp\fo$ is canonically isomorphic to $\fm$, $X_{\vartheta_{\fm,\fo}}=|\Delta^\fm|\times|\Delta^\fo|\approx|\Delta^\fm|$ and the maps $P_{\vartheta_{\fm,\fo}}$ and $|\mu_{\fm,\fo}|$ simply correspond to the identity map on $|\Delta^\fm|$. Therefore, the natural transformation $\Theta_{\fm,\fn}\colon\Simpl^\fm\Simpl^\fo\to\Simpl^{\fm\sharp\fo}$ is a natural isomorphism corresponding to the identity on $\Simpl^\fm$ and hence the product $\SimplMor[\fm]{A}{B}\times \SimplMor[\fo]{B}{C}\to \SimplMor[\fm\sharp\fo]{A}{C}$ corresponds to the product $\SimplMor[\Simpl^\fm]{A}{B}\times \SimplMor[\Simpl^\fo]{B}{C}\to \SimplMor[\Simpl^\fm\Simpl^\fo]{A}{C}$ introduced in \Cref{cor:Fhomotopyclasscompositionproduct}.

The directed set $\fo\sharp\fm$ is also isomorphic to $\fm$ and in this case $X_{\vartheta_{\fo,\fm}}$ corresponds to a subset of 
\[\bigcup_{\substack{\sigma,\tau\in\Delta^\fm\\\forall m\in\sigma,m'\in\tau\colon m'\geq m}}|\Delta^\sigma|\times|\Delta^{\tau}|\subset|\Delta^\fm|\times|\Delta^\fm|\,.\] 
Using this, one readily checks that 
\[|\Delta^\fm|\approx|\Delta^{\fm\sharp\fo}|\xrightarrow{\Theta_{\fo,\fm}}|\Delta^\fo|\times|\Delta^\fm|\approx|\Delta^\fm|\]
is homotopic to the identity via a homotopy of continuous cofinal maps and hence we see that the product $\SimplMor[\fo]{A}{B}\times \SimplMor[\fm]{B}{C}\to \SimplMor[\fo\sharp\fm]{A}{C}$ also identifies with the product $\SimplMor[\Simpl^\fo]{A}{B}\times \SimplMor[\Simpl^\fm]{B}{C}\to \SimplMor[\Simpl^\fo\Simpl^\fm]{A}{C}$.

This shows that the functoriality of the sets $\SimplMor{A}{B}$ in $A$ and $B$, which comes from the one in \Cref{cor:Fhomotopyclasscompositionproduct}, is given by composition product with the homotopy classes of the $*$-ho\-mo\-mor\-phism. In particular we see that the composition product with $\llbracket \id_A\rrbracket\in\SimplMor{A}{A}$ is an identity morphism over $A$.

Since furthermore $\SimplMor[\fo]{A}{B}\times \SimplMor[\fo]{B}{C}\to \SimplMor[\fo\sharp\fo]{A}{C}\cong\SimplMor[\fo]{A}{C}$ identifies with the composition product of homotopy classes of $*$-ho\-mo\-mor\-phisms, we obtain the following. 

\begin{thm}\label{thm:defSimpltoticCategory}
The simpltotic morphism sets $\SimplMor{A}{B}$ are the morphism sets of a category whose objects are the \textCstar-algebras.
We call this category the \emph{simpltotic category}.

There is a canonical functor from the homotopy category of \textCstar-algebras into the simpltotic category which assigns to the homotopy class of a $*$-ho\-mo\-mor\-phism $\varphi\colon A\to B$ the morphism $\llbracket\varphi\rrbracket\coloneqq\llbracket A\xrightarrow{\varphi}B\cong\Simpl^\fo B\rrbracket \in\SimplMor[\fo]{A}{B}$. \qed
\end{thm}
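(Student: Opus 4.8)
The plan is to collect the category axioms from the results of \Cref{sec:compositionproduct} and then extract the functor from the degenerate cases of the product involving the one-element directed set $\fo$. A category structure on the sets $\SimplMor{A}{B}$ requires a composition law, associativity, and two-sided identities. The composition is furnished by \Cref{thm:compprodpassestolimits} and associativity is precisely \Cref{thm:associativity}, so the only thing left to establish is that the image of $\llbracket\id_A\rrbracket\in\SimplMor[\fo]{A}{A}$ under the canonical map of \Cref{cor:existenceofsimpletoticmorphismset} serves as a two-sided unit.

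To verify the unit laws I would treat the two products $\SimplMor[\fm]{A}{B}\times\SimplMor[\fo]{B}{B}\to\SimplMor[\fm\sharp\fo]{A}{B}$ and $\SimplMor[\fo]{A}{A}\times\SimplMor[\fm]{A}{B}\to\SimplMor[\fo\sharp\fm]{A}{B}$ separately, using that both $\fm\sharp\fo$ and $\fo\sharp\fm$ are isomorphic to $\fm$ in $\cD$, so that it suffices to compute at the level of the stage $\fm$ and then map into the colimit. In the first case the space $X_{\vartheta_{\fm,\fo}}$ collapses to $|\Delta^\fm|$ and both $P_{\vartheta_{\fm,\fo}}$ and $|\mu_{\fm,\fo}|$ are the identity, so $\Theta_{\fm,\fo}^*$ is the identity natural transformation on $\Simpl^\fm$; the product then reduces to the one of \Cref{cor:Fhomotopyclasscompositionproduct} with $F'=\Simpl^\fo=\id$, yielding $\llbracket\Simpl^\fm(\id_B)\circ\varphi\rrbracket=\llbracket\varphi\rrbracket$. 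In the second case $X_{\vartheta_{\fo,\fm}}$ only embeds as a subset of the ``staircase'' region of $|\Delta^\fm|\times|\Delta^\fm|$, so $\Theta_{\fo,\fm}$ is not literally the identity; instead I would exhibit a homotopy through continuous cofinal maps from $|\Delta^\fm|\approx|\Delta^{\fo\sharp\fm}|\xrightarrow{\Theta_{\fo,\fm}}|\Delta^\fo|\times|\Delta^\fm|\approx|\Delta^\fm|$ to $\id_{|\Delta^\fm|}$ and invoke \Cref{lem:independenceofcofinalmap} to conclude that $\Theta_{\fo,\fm}^*$ acts as the identity on morphism sets, so that this product too reduces to that of \Cref{cor:Fhomotopyclasscompositionproduct} and returns $\llbracket\varphi\rrbracket$.

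With composition, associativity and both unit laws in hand, the $\SimplMor{A}{B}$ are the morphism sets of a category. For the functor, I would send the homotopy class of $\varphi\colon A\to B$ to its image in $\SimplMor{A}{B}$ under $[A,B]=\SimplMor[\fo]{A}{B}\to\SimplMor{A}{B}$, which is well defined because, by \Cref{example:oneelementdirectedset}, the set $\SimplMor[\fo]{A}{B}$ already consists of homotopy classes. Preservation of identities is the unit-law computation above, and preservation of composition amounts to identifying $\SimplMor[\fo]{A}{B}\times\SimplMor[\fo]{B}{C}\to\SimplMor[\fo\sharp\fo]{A}{C}\cong\SimplMor[\fo]{A}{C}$ with the ordinary composition of homotopy classes of $*$-ho\-mo\-mor\-phisms, which is the specialization $\fm=\fo$ of either unit computation.

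The step I expect to be the main obstacle is the $\fo\sharp\fm$ unit law: unlike $\fm\sharp\fo$, the map $\Theta_{\fo,\fm}$ is not the identity on the nose, and one must produce a homotopy to the identity that stays inside the staircase region and is filter-proper throughout, so that \Cref{lem:independenceofcofinalmap,lem:spacefunctoriality} apply and the homotopy descends to an $\fm$-homotopy of the induced self-maps of $\Simpl^\fm$.
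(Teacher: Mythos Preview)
Your proposal is correct and follows essentially the same route as the paper's own argument preceding the theorem: composition from \Cref{thm:compprodpassestolimits}, associativity from \Cref{thm:associativity}, the two unit laws handled separately via the collapse of $X_{\vartheta_{\fm,\fo}}$ and the staircase description of $X_{\vartheta_{\fo,\fm}}$, and the functor extracted from the $\fo\sharp\fo$ case. One small correction: in the $\fo\sharp\fm$ case the map $\Theta_{\fo,\fm}$ is not of the form $|\alpha|$ for a cofinal $\alpha$ between directed sets, so \Cref{lem:independenceofcofinalmap} does not literally apply; the correct reference (which you do cite in your final paragraph) is \Cref{lem:spacefunctoriality}, since what is needed is that a filter-proper homotopy of continuous filter-proper maps induces the same map on $\SimplMor[X]{A}{B}$.
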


Finally, we also want to show that there is a functor from the asymptotic category into the simpletotic category which is an isomorphism for separable \textCstar-algebras and extends the functor from \Cref{thm:defSimpltoticCategory}.
To this end, we first observe the following immediate consequence of \Cref{lem:Thetaassociativity,lem:FFprimehomotopic}:
Given directed sets $\fm_1,\dots,\fm_i$ and $1\leq j\leq i-2$, there is a directed set $\fk_j$ and cofinal maps 
\[(\fm_j\sharp\fm_{j+1})\sharp\fm_{j+2}\xleftarrow{\alpha_j}\fk_j\xrightarrow{\beta_j}\fm_j\sharp(\fm_{j+1}\sharp\fm_{j+2})\]
such that the diagram of natural transformations
\[\xymatrix{
\Simpl^{\fm_1}\cdots\Simpl^{\fm_j\sharp\fm_{j+1}}\cdots\Simpl^{\fm_i}\ar[r]
&\Simpl^{\fm_1}\cdots\Simpl^{(\fm_j\sharp\fm_{j+1})\sharp\fm_{j+2}}\cdots\Simpl^{\fm_i}\ar[d]
\\\Simpl^{\fm_1}\cdots\Simpl^{\fm_i}\ar[u]\ar[d]
&\Simpl^{\fm_1}\cdots\Simpl^{\fk_j}\cdots\Simpl^{\fm_i}
\\\Simpl^{\fm_1}\cdots\Simpl^{\fm_{j+1}\sharp\fm_{j+2}}\cdots\Simpl^{\fm_i}\ar[r]
&\Simpl^{\fm_1}\cdots\Simpl^{\fm_j\sharp(\fm_{j+1}\sharp\fm_{j+2})}\cdots\Simpl^{\fm_i}\ar[u]
}\]
induced by the various $\Theta$s and $\alpha_j,\beta_j$
commutes up to $\Simpl^{\fm_1}\cdots\Simpl^{\fk_j}\cdots\Simpl^{\fm_i}$-homotopy.
Thus, if we define a natural transformation $\Simpl^{\fm_1}\cdots\Simpl^{\fm_i}\to\Simpl^\fk$ by iteratively applying a $\Theta^*$ to two neighboring functors, then its $\Simpl^\fk$-homotopy class is independent of the order in which we contract neighboring functors, provided $\fk$ is large enough.

Combining this observation with the natural transformations $\tau^*\colon\Asymp\to\Simpl^{\N_+}$ from \Cref{ex:asymptoticsimpltoticrelation} we obtain for each $n\in\N$ canonical maps
\[F_n\colon\SimplMor[n]{A}{B}=\SimplMor[\Asymp^n]{A}{B}\to\SimplMor[(\Simpl^{\N_+})^n]{A}{B}\to\SimplMor[\fk]{A}{B}\to\SimplMor{A}{B}\,.\]
\begin{thm}\label{thm:asymptoticsimpltoticfunctor}
The maps described above constitute a functor from the asymptotic category to the simpltotic category.
\end{thm}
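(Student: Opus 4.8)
The plan is to define $F$ as the identity on objects and as $\varinjlim_n F_n$ on morphisms, where $\SimplMor[\infty]{A}{B}=\varinjlim_n\SimplMor[n]{A}{B}$ are the morphism sets of the asymptotic category. Three points need checking: that each $F_n$ is well-defined, that the $F_n$ are compatible with the structure maps $\SimplMor[n]{A}{B}\to\SimplMor[n+1]{A}{B}$ and hence descend to the limit, and that the resulting $F$ preserves identities and composition. Well-definedness of each $F_n$ — independence of the order of contraction of the $n$ copies of $\Simpl^{\N_+}$ and of the auxiliary directed set — is exactly the coherence observation stated just before the theorem, so only the last two points remain. Throughout I abbreviate by $T_n\colon\Asymp^n\to(\Simpl^{\N_+})^n$ the natural transformation obtained by inserting $\tau^*$ (from \Cref{ex:asymptoticsimpltoticrelation}) into each of the $n$ slots via \Cref{lem:functorialityundernatrualtransformationsoffunctors}, and I write $\eta\colon\id\to\Asymp$, $\eta'\colon\id\to\Simpl^{\N_+}$ for the natural transformations given by inclusion of constant functions.

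Two elementary facts drive the argument. First, since $\tau$ pulls back a constant function on $T$ to a constant function on $|\Delta^{\N_+}|$, one has $\tau^*\circ\eta=\eta'$. Second, for every directed set $\fm$ the composite $\Theta^*_{\fm,\N_+}\circ\Simpl^\fm(\eta')\colon\Simpl^\fm\to\Simpl^{\fm\sharp\N_+}$ is $\Simpl^{\fm\sharp\N_+}$-homotopic to the cofinal pullback $\mu_{\fm,\N_+}^*$. Indeed, writing $\vartheta=\vartheta_{\fm,\N_+}$ and $\mu=\mu_{\fm,\N_+}$, the transformation $\Simpl^\fm(\eta')$ sends a representative $g\in\Cb(|\Delta^\fm|;B)$ to the function on $|\Delta^\fm|\times|\Delta^{\N_+}|$ that is constant in the second variable, so $(|\mu|\times\id)^*\circ\Simpl^\fm(\eta')$ sends $g$ to $g\circ|\mu|\circ P_\vartheta=P_\vartheta^*(\mu^*g)$; hence $\Theta^*_{\fm,\N_+}\circ\Simpl^\fm(\eta')=I_\vartheta^*\circ P_\vartheta^*\circ\mu^*=(P_\vartheta\circ I_\vartheta)^*\circ\mu^*$, and $P_\vartheta\circ I_\vartheta$ is filter-properly homotopic to the identity by the proof of \Cref{lem:Xhomotopyequivalence}. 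Since $\mu$ is cofinal, $\mu^*$ is one of the structure maps of the colimit $\SimplMor{A}{B}$ and therefore acts as the identity there.

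Using these facts I would establish compatibility with the structure maps as follows. Taking the structure maps of the asymptotic directed system to be induced by $\eta$ in the innermost slot (cf.\ \cite{GueHigTro}), the interchange law (naturality of the $\tau^*$'s) gives $T_{n+1}\circ\Asymp^n(\eta)=(\Simpl^{\N_+})^n(\eta')\circ T_n$. Thus $F_{n+1}$ applied to the image of $\llbracket\varphi\rrbracket$ is computed by contracting the $n+1$ copies of $\Simpl^{\N_+}$ in $(\Simpl^{\N_+})^n(\eta')\circ T_n\circ\varphi$; by the coherence observation I may contract the innermost pair first, whose inner factor is the constant inclusion $\eta'$. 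By the second fact above, applied inside $(\Simpl^{\N_+})^{n-1}$ via \Cref{lem:FFprimehomotopic}, this first contraction is homotopic to a cofinal pullback, which is invisible in the limit; what remains is precisely the iterated contraction computing $F_n\llbracket\varphi\rrbracket$, composed with that cofinal pullback. By \Cref{thm:compprodpassestolimits} the result still represents $F_n\llbracket\varphi\rrbracket$ in $\SimplMor{A}{B}$, so $F_{n+1}$ agrees with $F_n$ after the structure map and the $F_n$ descend to a well-defined map $F\colon\SimplMor[\infty]{A}{B}\to\SimplMor{A}{B}$.

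Finally I would check that $F$ is a functor. The asymptotic identity of $A$ is represented by $\eta_A\colon A\to\Asymp A$; applying $\tau^*$ turns it into the constant inclusion $\eta'_A\colon A\to\Simpl^{\N_+}A$, whose class by \Cref{lem:simpltoticmorphismgivenbystarhomomorphism} is the image of $\llbracket\id_A\rrbracket$, i.e.\ the identity of \Cref{thm:defSimpltoticCategory}. For composition, let $\llbracket\varphi\rrbracket\in\SimplMor[n]{A}{B}$ and $\llbracket\psi\rrbracket\in\SimplMor[m]{B}{C}$, with asymptotic product represented by $\Asymp^n(\psi)\circ\varphi$. The interchange law again yields $T_{n+m}\bigl(\Asymp^n(\psi)\circ\varphi\bigr)=(\Simpl^{\N_+})^n(T_m\circ\psi)\circ(T_n\circ\varphi)$, which is exactly the image of the pair $(T_n\circ\varphi,\,T_m\circ\psi)$ under the product of \Cref{cor:Fhomotopyclasscompositionproduct} for the functors $(\Simpl^{\N_+})^n$ and $(\Simpl^{\N_+})^m$. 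Applying $F_{n+m}$ contracts all $n+m$ copies of $\Simpl^{\N_+}$, and by the coherence observation I may contract the first $n$ and the last $m$ blocks separately, to functors $\Simpl^{\fk_1}$ and $\Simpl^{\fk_2}$, and then apply a single $\Theta^*_{\fk_1,\fk_2}$; the block contractions produce representatives of $F_n\llbracket\varphi\rrbracket$ and $F_m\llbracket\psi\rrbracket$, while the final $\Theta^*$ is by definition the simpltotic product. Hence $F$ carries the asymptotic product of $\llbracket\varphi\rrbracket,\llbracket\psi\rrbracket$ to the simpltotic product of $F\llbracket\varphi\rrbracket,F\llbracket\psi\rrbracket$, the independence of the auxiliary directed sets in the limit being guaranteed by \Cref{thm:compprodpassestolimits}. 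The main obstacle is precisely this last step, where the plain ``stacking'' composition of the asymptotic category must be matched with the $\Theta$-contraction composition of the simpltotic category; everything rests on the coherence of iterated contractions together with \Cref{thm:compprodpassestolimits}, which guarantee that reordering contractions and enlarging the auxiliary directed sets leave the class in the limit unchanged, whereas the remaining verifications (the two facts above and the interchange law) are routine.
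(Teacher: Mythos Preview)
Your proof is correct and follows essentially the same approach as the paper's: both rely on the coherence observation preceding the theorem and on contracting the $n+m$ copies of $\Simpl^{\N_+}$ block-wise (first $n$, then $m$, then a single $\Theta^*$) to match the asymptotic composition with the simpltotic one. The only organizational difference is that the paper proves the composition formula \eqref{eq:comparingasymptoticwithsimpltoticcomposition} \emph{first} and then obtains the cocone property as a corollary---observing that the structure map $\SimplMor[n]{A}{B}\to\SimplMor[n+1]{A}{B}$ is composition with $\llbracket\eta_B\rrbracket\in\SimplMor[1]{B}{B}$ and that $F_1\llbracket\eta_B\rrbracket$ is the simpltotic identity---whereas you establish the cocone property directly via your ``second elementary fact'' about $\Theta^*_{\fm,\N_+}\circ\Simpl^\fm(\eta')\simeq\mu^*$ and prove compatibility with composition separately. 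The paper's ordering is slightly more economical (your second fact becomes a special case of the composition formula), but the underlying content is the same.
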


Note that \Cref{ex:asymptoticsimpltoticrelation} shows that this functor is an isomorphism on morphism sets between separable \textCstar-algebras, if the group $G$ is countable.

\begin{proof}
Since we can choose a natural transformation $(\Simpl^{\N_+})^{n_1+n_2}\to\Simpl^\fk$ which first contracts the first $n_1$ factors to $\Simpl^{\fk_1}$ and the last $n_2$ factors to $\Simpl^{\fk_2}$ before combining them via $\Theta_{\fk_1,\fk_2}^*$, we obtain a commutative diagram
\[\xymatrix{
\SimplMor[(\Simpl^{\N_+})^{n_1}]{A}{B}\times\SimplMor[(\Simpl^{\N_+})^{n_2}]{B}{C}\ar[r]\ar[d]^{F_{n_1}\times F_{n_2}}
&\SimplMor[(\Simpl^{\N_+})^{n_1+n_2}]{A}{C}\ar[dr]^-{F_{n_1+n_2}}\ar[d]
&
\\\SimplMor[\fk_1]{A}{B}\times\SimplMor[\fk_2]{B}{C}\ar[r]
&\SimplMor[\Simpl^{\fk_1}\Simpl^{\fk_2}]{A}{C}\ar[r]_-{\Theta_{\fk_1,\fk_2}^*}
&\SimplMor[\fk]{A}{C}
}\]
and we conclude that
\begin{equation}
\label{eq:comparingasymptoticwithsimpltoticcomposition}
F_{n_1+n_2}(\llbracket\psi\rrbracket\circ\llbracket\varphi\rrbracket)=F_{n_1}(\llbracket\psi\rrbracket)\circ F_{n_2}(\llbracket\varphi\rrbracket)
\end{equation}
for all $\llbracket\varphi\rrbracket\in\SimplMor[n_1]{A}{B}$ and $\llbracket\psi\rrbracket\in\SimplMor[n_2]{B}{C}$.

Note that the maps $\SimplMor[n]{A}{B}\to \SimplMor[n+1]{A}{B}$ in the directed system of asymptotic morphism sets are actually given by taking the product with 
\[\llbracket \psi\colon B\to\Asymp B,b\mapsto [t\mapsto b]\rrbracket\in \SimplMor[1]{B}{B}\]
and $F_1\llbracket \psi\rrbracket\in\SimplMor{B}{B}$ is the identity morphism. Therefore, \eqref{eq:comparingasymptoticwithsimpltoticcomposition} implies that the $F_n$ constitute a cocone and hence we obtain canonical maps
\[F\colon\SimplMor[\infty]{A}{B}\coloneqq\varinjlim_{n\to\infty}\SimplMor[n]{A}{B}\to\SimplMor{A}{B}\]
for all $A,B$.

Then \eqref{eq:comparingasymptoticwithsimpltoticcomposition} also passes to the direct limit over $n_1,n_2$.
Since moreover $F_0$ clearly maps identity morphisms to identity morphisms, we see that the $F$ indeed constitute a functor between the asymptotic and the simpltotic category. 
\end{proof}

\section{Functors on the simpltotic category}
\label{sec:functors}

Chapter 3 of [GHT] shows that functors from the category of $G$-\textCstar-algebras to the category of $H$-\textCstar-algebras determine functors from the asymptotic category of $G$-\textCstar-algebras to the asymptotic category of $H$-\textCstar-algebras if they are exact and continuous. 
Here, continuity is not meant in the sense of category theory, i.\,e.\ that it preserves small limits, and hence we will avoid this denomination. Instead we will call it $\cI$-continuity, which happens to be a special case of \Cref{defn:functorproperties}.\ref{defn:it:functorcont} for the class $\cI$ of all intervals.

Unfortunately, these two properties are not quite sufficient to obtain the analogous result for the simpltotic category. We will therefore additionally assume that the functors are also cocontinuous with respect to a certain type of direct limits, this time in the sense of category theory.\footnote{An alternative to assuming cocontinuity would probably be to replace the assumption of $\cI$-continuity with the one of $\cZ$-continuity for a bigger class of spaces $\cC$, but this condition appears to be less managable in practice.}

Without further ado, let us define and recall these notions.
\begin{defn}\label{defn:functorproperties}
Let $G$, $H$ be two discrete groups and let $\tilde F$ be a functor from the category of $G$-\textCstar-algebras  to the category of $H$-\textCstar-algebras.
\begin{enumerate}
\item\label{defn:it:functorcont} Let $\cZ$ be a class of spaces. Then $\tilde F$ is called \emph{$\cZ$-continuous} if for every $G$-\textCstar-algebra $B$, every $Z\in\cZ$ and every $f\in \tilde F(\Cb(Z;B))$ the function 
\[\mu^Z_B(f)\colon Z\to \tilde F(B)\,,\quad z\mapsto \tilde F(\ev{z})(f)\]
is continuous.
\item\label{defn:it:functorexact} $\tilde F$ is called \emph{exact} if for every short exact sequence $0\to I\to A\to B\to 0$ of $G$-\textCstar-algebras the induced sequence of $H$-\textCstar-algebras $0\to \tilde F(I)\to \tilde F(A)\to \tilde F(B)\to 0$ is exact.
\item\label{defn:it:directedcoconut} $\tilde F$ is called \emph{cocontinuous under direct limits of ideals} if for every directed system $\{A_m\}_{m\in\fm}$ of ideals in a $G$-\textCstar-algebra $A$ ($\fm$ a directed set, $A_m$ ideal in $A$, $A_m\subset A_n$ if $m\leq n$) the canonical map
\[\varinjlim_{m\in\fm}\tilde F(A_m)\to \tilde F\left(\varinjlim_{m\in\fm}A_m\right)\]
is an isomorphism.
\end{enumerate}
\end{defn}

Note that the function $\hat f$ in point 1 is always bounded by $\|f\|$, because each $\tilde F(\ev{z})$ is a $*$-ho\-mo\-mor\-phism and hence has operator norm at most 1. Thus, if \ref{defn:it:functorcont} holds we have $*$-ho\-mo\-mor\-phisms $\mu_B^Z\colon \tilde F(\Cb(Z;B))\to \Cb(Z;\tilde F(B))$ and they are clearly natural in $B$.

\begin{lem}\label{lem:CWcontinuity}
If $\tilde F$ is $\cI$-continuous for the class $\cI$ of all intervals, then it is also $\cCW$-continuous for the class $\cCW$ of all CW-complexes.
\end{lem}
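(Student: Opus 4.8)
The plan is to deduce from $\cI$-continuity that $g\coloneqq\mu^X_B(f)$ is continuous along every path in $X$, and then to upgrade this ``path-continuity'' to honest continuity using that a CW-complex carries the weak (colimit) topology with respect to its cells. So fix a CW-complex $X$, a $G$-\textCstar-algebra $B$ and $f\in\tilde F(\Cb(X;B))$; recall from the remark after \Cref{defn:functorproperties} that $g\colon X\to\tilde F(B)$, $x\mapsto\tilde F(\ev{x})(f)$, is bounded by $\|f\|$. First I would record the path identity: for any continuous $\gamma\colon[0,1]\to X$ the pullback $\gamma^*\colon\Cb(X;B)\to\Cb([0,1];B)$, $h\mapsto h\circ\gamma$, is a ($G$-equivariant, graded) $*$-ho\-mo\-mor\-phism satisfying $\ev{s}\circ\gamma^*=\ev{\gamma(s)}$, so functoriality of $\tilde F$ gives
\[\tilde F(\ev{s})\bigl(\tilde F(\gamma^*)(f)\bigr)=\tilde F(\ev{\gamma(s)})(f)=g(\gamma(s))\,.\]
Hence $g\circ\gamma=\mu^{[0,1]}_B\bigl(\tilde F(\gamma^*)(f)\bigr)$, which is continuous because $[0,1]\in\cI$. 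Thus $g\circ\gamma$ is continuous for every path $\gamma$ in $X$.

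Next I would invoke the weak topology: $g$ is continuous if and only if $g\circ\Phi_\alpha$ is continuous for each characteristic map $\Phi_\alpha\colon D^{n_\alpha}\to X$ of a cell of $X$, since a subset of $X$ is closed exactly when its preimage under every $\Phi_\alpha$ is closed. Fixing one such $\Phi\coloneqq\Phi_\alpha$ (and writing $n\coloneqq n_\alpha$) and using that the disk $D^{n}$ is metrizable, it then suffices to prove that $g\circ\Phi$ is \emph{sequentially} continuous.

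To this end, given $x_k\to x_0$ in $D^n$, I would interpolate the whole sequence by a single path, exploiting convexity of the disk: put $s_k\coloneqq 1-\tfrac1k$, let $\sigma\colon[0,1]\to D^n$ traverse the affine segment from $x_k$ to $x_{k+1}$ on $[s_k,s_{k+1}]$, and set $\sigma(1)\coloneqq x_0$. Continuity of $\sigma$ at $1$ holds because each segment between $x_k$ and $x_{k+1}$ lies within $\max(\|x_k-x_0\|,\|x_{k+1}-x_0\|)$ of $x_0$. Then $\gamma\coloneqq\Phi\circ\sigma$ is a path in $X$, so $g\circ\gamma=(g\circ\Phi)\circ\sigma$ is continuous by the path identity of the first paragraph; evaluating at $s_k\to1$ yields $(g\circ\Phi)(x_k)\to(g\circ\Phi)(x_0)$, which is the desired sequential continuity.

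I expect the only genuine obstacle to be the passage from path-continuity to continuity. Continuity along individual lines or directions would not be enough (witness separately continuous functions on $\R^2$); what makes the argument work is, on the one hand, the colimit topology of the CW-complex, which reduces the problem to the metrizable, convex disks $D^n$, and, on the other hand, interpolating an \emph{entire} convergent sequence by one continuous path so that the hypothesis can be applied to a single interval. Everything else — the $*$-ho\-mo\-mor\-phism and equivariance properties of $\gamma^*$, the boundedness of $g$, and the reduction to sequential continuity — is routine.
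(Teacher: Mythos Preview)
Your argument is correct, but it takes a different route from the paper's. The paper iterates $\cI$-continuity \emph{algebraically}: using the identification $\Cb([0,1]^n;B)\cong\Cb\bigl([0,1];\Cb([0,1]^{n-1};B)\bigr)$ for compact intervals, one applies $\mu^{[0,1]}$ (which, as a $*$-ho\-mo\-mor\-phism, can be post-composed with $\Cb([0,1];\blank)$) repeatedly to obtain a $*$-ho\-mo\-mor\-phism $\tilde F(\Cb([0,1]^n;B))\to\Cb([0,1]^n;\tilde F(B))$ that one checks is $\mu^{[0,1]^n}_B$. This gives $\cC$-continuity for cubes, hence for disks $D^n\approx[0,1]^n$, and the CW weak topology finishes the proof exactly as in your second step. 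Your approach instead first establishes continuity along \emph{every} path via pullback by $\gamma^*$, reduces to disks by the weak topology, and then upgrades path-continuity on each disk to genuine continuity by the sequential interpolation trick. The paper's route is shorter and exploits the $*$-ho\-mo\-mor\-phism structure of $\mu$ more directly, avoiding any interpolation of sequences by paths; your route is more topological and makes explicit why intervals alone control the situation. Both are valid.
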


\begin{proof}
Iterated application of $\cI$-continuity shows that $\tilde F$ is also $\cC$-continuous for the class $\cC$ of all cubes, hence also $\cD$-continuous for the class $\cD\coloneqq\{D^n\mid n\in\N\}$ of all disks.
Since a function on a CW-complex is continuous if and only if its restriction to all closed cells are continuous, the claim follows.
\end{proof}

\begin{lem}
Let $\tilde F$ be an $\cI$-continuous and exact functor that is cocontinuous under direct limits of ideals and let $X$ be a CW-complex equipped with a filter as in \Cref{prop:homotopyequivalencerelation}.
Then for every $f\in \tilde F(\Cz(X;B))$, which can be identified with an ideal in $\tilde F(\Cb(X;B))$ by exactness of $\tilde F$, we have $\mu^Z_B(f)\in\Cz(X;\tilde F(B))$. Consequently, $\mu^Z$ induces natural transformations $\tilde F(\Cz(X;\blank))\to \Cz(X;\tilde F(\blank))$ and $\tilde F(\Ca(X;\blank))\to \Ca(X;\tilde F(\blank))$ which we denote by the same letter.
\end{lem}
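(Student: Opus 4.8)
The plan is to realise $\Cz(X;B)$ as a directed colimit of ideals to which the cocontinuity hypothesis applies, and then to exploit the fact that evaluation at a point lying in a basis subcomplex annihilates the corresponding ideal. First recall that by $\cCW$-continuity (\Cref{lem:CWcontinuity}) together with the remark following \Cref{defn:functorproperties}, the assignment $f\mapsto(z\mapsto\tilde F(\ev{z})(f))$ is a well-defined, norm-nonincreasing $*$-homomorphism $\mu^X_B\colon\tilde F(\Cb(X;B))\to\Cb(X;\tilde F(B))$; the content of the lemma is that it carries the ideal $\tilde F(\Cz(X;B))$ into $\Cz(X;\tilde F(B))$. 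Fix a filter basis $\filterbasis$ of $X$ consisting of subcomplexes, and for each $U\in\filterbasis$ put $J_U\coloneqq\{h\in\Cb(X;B)\mid h|_U=0\}$, the kernel of the restriction $*$-homomorphism to $U$, hence a closed ideal with $J_U\subseteq\Cz(X;B)$. Since $\filterbasis$ is downward directed under inclusion, the $J_U$ form a directed system of ideals under reverse inclusion.

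The one genuinely geometric step, which I expect to be the main (though essentially standard) obstacle, is the identity $\Cz(X;B)=\overline{\bigcup_{U\in\filterbasis}J_U}=\varinjlim_U J_U$. The inclusion $\supseteq$ is immediate. For $\subseteq$, given $f\in\Cz(X;B)$ and $\varepsilon>0$, I would choose $U\in\filterbasis$ with $\|f|_U\|<\varepsilon$ and observe that $N\coloneqq\{x\in X\mid\|f(x)\|<\varepsilon\}$ is an open neighbourhood of the closed subcomplex $U$. Because CW-complexes are normal, Urysohn's lemma supplies a continuous $\chi\colon X\to[0,1]$ with $\chi|_U=0$ and $\chi|_{X\setminus N}=1$; then $g\coloneqq\chi f$ lies in $J_U$ and $\|f-g\|=\|(1-\chi)f\|\le\varepsilon$, since $1-\chi$ is supported in $N$. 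This is exactly where the hypotheses on $X$ and on the filter enter.

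With this in hand, exactness of $\tilde F$ identifies each $\tilde F(J_U)$ with an ideal in $\tilde F(\Cb(X;B))$, and cocontinuity under direct limits of ideals (\Cref{defn:functorproperties}.\ref{defn:it:directedcoconut}) yields $\tilde F(\Cz(X;B))=\varinjlim_U\tilde F(J_U)=\overline{\bigcup_U\tilde F(J_U)}$ inside $\tilde F(\Cb(X;B))$. Consequently, for a given $f\in\tilde F(\Cz(X;B))$ and $\varepsilon>0$ there exist $U\in\filterbasis$ and $g\in\tilde F(J_U)$ with $\|f-g\|<\varepsilon$. Now for any $z\in U$ the evaluation $\ev{z}$ restricts to the zero homomorphism on $J_U$, i.e. $\ev{z}\circ\iota_U=0$ for the inclusion $\iota_U\colon J_U\hookrightarrow\Cb(X;B)$. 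Applying the functor and using that an exact $\tilde F$ preserves zero morphisms — which follows by applying $\tilde F$ to $0\to\{0\}\to\C\xrightarrow{\id}\C\to0$ to deduce $\tilde F(\{0\})=0$ and then factoring any zero homomorphism through $\{0\}$ — gives $\tilde F(\ev{z})(g)=0$, so that $\mu^X_B(g)|_U=0$.

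Finally I would conclude by the norm estimate: since $\mu^X_B$ is linear and norm-nonincreasing and $\mu^X_B(g)|_U=0$, we obtain $\|\mu^X_B(f)|_U\|=\|\mu^X_B(f-g)|_U\|\le\|f-g\|<\varepsilon$; as $U\in\filter$ and $\varepsilon$ was arbitrary, this is precisely $\mu^X_B(f)\in\Cz(X;\tilde F(B))$. The asserted natural transformations then follow formally: $\mu^X$ maps $\tilde F(\Cz(X;\blank))$ into $\Cz(X;\tilde F(\blank))$ and $\tilde F(\Cb(X;\blank))$ into $\Cb(X;\tilde F(\blank))$, so using exactness to write $\tilde F(\Ca(X;\blank))=\tilde F(\Cb(X;\blank))/\tilde F(\Cz(X;\blank))$ it descends to a natural transformation $\tilde F(\Ca(X;\blank))\to\Ca(X;\tilde F(\blank))$, with naturality inherited from that of $\mu^X$ in $B$.
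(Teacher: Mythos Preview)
Your proof is correct and follows essentially the same route as the paper: both write $\Cz(X;B)$ as the directed colimit of the ideals $J_U=\Cb(X,U;B)$, invoke cocontinuity, and then verify that $\mu^X_B$ sends each $\tilde F(J_U)$ into $\Cb(X,U;\tilde F(B))$. The only cosmetic differences are that you spell out the Urysohn argument for $\Cz(X;B)=\overline{\bigcup_U J_U}$ (which the paper takes for granted) and establish $\mu^X_B(g)|_U=0$ via pointwise evaluations $\tilde F(\ev_z)\circ\tilde F(\iota_U)=0$, whereas the paper packages the same fact into a commutative diagram with the restriction map $\Cb(X;B)\to\Cb(U;B)$.
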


\begin{proof}
Let $\filterbasis$ be a basis of the filter on $X$ consisting of subcomplexes of $X$.
For each $U\in\filterbasis$ let $\Cb(X,U;B)$ be the ideal in $\Cb(X;B)$ consisting of the functions vanishing on $U$.
Then $\Cz(X;B)=\varinjlim_{U\in\filterbasis}\Cb(X,U;B)$ and hence $\tilde F(\Cz(X;B))=\varinjlim_{U\in\filterbasis}\tilde F(\Cb(X,U;B))$ by cocontinuity. It therefore suffices to show that $\mu^Z_B$ maps each $\tilde F(\Cb(X,U;B))$ into $\Cb(X,U;\tilde F(B))$.
To this end, consider the commutative diagram
\[\xymatrix{
0\ar[r]&\tilde F(\Cb(X,U;B))\ar[r]\ar@{-->}[d]^{\mu^{X,U}_B}&\tilde F(\Cb(X;B))\ar[r]\ar[d]^{\mu^X_B}&\tilde F(\Cb(U;B))\ar[r]\ar[d]^{\mu^U_B}&0
\\0\ar[r]&\Cb(X,U;\tilde F(B))\ar[r]&\Cb(X;\tilde F(B))\ar[r]&\Cb(U;\tilde F(B))\ar[r]&0
}\]
whose rows are exact and whose left vertical arrow is induced by the the right two ones. In other words, the middle verticle map indeed restricts to a $*$-ho\-mo\-mor\-phism between the ideals.

For the second part, the commutative diagram with exact rows
\[\xymatrix{
0\ar[r]&\tilde F(\Cz(X;B))\ar[r]\ar[d]&\tilde F(\Cb(X;B))\ar[r]\ar[d]&\tilde F(\Ca(X;B))\ar[r]\ar@{-->}[d]&0
\\0\ar[r]&\Cz(X;\tilde F(B))\ar[r]&\Cb(X;\tilde F(B))\ar[r]&\Ca(X;\tilde F(B))\ar[r]&0
}\]
induces the right vertical arrow and its naturality in $B$ is clear.
\end{proof}

\begin{cor}\label{cor:functorordernaturaltransformation}
Let $\tilde F$ be an $\cI$-continuous and exact functor that is cocontinuous under direct limits of ideals and let $F=F_1\circ\dots\circ F_k$ be a functor as in \Cref{prop:homotopyequivalencerelation}. 
Then there is a canonical natural transformation $\eta^F\colon \tilde F\circ F\to F\circ \tilde F$.
\end{cor}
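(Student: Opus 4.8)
The plan is to assemble $\eta^F$ from the single-step natural transformations provided by the preceding lemma, composing them by whiskering along the decomposition $F=F_1\circ\dots\circ F_k$.

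First I would record that each factor $F_i$ already carries such a transformation $\eta^{F_i}\colon\tilde F\circ F_i\to F_i\circ\tilde F$. For $F_i=\Cz(X_i;\blank)$ or $F_i=\Ca(X_i;\blank)$ these are exactly the natural transformations produced by the preceding lemma. For $F_i=\Cb(X_i;\blank)$ I would use that $\tilde F$ is $\cCW$-continuous by \Cref{lem:CWcontinuity} (each $X_i$ being a CW-complex by the hypotheses inherited from \Cref{prop:homotopyequivalencerelation}), so that the remark following \Cref{defn:functorproperties} supplies the natural $*$-ho\-mo\-mor\-phism $\mu^{X_i}\colon\tilde F(\Cb(X_i;\blank))\to\Cb(X_i;\tilde F(\blank))$. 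In every case $F_i$ is read as a \textCstar-endofunctor of $G$-algebras on the left and of $H$-algebras on the right.

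Then I would construct $\eta^F$ by induction on $k$, the case $k=1$ being the previous step. For the inductive step I write $F=F_1\circ G$ with $G\coloneqq F_2\circ\dots\circ F_k$ and assume $\eta^G\colon\tilde F\circ G\to G\circ\tilde F$ is already built; I then define $\eta^F$ as the vertical composite
\[
\tilde F\circ F_1\circ G\xrightarrow{\;\eta^{F_1}G\;}F_1\circ\tilde F\circ G\xrightarrow{\;F_1\eta^G\;}F_1\circ G\circ\tilde F\,,
\]
where $\eta^{F_1}G$ denotes the right-whiskering of $\eta^{F_1}$ by $G$ (i.e.\ $\eta^{F_1}$ evaluated at the $G$-\textCstar-algebras $G(B)$) and $F_1\eta^G$ denotes the left-whiskering of $\eta^G$ by $F_1$ (i.e.\ the image of the transformation $\eta^G$ under the functor $F_1$). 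Tracking the group actions confirms that every intermediate term is a legitimate $G$- or $H$-\textCstar-algebra, so both whiskerings are well defined and their composite indeed goes $\tilde F(F(B))\to F(\tilde F(B))$. Naturality in $B$ is automatic, since the two factors are natural and whiskering preserves naturality.

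I do not expect a genuine obstacle here, as the corollary is essentially formal once the preceding lemma is in hand: all of the analytic input (pointwise continuity of the evaluation maps, exactness, and the cocontinuity argument identifying $\Cz(X;\blank)$ with a direct limit of ideals) has already been discharged there. The only point demanding a little care is the bookkeeping of the whiskering, namely performing it on the correct side at each stage and keeping the $G$- and $H$-incarnations of each $F_i$ straight throughout the induction.
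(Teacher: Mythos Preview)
Your proposal is correct and is precisely the paper's approach: the paper's entire proof reads ``Induction over $k$.'' You have simply spelled out what that induction means, correctly identifying the base cases (the preceding lemma handles $\Cz(X_i;\blank)$ and $\Ca(X_i;\blank)$, while $\mu^{X_i}$ from \Cref{lem:CWcontinuity} handles $\Cb(X_i;\blank)$) and the whiskering that assembles them in the inductive step.
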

\begin{proof}
Induction over $k$.
\end{proof}

\begin{lem}
Let $\tilde F$ be an $\cI$-continuous and exact functor that is cocontinuous under direct limits of ideals and let $F$ be a functor as in \Cref{prop:homotopyequivalencerelation}. 
Then the canonical map 
\begin{align*}
\tilde F_F\colon\SimplMor[F]{A}{B}&\to\SimplMor[F]{\tilde F(A)}{\tilde F(B)}
\\\llbracket \varphi\colon A\to F(B)\rrbracket&\mapsto\llbracket\eta^F_B\circ \tilde F(\varphi)\colon \tilde F(A)\to F(\tilde F(B))\rrbracket
\end{align*}
is well-defined.
\end{lem}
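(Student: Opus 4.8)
The map $\tilde F_F$ is manifestly well-defined on individual $*$-homomorphisms, since $\eta^F_B\circ\tilde F(\varphi)$ is just a composite of $*$-homomorphisms $\tilde F(A)\to\tilde F(F(B))\to F(\tilde F(B))$. Hence the only thing to prove is that $\tilde F_F$ respects $F$-homotopy. The plan is therefore: starting from an $F$-homotopy $\Phi\colon A\to F(\Ct([0,1];B))$ between $\varphi_0$ and $\varphi_1$, I would produce an $F$-homotopy between $\eta^F_B\circ\tilde F(\varphi_0)$ and $\eta^F_B\circ\tilde F(\varphi_1)$, i.\,e.\ a $*$-homomorphism $\tilde F(A)\to F(\Ct([0,1];\tilde F(B)))$ evaluating to these two maps at $0$ and $1$.

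First I would apply $\tilde F$ to $\Phi$, then the natural transformation $\eta^F$ of \Cref{cor:functorordernaturaltransformation} at the object $\Ct([0,1];B)$, and finally $F$ applied to the comparison map $\mu^{[0,1]}_B\colon\tilde F(\Ct([0,1];B))\to\Ct([0,1];\tilde F(B))$ coming from $\cI$-continuity (note $\Ct([0,1];B)=\Cb([0,1];B)$, as $[0,1]$ is a compact interval). This yields the candidate homotopy
\[\Psi\coloneqq F(\mu^{[0,1]}_B)\circ\eta^F_{\Ct([0,1];B)}\circ\tilde F(\Phi)\colon \tilde F(A)\to F(\Ct([0,1];\tilde F(B)))\,.\]

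To finish, I would compute $F(\ev{i})\circ\Psi$ for $i=0,1$ using two identities. The first is immediate from the defining formula $\mu^Z_B(f)(z)=\tilde F(\ev{z})(f)$: evaluating at $i$ gives $\ev{i}\circ\mu^{[0,1]}_B=\tilde F(\ev{i})$ as maps $\tilde F(\Ct([0,1];B))\to\tilde F(B)$. The second is naturality of $\eta^F$ applied to the $*$-homomorphism $\ev{i}\colon\Ct([0,1];B)\to B$, which reads $F(\tilde F(\ev{i}))\circ\eta^F_{\Ct([0,1];B)}=\eta^F_B\circ\tilde F(F(\ev{i}))$. Combining these with functoriality of $F$ and $\tilde F$ gives
\[F(\ev{i})\circ\Psi=F(\tilde F(\ev{i}))\circ\eta^F_{\Ct([0,1];B)}\circ\tilde F(\Phi)=\eta^F_B\circ\tilde F\bigl(F(\ev{i})\circ\Phi\bigr)=\eta^F_B\circ\tilde F(\varphi_i)\,,\]
so $\Psi$ is exactly the $F$-homotopy we need, and well-definedness follows.

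I do not anticipate a genuine obstacle: the argument is purely formal, the substance being only the careful bookkeeping that assembles $\mu^{[0,1]}_B$ from $\cI$-continuity and that keeps the two evaluation maps (on $B$ and on $\tilde F(B)$) straight. The one point deserving explicit mention is the compatibility $\ev{i}\circ\mu^{[0,1]}_B=\tilde F(\ev{i})$, which is trivial from the definition of $\mu$ but must be invoked precisely at the step above; the remainder is naturality of $\eta^F$ together with the functoriality of $F$ and $\tilde F$.
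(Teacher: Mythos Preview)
Your proof is correct and follows exactly the same approach as the paper: both construct the $F$-homotopy as the composite $F(\mu^{[0,1]}_B)\circ\eta^F_{\Ct([0,1];B)}\circ\tilde F(\Phi)$. You supply more detail in verifying the evaluations at $0$ and $1$ via the identity $\ev{i}\circ\mu^{[0,1]}_B=\tilde F(\ev{i})$ and naturality of $\eta^F$, whereas the paper simply asserts that this composite is the required homotopy.
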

\begin{proof}
If $\Phi\colon A\to F(\Ct([0,1];B))$ is an $F$-homotopy between $\varphi_0$ and $\varphi_1$, then the composition
\begin{align*}
\tilde F(A)&\xrightarrow{\tilde F(\Phi)}\tilde F(F(\Ct([0,1];B)))\xrightarrow{\eta^F_{\Ct([0,1];B)}}F(\tilde F(\Ct([0,1];B)))
\\&\xrightarrow{F(\mu^{[0,1]}_B)}F(\Ct([0,1];\tilde F(B)))
\end{align*}
is an $F$-homotopy between $\eta^F_B\circ \tilde F(\varphi_0)$ and $\eta^F_B\circ \tilde F(\varphi_1)$.
\end{proof}

\begin{lem}
Let $\tilde F$ be an $\cI$-continuous and exact functor that is cocontinuous under direct limits of ideals and let $F,F'$ be functor as in \Cref{prop:homotopyequivalencerelation}. 
Then the maps $\tilde F_F$, $\tilde F_{F'}$ and $\tilde F_{F\circ F'}$ are compatible with the products defined in \Cref{cor:Fhomotopyclasscompositionproduct}, that is, for $\llbracket\varphi\rrbracket\in\SimplMor[F]{A}{B}$ and $\llbracket\psi\rrbracket\in\SimplMor[F']{B}{C}$ we have
\[\tilde F_{F\circ F'}(\llbracket\psi\rrbracket\circ\llbracket\varphi\rrbracket) = \tilde F_{F'}\llbracket\psi\rrbracket\circ \tilde F_F\llbracket\varphi\rrbracket\in\SimplMor[F\circ F']{\tilde F(A)}{\tilde F(C)}\,.\]
\end{lem}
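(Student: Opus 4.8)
The plan is to verify the identity already at the level of representing $*$-homomorphisms, so that no genuine homotopy is needed. First I would unwind both sides. Fix representatives $\varphi\colon A\to F(B)$ of $\llbracket\varphi\rrbracket$ and $\psi\colon B\to F'(C)$ of $\llbracket\psi\rrbracket$. By \Cref{cor:Fhomotopyclasscompositionproduct} the product $\llbracket\psi\rrbracket\circ\llbracket\varphi\rrbracket$ is represented by $F(\psi)\circ\varphi\colon A\to F\circ F'(C)$, so by the definition of $\tilde F_{F\circ F'}$ the left-hand side is represented by
\[\eta^{F\circ F'}_C\circ\tilde F(F(\psi))\circ\tilde F(\varphi)\colon\tilde F(A)\to F\circ F'(\tilde F(C))\,.\]
On the other hand, $\tilde F_F\llbracket\varphi\rrbracket$ and $\tilde F_{F'}\llbracket\psi\rrbracket$ are represented by $\eta^F_B\circ\tilde F(\varphi)$ and $\eta^{F'}_C\circ\tilde F(\psi)$, so applying \Cref{cor:Fhomotopyclasscompositionproduct} shows that the right-hand side is represented by
\[F\bigl(\eta^{F'}_C\circ\tilde F(\psi)\bigr)\circ\eta^F_B\circ\tilde F(\varphi)=F(\eta^{F'}_C)\circ F(\tilde F(\psi))\circ\eta^F_B\circ\tilde F(\varphi)\,.\]

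The crucial input is a cocycle identity for the natural transformations of \Cref{cor:functorordernaturaltransformation}, namely
\[\eta^{F\circ F'}_C=F(\eta^{F'}_C)\circ\eta^F_{F'(C)}\colon\tilde F(F(F'(C)))\to F(F'(\tilde F(C)))\,.\]
I would establish this by induction on the number $k$ of elementary factors making up $F$, which is exactly how $\eta^{F\circ F'}$ is built in the proof of \Cref{cor:functorordernaturaltransformation}: the base case $k=1$ is the definitional step $\eta^{F_1\circ G}=F_1(\eta^G)\circ\eta^{F_1}_{G(\blank)}$, and the inductive step follows by splitting $F=F_1\circ\hat F$, applying this step with $G=\hat F\circ F'$, feeding in the induction hypothesis for $\hat F\circ F'$, and recombining via the same definitional step for $\hat F$.

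Given the cocycle identity, the computation closes immediately. Substituting it into the left-hand representative gives
\[\eta^{F\circ F'}_C\circ\tilde F(F(\psi))=F(\eta^{F'}_C)\circ\eta^F_{F'(C)}\circ\tilde F(F(\psi))\,,\]
and naturality of $\eta^F$ applied to the morphism $\psi\colon B\to F'(C)$ yields $\eta^F_{F'(C)}\circ\tilde F(F(\psi))=F(\tilde F(\psi))\circ\eta^F_B$. Composing with $\tilde F(\varphi)$ on the right then turns the left-hand representative into $F(\eta^{F'}_C)\circ F(\tilde F(\psi))\circ\eta^F_B\circ\tilde F(\varphi)$, which is precisely the right-hand representative; hence the two classes coincide. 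The only non-formal ingredient is the cocycle identity, and even that is a routine unravelling of the inductive construction of $\eta$. I expect the main thing to watch is purely bookkeeping: keeping track of which functor each instance of $\eta$ is indexed by and at which object, and over which argument each outer $F$ is applied, since the equality itself is entirely formal once these are pinned down.
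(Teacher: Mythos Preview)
Your proposal is correct and is essentially the same as the paper's proof: the paper packages the computation into a single commutative diagram whose three nontrivial cells are exactly the functoriality of $\tilde F$, the naturality square for $\eta^F$ applied to $\psi$, and the cocycle identity $\eta^{F\circ F'}_C=F(\eta^{F'}_C)\circ\eta^F_{F'(C)}$, which it declares ``readily verified.'' Your write-up spells out these same three ingredients linearly and additionally sketches the inductive proof of the cocycle identity, which the paper omits.
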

\begin{proof}
The claim follows directly from the commutative diagram
\[\xymatrix@C=10ex{
\tilde F(A)\ar[r]^{\tilde F(\varphi)}\ar[dr]_{\tilde F(F(\psi)\circ\varphi)}
&\tilde F(F(B))\ar[r]^{\eta^F_B}\ar[d]^{\tilde F(F(\psi))}
&F(\tilde F(B))\ar[d]_{F(\tilde F(\psi))}\ar@/^10ex/[dd]^{F(\eta^{F'}_C\circ \tilde F(\psi))}
\\
&\tilde F(F\circ F'(C))\ar[r]^{\eta^F_{F'(C)}}\ar[dr]_{\eta^{F\circ F'}_C}
&F(\tilde F(F'(C)))\ar[d]_{F(\eta^{F'}_C)}
\\&&F\circ F'(\tilde F(C))
}\]
which is readily verified.
\end{proof}

\begin{cor}
Let $\tilde F$ be an $\cI$-continuous and exact functor that is cocontinuous under direct limits of ideals and let $\fm,\fn$ be directed sets. Then the maps $\tilde F_{\Simpl^\fm}$, $\tilde F_{\Simpl^\fn}$ and $\tilde F_{\Simpl^{\fm\sharp\fn}}$ are compatible with the composition products constructed in \Cref{sec:productconstruction}, that is, 
\[\tilde F_{\Simpl^{\fm\sharp\fn}}(\llbracket\psi\rrbracket\circ\llbracket\varphi\rrbracket) = \tilde F_{\Simpl^\fn}\llbracket\psi\rrbracket\circ \tilde F_{\Simpl^\fm}\llbracket\varphi\rrbracket\in\SimplMor[\Simpl^{\fm\sharp\fn}]{\tilde F(A)}{\tilde F(C)}\,.\]
\end{cor}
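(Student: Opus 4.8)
The plan is to reduce everything to the preceding lemma by exploiting the factorization of the simpltotic composition product from \Cref{sec:productconstruction}. Recall that by construction
\[\llbracket\psi\rrbracket\circ\llbracket\varphi\rrbracket = \Theta^*\bigl(\llbracket\Simpl^\fm(\psi)\circ\varphi\rrbracket\bigr),\]
where $\llbracket\Simpl^\fm(\psi)\circ\varphi\rrbracket\in\SimplMor[\Simpl^\fm\Simpl^\fn]{A}{C}$ is the product of \Cref{cor:Fhomotopyclasscompositionproduct} for $F=\Simpl^\fm$, $F'=\Simpl^\fn$, and $\Theta^*$ is natural in the algebra variable, so the same $\Theta^*$ serves both for $A,B,C$ and for $\tilde F(A),\tilde F(B),\tilde F(C)$. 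The preceding lemma, applied with these $F,F'$, already shows that $\tilde F_{\Simpl^\fm\Simpl^\fn}$ intertwines this first product with the corresponding one for $\tilde F(A),\tilde F(B),\tilde F(C)$. Hence, once we know that $\tilde F$ commutes with $\Theta^*$, that is
\[\tilde F_{\Simpl^{\fm\sharp\fn}}\circ\Theta^* = \Theta^*\circ\tilde F_{\Simpl^\fm\Simpl^\fn},\]
the claim will follow by applying $\Theta^*$ to both sides of the identity furnished by the preceding lemma and reading off the definition of the simpltotic product.

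It therefore remains to establish this last compatibility, and the main tool is the following general principle. If $\zeta\colon F\to F'$ is a natural transformation of functors as in \Cref{prop:homotopyequivalencerelation} whose components fit into a commuting square $\eta^{F'}_B\circ\tilde F(\zeta_B)=\zeta_{\tilde F(B)}\circ\eta^F_B$ for every $B$ (naturality of $\eta$ in the functor variable), then $\tilde F_{F'}\circ\zeta_* = \zeta_*\circ\tilde F_F$, where $\zeta_*$ is the map of \Cref{lem:functorialityundernatrualtransformationsoffunctors}. This is immediate from the defining formula $\tilde F_F\llbracket\varphi\rrbracket=\llbracket\eta^F_B\circ\tilde F(\varphi)\rrbracket$: evaluating both sides on a representative $\varphi$ gives $\llbracket\eta^{F'}_B\circ\tilde F(\zeta_B)\circ\tilde F(\varphi)\rrbracket$ and $\llbracket\zeta_{\tilde F(B)}\circ\eta^F_B\circ\tilde F(\varphi)\rrbracket$, which agree by the square. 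Since $\Theta^* = (P_\vartheta^*)^{-1}\circ(|\mu|\times\id)^*$ and $P_\vartheta^*$ is a bijection on all relevant morphism sets by \Cref{lem:Xhomotopyequivalence}, it suffices to verify the square for the two natural transformations $\zeta=(|\mu|\times\id)^*\colon\Simpl^\fm\Simpl^\fn\to\Ca(X_\vartheta;\blank)$ and $\zeta=P_\vartheta^*\colon\Simpl^{\fm\sharp\fn}\to\Ca(X_\vartheta;\blank)$, and then to invert the latter.

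The verification of the square reduces, via exactness of $\tilde F$, to the level of the structure maps $\mu^X$ of \Cref{defn:functorproperties} out of which the $\eta^F$ are assembled in \Cref{cor:functorordernaturaltransformation}. For any continuous filter-proper map $g\colon X\to X'$ one has the pointwise identity $\ev{z}\circ g^* = \ev{g(z)}$, whence
\[\mu^X_B\circ\tilde F(g^*) = g^*\circ\mu^{X'}_B\colon\tilde F(\Cb(X';B))\to\Cb(X;\tilde F(B)),\]
as one checks by evaluating at $z\in X$. For $P_\vartheta$ this single space map immediately yields the required square after passing through the quotients $\Cz\hookrightarrow\Cb\twoheadrightarrow\Ca$, and hence so does $(P_\vartheta^*)^{-1}$. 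For $(|\mu|\times\id)^*$ one argues the same way, but must additionally match the two-layer transformation $\eta^{\Simpl^\fm\Simpl^\fn}=\Ca(|\Delta^\fm|;\mu^{|\Delta^\fn|})\circ\mu^{|\Delta^\fm|}$ against the single $\mu^{X_\vartheta}$; this uses that the inclusion $\Cb(|\Delta^\fm|;\Cb(|\Delta^\fn|;\blank))\subset\Cb(|\Delta^\fm|\times|\Delta^\fn|;\blank)$ intertwines the iterated transformation with $\mu^{|\Delta^\fm|\times|\Delta^\fn|}$, which in turn rests on $\ev{(x_1,x_2)}=\ev{x_2}\circ\ev{x_1}$.

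I expect the bookkeeping for $(|\mu|\times\id)^*$ to be the only genuinely delicate point: its source $\Simpl^\fm\Simpl^\fn$ is a two-fold composite, so the relevant $\eta$ is itself layered, and one has to track it carefully through the inclusion of the product of function algebras and through the descent to the $\Ca$-quotients guaranteed by the exactness and cocontinuity hypotheses on $\tilde F$. Everything else is a routine diagram chase built from the elementary evaluation identities above.
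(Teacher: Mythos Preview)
Your proposal is correct and follows the same overall strategy as the paper: reduce to the preceding lemma plus the compatibility $\tilde F_{\Simpl^{\fm\sharp\fn}}\circ\Theta^* = \Theta^*\circ\tilde F_{\Simpl^\fm\Simpl^\fn}$, and establish the latter via the interchange square $\eta^{F'}_C\circ\tilde F(\zeta_C)=\zeta_{\tilde FC}\circ\eta^F_C$. The only difference is granularity: the paper treats $\Theta^*$ as a single natural transformation $\Simpl^\fm\Simpl^\fn\to\Simpl^{\fm\sharp\fn}$ (it is, since $\Theta=(|\mu|\times\id)\circ I_\vartheta$ is an honest continuous map) and checks the square in one step on a representative in $\tilde F(\Cb(|\Delta^\fm|;\Cb(|\Delta^\fn|;C)))$, whereas you factor $\Theta^*=(P_\vartheta^*)^{-1}\circ(|\mu|\times\id)^*$ through $\Ca(X_\vartheta;\blank)$ and check each piece separately via the evaluation identity $\ev{z}\circ g^*=\ev{g(z)}$. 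Your route is slightly longer but makes explicit exactly where the two-layer structure of $\eta^{\Simpl^\fm\Simpl^\fn}$ enters; the paper's route is shorter but leaves that bookkeeping implicit in the phrase ``verified in a straightforward manner on a representative''.
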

\begin{proof}
In light of the lemma, we have to prove that the diagram
\[\xymatrix{
\SimplMor[\Simpl^\fm\Simpl^\fn]{A}{C}\ar[r]^{\Theta^*}\ar[d]^{\tilde F_{\Simpl^\fm\Simpl^\fn}}
&\SimplMor[\fm\sharp\fn]{A}{C}\ar[d]^{\tilde F_{\Simpl^{\fm\sharp\fn}}}
\\\SimplMor[\Simpl^\fm\Simpl^\fn]{\tilde F A}{\tilde FC}\ar[r]^{\Theta^*}
&\SimplMor[\fm\sharp\fn]{\tilde F A}{\tilde FC}
}\]
commutes. This follows from commutativity of the diagram
\[\xymatrix@C=8ex{
\tilde F\Simpl^\fm\Simpl^\fn C\ar[r]^{\tilde F(\Theta^*)}\ar[d]^{\eta^{\Simpl^\fm\Simpl^\fn}_C}
&\tilde F\Simpl^{\fm\sharp\fn}C\ar[d]^{\eta^{\Simpl^{\fm\sharp\fn}}_C}
\\\Simpl^\fm\Simpl^\fn \tilde FC\ar[r]^{\Theta^*}
&\Simpl^{\fm\sharp\fn}\tilde FC
}\]
which in turn can be verified in a straightforward manner on a representative in $F(\Cb(|\Delta^\fm|;\Cb(|\Delta^\fn|;C))$ of an element in the top left corner.
\end{proof}

\begin{thm}\label{thm:functorsonsimpletoticcategory}
Let $\tilde F$ be an $\cI$-continuous and exact functor that is cocontinuous under direct limits of ideals.
The maps $\tilde F_{\Simpl^\fm}$ constitute natural transformations of functors over $\cD^\op$ and hence induce maps on the direct limits.
The resulting maps $\tilde F_{\Simpl}\colon\SimplMor{A}{B}\to \SimplMor{\tilde FA}{\tilde FB}$ together with the map $A\mapsto \tilde F_\Simpl(A)\coloneqq \tilde FA$ on the class of objects are a functor from the simpltotic category of $G$-\textCstar-algebras to the simpltotic category of $H$-\textCstar-algebras such that $\tilde F_\Simpl\llbracket f\rrbracket=\llbracket \tilde Ff\rrbracket$ for all $*$-ho\-mo\-mor\-phisms $f$.
\end{thm}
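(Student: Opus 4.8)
The plan is to prove the two assertions separately: first that the family $\{\tilde F_{\Simpl^\fm}\}_\fm$ is natural over $\cD^\op$, so that it descends to the colimits defining $\SimplMor{\blank}{\blank}$, and then that the induced map $\tilde F_\Simpl$ respects composition and units.

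First I would establish the naturality. For a cofinal map $\alpha\colon\fm\to\fn$ the restriction $\alpha^*\colon\SimplMor[\fn]{A}{B}\to\SimplMor[\fm]{A}{B}$ is precisely the map $(\alpha^*)_*$ of \Cref{lem:functorialityundernatrualtransformationsoffunctors} attached to the natural transformation $\alpha^*=|\alpha|^*\colon\Simpl^\fn\to\Simpl^\fm$. Unwinding the definition of $\tilde F_{\Simpl^\fm}$ through $\eta^{\Simpl^\fm}$ (\Cref{cor:functorordernaturaltransformation}), the square asserting naturality over $\cD^\op$ reduces to the identity of $*$-ho\-mo\-mor\-phisms
\[
\alpha^*_{\tilde FB}\circ\eta^{\Simpl^\fn}_B=\eta^{\Simpl^\fm}_B\circ\tilde F(\alpha^*_B)\colon\tilde F(\Simpl^\fn B)\to\Simpl^\fm\tilde FB\,.
\]
I would verify this on $\Cb$-representatives, where $\eta^{\Simpl^\fm}_B$ is induced by the map $\mu^{|\Delta^\fm|}_B$ of the preceding lemmas, characterized by $\mu^{|\Delta^\fm|}_B(f)(x)=\tilde F(\ev{x})(f)$. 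Since $\ev{x}\circ|\alpha|^*=\ev{|\alpha|(x)}$, both sides send $f$ to the function $x\mapsto\tilde F(\ev{|\alpha|(x)})(f)$, using only functoriality of $\tilde F$. This is the one genuinely new computation and the crux of the proof.

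With naturality in hand, the maps $\tilde F_{\Simpl^\fm}$ form a morphism of $\cD^\op$-shaped diagrams; since both $\SimplMor{A}{B}$ and $\SimplMor{\tilde FA}{\tilde FB}$ are the associated colimits, which exist by \Cref{cor:existenceofsimpletoticmorphismset}, they induce the desired map $\tilde F_\Simpl$ on the limits. To see it is a functor I would assemble two already-established facts. Compatibility with composition at the finite level is exactly the corollary immediately preceding this theorem, namely $\tilde F_{\Simpl^{\fm\sharp\fn}}(\llbracket\psi\rrbracket\circ\llbracket\varphi\rrbracket)=\tilde F_{\Simpl^\fn}\llbracket\psi\rrbracket\circ\tilde F_{\Simpl^\fm}\llbracket\varphi\rrbracket$; passing this relation to the direct limits via \Cref{thm:compprodpassestolimits} shows that $\tilde F_\Simpl$ preserves the composition product of the simpltotic category.

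Finally, for units and the asserted compatibility with $*$-ho\-mo\-mor\-phisms, recall from \Cref{example:oneelementdirectedset} that a $*$-ho\-mo\-mor\-phism $f\colon A\to B$ represents a class in $\SimplMor[\fo]{A}{B}$ via $B\cong\Simpl^\fo B$, and that $\Simpl^\fo=\id$. Hence $\eta^{\Simpl^\fo}$ is the identity transformation and $\tilde F_{\Simpl^\fo}\llbracket f\rrbracket=\llbracket\eta^{\Simpl^\fo}_B\circ\tilde F(f)\rrbracket=\llbracket\tilde Ff\rrbracket$; specializing to $f=\id_A$ shows that identity morphisms are preserved, and the general statement $\tilde F_\Simpl\llbracket f\rrbracket=\llbracket\tilde Ff\rrbracket$ follows in the colimit. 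Together with the previous paragraph this exhibits $\tilde F_\Simpl$ as a functor of the stated form, with the naturality computation of the first step being the principal obstacle and everything else a matter of citing the preceding results.
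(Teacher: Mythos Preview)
Your proposal is correct and follows essentially the same route as the paper: you verify the commuting square $\alpha^*_{\tilde FB}\circ\eta^{\Simpl^\fn}_B=\eta^{\Simpl^\fm}_B\circ\tilde F(\alpha^*_B)$ (which the paper states as a diagram and calls ``straightforward''), then invoke the preceding corollary for compatibility with composition, and finally handle units via $\Simpl^\fo\cong\id$. Your explicit verification of the square on $\Cb$-representatives via $\ev{x}\circ|\alpha|^*=\ev{|\alpha|(x)}$ is exactly the computation the paper leaves to the reader.
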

\begin{proof}
Similar to the proof of the preceding corollary, it is straighforward to verify for any cofinal map $\alpha\colon\fm\to\fn$ that the diagram
\[\xymatrix@C=8ex{
\tilde F\Simpl^\fn C\ar[r]^{\tilde F(\alpha^*)}\ar[d]^{\eta^{\Simpl^\fn}_C}
&\tilde F\Simpl^{\fm}C\ar[d]^{\eta^{\Simpl^{\fm}}_C}
\\\Simpl^\fn \tilde FC\ar[r]^{\alpha^*}
&\Simpl^{\fm}\tilde FC
}\]
commutes and the first claim follows from this.

Compatibility of $\tilde F_\Simpl$ with the composition products in the simpletotic category are now a direct consequence of the preceding corollary.

Clearly, $\tilde F_{\Simpl^\fo}$ maps the class of a $*$-ho\-mo\-mor\-phism $f\colon A\to B=\Simpl^\fo B$ to the class of $\tilde Ff\colon\tilde FA\to\tilde FB=\Simpl^\fo\tilde FB$ and hence we get $\tilde F_\Simpl\llbracket f\rrbracket=\llbracket \tilde Ff\rrbracket$.
In particular this also shows that $\tilde F_\Sigma$ maps the identity morphism over $A$ to the identity morphism over $\tilde F_\Simpl A$.
\end{proof}

In the remainder of this section we discuss two important examples of $\cI$-continuous and exact functor that are cocontinuous under direct limits of ideals, namely the maximal tensor product and the full crossed product, cf.\ \cite[Chapter 4]{GueHigTro}. In the following we will always use the symbol $\grtensor$ for the \emph{maximal} graded tensor product of \textCstar-algebras. If we are dealing with ungraded \textCstar-algebras, then we will also use the symbol $\otimes$.

\begin{lem}\label{lem:propertiesoftensorproductfunctors}
For any \textCstar-algebra $D$, the two maximal tensor product functors $B\mapsto B\grtensor D$ and $B\mapsto D\grtensor B$ are $\cI$-continuous, exact and cocontinuous.
\end{lem}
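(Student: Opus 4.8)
The plan is to verify the three properties for the functor $\tilde F(B)=B\grtensor D$ and then to deduce them for $B\mapsto D\grtensor B$ from the graded flip isomorphism $B\grtensor D\xrightarrow{\cong}D\grtensor B$, $b\grtensor d\mapsto(-1)^{|b||d|}d\grtensor b$, which is a natural isomorphism of functors. Since $\cI$-continuity, exactness and cocontinuity under direct limits of ideals are all invariant under natural isomorphism of functors, this reduction is harmless and we may concentrate on $B\grtensor D$.

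First I would establish $\cI$-continuity; in fact the argument gives $\cZ$-continuity for the class of \emph{all} topological spaces, at no extra cost. Given $f\in\Cb(Z;B)\grtensor D$ one must check that $z\mapsto(\ev{z}\grtensor\id_D)(f)$ is continuous. For an elementary tensor $g\grtensor d$ this map is $z\mapsto g(z)\grtensor d$, which is continuous because $g$ is continuous and $b\mapsto b\grtensor d$ is bounded; the same follows for finite sums of elementary tensors by linearity. Such sums are dense in $\Cb(Z;B)\grtensor D$, and every $\ev{z}\grtensor\id_D$ is a contraction, so an arbitrary $f$ is approximated uniformly in $z$ by continuous functions $Z\to B\grtensor D$. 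A uniform limit of continuous functions is continuous, which settles this point.

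For exactness I would invoke the classical theorem that the maximal (graded) tensor product preserves short exact sequences, see \cite[Chapter 4]{GueHigTro}. Concretely, for $0\to I\to A\to A/I\to 0$ the induced map $A\grtensor D\to(A/I)\grtensor D$ is surjective and right-exactness is immediate from density and the universal property; the only substantial point is that its kernel is exactly $I\grtensor D$, equivalently that the canonical map $I\grtensor D\to A\grtensor D$ is isometric. This injectivity is precisely the content of the cited theorem, and I expect it to be the one genuinely nontrivial ingredient in the whole lemma. I would not reprove it.

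Finally, cocontinuity follows from exactness together with a density argument. Let $\{A_m\}_{m\in\fm}$ be a directed system of ideals of $A$ and put $C\coloneqq\varinjlim_m A_m=\overline{\bigcup_m A_m}$, which is again an ideal of $A$. Each $A_m$ is an ideal of $C$, so by exactness the inclusion induces an isometric embedding $A_m\grtensor D\hookrightarrow C\grtensor D$; these embeddings are compatible with the connecting maps, whence the canonical map $\varinjlim_m(A_m\grtensor D)\to C\grtensor D$ is isometric, with image the closure of $\bigcup_m(A_m\grtensor D)$. This image contains every elementary tensor $a\grtensor d$ with $a\in\bigcup_m A_m$ and $d\in D$, hence all their finite linear combinations, which form a dense subset of $C\grtensor D$ because $\bigcup_m A_m$ is dense in $C$. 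A closed subalgebra containing a dense set is everything, so the canonical map is also surjective, and therefore an isomorphism, as required.
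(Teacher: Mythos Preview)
Your proof is correct. The paper itself does not argue any of the three points but simply cites references: \cite[Lemma~4.1]{GueHigTro} and \cite[Section~1.9]{Wassermann} for exactness, \cite[Lemma~4.2]{GueHigTro} for $\cI$-continuity, and \cite[II.9.6.5]{BlackadarOA} for cocontinuity via the universal properties of direct limits and maximal tensor products. Your arguments for $\cI$-continuity and exactness unpack exactly what those references contain, so there is no substantive difference there.

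For cocontinuity you take a slightly different route: instead of invoking the universal property of the maximal tensor product directly (which is how the Blackadar reference handles it), you deduce it from exactness plus a density argument. This is entirely valid, and in fact it is precisely the strategy the paper adopts a few lines later when proving cocontinuity of the full crossed product functor, where the universal-property shortcut is less readily available. Your approach has the mild advantage of being uniform across both functors; the paper's citation is shorter but outsources the work.
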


\begin{proof}[References for proof]
For exactness see \cite[Lemma 4.1]{GueHigTro} and \cite[Section 1.9]{Wassermann} and for $\cI$-continuity see \cite[Lemma 4.2]{GueHigTro}. Cocontinuity follows from universal properties, as was mentioned in \cite[II.9.6.5]{BlackadarOA}.
\end{proof}

Note that the lemma stays true if we take group actions on the \textCstar-algebras into account, because the three properties of the functor make no use of it. Therefore, \Cref{thm:functorsonsimpletoticcategory,lem:propertiesoftensorproductfunctors} yield the following.

\begin{cor}\label{cor:simpltotictensorproductfunctors}
For an $H$-\textCstar-algebra $D$ there is a functor from the simpletotic category of $G$-\textCstar-algebras to the simpletotic category of $(G\times H)$-\textCstar-algebras which associates to the class of a $*$-ho\-mo\-mor\-phism $\varphi \colon A\to \Simpl^\fm B$ the class $\llbracket\varphi\rrbracket\grtensor\id$ represented by the composition 
\[A\grtensor D\xrightarrow{\varphi\grtensor\id}(\Simpl^\fm B)\grtensor D\xrightarrow{\eta^{\Simpl^\fm}_B}\Simpl^\fm (B\grtensor D)\,.\]
Similarily, for a $G$-\textCstar-algebra $D$ there is a functor from the simpletotic category of $H$-\textCstar-algebras to the simpletotic category of $(G\times H)$-\textCstar-algebras which associates to the class of a $*$-ho\-mo\-mor\-phism $\psi\colon A\to \Simpl^\fn B$ the class $\id\grtensor\llbracket\psi\rrbracket$ represented by the composition 
\[D\grtensor A\xrightarrow{\id\grtensor\psi}D\grtensor \Simpl^\fn B\xrightarrow{\eta^{\Simpl^\fn}_B}\Simpl^\fn (D\grtensor B)\,.\]
\qed
\end{cor}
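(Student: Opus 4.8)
The plan is to recognize both claimed functors as instances of the general machinery of \Cref{thm:functorsonsimpletoticcategory}, applied to the tensor product functors of \Cref{lem:propertiesoftensorproductfunctors}. First I would set $\tilde F\coloneqq(\blank)\grtensor D$. Since $B$ ranges over $G$-\textCstar-algebras while $D$ is a fixed $H$-\textCstar-algebra, the $G$-action on $B$ and the $H$-action on $D$ assemble into a $(G\times H)$-action on $B\grtensor D$, so that $\tilde F$ is a functor from $G$-\textCstar-algebras to $(G\times H)$-\textCstar-algebras; the target group ``$H$'' of \Cref{defn:functorproperties,thm:functorsonsimpletoticcategory} is here played by $G\times H$. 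By \Cref{lem:propertiesoftensorproductfunctors}, together with the remark immediately preceding the corollary that none of the three relevant properties makes any use of the group actions, this $\tilde F$ is $\cI$-continuous, exact and cocontinuous under direct limits of ideals. Hence \Cref{thm:functorsonsimpletoticcategory} applies and produces a functor $\tilde F_\Simpl$ from the simpltotic category of $G$-\textCstar-algebras to that of $(G\times H)$-\textCstar-algebras, satisfying $\tilde F_\Simpl\llbracket f\rrbracket=\llbracket\tilde Ff\rrbracket$ on $*$-ho\-mo\-mor\-phisms.

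Next I would match this abstractly produced functor with the explicit formula in the statement. By the construction of $\tilde F_{\Simpl^\fm}$ in the lemma preceding \Cref{thm:functorsonsimpletoticcategory}, the class $\llbracket\varphi\colon A\to\Simpl^\fm B\rrbracket$ is sent to $\llbracket\eta^{\Simpl^\fm}_B\circ\tilde F(\varphi)\rrbracket$, where $\eta^{\Simpl^\fm}\colon\tilde F\circ\Simpl^\fm\to\Simpl^\fm\circ\tilde F$ is the natural transformation of \Cref{cor:functorordernaturaltransformation}. Since $\tilde F(\varphi)=\varphi\grtensor\id$ and $\eta^{\Simpl^\fm}_B$ is, by construction, the canonical map $(\Simpl^\fm B)\grtensor D\to\Simpl^\fm(B\grtensor D)$, the composite $\eta^{\Simpl^\fm}_B\circ\tilde F(\varphi)$ is precisely the two-step composition displayed in the statement, i.e.\ the representative of $\llbracket\varphi\rrbracket\grtensor\id$. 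The second functor is then obtained verbatim by instead taking $\tilde F\coloneqq D\grtensor(\blank)$, which by the same lemma has all three required properties and yields the analogous composition $\id\grtensor\psi$ followed by $\eta^{\Simpl^\fn}_B$.

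The only point that genuinely warrants checking — and the one routine obstacle — is the identification of the abstract $\eta^{\Simpl^\fm}$ from \Cref{cor:functorordernaturaltransformation} with the explicit tensoring map appearing in the statement. This I would verify by unwinding the inductive construction of $\eta^F$: at the bottom level $F=\Cb(|\Delta^\fm|;\blank)$, the relevant transformation $\bigl(\Cb(|\Delta^\fm|;B)\bigr)\grtensor D\to\Cb(|\Delta^\fm|;B\grtensor D)$ is exactly the map $\mu^{|\Delta^\fm|}_B$ furnished by $\cI$-continuity, and passing to the quotients by the respective $\Cz$-ideals (which is where exactness and cocontinuity enter) yields the asserted map on $\Simpl^\fm$. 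As the grading is subsumed in the standing convention that all tensor products are the graded maximal ones $\grtensor$, no further bookkeeping is needed, and the corollary follows.
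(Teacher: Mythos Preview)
Your proposal is correct and follows exactly the paper's approach: the corollary is stated with a \qed and no proof, being an immediate consequence of \Cref{thm:functorsonsimpletoticcategory} applied to the tensor product functors of \Cref{lem:propertiesoftensorproductfunctors}, with the explicit remark (which you reproduce) that the three properties are insensitive to the group actions. Your additional unwinding of $\eta^{\Simpl^\fm}$ as the canonical map $(\Simpl^\fm B)\grtensor D\to\Simpl^\fm(B\grtensor D)$ is more detail than the paper gives but is accurate and harmless.
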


\begin{lem}\label{lem:leftrightmaxtensorcommute}
Given $\llbracket\varphi\rrbracket\in\SimplMor{A_1}{A_2}$ and $\llbracket\psi\rrbracket\in\SimplMor{B_1}{B_2}$, the diagram
\[\xymatrix@C=8ex{
A_1\grtensor B_1\ar[r]^{\llbracket\varphi\rrbracket\grtensor\id}\ar[d]_{\id\grtensor\llbracket\psi\rrbracket}
&A_2\grtensor B_1\ar[d]^{\id\grtensor\llbracket\psi\rrbracket}
\\A_1\grtensor B_2\ar[r]^{\llbracket\varphi\rrbracket\grtensor\id}
&A_2\grtensor B_2
}\]
commutes in the simpltotic category.
\end{lem}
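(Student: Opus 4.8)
The plan is to unwind both composites down to the level of two-variable functions on $|\Delta^\fm|\times|\Delta^\fn|$, to observe that they are built from one and the same such function, and then to identify them inside the colimit $\SimplMor{A_1\grtensor B_1}{A_2\grtensor B_2}$ by means of a filter-proper homotopy, in the spirit of \Cref{lem:Thetaassociativity}.

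First I would represent $\llbracket\varphi\rrbracket$ by a $*$-ho\-mo\-mor\-phism $\varphi\colon A_1\to\Simpl^\fm A_2$ and $\llbracket\psi\rrbracket$ by $\psi\colon B_1\to\Simpl^\fn B_2$, and unwind the definition of the two tensor functors (\Cref{cor:simpltotictensorproductfunctors}) together with that of the composition product (the product of \Cref{cor:Fhomotopyclasscompositionproduct} followed by the map $\Theta^*$ of \Cref{sec:productconstruction}). The clockwise composite $(\id\grtensor\llbracket\psi\rrbracket)\circ(\llbracket\varphi\rrbracket\grtensor\id)$ then equals $\Theta_{\fm,\fn}^*$ applied to the class in $\SimplMor[\Simpl^\fm\Simpl^\fn]{A_1\grtensor B_1}{A_2\grtensor B_2}$ of the $*$-ho\-mo\-mor\-phism
\[A_1\grtensor B_1\xrightarrow{\varphi\grtensor\id}(\Simpl^\fm A_2)\grtensor B_1\xrightarrow{\eta}\Simpl^\fm(A_2\grtensor B_1)\xrightarrow{\Simpl^\fm(\eta\circ(\id\grtensor\psi))}\Simpl^\fm\Simpl^\fn(A_2\grtensor B_2),\]
whereas the counterclockwise composite equals $\Theta_{\fn,\fm}^*$ applied to the analogous class in $\SimplMor[\Simpl^\fn\Simpl^\fm]{A_1\grtensor B_1}{A_2\grtensor B_2}$ with the two tensor legs interchanged. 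Since the structure maps $\eta$ merely tensor pointwise, evaluating both on an elementary tensor $a\grtensor b$ and choosing bounded continuous lifts of $\varphi(a)$ and $\psi(b)$ shows that both classes are represented by the single two-variable assignment $(x,y)\mapsto\varphi(a)(x)\grtensor\psi(b)(y)$; the only difference between the two sides is whether the $|\Delta^\fm|$- or the $|\Delta^\fn|$-coordinate is read as the outer and which as the inner simpltotic variable.

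To compare these in the colimit I would choose a directed set $\fk$ together with cofinal maps $p\colon\fk\to\fm\sharp\fn$ and $q\colon\fk\to\fn\sharp\fm$ and pull both classes back to $\SimplMor[\fk]{A_1\grtensor B_1}{A_2\grtensor B_2}$. Using \Cref{lem:doublesimpleasymp} together with the identity $\Theta=(|\mu|\times\id)\circ I_\vartheta$, both pullbacks are then represented by the pullback of the above two-variable function along the continuous maps
\[|\Delta^\fk|\xrightarrow{\ \Theta_{\fm,\fn}\circ|p|\ }|\Delta^\fm|\times|\Delta^\fn|\qquad\text{and}\qquad|\Delta^\fk|\xrightarrow{\ \mathrm{sw}\circ\Theta_{\fn,\fm}\circ|q|\ }|\Delta^\fm|\times|\Delta^\fn|,\]
where $\mathrm{sw}$ denotes the coordinate flip. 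As both targets lie in the convex space $|\Delta^\fm|\times|\Delta^\fn|$, the affine-linear homotopy between the two maps is well defined, and it remains to arrange that it be filter-proper in the sense that pulling the two-variable data back along it respects the double-vanishing ideal characterised in \Cref{lem:CbCzcharacterization}; then \Cref{lem:spacefunctoriality} turns it into an $\fk$-homotopy in $\Simpl^\fk(A_2\grtensor B_2)$, so the two classes already coincide in $\SimplMor[\fk]{A_1\grtensor B_1}{A_2\grtensor B_2}$ and hence in the colimit.

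The hard part is precisely this filter-properness, since the two maps above need not agree: $\fk$ and the auxiliary functions $\vartheta_{\fm,\fn},\vartheta_{\fn,\fm}$ — and probably an intermediate filtered space $X\subset|\Delta^\fk|\times|\Delta^\fm|\times|\Delta^\fn|$ dominating both pictures, playing the role that $X_{\vartheta_1,\vartheta_2}$ did in \Cref{lem:Thetaassociativity} — must be chosen large enough. Concretely I expect to construct the relevant data by Noetherian recursion over the skeleta, using the defining inequalities \eqref{eq:varthetabiggerxi}, the monotonicity hypothesis of \Cref{lem:Xhomotopyequivalence}, and the cofinality of $p,q$ and of the projections $\mu$, so that the straight-line homotopy maps each $|\Delta^{\fk\ab k}|\times[0,1]$ into a set of the form $\bigcup_\sigma|\Delta^\sigma|\times|\Delta^{\fn\ab\xi(\sigma)}|$ and symmetrically in the two factors. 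Combined with \Cref{lem:CbCzcharacterization}, this guarantees that the homotopy respects the ideals, and the commutativity of the square then follows from \Cref{lem:functorialityundernatrualtransformationsoffunctors,lem:spacefunctoriality}.
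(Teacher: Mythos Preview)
Your plan is sound and shares the paper's starting observation: both composites are built from the single two-variable assignment $(x,y)\mapsto\varphi(a)(x)\grtensor\psi(b)(y)$ on $|\Delta^\fm|\times|\Delta^\fn|$, read with different variable orderings. Where you diverge from the paper is in how you compare the two. You propose to pull both classes \emph{up} to some large common $\fk$ dominating $\fm\sharp\fn$ and $\fn\sharp\fm$, and then to construct a filter-proper homotopy between $\Theta_{\fm,\fn}\circ|p|$ and $\mathrm{sw}\circ\Theta_{\fn,\fm}\circ|q|$, anticipating that this will require an auxiliary filtered space of the $X_{\vartheta_1,\vartheta_2}$-type as in \Cref{lem:Thetaassociativity}.

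The paper instead pushes both classes \emph{down} to the product directed set $\fm\times\fn$. Using the projections $\pi_1,\pi_2$ from $\fm\times\fn$ one gets a $*$-ho\-mo\-mor\-phism $(|\pi_1|\times|\pi_2|)^*\colon\Simpl^\fm A_2\grtensor\Simpl^\fn B_2\to\Simpl^{\fm\times\fn}(A_2\grtensor B_2)$, and the paper shows that each composite in the square is the pullback of the single class $\llbracket(|\pi_1|\times|\pi_2|)^*\circ(\varphi\grtensor\psi)\rrbracket\in\SimplMor[\fm\times\fn]{A_1\grtensor B_1}{A_2\grtensor B_2}$ along a cofinal map: for $(\id\grtensor\llbracket\psi\rrbracket)\circ(\llbracket\varphi\rrbracket\grtensor\id)$ this is $\mu\times\nu\colon\fm\sharp\fn\to\fm\times\fn$ with $\nu(m,\xi)=\xi(m)$, and symmetrically for the other composite. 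The only homotopy required is the linear one in the $|\Delta^\fn|$-coordinate on $X_\vartheta$ (the square $X_\vartheta\to|\Delta^\fm|\times|\Delta^\fn|$ versus $X_\vartheta\to|\Delta^{\fm\sharp\fn}|\to|\Delta^{\fm\times\fn}|\to|\Delta^\fm|\times|\Delta^\fn|$ already commutes in the first component), which is straightforward to check against the double-vanishing ideal. No extra $\fk$, no recursion over skeleta, and no intermediate $X$-space are needed.

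So your approach would work, but it recreates the heavy machinery of the associativity proof where a much lighter argument suffices. The trade-off is that the paper's route exploits the specific symmetry of the tensor situation (a common class over $\fm\times\fn$), whereas your method is more general-purpose and closer to how one would proceed if no such common refinement were available.
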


\begin{proof}
Let $\varphi\colon A_1\to\Simpl^\fm A_2$ and $\psi\colon B_1\to\Simpl^\fn B_2$ be two representatives of the given classes.
Denoting the two projections from $\fm\times\fn$ onto the factors $\fm,\fn$ by $\pi_1,\pi_2$, respectively, the $*$-ho\-mo\-mor\-phism
\begin{align*}
\Cb(|\Delta^\fm|;A_2)\grtensor\Cb(|\Delta^\fn|;B_2)&\subset \Cb(|\Delta^\fm|\times|\Delta^\fn|;A_2\grtensor B_2)
\\&\xrightarrow{(|\pi_1|\times|\pi_2|)^*} \Cb(|\Delta^{\fm\times\fn}|;A_2\grtensor B_2)
\end{align*}
clearly descends to a $*$-ho\-mo\-mor\-phism $(|\pi_1|\times|\pi_2|)^*\colon\Simpl^\fm A_2\grtensor\Simpl^\fn B_2\to\Simpl^{\fm\times\fn}(A_2\grtensor B_2)$.
We claim that the two compositions in the diagram are both represented by 
\[(|\pi_1|\times|\pi_2|)^*\circ(\varphi\grtensor\psi)\colon A_1\grtensor B_1\to\Simpl^{\fm\times\fn}(A_2\grtensor B_2)\,.\]

To prove this, recall from \Cref{sec:productconstruction,sec:productproperties} the space
\[X_\vartheta\coloneqq\bigcup_{\sigma\in \Delta^{\fm\sharp\fn}}|\Delta^\sigma|\times|\Delta^{\fn\ab\vartheta(\sigma)}|\subset |\Delta^{\fm\sharp\fn}|\times|\Delta^\fn|\]
together with the projection $P_\vartheta\colon X_\vartheta\to |\Delta^{\fm\sharp\fn}|$ onto the first component and the cofinal maps $\mu\colon\fm\sharp\fn\to\fm, (m,\xi)\mapsto m$ and $\nu\colon\fm\sharp\fn\to\fn, (m,\xi)\mapsto\xi(m)$. The diagram
\[\xymatrix@C=10ex{
X_\vartheta\ar[r]^{|\mu|\times\id}\ar[d]_{P_\vartheta}
&|\Delta^\fm|\times|\Delta^\fn|
\\|\Delta^{\fm\sharp\fn}|\ar[r]^{|\mu\times\nu|}
&|\Delta^{\fm\times\fn}|\ar[u]_{|\pi_1|\times|\pi_2|}
}\]
commutes only in the first component. Using the canonical linear homotopy on the second component, it is straightforward to prove that the induced diagram
\[\xymatrix@C=12ex{
\Ca(X_\vartheta;A_2\grtensor B_2)
&\Simpl^\fm\Simpl^\fn(A_2\grtensor B_2)\ar[l]_{(|\mu|\times\id)^*}
&\Simpl^\fm A_2\grtensor\Simpl^\fn B_2\ar[l]_{\Simpl^\fm(\eta^{\Simpl^\fn}_{B_2})\circ\eta^{\Simpl^\fm}_{A_2}}\ar[d]_{(|\pi_1|\times|\pi_2|)^*}
\\\Simpl^{\fm\sharp\fn}\ar[u]^{P_\vartheta^*}(A_2\grtensor B_2)
&&\Simpl^{\fm\times\fn}(A_2\grtensor B_2)\ar[ll]_{|\mu\times\nu|^*}
}\]
commutes up to $\Ca(X_\vartheta;\blank)$-homotopy. This readily implies that the composition 
\[(\id\grtensor\llbracket\psi\rrbracket)\circ(\llbracket\varphi\rrbracket\grtensor\id)\in\SimplMor[\fm\sharp\fn]{A_1\grtensor B_1}{A_2\grtensor B_2}\] is indeed obtained from 
\[\llbracket (|\pi_1|\times|\pi_2|)^*\circ(\varphi\grtensor\psi)\rrbracket\in \SimplMor[\fm\times\fn]{A_1\grtensor B_1}{A_2\grtensor B_2}\]
by pullback under the cofinal map $\mu\times\nu$.

The claim for the other composition follows completely analogously with the roles of $\fm$ and $\fn$ interchanged.
\end{proof}

Denoting the simpltotic category of $G$-\textCstar-algebras by $G\mbox{-}\Simpl$, \Cref{cor:simpltotictensorproductfunctors} and \Cref{lem:leftrightmaxtensorcommute} immediately yield the following adaption of \cite[Theorem 4.6]{GueHigTro}.

\begin{thm}
There is a maximal tensor product functor 
\[\grtensor\colon (G\mbox{-}\Simpl)\times (H\mbox{-}\Simpl)\to (G\times H)\mbox{-}\Simpl\]
which is the maximal tensor product on objects and which associates to a pair of morphisms $(\id,\llbracket\psi\rrbracket)$ or $(\llbracket\varphi\rrbracket,\id)$ the morphism $\id\grtensor\llbracket\psi\rrbracket$ or $\llbracket\varphi\rrbracket\grtensor\id$, respectively, that were introduced in \Cref{cor:simpltotictensorproductfunctors}. \qed
\end{thm}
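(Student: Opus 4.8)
The plan is to assemble the desired bifunctor from the partial functors already at our disposal, using the standard categorical principle that a functor out of a product category is completely determined by its two families of partial functors together with an interchange law. Concretely, to specify a functor $\grtensor\colon(G\mbox{-}\Simpl)\times(H\mbox{-}\Simpl)\to(G\times H)\mbox{-}\Simpl$ it suffices to give, for every object $B$ of $H\mbox{-}\Simpl$, a functor $\blank\grtensor B\colon G\mbox{-}\Simpl\to(G\times H)\mbox{-}\Simpl$, and for every object $A$ of $G\mbox{-}\Simpl$, a functor $A\grtensor\blank\colon H\mbox{-}\Simpl\to(G\times H)\mbox{-}\Simpl$, such that these two families agree on objects and such that for every pair of morphisms $\llbracket\varphi\rrbracket$, $\llbracket\psi\rrbracket$ the interchange square commutes. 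Given these data, one defines $\grtensor$ on a general morphism $(\llbracket\varphi\rrbracket,\llbracket\psi\rrbracket)$ as either of the two composites around that square; the interchange law guarantees that the two composites coincide and that the resulting assignment respects composition and identities.

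First I would extract the two families of partial functors from \Cref{cor:simpltotictensorproductfunctors}. For a fixed $H$-\textCstar-algebra $B$, that corollary produces a functor $\blank\grtensor B$ on the simpltotic category of $G$-\textCstar-algebras which sends $\llbracket\varphi\rrbracket$ to $\llbracket\varphi\rrbracket\grtensor\id$; symmetrically, for a fixed $G$-\textCstar-algebra $A$ it produces a functor $A\grtensor\blank$ sending $\llbracket\psi\rrbracket$ to $\id\grtensor\llbracket\psi\rrbracket$. Both assign the maximal graded tensor product $A\grtensor B$ on objects, so the two families agree on objects as required, and each of them already preserves composition and identities by the corollary.

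Next I would supply the interchange law, which is exactly the content of \Cref{lem:leftrightmaxtensorcommute}: for $\llbracket\varphi\rrbracket\in\SimplMor{A_1}{A_2}$ and $\llbracket\psi\rrbracket\in\SimplMor{B_1}{B_2}$ the square built from $\llbracket\varphi\rrbracket\grtensor\id$ and $\id\grtensor\llbracket\psi\rrbracket$ commutes in the simpltotic category. With all three ingredients in hand---partial functoriality in each variable, agreement on objects, and commutativity of the interchange square---the categorical principle recalled above yields a unique bifunctor $\grtensor$ whose value on $(\id,\llbracket\psi\rrbracket)$ and $(\llbracket\varphi\rrbracket,\id)$ is $\id\grtensor\llbracket\psi\rrbracket$ and $\llbracket\varphi\rrbracket\grtensor\id$, respectively, as stated.

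Since the analytically substantial work has already been discharged in establishing \Cref{cor:simpltotictensorproductfunctors,lem:leftrightmaxtensorcommute}, no genuine obstacle remains at this stage; had those two results \emph{not} been available, the hard part would have been precisely the verification of the interchange square of \Cref{lem:leftrightmaxtensorcommute}, since this is where the two a priori different composition products $(\id\grtensor\llbracket\psi\rrbracket)\circ(\llbracket\varphi\rrbracket\grtensor\id)$ and $(\llbracket\varphi\rrbracket\grtensor\id)\circ(\id\grtensor\llbracket\psi\rrbracket)$ are reconciled through a common representative over $\Simpl^{\fm\times\fn}(A_2\grtensor B_2)$. Given that lemma, the remaining content is the purely formal bookkeeping that the assignment $(\llbracket\varphi\rrbracket,\llbracket\psi\rrbracket)\mapsto(\id\grtensor\llbracket\psi\rrbracket)\circ(\llbracket\varphi\rrbracket\grtensor\id)$ is functorial, which is immediate once the interchange square is known to commute.
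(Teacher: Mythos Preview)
Your proposal is correct and matches the paper's approach exactly: the paper states just before the theorem that \Cref{cor:simpltotictensorproductfunctors} and \Cref{lem:leftrightmaxtensorcommute} ``immediately yield'' the result, and the \qed marks it as proven without further argument. You have simply made explicit the standard categorical principle (partial functors plus interchange law determine a bifunctor) that the paper leaves implicit.
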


The second functor of interest is, of course, the full crossed product functor $\Cstar(G;\blank)$ from the category of $G$-\textCstar-algebras to the category of \textCstar-algebras, see, for example, \cite[p.~25]{GueHigTro}

\begin{lem}
The full crossed product functor $\cI$-continuous, exact and cocontinuous under direct limits of ideals.\footnote{Presumably, cocontinuity holds in larger generality, but the author isn't aware of a reference. For simplicity, we restricted ourselves to an easy proof in this special case, which is sufficient for our purposes.}
\end{lem}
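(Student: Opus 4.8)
The plan is to establish the three properties separately, in each case reducing the claim to a computation on the dense $*$-subalgebra $\Cc(G;B)$ of finitely supported functions $G\to B$, whose elements we write inside $\Cstar(G;B)$ as finite sums $\sum_{g}b_g u_g$ with $b_g\in B$. Throughout I would use the elementary estimate $\|\sum_g b_g u_g\|\leq\sum_g\|b_g\|$ and the fact that each $b\mapsto bu_g$ is isometric, so that $\|b u_g\|=\|b\|$.

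For exactness I would invoke the standard fact that the \emph{full} (maximal) crossed product is exact for every locally compact, in particular every discrete, group: given $0\to I\to A\xrightarrow{\pi}A/I\to 0$, surjectivity of $\Cstar(G;A)\to\Cstar(G;A/I)$ is immediate because $\Cc(G;A)\to\Cc(G;A/I)$ is surjective with dense image and a $*$-homomorphism onto a \textCstar-algebra, while the nontrivial equality $\ker=\Cstar(G;I)$ is classical and parallels the tensor-product argument in \cite[Chapter 4]{GueHigTro}. I emphasize this property first because it is also what lets us regard each $\Cstar(G;A_m)$ as an ideal in $\Cstar(G;A)$, which the cocontinuity argument relies on.

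For $\cI$-continuity, fix an interval $Z$, a $G$-\textCstar-algebra $B$, and $f\in\Cstar(G;\Cb(Z;B))$. The evaluation $\ev{z}\colon\Cb(Z;B)\to B$ is a $G$-equivariant $*$-homomorphism, so $\Cstar(G;\ev{z})$ is a $*$-homomorphism of norm at most one. On a finitely supported $f=\sum_g f_g u_g$ with $f_g\in\Cb(Z;B)$ one computes $\Cstar(G;\ev{z})(f)=\sum_g f_g(z)u_g$, which is continuous in $z$ since each $f_g\colon Z\to B$ is continuous and $b\mapsto bu_g$ is isometric; this is the crossed-product counterpart of \cite[Chapter 4]{GueHigTro}. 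The general case then follows by uniform approximation: if $f_n\to f$ in $\Cstar(G;\Cb(Z;B))$ with $f_n\in\Cc(G;\Cb(Z;B))$, then $z\mapsto\Cstar(G;\ev{z})(f_n)$ converges uniformly to $z\mapsto\Cstar(G;\ev{z})(f)$, so the latter is continuous as a uniform limit of continuous functions.

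The main work, as the footnote signals, is cocontinuity. Let $\{A_m\}_{m\in\fm}$ be a directed system of ideals in $A$ and set $J\coloneqq\varinjlim_m A_m=\overline{\bigcup_m A_m}$, again an ideal in $A$. By exactness the maps $\Cstar(G;A_m)\to\Cstar(G;J)\to\Cstar(G;A)$ are isometric inclusions of ideals, so the canonical map $\varinjlim_m\Cstar(G;A_m)\to\Cstar(G;J)$ is an isometric injection whose image is $\overline{\bigcup_m\Cstar(G;A_m)}$ inside $\Cstar(G;A)$. It remains to show this image is dense, for which it suffices to approximate elements of $\Cc(G;J)$. Given $f=\sum_{g\in F}b_g u_g$ with $b_g\in J$ and $\varepsilon>0$, each $b_g$ lies within $\varepsilon$ of some $A_{m_g}$; by directedness of $\fm$ there is a single $m\geq m_g$ for all $g\in F$, and then $f$ lies within $|F|\varepsilon$ of $\Cc(G;A_m)\subset\Cstar(G;A_m)$ in the crossed-product norm. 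Hence the isometric injection above has dense image and is therefore an isomorphism. The one point I would be careful about is the norm bookkeeping: ensuring that coefficientwise approximation in $A$ really yields approximation of $f$ in the \emph{universal} crossed-product norm, which is exactly what the estimate $\|\sum_g b_g u_g\|\leq\sum_g\|b_g\|$ together with the isometries furnished by exactness guarantees.
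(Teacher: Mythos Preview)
Your proposal is correct and follows essentially the same approach as the paper. For exactness and $\cI$-continuity the paper simply cites \cite[Lemmas 4.10 \& 4.11]{GueHigTro}, while you spell out the latter (your argument is essentially the content of that lemma); for cocontinuity your argument is the same as the paper's—use exactness to view each $\Cstar(G;A_m)$ as an ideal in $\Cstar(G;J)$, then observe that $\bigcup_m\Cc(G;A_m)$ is $L^1$-dense in $\Cc(G;J)$ (which is exactly your $\sum_g\|b_g\|$ estimate for discrete $G$) and hence dense in the full crossed product.
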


\begin{proof}
Exactness and $\cI$-continuity were proven in \cite[Lemmas 4.10 \& 4.11]{GueHigTro}.

To prove cocontinuity under direct limits of ideals, let $(B_m)_{m\in \fm}$ be a directed system of ideals in a \textCstar-algebra $B$.
Then each $B_m$ is an ideal in $\varinjlim_{m\in\fm}B_m=\overline{\bigcup_{m\in\fm}B_m}\subset B$ and hence exactness of the functor implies that each $\Cstar(G,B_m)$ is an ideal in $\Cstar(G,\varinjlim_{m\in\fm}B_m)$.
Since $\bigcup_{m\in\fm}\Cc(G;B_m)$ is dense in $\Cc(G;\varinjlim_{m\in\fm}B_m)$ with respect to the $L^1$-norm and hence also dense in the completion $\Cstar(G,\varinjlim_{m\in\fm}B_m)$ with respect to the full crossed product norm, we see that
\[\varinjlim_{m\in\fm} \Cstar(G,B_m)=\overline{\bigcup_{m\in\fm}\Cstar(G,B_m)}=\Cstar(G,\varinjlim_{m\in\fm}B_m)\,.\qedhere\]
\end{proof}

As a result we obtain the descent functors on the simpltotic category.

\begin{thm}[{cf.~\cite[Theorem 4.12]{GueHigTro}}]
There is a \emph{descent functor} 
\[\Cstar(G,\blank)\colon G\mbox{-}\Simpl\to\C\mbox{-}\Simpl\]
which associates to the class of a $G$-equivariant $*$-ho\-mo\-mor\-phism $\varphi\colon A\to\Simpl^\fm B$ the class of the composition
\[\Cstar(G,A)\xrightarrow{\Cstar(G,\varphi)}\Cstar(G,\Simpl^\fm B)\xrightarrow{\eta^{\Simpl^\fm}_B}\Simpl^\fm\Cstar(G,B)\,.\]
\qed
\end{thm}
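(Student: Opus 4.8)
The plan is to recognize this as an immediate application of \Cref{thm:functorsonsimpletoticcategory} to the full crossed product functor $\tilde F\coloneqq\Cstar(G,\blank)$, regarded as a functor from the category of $G$-\textCstar-algebras to the category of $\C$-\textCstar-algebras (that is, with $H$ the trivial group). The three hypotheses required there---$\cI$-continuity, exactness, and cocontinuity under direct limits of ideals---are precisely the content of the preceding lemma. Hence \Cref{thm:functorsonsimpletoticcategory} directly supplies a functor $\tilde F_\Simpl\colon G\mbox{-}\Simpl\to\C\mbox{-}\Simpl$ acting as $A\mapsto\Cstar(G,A)$ on objects and satisfying $\tilde F_\Simpl\llbracket f\rrbracket=\llbracket\Cstar(G,f)\rrbracket$ on homotopy classes of $*$-ho\-mo\-mor\-phisms.

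It then remains only to match the action on a general morphism with the stated formula. By construction $\tilde F_\Simpl$ is induced on the direct limit by the maps $\tilde F_{\Simpl^\fm}$, and the lemma introducing those maps defines them via $\llbracket\varphi\colon A\to\Simpl^\fm B\rrbracket\mapsto\llbracket\eta^{\Simpl^\fm}_B\circ \tilde F(\varphi)\rrbracket$. Substituting $\tilde F=\Cstar(G,\blank)$ turns $\tilde F(\varphi)$ into $\Cstar(G,\varphi)$, so the representing composition is exactly
\[\Cstar(G,A)\xrightarrow{\Cstar(G,\varphi)}\Cstar(G,\Simpl^\fm B)\xrightarrow{\eta^{\Simpl^\fm}_B}\Simpl^\fm\Cstar(G,B)\,,\]
as claimed.

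I expect no genuine obstacle here: all the substantive work---constructing the natural transformation $\eta^{\Simpl^\fm}$ of \Cref{cor:functorordernaturaltransformation}, checking its compatibility with cofinal maps and with the composition product, and verifying the three functor properties for the crossed product---has already been carried out in the lemmas of this section. The only point deserving a moment's care is that the transformation $\eta^{\Simpl^\fm}$ produced by \Cref{cor:functorordernaturaltransformation} for $\tilde F=\Cstar(G,\blank)$ coincides with the descent map $\Cstar(G,\Simpl^\fm B)\to\Simpl^\fm\Cstar(G,B)$ appearing in the statement; this is immediate from the inductive definition of $\eta^{\Simpl^\fm}$ together with the fact that $\Cstar(G,\blank)$ commutes with the evaluation $*$-ho\-mo\-mor\-phisms $\ev{x}$.
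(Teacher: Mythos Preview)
Your proposal is correct and matches the paper's approach exactly: the theorem is stated with a \qed and no proof precisely because it is an immediate application of \Cref{thm:functorsonsimpletoticcategory} to $\tilde F=\Cstar(G,\blank)$, whose three required properties were verified in the immediately preceding lemma. Your explicit unpacking of why the formula for $\tilde F_{\Simpl^\fm}$ yields the stated composition is a welcome clarification but not something the paper bothers to spell out.
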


\section{Extensions}
\label{sec:extensions}

In this section we transfer the results of \cite[Chapter 5]{GueHigTro} to the simpltotic case. Central to them is the construction of a class $\llbracket\sigma\rrbracket\in\SimplMor{\Sigma A}{J}$,
where we have used the shorthand notation
\[\Sigma A\coloneqq \Cz((0,1))\grtensor A\cong\Cz((0,1);A)\,,\]
associated to each short exact sequence
\[0\to J\to B\xrightarrow{\pi} A\to 0\]
of \textCstar-algebras, which we will obtain from a quasicentral approximate unit.

\begin{defn}[{cf.\ \cite[Definition 5.2]{GueHigTro}}]
Let $B$ be a $G$-\textCstar-algebra and $J\subset B$ a sub-$G$-\textCstar-algebra such that $[B,J]\subset J$.
A \emph{quasicentral approximate unit} for $J$ in $B$ is a net $\{u_m\}_{m\in\fm}$ in $J$ such that
\begin{enumerate}
\item $\forall m\in\fm\colon 0\leq u_m\leq 1$;
\item $\forall j\in J\colon\lim_{m\in\fm}\|u_mj-j\|=0$;
\item $\forall b\in B\colon\lim_{m\in\fm}\|[u_m,b]\|=0$;
\item $\forall g\in G\colon\lim_{m\in\fm}\|gu_m-u_m]\|=0$.
\end{enumerate}
An \emph{approximate unit} for a $G$-\textCstar-algebra $J$ is a quasicentral approximate unit of $J$ in itself, i.\,e.\ condition 3 is superfluous.
\end{defn}

Recall that we only consider discrete groups $G$ and hence we do not need to impose uniformity on compact subsets of the fourth convergence.

\begin{lem}
Quasicentral approximate units exist for all $G$-\textCstar-algebras $B$ with sub-$G$-\textCstar-algebra $J$ satisfying the condition $[B,J]\subset J$.
\end{lem}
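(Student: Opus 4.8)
The plan is to adapt Arveson's classical construction to our slightly more general situation, in which $J$ need not be an ideal in $B$ but only satisfies $[B,J]\subset J$, and in which we additionally demand approximate $G$-invariance. First I would fix an ordinary approximate unit $\{f_\lambda\}_{\lambda\in\Lambda}$ for the \textCstar-algebra $J$ with $0\le f_\lambda\le 1$; such a net always exists. As index set for the net to be constructed I would take
\[
\fm=\{(E,F,S,k)\mid E\subset J,\ F\subset B,\ S\subset G\text{ finite},\ k\in\N_+\},
\]
ordered by inclusion in the first three entries and by the usual order in the last, which is clearly directed. It then suffices to produce, for each $m=(E,F,S,k)\in\fm$, an element $u_m\in J$ with $0\le u_m\le 1$ and
\[
\|u_mj-j\|<\tfrac1k,\quad \|[u_m,b]\|<\tfrac1k,\quad \|gu_m-u_m\|<\tfrac1k
\]
for all $j\in E$, $b\in F$ and $g\in S$; the net $\{u_m\}_{m\in\fm}$ then visibly has all four required properties.

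The technical core consists of three convergence statements for the fixed approximate unit, all of which take values in $J$ thanks to $[B,J]\subset J$ and the $G$-invariance of $J$: the norm convergence $f_\lambda j-j\to 0$ for $j\in J$, and the \emph{weak} convergences $[f_\lambda,b]\to 0$ and $gf_\lambda-f_\lambda\to 0$ in $J$ for $b\in B$, $g\in G$. The common input for the two weak statements is the elementary fact that $\omega(f_\lambda)\to 1$ for every state $\omega$ on $J$, which one reads off the GNS representation, where $\pi(f_\lambda)\to 1$ strongly. For $G$-invariance this is immediate, since $\omega(gf_\lambda-f_\lambda)=\omega_g(f_\lambda)-\omega(f_\lambda)\to 1-1=0$, where $\omega_g(x)\coloneqq\omega(gx)$ is again a state on $J$. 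For the commutator I would extend $\omega$ to a state $\tilde\omega$ on $B$, write $[f_\lambda,b]=b(1-f_\lambda)-(1-f_\lambda)b$, and estimate each summand by the Cauchy--Schwarz inequality, e.g.\ $|\tilde\omega(b(1-f_\lambda))|\le\|b\|\,\tilde\omega((1-f_\lambda)^2)^{1/2}\le\|b\|\,(1-\omega(f_\lambda))^{1/2}\to 0$; since bounded functionals are linear combinations of states, this yields weak convergence.

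With these in hand the final step is a convexity argument. For fixed $m=(E,F,S,k)$ I would assemble the finitely many nets above, via the affine map $f\mapsto\bigl((fj-j)_{j\in E},([f,b])_{b\in F},(gf-f)_{g\in S}\bigr)$, into a single net $c_\lambda$ in the finite direct sum $J^{\oplus N}$ with $N=|E|+|F|+|S|$, which converges weakly to $0$ there (finite products of weak topologies being weak). Hence $0$ lies in the weak closure of $\mathrm{conv}\{c_\lambda\}$, and by the Hahn--Banach theorem (the weak and the norm closure of a convex set coincide) also in its norm closure. A convex combination $\sum_i t_i c_{\lambda_i}$ of norm $<\tfrac1k$ is the image of $u_m\coloneqq\sum_i t_i f_{\lambda_i}$ under the affine assembling map, so $u_m$ has the three desired approximation properties, and $0\le u_m\le 1$ because it is a convex combination of elements of $[0,1]$. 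The main obstacle is precisely the weak convergence of the commutators in the non-ideal setting: one cannot simply evaluate $\omega$ on the individual products $f_\lambda b$ and $bf_\lambda$, which need not lie in $J$, and the extension-and-Cauchy--Schwarz device is exactly what circumvents this difficulty.
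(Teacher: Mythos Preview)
Your proof is correct. The paper's own proof is by citation: it invokes Arveson's theorem for the non-equivariant case (remarking that the proof goes through verbatim under the weaker hypothesis $[B,J]\subset J$) and then appeals to the trick of \cite[Lemma~5.3]{GueHigTro} to upgrade to the $G$-equivariant statement. You instead carry out Arveson's convexity argument explicitly and build the $G$-equivariance directly into it, by observing that $gf_\lambda-f_\lambda\to 0$ weakly in $J$ (via $\omega(f_\lambda)\to 1$ for all states $\omega$) and throwing these extra conditions into the same Hahn--Banach step. Both approaches rest on the same core idea, namely Arveson's passage from weak to norm convergence through the convex hull; your version has the advantage of being self-contained and making transparent exactly where the hypothesis $[B,J]\subset J$ is used (to ensure $[f_\lambda,b]\in J$ so that $\omega$ can be evaluated on it), while the paper's modular approach separates the two generalisations cleanly and keeps the write-up short.
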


\begin{proof}
In the non-equivariant case ($G=1$) this was proven in \cite[Theorem 1]{Arveson} for ideals $J\subset B$, but the proof works exactly the same for the more general condition. 
The general statement can then be derived with the trick from \cite[Lemma 5.3]{GueHigTro}, which works equally well for non-separable $G$-\textCstar-algebras.
\end{proof}

The following properties of quasicentral approximate unit can be proven completely analogously to \cite[Lemma 5.6]{GueHigTro}.

\begin{lem}
Let $B$ be a $G$-\textCstar-algebra, $J\subset B$ a sub-$G$-\textCstar-algebra such that $[B,J]\subset J$ and $\{u_m\}_{m\in\fm}$ a quasicentral approximate unit for $J$ in $B$. Then for any $f\in\Cz((0,1])$:
\begin{enumerate}
\item $\forall g\in G\colon\lim_{m\in\fm}\|gf(u_m)-f(u_m)]\|=0$;
\item $\forall b\in B\colon\lim_{m\in\fm}\|[f(u_m),b]\|=0$;
\item if in addition $f(1)=0$, then $\forall j\in J\colon\lim_{m\in\fm}\|f(u_m)j\|=0$.\qed
\end{enumerate}
\end{lem}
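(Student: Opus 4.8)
The plan is to reduce all three assertions to the defining convergences of the quasicentral approximate unit $\{u_m\}$ by a single polynomial approximation argument. Since $f\in\Cz((0,1])$ extends to a continuous function on $[0,1]$ with $f(0)=0$, the Weierstrass theorem lets me, for any $\varepsilon>0$, choose a polynomial $p(t)=\sum_{k=1}^d c_k t^k$ with $p(0)=0$ and $\|f-p\|_\infty<\varepsilon$, where $\|\cdot\|_\infty$ is the supremum norm on $[0,1]$. Every $u_m$ is positive with spectrum in $[0,1]$, and because the $G$-action is by isometric $*$-automorphisms, the same holds for $gu_m$; hence $\|f(u_m)-p(u_m)\|\le\|f-p\|_\infty<\varepsilon$ and likewise for $gu_m$. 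The elementary telescoping estimates
\[\|(gu_m)^k-u_m^k\|\le k\,\|gu_m-u_m\|,\qquad \|[u_m^k,b]\|\le k\,\|[u_m,b]\|,\]
valid since all the relevant norms are $\le 1$, then turn everything into the axioms of the quasicentral approximate unit.

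For the first statement I would use that $g$ acts by a $*$-automorphism, so $g\,f(u_m)=f(gu_m)$ and therefore
\[\|g\,f(u_m)-f(u_m)\|\le 2\varepsilon+\|p(gu_m)-p(u_m)\|\le 2\varepsilon+\Bigl(\textstyle\sum_k k|c_k|\Bigr)\|gu_m-u_m\|,\]
whose last term vanishes in the limit by the fourth defining property, giving $\limsup_m\le 2\varepsilon$; letting $\varepsilon\to0$ concludes. The second statement is entirely parallel: one has $\|[f(u_m),b]\|\le 2\varepsilon\|b\|+\|[p(u_m),b]\|$ and $\|[p(u_m),b]\|\le(\sum_k k|c_k|)\,\|[u_m,b]\|\to 0$ by the third defining property.

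The third statement is the only one needing more than a routine estimate, and I expect it to be the main obstacle, since the hypothesis $f(1)=0$ must genuinely be used. Here I would sharpen the approximation: as $f$ now lies in $\Cz((0,1))$ and vanishes at both endpoints, I can arrange $p(0)=p(1)=0$ by correcting a Weierstrass approximant $p_0$ to $p(t)=p_0(t)-p_0(0)(1-t)-p_0(1)t$, whence $p(t)=(1-t)\,h(t)$ with $h(t):=t\,q(t)\in\Cz((0,1])$ for a polynomial $q$. Then for $j\in J$,
\[p(u_m)\,j=(1-u_m)\,h(u_m)\,j=(1-u_m)\,j\,h(u_m)+(1-u_m)\,[h(u_m),j].\]
The first summand is bounded by $\|(1-u_m)j\|\,\|h\|_\infty$, which tends to $0$ by the second defining property, and the second by $\|[h(u_m),j]\|$, which tends to $0$ by the already-proven second statement (applied to $h$); here $\|1-u_m\|\le1$ is used. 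Hence $\|p(u_m)j\|\to 0$, and combining with $\|(f-p)(u_m)j\|\le\varepsilon\|j\|$ gives $\limsup_m\|f(u_m)j\|\le\varepsilon\|j\|$, so $\varepsilon\to0$ finishes the proof. The crucial observation is that extracting the factor $(1-t)$ converts the obstruction into the commutator estimate of the second statement together with the approximate-unit convergence $u_mj\to j$.
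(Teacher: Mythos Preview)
Your proof is correct; the paper itself gives no argument, deferring entirely to \cite[Lemma~5.6]{GueHigTro}, and your polynomial-approximation approach is the standard one that reference uses. One small simplification in part~3: since $(1-u_m)$ and $h(u_m)$ are both functions of $u_m$ and hence commute, you may write $p(u_m)j = h(u_m)(1-u_m)j$ directly and bound it by $\|h\|_\infty\,\|(1-u_m)j\|\to 0$, so the detour through the commutator $[h(u_m),j]$ is not needed.
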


We can now introduce the simpletotic morphisms associated to quasicentral approximate units.
This is done in the following proposition, which is essentially the simpletotic analogue of \cite[Proposition 5.5]{GueHigTro} including the slight generalization mentioned in its proof. 
Since we have decided to reformulate it, we will also give a reformulation of the proof.

\begin{prop}\label{prop:extensionsimpletoticmorphism}
Let
\[0\to J\to B\xrightarrow{\pi} A\to 0\]
be a short exact sequence of $G$-\textCstar-algebras and $J_0\subset J$ a sub-$G$-\textCstar-algebra such that $[B,J_0]\subset J_0$. Here, $J$ has been identified with its image in $B$.
Given a quasicentral approximate unit $\{u_m\}_{m\in\fm}$ for $J$ in $B$, we extend it affine linearily to a continuous function 
\[u\colon |\Delta^\fm|\to J\,,\quad x\mapsto u_x\,.\]
Then there is a simpltotic morphism
\[\sigma\colon \Sigma A\to \Simpl^\fm J\]
which maps an elementary tensor $f\grtensor \pi(b)$ to the class of the function $\Delta^\fm\to J, x\mapsto f(u_x)b$.
\end{prop}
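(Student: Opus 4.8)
The plan is to realise $\sigma$ as the product of two genuinely multiplicative $*$-homomorphisms into $\Simpl^\fm B$, exploiting that $\Simpl^\fm J$ is an ideal therein. First I would define
\[\rho\colon\Cz((0,1))\to\Simpl^\fm J\,,\quad f\mapsto[x\mapsto f(u_x)]\,,\]
which is a genuine $*$-homomorphism: since $0\le u_x\le 1$ for every $x$ (a convex combination of the $u_s$, all with $0\le u_s\le 1$), continuous functional calculus makes $x\mapsto f(u_x)$ a bounded continuous $J$-valued function (it lands in $J$ because $f(0)=0$), and $f\mapsto f(u_x)$ is multiplicative and $*$-preserving pointwise. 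Secondly, let $\beta\colon B\to\Simpl^\fm B$ be the constant $*$-homomorphism $b\mapsto[x\mapsto b]$. By \Cref{lem:asymptoticalgebrasexact} the algebra $\Simpl^\fm J$ sits as an ideal in $\Simpl^\fm B$, so once I know that the ranges of $\rho$ and $\beta$ commute I can feed the commuting pair into the universal property of the maximal tensor product to obtain $\rho\grtensor\beta\colon\Cz((0,1))\grtensor B\to\Simpl^\fm B$ with image inside the ideal $\Simpl^\fm J$. Finally I would check that $\rho\grtensor\beta$ annihilates the kernel of $\id\grtensor\pi$, which by exactness of the maximal tensor product is $\Cz((0,1))\grtensor J$, so that it descends to the desired $\sigma\colon\Sigma A\to\Simpl^\fm J$; evaluating on $f\grtensor\pi(b)$ then gives exactly $[x\mapsto f(u_x)b]$, and $G$-equivariance and gradedness are immediate once the estimates below are in place.

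Everything above reduces to three assertions that certain functions $|\Delta^\fm|\to B$ lie in the ideal $\Cz(|\Delta^\fm|;B)$, i.e.\ vanish along the filter generated by the tails $|\Delta^{\fm\ab m}|$: (i) $x\mapsto f(u_x)j$ for $j\in J$ and $f\in\Cz((0,1))$ (this yields both the descent along $\id\grtensor\pi$ and well-definedness in $\pi(b)$); (ii) $x\mapsto[g(u_x),b]$ for $b\in B$ (this yields commutation of the ranges of $\rho$ and $\beta$ in the quotient); and (iii) $x\mapsto g_0f(u_x)-f(u_x)$ for $g_0\in G$ (this yields $G$-equivariance, since $G$ acts trivially on $\Cz((0,1))$ and $g_0(f(u_x))=f(g_0u_x)$). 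I regard these estimates as the \emph{main obstacle}, because the quasicentral approximate unit only controls the individual net elements $u_m$, whereas the filter forces me to estimate uniformly over the entire tail simplex $|\Delta^{\fm\ab m}|$, that is, over \emph{all} convex combinations $u_x=\sum_s\lambda_s u_s$ with $s\ge m$, and functional calculus is nonlinear in $u_x$.

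The way I would overcome this is a two-step argument. Since $\mathrm{spec}(u_x)\subseteq[0,1]$ uniformly in $x$, any continuous $h$ on $[0,1]$ can be approximated by polynomials $p$ with $\|h(u_x)-p(u_x)\|\le\|h-p\|_{\infty,[0,1]}$ \emph{uniformly} in $x$, which reduces each of (i)--(iii) to the case of a polynomial. For polynomials the key point is that the relevant quantities are \emph{affine linear} in $x$: writing $p=(1-t)\,s(t)$ for a polynomial $s$ whenever $p(1)=0$, one gets $\|p(u_x)j\|\le\|s\|_{\infty,[0,1]}\,\|(1-u_x)j\|$ with $(1-u_x)j=\sum_s\lambda_s(j-u_sj)$; likewise $[u_x,b]=\sum_s\lambda_s[u_s,b]$ and $g_0u_x-u_x=\sum_s\lambda_s(g_0u_s-u_s)$, combined with the standard bounds $\|[p(u_x),b]\|\le C_p\|[u_x,b]\|$ and $\|p(g_0u_x)-p(u_x)\|\le C_p\|g_0u_x-u_x\|$. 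In each case the norm of the affine-linear quantity at a point of $|\Delta^{\fm\ab m}|$ is bounded by its supremum over the vertices $\{u_s:s\ge m\}$, and that supremum tends to $0$ by the defining convergence properties of the quasicentral approximate unit. This establishes (i)--(iii), hence the construction of $\sigma$; multiplicativity and the $*$-property of $\sigma$ then follow formally from those of $\rho$ and $\beta$ together with the commutation (ii), via $\rho(f)\beta(b)\rho(g)\beta(c)=\rho(fg)\beta(bc)$ in $\Simpl^\fm J$.
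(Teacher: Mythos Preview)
Your proposal is correct and follows essentially the same route as the paper: define the functional-calculus map $\rho$ and the constant embedding $\beta$, show their ranges commute in $\Simpl^\fm B$, apply the universal property of the maximal tensor product to get $\Sigma B\to\Simpl^\fm J$, and then descend along $\Sigma J$ to $\Sigma A$. Your explicit polynomial-approximation plus convexity argument for upgrading the net-wise estimates $\lim_m\|[f(u_m),b]\|=0$, $\lim_m\|g_0f(u_m)-f(u_m)\|=0$, $\lim_m\|f(u_m)j\|=0$ to \emph{uniform} estimates over the tail simplices $|\Delta^{\fm\ab m}|$ is precisely the content the paper invokes by citing the preceding lemma, and you are in fact more careful here than the paper, which only states those limits at the vertices and leaves the passage to all convex combinations $u_x$ implicit.
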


\begin{proof}
According to the first part of the lemma, The $*$-ho\-mo\-mor\-phism 
\[\Cz((0,1))\to\Simpl^\fm J_0\subset\Simpl^\fm J\subset\Simpl^\fm B\,,\quad f\mapsto [x\mapsto f(u_x)]\]
is $G$-equivariant with respect to the trivial $G$-action on the domain. The second part of the lemma implies that it commutes with the inclusion as constant functions $B\hookrightarrow\Simpl^\fm B,b\mapsto [x\mapsto b]$.
Therefore, the universal property of the maximal tensor product yields an induced $G$-equivariant $*$-ho\-mo\-mor\-phism
\(\Sigma B\to \Simpl^\fm B\)
which maps elementary tensors $f\grtensor b$ to $[x\mapsto f(u_x)b]$, whose image is clearly contained in $\Simpl^\fm J$. It vanishes on $\Sigma J$ by the third part of the lemma and hence factors through the quotient $\Simpl^\fm B/\Simpl^\fm J$, which is isomorphic to $\Simpl^\fm A$ by \Cref{lem:asymptoticalgebrasexact}.
The resulting $*$-ho\-mo\-mor\-phism is $\sigma$.
\end{proof}

\begin{lem}[{cf.\ \cite[Lemma 5.7]{GueHigTro}}]\label{lem:extensionsimpletoticmorphismclassunique}
The class in $\SimplMor{\Sigma A}{J}$ of the $*$-ho\-mo\-mor\-phism $\sigma$ depends only on the short exact sequence and the sub-\textCstar-algebra $J_0$, but not on the choice of the approximate unit.
\end{lem}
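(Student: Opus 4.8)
The plan is to connect the two $*$-homomorphisms $\sigma_u,\sigma_v$ associated to two quasicentral approximate units $\{u_m\}_{m\in\fm}$ and $\{v_n\}_{n\in\fn}$ as in \Cref{prop:extensionsimpletoticmorphism} (so both taking values in $J_0$) by an explicit affine-linear homotopy, after first moving them into a common simpltotic algebra. First I would pass to the product directed set $\fm\times\fn$ with its canonical order: the two coordinate projections are cofinal, so reindexing the two approximate units along them yields quasicentral approximate units over $\fm\times\fn$ whose affine extensions I denote $\tilde u,\tilde v\colon|\Delta^{\fm\times\fn}|\to J_0$. The explicit formula for $\sigma$ in \Cref{prop:extensionsimpletoticmorphism} then shows that the resulting $*$-homomorphisms $\sigma_{\tilde u},\sigma_{\tilde v}\colon\Sigma A\to\Simpl^{\fm\times\fn}J$ are exactly the pullbacks of $\sigma_u,\sigma_v$ under the induced cofinal maps. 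Since these pullbacks are the structure maps of the direct limit defining $\SimplMor{\Sigma A}{J}$ (\Cref{lem:independenceofcofinalmap,cor:existenceofsimpletoticmorphismset}), it suffices to prove $\llbracket\sigma_{\tilde u}\rrbracket=\llbracket\sigma_{\tilde v}\rrbracket$ already in $\SimplMor[\fm\times\fn]{\Sigma A}{J}$.

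To produce such a homotopy I would form the affine interpolation $W\colon|\Delta^{\fm\times\fn}|\times[0,1]\to J_0$, $W(x,s)\coloneqq(1-s)\tilde u(x)+s\tilde v(x)$, which still satisfies $0\le W(x,s)\le 1$ and stays in $J_0$ because $J_0$ is a subalgebra, and then run the construction in the proof of \Cref{prop:extensionsimpletoticmorphism} verbatim with $W$ in place of $u$. Because the filter lives on $|\Delta^{\fm\times\fn}|$ only and the interval $[0,1]$ is merely an auxiliary coordinate, the ideal $\Cz(|\Delta^{\fm\times\fn}|;\Ct([0,1];J))$ consists of the functions vanishing along the filter uniformly in $s$; hence the construction goes through provided the three limit conditions used there hold for $W$ uniformly in $s$, i.e.\ for all $g\in G$, $b\in B$, $j\in J$ and $f\in\Cz((0,1])$ the quantities $\sup_s\|gf(W(\cdot,s))-f(W(\cdot,s))\|$, $\sup_s\|[f(W(\cdot,s)),b]\|$ and (when $f(1)=0$) $\sup_s\|f(W(\cdot,s))j\|$ tend to $0$ along the filter. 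This produces a $*$-homomorphism $\Phi\colon\Sigma A\to\Simpl^{\fm\times\fn}(\Ct([0,1];J))$ sending $f\grtensor\pi(b)$ to the class of $(x,s)\mapsto f(W(x,s))b$.

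The hard part is verifying these three uniform limits, and I expect it to be the main obstacle. The reduction to the single-unit statements proved above proceeds as follows: restricted to each simplex, $\tilde u$ and $\tilde v$ are convex combinations of finitely many $u_m$ and $v_n$, so $gW-W$, $[W,b]$ and $(1-W)j$ are the corresponding convex combinations of $gu_m-u_m$, $[u_m,b]$, $(1-u_m)j$ and their $v$-analogues, each of which tends to $0$ along the filter; convexity then yields the estimates uniformly in $s$. The nonlinearity of the functional calculus $f(W)$ is absorbed by approximating $f$ uniformly by polynomials vanishing at $0$ (and additionally at $1$ in the third case) and using $\|W(x,s)\|\le 1$: the telescoping bounds $\|p(gW)-p(W)\|\le C_p\|gW-W\|$ and $\|[p(W),b]\|\le C_p\|[W,b]\|$, together with the factorization $p(t)=(1-t)r(t)$ for the annihilation condition, then deliver all three limits uniformly in $s$.

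Finally, evaluating the underlying bounded-continuous model of $\Phi$ at $s=0$ and $s=1$ returns, since $W(\cdot,0)=\tilde u$ and $W(\cdot,1)=\tilde v$, precisely $\sigma_{\tilde u}$ and $\sigma_{\tilde v}$. Thus $\Phi$ is a $\Simpl^{\fm\times\fn}$-homotopy in the sense of \Cref{defn:Fhomotopies}, whence $\llbracket\sigma_{\tilde u}\rrbracket=\llbracket\sigma_{\tilde v}\rrbracket$ and, by the reduction in the first paragraph, $\llbracket\sigma_u\rrbracket=\llbracket\sigma_v\rrbracket$ in $\SimplMor{\Sigma A}{J}$, independently of the chosen approximate unit.
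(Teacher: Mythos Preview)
Your proof is correct and follows the same overall strategy as the paper: reduce to a common directed set via cofinal projections, then homotope via the affine interpolation $(1-s)\tilde u+s\tilde v$. The difference lies only in how the homotopy is packaged. You carry the parameter $s$ through the construction of \Cref{prop:extensionsimpletoticmorphism} and verify the three functional-calculus limits uniformly in $s$ by polynomial approximation. The paper instead observes that $\{s\mapsto(1-s)u_m+sv_m\}_{m}$ is itself a quasicentral approximate unit for $\Ct([0,1];J_0)$ inside the auxiliary algebra $B_1=\{f\in\Ct([0,1];B)\mid f\text{ constant mod }J\}$, and applies \Cref{prop:extensionsimpletoticmorphism} directly to the short exact sequence $0\to\Ct([0,1];J)\to B_1\to A\to 0$. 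This produces the $\Simpl^\fm$-homotopy $\Sigma A\to\Simpl^\fm\Ct([0,1];J)$ in one stroke, without reproving any estimates: the uniformity in $s$ is absorbed into the fact that the new approximate unit satisfies the hypotheses of the definition (which is checked via compactness of $[0,1]$). Your route is more explicit; the paper's is shorter and reuses \Cref{prop:extensionsimpletoticmorphism} as a black box.
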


\begin{proof}
First of all we note the following: If $\{u_m\}_{m\in\fm}$ is a quasicentral approximate unit for $J_0$ in $B$ and $\alpha\colon\fk\to\fm$ is a cofinal map, then the simpletotic morphism associated to the quasicentral approximate unit $\{u_{\alpha(k)}\}_{k\in\fk}$ is obtained from the simpletotic morphism associated to $\{u_m\}_{m\in\fm}$ by composing with the $*$-ho\-mo\-mor\-phism $\alpha^*\colon\Simpl^\fm J\to\Simpl^\fk J$. Hence their classes in $\SimplMor{\Sigma A}{J}$ agree.

Now, if $\{u_m\}_{m\in\fm}$ and $\{v_n\}_{n\in\fn}$ are two quasicentral approximate units for $J_0$ in $B$, then the first observation allows us to assume $\fm=\fn$ without loss of generality. 
Let $B_1$ denote the $G$-\textCstar-algebra of continuous functions $[0,1]\to B$ which are constant modulo $J$.
Applying the construction of \Cref{prop:extensionsimpletoticmorphism} to the short exact sequence 
\[0\to\Ct([0,1];J)\to B_1\to A\to 0\]
and the quasicentral approximate unit $\{t\mapsto (1-t)u_m+tv_m\}_{m\in\fm}$ for $\Ct([0,1];J_0)$ in $B_1$ yields a $\Simpl^\fm$-homotopy $\Sigma A\to \Simpl^\fm \Ct([0,1];J)$ between the simpletotic morphisms associated to $\{u_m\}_{m\in\fm}$ and $\{v_m\}_{m\in\fm}$.
\end{proof}

\begin{prop}[{cf.\ \cite[Lemma 5.8]{GueHigTro}}]\label{prop:extensionsimpltoticmorphismnatural}
A commuting diagram of short exact sequences of $G$-\textCstar-algebras
\[\xymatrix{
0\ar[r]&J_0\ar[r]\ar[d]&B_0\ar[r]\ar[d]&A_0\ar[r]\ar[d]&0
\\0\ar[r]&J_1\ar[r]&B_1\ar[r]&A_1\ar[r]&0
}\]
gives rise to a commuting diagram
\[\xymatrix{
\Sigma A_0\ar[r]^-{\llbracket\sigma_0\rrbracket}\ar[d]&J_0\ar[d]
\\\Sigma A_1\ar[r]^-{\llbracket\sigma_1\rrbracket}&J_1
}\]
in the simpltotic category.
\end{prop}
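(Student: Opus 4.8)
The plan is to unwind both legs of the square into two explicit $*$-homomorphisms with values in simpltotic algebras of $J_1$, and then to write down a direct homotopy between them over a product directed set, bypassing the cylinder-extension trick of \Cref{lem:extensionsimpletoticmorphismclassunique}.

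First I would fix a quasicentral approximate unit $\{u_m\}_{m\in\fm}$ for $J_0$ in $B_0$ and one $\{v_n\}_{n\in\fn}$ for $J_1$ in $B_1$, extend them affine linearly to $u\colon|\Delta^\fm|\to J_0$ and $v\colon|\Delta^\fn|\to J_1$, and let $\sigma_0,\sigma_1$ be the representatives produced by \Cref{prop:extensionsimpletoticmorphism}. Since the vertical maps $\llbracket\phi_J\rrbracket$ and $\llbracket\Sigma\phi_A\rrbracket$ are classes of $*$-homomorphisms, the two composites are computed by \Cref{cor:Fhomotopyclasscompositionproduct} together with the identifications $\Simpl^\fm\Simpl^\fo\cong\Simpl^\fm$ and $\Simpl^\fo\Simpl^\fn\cong\Simpl^\fn$ explained before \Cref{thm:defSimpltoticCategory}. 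Using that $\phi_B$ restricts to $\phi_J$ on $J_0$ and that $\pi_1\circ\phi_B=\phi_A\circ\pi_0$, they come out as $\llbracket\phi_J\rrbracket\circ\llbracket\sigma_0\rrbracket=\llbracket\varphi_0\rrbracket$ and $\llbracket\sigma_1\rrbracket\circ\llbracket\Sigma\phi_A\rrbracket=\llbracket\varphi_1\rrbracket$, where
\[\varphi_0\colon\Sigma A_0\to\Simpl^\fm J_1,\quad f\grtensor\pi_0(b)\mapsto\bigl[x\mapsto f(\phi_B(u_x))\,\phi_B(b)\bigr],\]
\[\varphi_1\colon\Sigma A_0\to\Simpl^\fn J_1,\quad f\grtensor\pi_0(b)\mapsto\bigl[y\mapsto f(v_y)\,\phi_B(b)\bigr].\]
Thus the whole statement reduces to proving $\llbracket\varphi_0\rrbracket=\llbracket\varphi_1\rrbracket$ in $\SimplMor{\Sigma A_0}{J_1}$.

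To compare these I would pass to the product directed set $\fm\times\fn$, whose projections $p\colon\fm\times\fn\to\fm$ and $q\colon\fm\times\fn\to\fn$ are cofinal (as noted before \Cref{defn:morphismlimit}); by \Cref{lem:independenceofcofinalmap} it suffices to show $p^*\llbracket\varphi_0\rrbracket=q^*\llbracket\varphi_1\rrbracket$ in $\SimplMor[\fm\times\fn]{\Sigma A_0}{J_1}$. Writing $\hat u\coloneqq\phi_B\circ u\circ|p|$ and $\hat v\coloneqq v\circ|q|$ for the resulting $J_1$-valued families on $|\Delta^{\fm\times\fn}|$ and setting $w(z,t)\coloneqq(1-t)\hat u(z)+t\,\hat v(z)$, I would define the homotopy \emph{directly} on elementary tensors by
\[\Phi\colon\Sigma A_0\to\Simpl^{\fm\times\fn}\bigl(\Ct([0,1];J_1)\bigr),\quad f\grtensor\pi_0(b)\mapsto\bigl[z\mapsto\bigl(t\mapsto f(w(z,t))\,\phi_B(b)\bigr)\bigr],\]
so that evaluation at $t=0$ and $t=1$ returns exactly $p^*\varphi_0$ and $q^*\varphi_1$. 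The verification that $\Phi$ is a well-defined $*$-homomorphism follows the pattern of \Cref{prop:extensionsimpletoticmorphism}: $w$ takes values in $J_1$, is bounded by $1$, and is asymptotically $G$-invariant (by the first property of quasicentral approximate units, cf.\ \cite[Lemma 5.6]{GueHigTro}), so $f\mapsto[z\mapsto(t\mapsto f(w(z,t)))]$ is a $G$-equivariant $*$-homomorphism $\Cz((0,1))\to\Simpl^{\fm\times\fn}(\Ct([0,1];B_1))$; its range commutes with the constants $\phi_B(b)$ in the quotient because
\[[w(z,t),\phi_B(b)]=(1-t)\,\phi_B\bigl([u_{|p|(z)},b]\bigr)+t\,[v_{|q|(z)},\phi_B(b)]\]
tends to $0$ along the filter uniformly in $t$ — the first summand by quasicentrality of $\{u_m\}$ in $B_0$, the second by that of $\{v_n\}$ in $B_1$ — and a standard functional-calculus estimate upgrades this to $[f(w(z,t)),\phi_B(b)]\to 0$. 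Hence the maximal tensor product gives a $*$-homomorphism on $\Sigma B_0$ with image in $\Simpl^{\fm\times\fn}(\Ct([0,1];J_1))$, and it kills $\Sigma J_0$: for $b\in J_0$ the element $\phi_B(b)=\phi_J(b)$ lies in $\overline{\phi_J(J_0)}$, on which $\{w(z,t)\}$ is an approximate unit (both $\hat u$ and $\hat v$ are), so the third property of quasicentral approximate units and $f(1)=0$ force $\|f(w(z,t))\phi_J(b)\|\to 0$. As $\Sigma$ is exact (\Cref{lem:propertiesoftensorproductfunctors}), $\Phi$ descends to $\Sigma B_0/\Sigma J_0\cong\Sigma A_0$, and reading off the two endpoints completes the argument.

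The main obstacle, and the reason one cannot simply imitate \Cref{lem:extensionsimpletoticmorphismclassunique} by feeding a cylinder extension into \Cref{prop:extensionsimpletoticmorphism}, is that $\phi_J$ and $\phi_B$ are in general neither injective nor surjective: $\{\phi_B(u_m)\}$ is an approximate unit only for $\overline{\phi_J(J_0)}$ and quasicentral only with respect to $\phi_B(B_0)$, so the interpolation $(1-t)\phi_B(u_m)+t\,v_n$ is an approximate unit for \emph{no} natural ideal of $B_1$ and does not arise from any quasicentral approximate unit in the sense of \Cref{prop:extensionsimpletoticmorphism}. The point of building $\Phi$ by hand is that a homotopy only needs the two genuinely weaker asymptotic properties that the interpolation \emph{does} enjoy — asymptotic commutation with $\phi_B(B_0)$ and asymptotic unit behaviour on $\overline{\phi_J(J_0)}$ — and these are exactly enough to make $\Phi$ land in $J_1$, be multiplicative, and annihilate $\Sigma J_0$.
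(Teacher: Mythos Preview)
Your argument is correct: the explicit interpolation $w(z,t)=(1-t)\hat u(z)+t\,\hat v(z)$ on the product directed set $\fm\times\fn$ does give a well-defined $*$-homomorphism $\Phi\colon\Sigma A_0\to\Simpl^{\fm\times\fn}(\Ct([0,1];J_1))$ with the right endpoints, and your verification of asymptotic $G$-invariance, asymptotic commutation with $\phi_B(B_0)$, and vanishing on $\Sigma J_0$ goes through.

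However, the paper takes a different and somewhat slicker route, and your final paragraph mischaracterizes the obstacle. The paper does feed a cylinder-type extension into \Cref{prop:extensionsimpletoticmorphism}: it applies that proposition to the short exact sequence of mapping cylinders
\[0\to\Zyl(J_0\to J_1)\to\Zyl(B_0\to B_1)\to\Zyl(A_0\to A_1)\to 0\]
and simply chooses a \emph{fresh} quasicentral approximate unit $\{(u_m,f_m)\}_{m\in\fm}$ for $\Zyl(J_0\to J_1)$ in $\Zyl(B_0\to B_1)$, whose existence is guaranteed abstractly. The point you missed is that one need not manufacture this approximate unit out of given ones on $B_0$ and $B_1$ via interpolation; restricting the cylinder approximate unit to the two ends produces quasicentral approximate units $\{u_m\}$ for $J_0$ in $B_0$ and $\{f_m(1)\}$ for $J_1$ in $B_1$, and the resulting simpltotic morphism on the cylinder, pushed along $\Zyl(J_0\to J_1)\to\Ct([0,1];J_1)$ and precomposed with $A_0\hookrightarrow\Zyl(A_0\to A_1)$, is the desired homotopy. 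This avoids the product directed set, avoids having to check the three asymptotic properties by hand, and uses only the basic case $J_0=J$ of \Cref{prop:extensionsimpletoticmorphism}. Your approach has the virtue of being completely explicit about which approximate units are used; the paper's buys conceptual economy by hiding those estimates inside the existence of quasicentral approximate units.
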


This can be proven completely analogously to \cite[Lemma 5.8]{GueHigTro}. We shall also give a simpler alternative proof, which does not even require the most general form of \Cref{prop:extensionsimpletoticmorphism} but only its basic case $J_0=J$.

\begin{proof}
The \emph{mapping cylinder} of an equivariant $*$-ho\-mo\-mor\-phism $\varphi\colon C_0\to C_1$ between $G$-\textCstar-algebras $C_0,C_1$ is defined as 
\[\Zyl(\varphi)\coloneqq\{(c,f)\in C_0\oplus\Ct([0,1];C_1)\mid \varphi(c)=f(0)\}\,.\]
Then the diagram of short exact sequences gives rise to a short exact sequence
\[0\to\Zyl(J_0\to J_1)\to \Zyl(B_0\to B_1)\to \Zyl(A_0\to A_1)\to 0\]
and we choose a quasicentral approximate unit $\{(u_m,f_m)\}_{m\in\fm}$ of $\Zyl(J_0\to J_1)$ in $\Zyl(B_0\to B_1)$.
Associated to it is the simpltotic morphism
\[\Sigma \Zyl(A_0\to A_1)\to \Simpl^\fm\Zyl(J_0\to J_1)\]
and combining it with the canonical $*$-ho\-mo\-mor\-phisms 
\begin{align*}
A_0&\hookrightarrow\Zyl(A_0\to A_1)
&\Zyl(J_0\to J_1)&\to\Ct([0,1];J_1)
\\a&\mapsto (a,t\mapsto a)
&(j,f)&\mapsto f
\end{align*}
yields a $*$-ho\-mo\-mor\-phism
\(\psi\colon\Sigma A_0\to\Simpl^\fm \Ct([0,1];J_1)\).

Now, note that $\{u_m\}_{m\in\fm}$ and $\{f_m(1)\}_{m\in\fm}$ are then quasicentral approximate units for $J_0$ and $J_1$ in $B_0$ and $B_1$, respectively, and let $\sigma_0$, $\sigma_1$ be the simpltotic morphisms associated to them.
Then $\psi$ is exactly the $\Simpl^\fm$-homotopy which shows that the claimed diagram commutes in the simpltotic category.
\end{proof}

\begin{prop}[{cf.\ \cite[Proposition 5.9]{GueHigTro}}]\label{prop:extensionsimpltoticmorphismtensorproduct}
Let $\llbracket\sigma\rrbracket\in\SimplMor{\Sigma A}{J}$
be the morphism associated to a short exact sequence
\[0\to J\to B\to A\to 0\]
of $G$-\textCstar-algebras.
Let $D$ be another $G$-\textCstar-algebra and $\llbracket\sigma_D\rrbracket\in\SimplMor{\Sigma A\grtensor D}{J\grtensor D}$ the morphism associated to the short exact sequence
\[0\to J\grtensor D\to B\grtensor D\to A\grtensor D\to 0\,.\]
Then $\sigma_D=\sigma\grtensor \id_D$.
\end{prop}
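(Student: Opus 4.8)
The plan is to use the freedom in \Cref{lem:extensionsimpletoticmorphismclassunique} to compute $\sigma_D$ from a conveniently chosen quasicentral approximate unit and then to identify the outcome with a cofinal pullback of $\sigma\grtensor\id_D$. Let $\{u_m\}_{m\in\fm}$ be the quasicentral approximate unit for $J$ in $B$ defining $\sigma$, and pick a quasicentral approximate unit $\{e_n\}_{n\in\fn}$ for $D$ in itself which also satisfies the equivariance condition; such a net exists by the existence lemma for quasicentral approximate units. First I would verify that $\{u_m\grtensor e_n\}_{(m,n)\in\fm\times\fn}$ is a quasicentral approximate unit for $J\grtensor D$ in $B\grtensor D$. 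Positivity and the contraction bound are immediate; the approximate unit property holds on elementary tensors because $u_mj\to j$ and $e_nd\to d$, and extends to all of $J\grtensor D$ by density together with the uniform bound $\|u_m\grtensor e_n\|\le 1$; and the remaining two conditions follow from the identities
\begin{align*}
[u_m\grtensor e_n,\,b\grtensor d]&=[u_m,b]\grtensor e_nd+bu_m\grtensor[e_n,d],
\\ g(u_m\grtensor e_n)-u_m\grtensor e_n&=(gu_m-u_m)\grtensor ge_n+u_m\grtensor(ge_n-e_n),
\end{align*}
each summand of which tends to zero along $\fm\times\fn$ (here quasicentrality of $\{e_n\}$ is used for the second summand of the first line).

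By \Cref{lem:extensionsimpletoticmorphismclassunique} the class $\sigma_D$ is then represented by the $*$-ho\-mo\-mor\-phism $\sigma_D'\colon\Sigma(A\grtensor D)\to\Simpl^{\fm\times\fn}(J\grtensor D)$ that \Cref{prop:extensionsimpletoticmorphism} associates to this approximate unit; it sends $f\grtensor(\pi(b)\grtensor d)$ to the class of $z\mapsto f(w_z)(b\grtensor d)$, where $w\colon|\Delta^{\fm\times\fn}|\to J\grtensor D$ is the affine extension of $\{u_m\grtensor e_n\}$. On the other hand, let $p\colon\fm\times\fn\to\fm$ be the projection, which is cofinal. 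Unwinding the definition of $\grtensor\id_D$ in \Cref{cor:simpltotictensorproductfunctors} together with the transformation $\eta^{\Simpl^\fm}_J$, the pullback $p^*(\sigma\grtensor\id_D)$ sends, under the canonical identification $\Sigma A\grtensor D\cong\Sigma(A\grtensor D)$, the element $f\grtensor(\pi(b)\grtensor d)$ to the class of $z\mapsto f(u_{|p|(z)})b\grtensor d$, where $u_{|p|(z)}$ is the affine extension of $\{u_m\}$ evaluated at $|p|(z)$. Since $p$ is cofinal, $p^*(\sigma\grtensor\id_D)$ and $\sigma\grtensor\id_D$ represent the same element of $\SimplMor{\Sigma A\grtensor D}{J\grtensor D}$, so it suffices to show that $\sigma_D'$ and $p^*(\sigma\grtensor\id_D)$ coincide as $*$-ho\-mo\-mor\-phisms.

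The heart of the matter, and the step I expect to be the main obstacle, is therefore to prove that for all $f\in\Cz((0,1))$, $b\in B$ and $d\in D$ the function
\[z\mapsto f(w_z)(b\grtensor d)-f(u_{|p|(z)})b\grtensor d\]
lies in $\Cz(|\Delta^{\fm\times\fn}|;J\grtensor D)$; since both maps are $*$-ho\-mo\-mor\-phisms agreeing on the generators $f\grtensor(\pi(b)\grtensor d)$ of $\Sigma(A\grtensor D)$, this forces their equality. Using the spectral bound $\|(f-q)(w_z)\|\le\|f-q\|_\infty$ I would approximate $f$ uniformly on $[0,1]$ by polynomials $q$ with $q(0)=0$ and thereby reduce to a monomial $f(x)=x^k$. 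Writing $z=\sum_i\lambda_i[(m_i,n_i)]$, the difference then expands as $\sum_{i_1,\dots,i_k}\lambda_{i_1}\cdots\lambda_{i_k}\,u_{m_{i_1}}\cdots u_{m_{i_k}}b\grtensor(e_{n_{i_1}}\cdots e_{n_{i_k}}-1)d$, and the telescoping identity $e_{n_{i_1}}\cdots e_{n_{i_k}}d-d=\sum_{l}e_{n_{i_1}}\cdots e_{n_{i_{l-1}}}(e_{n_{i_l}}-1)d$ gives $\|(e_{n_{i_1}}\cdots e_{n_{i_k}}-1)d\|\le k\sup_{n\ge n_0}\|e_nd-d\|$ whenever all indices satisfy $n_{i_j}\ge n_0$. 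As $\sum_{i_1,\dots,i_k}\lambda_{i_1}\cdots\lambda_{i_k}=1$, the whole expression is bounded by $k\|b\|\sup_{n\ge n_0}\|e_nd-d\|$ on the tail $|\Delta^{(\fm\times\fn)\ab(m_0,n_0)}|$, which becomes arbitrarily small for $n_0$ large. This places the difference in $\Cz$ and hence yields $\sigma_D'=p^*(\sigma\grtensor\id_D)$, so that $\sigma_D=p^*(\sigma\grtensor\id_D)=\sigma\grtensor\id_D$ in $\SimplMor{\Sigma A\grtensor D}{J\grtensor D}$, as claimed. The apparent asymmetry between the two tensor factors is harmless: it is precisely the fact that one obtains a genuine equality of $*$-ho\-mo\-mor\-phisms, rather than merely a homotopy, that lets the argument dispense with any reparametrisation.
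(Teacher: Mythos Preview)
Your argument is correct and follows essentially the same route as the paper's proof, which in turn defers to \cite[Proposition 5.9]{GueHigTro}: form a product quasicentral approximate unit for $J\grtensor D$ in $B\grtensor D$ from one for $J$ in $B$ and one for $D$, and then check that the resulting simpltotic morphism agrees with $\sigma\grtensor\id_D$. The only cosmetic difference is that the paper passes to a common index set (``we may assume $\fm=\fn$'') while you keep $\fm\times\fn$ and use the cofinal projection $p$; your version also spells out the polynomial-approximation estimate in detail, which the paper leaves implicit.
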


\begin{proof}
Let $\{u_m\}_{m\in\fm}$ be a quasicentral approximate unit for $J$ in $B$ and $\{v_n\}_{n\in\fn}$ an approximate unit for $D$. We may assume that both are indexed over the same directed set $\fm=\fn$ and then $\{u_m\grtensor v_m\}_{m\in\fm}$ is a quasicentral approximate unit for $J\grtensor D$ in $B\grtensor D$.
In complete analogy to the proof of \cite[Proposition 5.9]{GueHigTro} one can then show that the simpltotic morphism associated to $\{u_m\grtensor v_m\}_{m\in\fm}$ is obtained by applying the continuous and exact functor $\blank\grtensor D$ to the simpltotic morphism associated to $\{u_m\}_{m\in\fm}$ and the claim follows.
\end{proof}

\begin{prop}[{cf.\ \cite[Proposition 5.11]{GueHigTro}}]
The class 
\[\llbracket\sigma\rrbracket\in\SimplMor{\Sigma \C}{\Sigma \C}\]
associated to the short exact sequence
\[0\to \Sigma \C\to \Cz([0,1))\to \C\to 0\]
is the identity morphism in the simpltotic category.
\end{prop}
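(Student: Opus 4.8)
The plan is to reduce to the known asymptotic computation \cite[Proposition 5.11]{GueHigTro} by means of the comparison functor between the asymptotic and the simpltotic category. First I would record that $\Sigma\C=\Cz((0,1))$ carries the trivial grading and the trivial $G$-action, so it possesses a countable dense $(\Q+i\Q)$-$*$-subalgebra $A'$ which is automatically invariant under the grading and the $G$-action. Hence \Cref{cor:asymptoticsimpltoticrelation} applies, irrespective of the cardinality of $G$, and the canonical map $F\colon\SimplMor[1]{\Sigma\C}{\Sigma\C}\to\SimplMor{\Sigma\C}{\Sigma\C}$ of \Cref{thm:asymptoticsimpltoticfunctor} is a natural bijection; moreover $F$ sends the asymptotic identity morphism to the simpltotic identity morphism, being a functor. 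Since both algebras carry the trivial $G$-action, the $G$-equivariant asymptotic morphism set coincides with the non-equivariant one, so the (non-equivariant) \cite[Proposition 5.11]{GueHigTro} applies and tells us that the asymptotic morphism $\sigma_{\mathrm{as}}$ associated to the same cone sequence represents the identity in $\SimplMor[1]{\Sigma\C}{\Sigma\C}$.

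It then remains to identify $F(\llbracket\sigma_{\mathrm{as}}\rrbracket)$ with $\llbracket\sigma\rrbracket$. By \Cref{lem:extensionsimpletoticmorphismclassunique} the class $\llbracket\sigma\rrbracket$ does not depend on the chosen directed set and approximate unit, so I would fix $\fm=\N_+$ together with an \emph{increasing} sequential approximate unit $\{u_n\}_{n\in\N_+}$ for $\Cz((0,1))$ in $\Cz([0,1))$ and the lift $b(s)=1-s$ of $1\in\C$. With these choices $\sigma=\sigma_{\N_+}$ is represented by $f\mapsto[\,x\mapsto f(u_x)b\,]$, where $u_x=\sum_n\lambda_n u_n$ is the barycentric affine interpolation of \Cref{prop:extensionsimpletoticmorphism}. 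On the asymptotic side the same sequence, interpolated affinely over $T=[1,\infty)$ to a family $t\mapsto u_t$, represents $\sigma_{\mathrm{as}}$; recalling that $F$ on $\SimplMor[1]{\blank}{\blank}$ is induced by the map $\tau\colon|\Delta^{\N_+}|\to T,\ \sum_n\lambda_n[n]\mapsto\sum_n\lambda_n n$ of \Cref{ex:asymptoticsimpltoticrelation}, the pullback $\tau^*\sigma_{\mathrm{as}}$ is represented by $f\mapsto[\,x\mapsto f(u_{\tau(x)})b\,]$. Thus the two representatives differ only in that the net $\{u_n\}$ is extended over $|\Delta^{\N_+}|$ by the two affine interpolations $x\mapsto u_x$ and $x\mapsto u_{\tau(x)}$, which agree on every vertex.

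The key step is therefore to show that these two extensions yield $\N_+$-homotopic $*$-homomorphisms. I would use the straight-line family $v_{x,r}=(1-r)\,u_{\tau(x)}+r\,u_x$, a continuous map $|\Delta^{\N_+}|\times[0,1]\to\Cz((0,1))$ with $0\le v_{x,r}\le1$ restricting to $u_n$ on each vertex. As $\{u_n\}$ is increasing, on the filter set $|\Delta^{\N_+\ab m}|$ both $u_{\tau(x)}$ and $u_x$ dominate $u_m$, hence so does $v_{x,r}$; since $f(1)=0$ for $f\in\Sigma\C$, the estimates for quasicentral approximate units (cf.\ \cite[Lemma 5.6]{GueHigTro}) then give, uniformly in $r$, that $\|f(v_{x,r})\,j\|\to0$ along the filter for $j\in\Cz((0,1))$. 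This is exactly what is needed for $f\mapsto[\,x\mapsto f(v_{x,r})b\,]$ to define a $*$-homomorphism $\Sigma\C\to\Simpl^{\N_+}(\Ct([0,1];\Cz((0,1))))$, which is an $\N_+$-homotopy between $\tau^*\sigma_{\mathrm{as}}$ and $\sigma_{\N_+}$. Consequently $F(\llbracket\sigma_{\mathrm{as}}\rrbracket)=\llbracket\sigma\rrbracket$, and combining with the first paragraph finishes the proof.

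The main obstacle is precisely this last identification: one must verify that the interpolated family $v_{x,r}$ retains the defining vanishing property of a quasicentral approximate unit along the filter \emph{uniformly} in the homotopy parameter $r$, so that $f\mapsto f(v_{x,r})b$ is genuinely multiplicative modulo $\Cz(|\Delta^{\N_+}|;\Cz((0,1)))$; this is where the monotonicity of $\{u_n\}$ is essential. As an alternative that avoids invoking \cite{GueHigTro} altogether, one could transplant the explicit homotopy underlying \cite[Proposition 5.11]{GueHigTro} directly into the simpltotic picture, deforming the suspension coordinate---which the approximate unit concentrates near the endpoint $s=0$---back to the identity coordinate; there the same vanishing estimates keep the intermediate maps $*$-homomorphisms modulo $\Cz$, the delicate point again being multiplicativity at the infinity end $s\to1$.
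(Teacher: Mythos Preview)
Your strategy is the same as the paper's: reduce to \cite[Proposition 5.11]{GueHigTro} via the comparison functor of \Cref{thm:asymptoticsimpltoticfunctor}. The difference is the direction in which you make the comparison. You pull the asymptotic morphism back via $\tau^*$ and then have to prove that $\tau^*\sigma_{\mathrm{as}}$ and $\sigma_{\N_+}$ are $\N_+$-homotopic, which is the explicit straight-line argument you correctly identify as the delicate point. The paper instead pushes $\sigma$ forward via $\iota^*$: because $\iota\colon T\to|\Delta^{\N_+}|$ embeds $[n,n+1]$ as the edge between $[n]$ and $[n+1]$, the affine extension $u_x$ restricted along $\iota$ \emph{is} the affine interpolation $u_t$, so $\iota^*\sigma=\sigma_{\mathrm{as}}$ on the nose. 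Then $\llbracket\sigma\rrbracket=\llbracket\tau^*\iota^*\sigma\rrbracket=F_1\llbracket\iota^*\sigma\rrbracket=F_1(\id)=\id$, using only the homotopy $\tau^*\iota^*\simeq\id$ already established in \Cref{ex:asymptoticsimpltoticrelation}. This avoids your hand-built homotopy entirely.

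Your homotopy does work here, but the justification you give is not quite the standard estimate: the lemma you cite controls $\|f(u_m)j\|$, not $\|f(v)j\|$ for arbitrary $v\ge u_m$. In this particular commutative situation the monotonicity argument can indeed be completed (use that $u_m\to1$ uniformly on compacts of $(0,1)$, so $v\ge u_m$ forces $v$ close to $1$ where $j$ is not small), but this is an extra verification you would have to spell out. Going via $\iota^*$ as the paper does sidesteps it.
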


\begin{proof}
This proposition can be proven completely analogously to \cite[Proposition 5.11]{GueHigTro}, but it can also be derived from it as follows.

Since the algebras in the sequence are separable, we can choose a quasicentral approximate unit for $\Sigma \C$ in $\Cz([0,1))$ which is indexed by $\N_+$, see \cite[Theorem 1]{Arveson} with \cite[Lemma 5.3]{GueHigTro}.
It is immediate from the construction that the composition of the associated simpltotic morphism 
$\sigma\colon \Sigma \C\to\Simpl^{\N_+}(\Sigma \C)$ with the $*$-ho\-mo\-mor\-phism $\iota^*\colon \Simpl^{\N_+}(\Sigma \C)\to\Asymp(\Sigma \C)$ from \Cref{ex:asymptoticsimpltoticrelation} is the asymptotic morphism which was associated to the sequence in \cite[Proposition 5.5]{GueHigTro} and is the identity morphism in the asymptotic category by \cite[Proposition 5.11]{GueHigTro}. Hence \Cref{thm:asymptoticsimpltoticfunctor} shows that $\llbracket\sigma\rrbracket=\llbracket\tau^*\iota^*\sigma\rrbracket=F_1\llbracket\iota^*\sigma\rrbracket$ is the identity morphism in the simpltotic category.
\end{proof}

The remainder of \cite[Chapter 5]{GueHigTro} starting with Definition 5.13 can be transferred to the simpltotic category with essentially the identical proofs, using what we have developed for our theory so far. Therefore, we are going to state these analogues without any further proof. 

We recall the definition of the mapping cone and two canonically associated $*$-ho\-mo\-mor\-phisms to fix the notation.
\begin{defn}[{cf., e.g., \cite[Definition 5.13]{GueHigTro}}]
The \emph{mapping cone} $\Cone(\theta)$ of a $*$-ho\-mo\-mor\-phism $\Theta\colon B\to A$ is the \textCstar-algebra 
\[\Cone(\theta)\coloneqq\{(b,f)\in B\oplus\Cz([0,1);A)\mid\theta(b)=f(0)\}\,.\]
Associated to it are the two $*$-ho\-mo\-mor\-phisms
\begin{align*}
\alpha\colon\Cone(\theta)&\twoheadrightarrow B&\beta\colon\Sigma A&\hookrightarrow\Cone(\theta)
\\(b,f)&\mapsto b&f&\mapsto (0,f)\,.
\end{align*}
\end{defn}

If $\theta=\pi\colon B\to A$ is an epimorphism with kernel $J\coloneqq\ker(\pi)$, then we have a canonical embedding and a canonical epimorphism
\begin{align*}
\gamma\colon J&\hookrightarrow\Cone(\pi)&\pi_1\colon\Cz([0,1);B)&\twoheadrightarrow \Cone(\theta)
\\j&\mapsto (j,0)&f&\mapsto (f(0),\pi\circ f)
\end{align*}
and a short exact sequence
\begin{equation}\label{eq:ConeSES}
0\to\Sigma J\to \Cz([0,1),B)\xrightarrow{\pi_1} \Cone(\pi)\to 0
\end{equation}

\begin{prop}[{cf.\ \cite[Proposition 5.14]{GueHigTro}}]
The class 
\[\llbracket\sigma\rrbracket\in\SimplMor{\Sigma \Cone(\pi)}{\Sigma J}\]
associated to the short exact sequence \eqref{eq:ConeSES} and the class of the $*$-ho\-mo\-mor\-phism 
\[\Sigma\gamma\coloneqq \id\grtensor\gamma\colon \Sigma J \to \Sigma \Cone(\pi)\]
are mutually inverse to each other. \qed
\end{prop}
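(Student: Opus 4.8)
The plan is to prove the two identities $\llbracket\sigma\rrbracket\circ\llbracket\Sigma\gamma\rrbracket=\id_{\Sigma J}$ and $\llbracket\Sigma\gamma\rrbracket\circ\llbracket\sigma\rrbracket=\id_{\Sigma\Cone(\pi)}$ separately, deriving each one formally from the naturality of the $\sigma$-construction (\Cref{prop:extensionsimpltoticmorphismnatural}) together with two $\sigma$-class computations that are already available. First I would record the general fact that for any $G$-\textCstar-algebra $D$ the cone sequence $0\to\Sigma D\to\Cz([0,1);D)\xrightarrow{\ev{0}}D\to0$ has associated class $\id_{\Sigma D}$: it is obtained from the elementary sequence $0\to\Sigma\C\to\Cz([0,1))\to\C\to0$ of the preceding proposition by applying the functor $\blank\grtensor D$, so \Cref{prop:extensionsimpltoticmorphismtensorproduct} identifies its class with $\id_{\Sigma\C}\grtensor\id_D=\id_{\Sigma D}$. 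The two identities then follow by comparing \eqref{eq:ConeSES} with the cone sequences of $J$ and of $\Cone(\pi)$, respectively.

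For the retraction $\llbracket\sigma\rrbracket\circ\llbracket\Sigma\gamma\rrbracket=\id_{\Sigma J}$ I would use the morphism of short exact sequences
\[\xymatrix{
0\ar[r]&\Sigma J\ar[r]\ar@{=}[d]&\Cz([0,1);J)\ar[r]^-{\ev{0}}\ar[d]&J\ar[r]\ar[d]^{\gamma}&0
\\0\ar[r]&\Sigma J\ar[r]&\Cz([0,1);B)\ar[r]^-{\pi_1}&\Cone(\pi)\ar[r]&0
}\]
whose middle vertical arrow is the inclusion of $J$-valued functions. The right-hand square commutes because on a $J$-valued function $f$ the homomorphism $\pi_1$ produces $(f(0),0)=\gamma(f(0))$. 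Feeding this into \Cref{prop:extensionsimpltoticmorphismnatural} yields a commuting square in the simpltotic category whose top edge is the class of the cone sequence of $J$, whose right edge is $\id_{\Sigma J}$, whose left edge is $\llbracket\Sigma\gamma\rrbracket$, and whose bottom edge is $\llbracket\sigma\rrbracket$; since the top class equals $\id_{\Sigma J}$ by the general fact, this reads $\id_{\Sigma J}=\llbracket\sigma\rrbracket\circ\llbracket\Sigma\gamma\rrbracket$.

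The section $\llbracket\Sigma\gamma\rrbracket\circ\llbracket\sigma\rrbracket=\id_{\Sigma\Cone(\pi)}$ is the substantial point, and it hinges on finding the right comparison map. I would map \eqref{eq:ConeSES} into the cone sequence of $\Cone(\pi)$ by
\[\xymatrix{
0\ar[r]&\Sigma J\ar[r]\ar[d]_{\Sigma\gamma}&\Cz([0,1);B)\ar[r]^-{\pi_1}\ar[d]^{\Psi}&\Cone(\pi)\ar[r]\ar@{=}[d]&0
\\0\ar[r]&\Sigma\Cone(\pi)\ar[r]&\Cz([0,1);\Cone(\pi))\ar[r]^-{\ev{0}}&\Cone(\pi)\ar[r]&0
}\]
where the middle arrow is the $G$-equivariant $*$-ho\-mo\-mor\-phism
\[\Psi(f)(s)\coloneqq\big(f(s),\,\tau\mapsto\pi(f(1-(1-s)(1-\tau)))\big)\qquad(f\in\Cz([0,1);B),\ s\in[0,1))\,.\]
The reparametrization $1-(1-s)(1-\tau)$ is engineered so that $\Psi(f)$ really lands in $\Cz([0,1);\Cone(\pi))$: at $\tau=0$ the second component is $\pi(f(s))$, matching $\pi$ of the first component, and the vanishing at the open ends in both $s$ and $\tau$ is automatic. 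One then checks that the left square commutes because $\pi$ annihilates $J$, so that $\Psi$ restricts to $\Sigma\gamma$ on the kernel, and that the right square commutes because $\Psi(f)(0)=(f(0),\pi\circ f)=\pi_1(f)$. Applying \Cref{prop:extensionsimpltoticmorphismnatural} now produces a commuting square identifying $\llbracket\Sigma\gamma\rrbracket\circ\llbracket\sigma\rrbracket$ with the class of the cone sequence of $\Cone(\pi)$, which is $\id_{\Sigma\Cone(\pi)}$ by the general fact. I expect the construction of $\Psi$ — in particular hitting on the reparametrization that simultaneously makes both squares commute while keeping $\Psi$ inside the vanishing-at-infinity algebra — to be the only genuinely nonroutine step; the diagram chases and the two $\sigma$-class computations are then immediate from the results already established.
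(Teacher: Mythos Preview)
Your argument is correct. The paper does not supply its own proof of this proposition: it explicitly states that everything from \cite[Definition 5.13]{GueHigTro} onward transfers with identical proofs and marks the statement with \qed. Your approach---reducing both identities to the naturality statement of \Cref{prop:extensionsimpltoticmorphismnatural} by comparing \eqref{eq:ConeSES} with the standard cone sequences of $J$ and of $\Cone(\pi)$, using \Cref{prop:extensionsimpltoticmorphismtensorproduct} together with the $\Sigma\C$-computation to identify the latter classes as identities---is exactly the argument of \cite[Proposition 5.14]{GueHigTro} that the paper is invoking. The reparametrization in your map $\Psi$ is the standard one; the verifications you sketch (that $\Psi$ lands in $\Cz([0,1);\Cone(\pi))$, restricts to $\Sigma\gamma$ on the kernel, and satisfies $\ev{0}\circ\Psi=\pi_1$) all go through as you indicate.
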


\begin{lem}[{cf.\ \cite[Lemma 5.15]{GueHigTro}}]
Let $\llbracket\sigma\rrbracket\in\SimplMor{\Sigma A}{J}$ be the class associated to the short exact sequence
\[0\to J\to A\to B\xrightarrow{\pi} 0\,.\]
Then the composition 
\[\Sigma^2A\xrightarrow{\id\grtensor\llbracket\sigma\rrbracket}\Sigma J\xrightarrow{\llbracket\Sigma\tau\rrbracket}\Sigma\Cone(\pi)\]
in the simpltotic category agrees with the class of $\Sigma\beta\colon\Sigma^2 A\to\Sigma\Cone(\pi)$. \qed
\end{lem}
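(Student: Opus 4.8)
The plan is to obtain the identity formally, from naturality of the boundary morphism together with the mapping-cone computation already at hand, so that no explicit $\fm$-homotopy has to be written down.

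First I would rewrite the left-hand factor. Applying \Cref{prop:extensionsimpltoticmorphismtensorproduct} with $D=\Cz((0,1))$, the morphism $\id\grtensor\llbracket\sigma\rrbracket$ is exactly the boundary class $\llbracket\sigma_\Sigma\rrbracket\in\SimplMor{\Sigma^2A}{\Sigma J}$ of the suspended short exact sequence
\[0\to\Sigma J\to\Sigma B\xrightarrow{\Sigma\pi}\Sigma A\to 0,\]
which is genuinely short exact because tensoring with the fixed algebra $\Cz((0,1))$ is exact by \Cref{lem:propertiesoftensorproductfunctors}. The discrepancy between $\id\grtensor\llbracket\sigma\rrbracket$ and the $\llbracket\sigma\rrbracket\grtensor\id$ produced by that proposition is merely the placement of the new suspension coordinate and is immaterial by the symmetry of the maximal tensor product.

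Next I would build a morphism from this suspended sequence to the cone sequence \eqref{eq:ConeSES}. On the kernels I take $\id_{\Sigma J}$, on the middle terms the inclusion $\iota\colon\Sigma B=\Cz((0,1);B)\hookrightarrow\Cz([0,1);B)$ of the functions that in addition vanish at $0$, and on the quotients the map $\beta\colon\Sigma A\to\Cone(\pi)$, $h\mapsto(0,h)$. The right square commutes since $\pi_1(\iota(h))=(h(0),\pi\circ h)=(0,\pi\circ h)=\beta(\Sigma\pi(h))$ using $h(0)=0$, and the left square commutes because $\iota$ composed with the kernel inclusion $\Sigma J\hookrightarrow\Sigma B$ and the kernel inclusion of \eqref{eq:ConeSES} are both the single inclusion $\Cz((0,1);J)\hookrightarrow\Cz([0,1);B)$. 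The point on which the whole argument hinges, and the one step I would check most carefully, is precisely that the induced map on kernels is the identity $\id_{\Sigma J}$: this is what forces the right-hand vertical arrow in the naturality square to be $\id$.

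Finally I would invoke \Cref{prop:extensionsimpltoticmorphismnatural}: the morphism of extensions yields a commuting square in the simpltotic category whose top is $\llbracket\sigma_\Sigma\rrbracket$, whose bottom is the boundary class $\llbracket\sigma'\rrbracket\in\SimplMor{\Sigma\Cone(\pi)}{\Sigma J}$ of \eqref{eq:ConeSES}, whose left arrow is $\llbracket\Sigma\beta\rrbracket$ and whose right arrow is $\id_{\Sigma J}$; that is, $\llbracket\sigma_\Sigma\rrbracket=\llbracket\sigma'\rrbracket\circ\llbracket\Sigma\beta\rrbracket$. By the preceding proposition (the simpltotic analogue of \cite[Proposition 5.14]{GueHigTro}), $\llbracket\sigma'\rrbracket$ and $\llbracket\Sigma\gamma\rrbracket$ are mutually inverse, where $\gamma\colon J\hookrightarrow\Cone(\pi)$, $j\mapsto(j,0)$, is the map occurring in the statement. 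Composing the previous relation on the left with $\llbracket\Sigma\gamma\rrbracket$ and using $\llbracket\Sigma\gamma\rrbracket\circ\llbracket\sigma'\rrbracket=\id_{\Sigma\Cone(\pi)}$ gives $\llbracket\Sigma\gamma\rrbracket\circ\llbracket\sigma_\Sigma\rrbracket=\llbracket\Sigma\beta\rrbracket$, which, since $\llbracket\sigma_\Sigma\rrbracket=\id\grtensor\llbracket\sigma\rrbracket$, is exactly the asserted equality. All of the genuine analytic content is thus hidden inside the two inputs just cited, and the present lemma reduces to the bookkeeping above.
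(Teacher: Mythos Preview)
Your argument is correct and is essentially the route the paper has in mind: the paper omits the proof entirely, declaring it ``essentially identical'' to \cite[Lemma~5.15]{GueHigTro}, and that proof proceeds exactly as you do---identify $\id\grtensor\llbracket\sigma\rrbracket$ with the boundary class of the suspended extension via \Cref{prop:extensionsimpltoticmorphismtensorproduct}, map that extension into the cone extension \eqref{eq:ConeSES} by $(\id_{\Sigma J},\iota,\beta)$, apply naturality (\Cref{prop:extensionsimpltoticmorphismnatural}), and finish with the mutual-inverse statement of the preceding proposition. Your identification of the typo $\tau$ with $\gamma$ is also correct; the only cosmetic point is that the left/right placement of the suspension coordinate deserves one more word (e.g.\ that the obvious left-sided analogue of \Cref{prop:extensionsimpltoticmorphismtensorproduct} holds by the same proof, or that one conjugates by the flip $\Cz((0,1))\grtensor(-)\cong(-)\grtensor\Cz((0,1))$ throughout), but this is harmless.
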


\begin{prop}[{cf.\ \cite[Proposition 5.16]{GueHigTro}}]
Let $\theta\colon B\to A$ be a $*$-ho\-mo\-mor\-phism and let $D$ be a \textCstar-algebra. Then the sequence of pointed sets
\[\SimplMor{D}{\Cone(\theta)}\xrightarrow{\alpha_*}\SimplMor{D}{B}\xrightarrow{\theta_*}\SimplMor{D}{A}\]
is exact. \qed
\end{prop}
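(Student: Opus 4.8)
The plan is to establish exactness at $\SimplMor{D}{B}$ by proving the two inclusions $\im(\alpha_*)\subseteq\ker(\theta_*)$ and $\ker(\theta_*)\subseteq\im(\alpha_*)$, where the basepoint of each morphism set is the class of the zero $*$-ho\-mo\-mor\-phism. For the first inclusion I would observe that $\theta\circ\alpha\colon\Cone(\theta)\to A$ sends $(b,f)$ to $\theta(b)=f(0)$, and that the formula $(b,f)\mapsto(t\mapsto f(t))$, using the continuous extension $f(1)=0$ coming from $f\in\Cz([0,1);A)$, defines a $*$-ho\-mo\-mor\-phism $\Cone(\theta)\to\Ct([0,1];A)$ which is an honest homotopy from $\theta\circ\alpha$ to $0$. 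Hence $\llbracket\theta\rrbracket\circ\llbracket\alpha\rrbracket=\llbracket\theta\circ\alpha\rrbracket=\llbracket 0\rrbracket$ in the simpltotic category, so by functoriality of the composition product $\theta_*\circ\alpha_*$ is the constant map onto the basepoint.

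For the reverse inclusion, let $\llbracket\varphi\rrbracket\in\SimplMor{D}{B}$ satisfy $\theta_*\llbracket\varphi\rrbracket=\llbracket 0\rrbracket$, and represent it by a $*$-ho\-mo\-mor\-phism $\varphi\colon D\to\Simpl^\fm B$. Since $\SimplMor{D}{A}$ is a filtered colimit over $\cD^\op$, the vanishing of $\theta_*\llbracket\varphi\rrbracket$ means that $\Simpl^\fm(\theta)\circ\varphi$ and $0$ agree at some later stage; pulling $\varphi$ back along a suitable cofinal map and absorbing it into $\fm$ (which leaves $\llbracket\varphi\rrbracket$ unchanged, by naturality of $\beta^*$), I may assume that $\Simpl^\fm(\theta)\circ\varphi$ is $\fm$-homotopic to zero. (Alternatively, by \Cref{cor:existenceofsimpletoticmorphismset} one works at $\fm=\fm(D')$ throughout.) Choose such an $\fm$-homotopy $\Psi\colon D\to\Simpl^\fm\Ct([0,1];A)$, so that $\Simpl^\fm(\ev{0})\circ\Psi=\Simpl^\fm(\theta)\circ\varphi$ and $\Simpl^\fm(\ev{1})\circ\Psi=0$.

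The key step is then to reinterpret $(\varphi,\Psi)$ as a map into the cone via the pullback description $\Cone(\theta)=B\oplus_A\Cz([0,1);A)$, where the structure maps are $\theta\colon B\to A$ and $\ev{0}\colon\Cz([0,1);A)\to A$, the latter being surjective. By \Cref{lem:asympalgebrapullback} applied to $F=\Simpl^\fm$, the canonical map
\[\Simpl^\fm(\Cone(\theta))\xrightarrow{\ \cong\ }\Simpl^\fm(B)\oplus_{\Simpl^\fm A}\Simpl^\fm(\Cz([0,1);A))\]
is an isomorphism. Moreover, applying exactness of $\Simpl^\fm$ (\Cref{lem:asymptoticalgebrasexact}) to $0\to\Cz([0,1);A)\to\Ct([0,1];A)\xrightarrow{\ev{1}}A\to 0$ identifies $\Simpl^\fm(\Cz([0,1);A))$ with $\ker\Simpl^\fm(\ev{1})$, so the condition $\Simpl^\fm(\ev{1})\circ\Psi=0$ lets me view $\Psi$ as landing in $\Simpl^\fm(\Cz([0,1);A))$. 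Since the two components satisfy the matching condition $\Simpl^\fm(\theta)\circ\varphi=\Simpl^\fm(\ev{0})\circ\Psi$, the pair $(\varphi,\Psi)$ defines a $*$-ho\-mo\-mor\-phism $\tilde\varphi\colon D\to\Simpl^\fm(\Cone(\theta))$, and because $\Simpl^\fm(\alpha)\circ\tilde\varphi=\varphi$ is the first projection, we obtain $\alpha_*\llbracket\tilde\varphi\rrbracket=\llbracket\varphi\rrbracket$, i.e.\ $\llbracket\varphi\rrbracket\in\im(\alpha_*)$.

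I expect the main obstacle to be purely the bookkeeping of directed sets: one must guarantee that the null-homotopy witnessing $\theta_*\llbracket\varphi\rrbracket=\llbracket 0\rrbracket$ can be realized over the \emph{same} $\fm$ that carries $\varphi$, which is exactly where the filteredness of $\cD^\op$ (equivalently, the stabilization in \Cref{cor:existenceofsimpletoticmorphismset}) is indispensable. Once that reduction is in place, the argument is a formal consequence of the pullback lemma and the exactness of $\Simpl^\fm$, exactly as in \cite[Proposition 5.16]{GueHigTro}.
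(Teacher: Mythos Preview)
Your proof is correct and follows precisely the approach the paper indicates: the proposition is stated without proof, with the remark that the arguments from \cite[Chapter~5]{GueHigTro} carry over verbatim using the simpltotic analogues of the relevant lemmas. Your argument does exactly this---the first inclusion via the evident null-homotopy of $\theta\circ\alpha$, the second via the pullback description $\Cone(\theta)=B\oplus_A\Cz([0,1);A)$ together with \Cref{lem:asympalgebrapullback} and exactness of $\Simpl^\fm$---and your handling of the directed-set bookkeeping (reducing to a common $\fm$ via filteredness of $\cD^\op$, or equivalently working at $\fm(D')$ via \Cref{cor:existenceofsimpletoticmorphismset}) is the only new wrinkle beyond \cite{GueHigTro} and is dealt with correctly.
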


\begin{prop}[{cf.\ \cite[Proposition 5.18]{GueHigTro}}]
Let $\theta\colon B\to A$ be a $*$-ho\-mo\-mor\-phism and let $D$ be a \textCstar-algebra. If the vertical suspension maps $\Sigma\coloneqq\id\grtensor\blank$ in the commuting diagram
\[\xymatrix{
&\SimplMor{A}{D}\ar[d]^{\Sigma}\ar[r]^{\theta^*}
&\SimplMor{B}{D}\ar[d]^{\Sigma}
\\\SimplMor{\Cone(\theta)}{\Sigma D}\ar[r]^-{\beta^*}
&\SimplMor{\Sigma A}{\Sigma D}\ar[r]^{\Sigma\theta^*}
&\SimplMor{\Sigma B}{\Sigma D}
}\]
are isomorphisms, then the bottom row is exact. \qed
\end{prop}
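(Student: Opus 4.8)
The plan is to run the argument of \cite[Proposition 5.18]{GueHigTro} inside the simpltotic category, since the bottom row is exactly the segment of the contravariant mapping cone (Puppe) sequence of $\theta$ obtained by applying $\SimplMor{\blank}{\Sigma D}$ to $\Sigma B\xrightarrow{\Sigma\theta}\Sigma A\xrightarrow{\beta}\Cone(\theta)$. Concretely, I would establish exactness at the middle term $\SimplMor{\Sigma A}{\Sigma D}$ by proving the two inclusions between $\im(\beta^*)$ and $\ker(\Sigma\theta^*)$ separately.

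For $\im(\beta^*)\subseteq\ker(\Sigma\theta^*)$, note that $\Sigma\theta^*\beta^*\llbracket g\rrbracket$ is represented by $g\circ\beta\circ\Sigma\theta$, so by functoriality of the composition product under homotopy it is enough to show that the honest $*$-ho\-mo\-mor\-phism $\beta\circ\Sigma\theta\colon\Sigma B\to\Cone(\theta)$ is homotopic to the zero homomorphism. This is the standard Puppe nullhomotopy: for $r\in[0,1)$ I would send $h\in\Cz((0,1);B)$ to the element $(0,f^h_r)\in\Cone(\theta)$, where $f^h_r\in\Cz([0,1);A)$ is $\Sigma\theta(h)$ reparametrised so as to vanish on $[0,r]$ and to run over $\Sigma\theta(h)$ on $[r,1)$. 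This is a homotopy of $*$-ho\-mo\-mor\-phisms from $\beta\circ\Sigma\theta$ at $r=0$ to the zero homomorphism as $r\to1$, and it uses neither the hypothesis nor half-exactness.

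For the converse inclusion $\ker(\Sigma\theta^*)\subseteq\im(\beta^*)$, which is where the hypothesis enters, let $\llbracket\phi\rrbracket\in\SimplMor{\Sigma A}{\Sigma D}$ satisfy $\Sigma\theta^*\llbracket\phi\rrbracket=0$. Since the right vertical map $\Sigma$ is an isomorphism I may write $\llbracket\phi\rrbracket=\Sigma\llbracket\psi\rrbracket$ for a unique $\llbracket\psi\rrbracket\in\SimplMor{A}{D}$, and commutativity of the square together with injectivity of the left vertical $\Sigma$ then forces $\theta^*\llbracket\psi\rrbracket=\llbracket\psi\circ\theta\rrbracket=0$, i.e.\ $\psi\circ\theta$ is homotopic to zero. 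Fixing a representative $\psi\colon A\to\Simpl^\fm D$ and a nullhomotopy of $\psi\circ\theta$, I would build a preimage $g\colon\Cone(\theta)\to\Sigma D$ by gluing: on a pair $(b,f)\in\Cone(\theta)$ the nullhomotopy supplies a path from $0$ to $\psi(\theta(b))$, while $\psi$ applied to the cone coordinate $f\in\Cz([0,1);A)$ supplies a path from $\psi(f(0))=\psi(\theta(b))$ back to $0$; the two halves match because $\theta(b)=f(0)$ and concatenate to an element of $\Sigma D$. By construction $g\circ\beta$ equals $\Sigma\psi=\phi$ up to a reparametrisation of the interval, so $\beta^*\llbracket g\rrbracket=\llbracket\phi\rrbracket$, as desired.

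The hard part will be purely technical: realising this gluing \emph{within} the simpltotic framework. The morphism $\psi$, its nullhomotopy and hence $g$ all live over a simpltotic algebra $\Simpl^\fm$, so the concatenation must first be carried out on bounded representatives and then shown to descend to a genuine $*$-ho\-mo\-mor\-phism $\Cone(\theta)\to\Simpl^{\fm}\Sigma D$ by means of the canonical natural transformations of \Cref{lem:FFprimehomotopic} and \Cref{cor:functorordernaturaltransformation}, while the equality $g\circ\beta=\phi$ has to be upgraded to an $\fm$-homotopy absorbing the reparametrisation built into the composition product of \Cref{sec:compositionproduct}. All of these verifications are formally identical to the ones in \cite{GueHigTro}; the only genuinely new feature is that the directed set $\fm$ over which $\psi$ is represented is now arbitrary, which is harmless because the whole construction is natural in $\fm$ and compatible with the pullbacks $\alpha^*$ of \Cref{lem:independenceofcofinalmap}.
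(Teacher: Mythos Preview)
Your proposal is correct and follows essentially the same route the paper intends: the paper gives no proof (the \qed indicates it is deferred to \cite[Proposition~5.18]{GueHigTro}), and your sketch is a faithful rendering of that standard argument---nullhomotopy of $\beta\circ\Sigma\theta$ for one inclusion, and the ``nullhomotopy plus map gives a map out of the cone'' construction for the other, with the suspension hypothesis used exactly to descend from $\llbracket\phi\rrbracket$ to $\llbracket\psi\rrbracket$. One minor simplification for the first inclusion: rather than writing down the reparametrised homotopy explicitly, you can observe that $\beta\circ\Sigma\theta$ factors as $\Sigma B\hookrightarrow\Cz([0,1);B)\to\Cone(\theta)$, $h\mapsto(h(0),\theta\circ h)$, through the contractible algebra $\Cz([0,1);B)$. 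Also, your remark about ``absorbing the reparametrisation built into the composition product'' is unnecessary here: $\beta^*$ is precomposition with an honest $*$-homomorphism, so the composition product reduces to the trivial $\fo$-case and no $\Theta^*$ is involved; the only reparametrisation is the harmless one of the interval in the concatenation.
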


\section{$\EE$-theory}
\label{sec:Etheory}

In this section we are finally going to use our new simpletotic category to generalize $\EE$-theory to non-separable $G$-\textCstar-algebras. 
Let us recall its definition in the separable case and with a countable discrete group $G$:
\begin{itemize}
\item In the ungraded picture of \cite[Chapter 6]{GueHigTro}, the $\EE$-theory of two separable ungraded $G$-\textCstar-algebras $A,B$ is defined as
\[\EE_G(A,B)\coloneqq \SimplMor[\infty]{\Sigma A\otimes\Kom(\Hilbert\otimes\ell^2G)}{\Sigma B\otimes\Kom(\Hilbert\otimes\ell^2G)}\]
where $\Hilbert$ denotes a fixed separable infinite dimensional ungraded Hilbert space.
\item In the graded picture of \cite[Section 2.7]{HigGue}, the $\EE$-theory of two separable graded $G$-\textCstar-algebras $A,B$ is defined as
\begin{equation}
\label{eq:Etheoryseparabledefinition}
\EE_G(A,B)\coloneqq \SimplMor[\infty]{\grS\grtensor A\grtensor\Kom(\hat\Hilbert\grtensor\ell^2G)}{B\grtensor\Kom(\hat\Hilbert\grtensor\ell^2G)}
\end{equation}
where $\hat\Hilbert\coloneqq\Hilbert\oplus\Hilbert$ is the graded Hilbert space whose even and odd part are both isomorphic to a fixed separable Hilbert space $\Hilbert$ and $\grS$ is the \textCstar-algebra $\Cz(\R)$ but graded into even and odd functions.
\end{itemize}

We will develop our $\EE$-theory for non-separable \textCstar-algebras in analogy to the graded picture \eqref{eq:Etheoryseparabledefinition}. 
Let us note right away that we do not have to restrict ourselves to countable $G$, because there is no need for the \textCstar-algebra $\Kom(\ell^2G)$ to be separable in our set-up. Henceforth $G$ will denote any discrete group.

One could now define $\EE$-theory for non-separable \textCstar-algebras simply by replacing the asymptotic morphism sets $\SimplMor[\infty]{\blank}{\blank}$ in \eqref{eq:Etheoryseparabledefinition} by the corresponding simpltotic morphism sets $\SimplMor{\blank}{\blank}$. This would give us a viable bivariant $\K$-theory with all products and exact sequences, but the stability is not quite yet as we want it: Tautologically there are isomorphisms $\EE_G(A,B)\cong\EE_G(A\grtensor\Kom(\hat\Hilbert_0),B)\cong\EE_G(A,B\grtensor\Kom(\hat\Hilbert_0))$ for all \emph{separable} graded non-zero $G$-Hilbert spaces $\hat\Hilbert_0$, because $\hat\Hilbert_0\grtensor\Hilbert\grtensor\ell^2G\cong\Hilbert\grtensor\ell^2G$ (cf.\ \Cref{lem:switchoffequivariance} below), but in $\EE$-theory for non-separable \textCstar-algebras it would be nice if they would hold for non-separable $G$-Hilbert spaces $\hat\Hilbert_0$, too.

To achieve this, we need to modify the definition further so that instead of a fixed separable Hilbert space $\Hilbert$ we consider arbitrary large unseparable Hilbert spaces as well. 
This requires a bit of work, which will be carried out in the following subsection.

\subsection{Definition}
We need a few lemmas about ($G$-)Hilbert spaces. 

\begin{lem}\label{lem:switchoffequivariance}
Let $\Hilbert_1,\Hilbert_2$ be two $G$-Hilbert spaces. If they are isometrically isomorphic (possibly non-equivariantly), then $\Hilbert_1\otimes\ell^2G,\Hilbert_2\otimes\ell^2G$ are equivariantly isometrically isomorphic.
This applies in particular if $\Hilbert_2=\Hilbert_1$ as Hilbert spaces, but $\Hilbert_2$ is equipped with the trivial $G$-action.
\end{lem}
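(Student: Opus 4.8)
The plan is to invoke the Fell absorption principle, which says that the left regular representation $\lambda$ on $\ell^2 G$ absorbs any unitary representation. Concretely, given a $G$-Hilbert space $(\Hilbert,\pi)$ with $G$ acting by $\pi$ and $\ell^2 G$ carrying the regular representation $\lambda$, I would introduce the map
\[
W_\pi\colon \Hilbert\otimes\ell^2 G\to\Hilbert\otimes\ell^2 G\,,\qquad \xi\otimes\delta_g\mapsto \pi(g)\xi\otimes\delta_g\,,
\]
and first check that it is a well-defined unitary. On each summand $\Hilbert\otimes\delta_g$ it acts as the unitary $\pi(g)$, and since $\Hilbert\otimes\ell^2 G=\bigoplus_{g\in G}\Hilbert\otimes\delta_g$ is an orthogonal decomposition, $W_\pi$ is the Hilbert-space direct sum of these unitaries, hence unitary on the completion with inverse $\xi\otimes\delta_g\mapsto\pi(g)^{-1}\xi\otimes\delta_g$.

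Next I would verify that $W_\pi$ intertwines the action $1\otimes\lambda$ that is trivial on $\Hilbert$ with the diagonal action $\pi\otimes\lambda$: for $h\in G$ a direct computation gives
\[
W_\pi\bigl((1\otimes\lambda)(h)\bigr)(\xi\otimes\delta_g)=\pi(hg)\xi\otimes\delta_{hg}=\bigl((\pi\otimes\lambda)(h)\bigr)W_\pi(\xi\otimes\delta_g)\,.
\]
Thus $W_\pi$ is an equivariant isometric isomorphism from $\Hilbert_0\otimes\ell^2 G$ onto $\Hilbert\otimes\ell^2 G$, where $\Hilbert_0$ denotes the underlying Hilbert space of $\Hilbert$ equipped with the trivial $G$-action. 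Specializing $\Hilbert_2=\Hilbert_1$ with the trivial action on the target already establishes the last sentence of the statement.

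For the general claim I would apply this twice, obtaining equivariant isomorphisms $\Hilbert_i\otimes\ell^2 G\cong (\Hilbert_i)_0\otimes\ell^2 G$ for $i=1,2$. A (possibly non-equivariant) isometric isomorphism $u\colon\Hilbert_1\to\Hilbert_2$ is automatically equivariant when viewed as a map $(\Hilbert_1)_0\to(\Hilbert_2)_0$ between trivial $G$-Hilbert spaces, so $u\otimes\id$ is an equivariant isometric isomorphism $(\Hilbert_1)_0\otimes\ell^2 G\to(\Hilbert_2)_0\otimes\ell^2 G$. Composing
\[
\Hilbert_1\otimes\ell^2 G\xrightarrow{W_{\pi_1}^{-1}}(\Hilbert_1)_0\otimes\ell^2 G\xrightarrow{u\otimes\id}(\Hilbert_2)_0\otimes\ell^2 G\xrightarrow{W_{\pi_2}}\Hilbert_2\otimes\ell^2 G
\]
then yields the required equivariant isometric isomorphism.

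There is no genuine obstacle here; the argument is the standard absorption trick. The only points demanding a little care are the bookkeeping of which $G$-action sits on which tensor factor, and confirming that $W_\pi$ is genuinely unitary on all of $\Hilbert\otimes\ell^2 G$ rather than merely isometric on the algebraic tensor product — which is exactly why I phrase $W_\pi$ as the orthogonal direct sum of the $\pi(g)$ so that it extends to the completion.
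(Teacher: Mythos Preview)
Your proof is correct and is essentially the same as the paper's: the paper writes down the single formula $V(v\otimes\delta_g)\coloneqq (g\cdot U(g^{-1}v))\otimes\delta_g$ and cites Mingo--Phillips, and your three-step composition $W_{\pi_2}\circ(u\otimes\id)\circ W_{\pi_1}^{-1}$ computes to exactly this operator. You have simply made the Fell absorption mechanism behind the formula explicit.
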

\begin{proof}
Given a unitary operator $U\colon\Hilbert_1\to\Hilbert_2$, then $V(v\otimes\delta_g)\coloneqq (g\cdot U(g^{-1}v))\otimes\delta_g$ defines an equivariant unitary operator $V\colon \Hilbert_1\otimes\ell^2G,\Hilbert_2\otimes\ell^2G$, cf.\ \cite[Lemma 2.3]{MingoPhillips}.
\end{proof}

\begin{lem}\label{lem:isometriesexist}
Let $\Hilbert_1,\Hilbert_2$ be $G$-Hilbert spaces such that $\dim(\Hilbert_2)\geq\dim(\Hilbert_1)$ (as cardinal numbers).
Then there exists an equivariant isometry
\[\ell^2G\otimes\Hilbert_1\hookrightarrow\ell^2G\otimes\Hilbert_2\,.\]
\end{lem}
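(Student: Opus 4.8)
The plan is to reduce everything to the case of trivial $G$-actions, where the existence of an equivariant isometry becomes a purely cardinal-arithmetic statement about the underlying Hilbert spaces.

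First I would use \Cref{lem:switchoffequivariance} to dispose of the equivariance. For $i=1,2$ write $\Hilbert_i^{\mathrm{triv}}$ for the same Hilbert space $\Hilbert_i$ but equipped with the \emph{trivial} $G$-action. Since $\Hilbert_i$ and $\Hilbert_i^{\mathrm{triv}}$ are isometrically isomorphic as plain Hilbert spaces (via the identity), the ``in particular'' part of \Cref{lem:switchoffequivariance} supplies equivariant unitaries
\[
\ell^2G\otimes\Hilbert_i \;\cong\; \ell^2G\otimes\Hilbert_i^{\mathrm{triv}}\,.
\]
Here I use that the flip $\Hilbert\otimes\ell^2G\to\ell^2G\otimes\Hilbert$ is $G$-equivariant for the diagonal actions, so that the order of the two tensor factors in \Cref{lem:switchoffequivariance} is immaterial. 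It therefore suffices to produce an equivariant isometry $\ell^2G\otimes\Hilbert_1^{\mathrm{triv}}\hookrightarrow\ell^2G\otimes\Hilbert_2^{\mathrm{triv}}$.

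Next I would build the isometry on the second tensor factor. Because $\dim(\Hilbert_2)\geq\dim(\Hilbert_1)$ as cardinal numbers, fixing orthonormal bases yields an isometric embedding $W\colon\Hilbert_1\hookrightarrow\Hilbert_2$ of the underlying Hilbert spaces (send the basis of $\Hilbert_1$ injectively into that of $\Hilbert_2$ and extend linearly and continuously). Regarded as a map $\Hilbert_1^{\mathrm{triv}}\to\Hilbert_2^{\mathrm{triv}}$ this is automatically $G$-equivariant, since both actions are trivial. Consequently $\id_{\ell^2G}\otimes W$ is an equivariant isometry $\ell^2G\otimes\Hilbert_1^{\mathrm{triv}}\hookrightarrow\ell^2G\otimes\Hilbert_2^{\mathrm{triv}}$.

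Finally I would compose the equivariant unitary $\ell^2G\otimes\Hilbert_1\cong\ell^2G\otimes\Hilbert_1^{\mathrm{triv}}$, the embedding $\id_{\ell^2G}\otimes W$, and the inverse equivariant unitary $\ell^2G\otimes\Hilbert_2^{\mathrm{triv}}\cong\ell^2G\otimes\Hilbert_2$, obtaining the desired equivariant isometry. There is no substantial obstacle in this argument: the only delicate point is the equivariance, and that is precisely what \Cref{lem:switchoffequivariance} is designed to absorb, reducing the statement to the trivial-action situation in which equivariance comes for free and only the comparison of Hilbert-space dimensions remains.
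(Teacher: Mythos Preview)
Your proof is correct and follows exactly the same approach as the paper: invoke \Cref{lem:switchoffequivariance} to reduce to the case of trivial $G$-actions on $\Hilbert_1,\Hilbert_2$, and then take $\id_{\ell^2G}\otimes W$ for any isometry $W\colon\Hilbert_1\hookrightarrow\Hilbert_2$. The paper simply states this more tersely, but your explicit unpacking of the reduction (including the remark about the order of the tensor factors) is sound.
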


\begin{proof}
Because of \Cref{lem:switchoffequivariance} we may assume without loss of generality that $G$ acts trivially on $\Hilbert_1,\Hilbert_2$. Then the equivariant isometry is induced by any isometry $\Hilbert_1\to\Hilbert_2$.
\end{proof}

\begin{lem}\label{lem:isometriesarehomotopic}
Let $\Hilbert_1,\Hilbert_2$ be $G$-Hilbert spaces, $\Hilbert_2$ infinite dimensional. Then any two equivariant isometries
\[V_1,V_2\colon \Hilbert_1\hookrightarrow\ell^2G\otimes\Hilbert_2\]
are equivariantly $*$-strongly homotopic.
Hence the induced $*$-ho\-mo\-mor\-phisms 
\[\Ad_{V_i}\colon \Kom(\Hilbert_1)\hookrightarrow\Kom(\ell^2G\otimes\Hilbert_2)\,,\quad T\mapsto V_iTV_i^*\]
are equivariantly homotopic.
\end{lem}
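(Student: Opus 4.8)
The plan is to move $V_0$ and $V_1$ into two orthogonal copies of the target, connect them there by an easy norm-continuous rotation, and then contract away the two embedding isometries by an equivariant swindle; this last contraction is the technical heart.

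First I would exploit that $\Hilbert_2$ is infinite dimensional. As ungraded Hilbert spaces $\Hilbert_2\cong\bigoplus_{n\geq 0}\Hilbert_2$, so \Cref{lem:switchoffequivariance} promotes this to an \emph{equivariant} unitary $K\coloneqq\ell^2 G\otimes\Hilbert_2\cong\bigoplus_{n\geq 0}K$. Fixing such an identification, I get two equivariant isometries $s_1,s_2\colon K\to K$, where $s_1$ is the shift $(\xi_0,\xi_1,\dots)\mapsto(0,\xi_0,\xi_1,\dots)$ and $s_2\colon\eta\mapsto(\eta,0,0,\dots)$ places a vector in the $0$-th slot. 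They satisfy $s_1^*s_2=0$ and $s_1s_1^*+s_2s_2^*=1$. The point of this bookkeeping is that $s_1,s_2$ have orthogonal ranges (good for rotations) while $s_1$ is a \emph{pure} shift with infinite multiplicity (good for the swindle).

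Next I would record the two elementary rotation homotopies through equivariant isometries. Since $s_1V_0$ and $s_2V_1$ have orthogonal ranges, the path $W_t\coloneqq\cos(\tfrac{\pi t}{2})\,s_1V_0+\sin(\tfrac{\pi t}{2})\,s_2V_1$ is norm-continuous and each $W_t$ is an isometry: the cross term vanishes because $s_1^*s_2=0$, so $\|W_t\xi\|^2=\cos^2(\tfrac{\pi t}{2})\|\xi\|^2+\sin^2(\tfrac{\pi t}{2})\|\xi\|^2=\|\xi\|^2$. This connects $s_1V_0$ to $s_2V_1$. Likewise $s_1$ and $s_2$ themselves have orthogonal ranges, so $t\mapsto\cos(\tfrac{\pi t}{2})\,s_1+\sin(\tfrac{\pi t}{2})\,s_2$ connects $s_1$ to $s_2$ through equivariant isometries. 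It therefore remains only to connect $s_1$ to $1_K$.

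The main obstacle is precisely this last step: showing that the pure shift $s_1$ is $*$-strongly homotopic to $1_K$ through equivariant isometries. I would carry it out by an Eilenberg–Mazur swindle, using once more (via \Cref{lem:switchoffequivariance}) that each coordinate copy $K$ is itself infinite dimensional and splits equivariantly as $K\cong K\oplus K$, so that successive coordinate copies can be rotated into one another. The resulting path of isometries is $*$-strongly (though not norm) continuous, and every operator in it is $G$-equivariant because the decomposition $K\cong\bigoplus_{n\geq 0}K$ is; this is the only place where infinite dimensionality of $\Hilbert_2$ is genuinely needed. Precomposing this homotopy (and its analogue $s_2\simeq s_1\simeq 1_K$) with $V_0$ and $V_1$ gives $V_0\simeq s_1V_0$ and $s_2V_1\simeq V_1$; concatenating with the rotation $W_t$ yields a $*$-strongly continuous path of equivariant isometries from $V_0$ to $V_1$, which proves the first assertion. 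For the statement about $\Ad_{V_i}$, I would use that multiplication against a fixed compact operator turns $*$-strong convergence into norm convergence: for $T\in\Kom(\Hilbert_1)$ the maps $t\mapsto V_tT$ and $t\mapsto TV_t^*$ are norm-continuous, hence so is $t\mapsto V_tTV_t^*$, giving a path of compact operators. Thus $T\mapsto\bigl(t\mapsto V_tTV_t^*\bigr)$ is an equivariant $*$-homomorphism $\Kom(\Hilbert_1)\to\Ct([0,1];\Kom(\ell^2 G\otimes\Hilbert_2))$ evaluating to $\Ad_{V_0}$ and $\Ad_{V_1}$ at the endpoints, i.e.\ an equivariant homotopy between them.
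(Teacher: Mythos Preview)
Your proof is correct and follows essentially the same strategy as the paper's: both use \Cref{lem:switchoffequivariance} together with the infinite-dimensionality of $\Hilbert_2$ to split $\ell^2G\otimes\Hilbert_2$ equivariantly into infinitely many orthogonal copies, move the two isometries into orthogonal summands, and then rotate. The paper invokes the well-known fact that the unitary $\ell^2\N\cong\ell^2\N\oplus\ell^2\N$ is $*$-strongly homotopic to each coordinate inclusion, which is precisely the content of your Cuntz-isometry/swindle argument for $s_1\simeq 1_K$; you also spell out the norm-continuity of $t\mapsto V_tTV_t^*$ on compacts, which the paper leaves implicit.
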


\begin{proof}
Again we can assume that the $G$-action on $\Hilbert_2$ is trivial. 
Since $\Hilbert_2$ is infinite dimensional, there are isometric isomorphisms
\[\Hilbert_2\cong \Hilbert_2\otimes\ell^2\N\cong \Hilbert_2\otimes(\ell^2\N\oplus\ell^2\N)\,.\]
It is well-known that the isomorphism $\ell^2\N\cong\ell^2\N\oplus\ell^2\N$ is $*$-strongly homotopic to the inclusions as one of the summands. Therefore, we can assume that the images of $V_1,V_2$ are contained in orthogonal subspaces.
But then they can be homotoped into each other by a simple rotation.
\end{proof}

\begin{rem}
Note that the three lemmas above also hold for graded Hilbert spaces, if the assumptions are satisfied for the even and odd parts separately.
\end{rem}

A direct consequence of the last two lemmas is the next corollary.

\begin{cor}\label{cor:directedsystemoverGHilbert}
Let $\Hilbert_1,\Hilbert_2$ be $G$-Hilbert spaces whose even and odd parts $\Hilbert_i^\pm$ satisfy $\dim(\Hilbert_2^\pm)\geq\max\{\dim(\Hilbert_1^\pm),\aleph_0\}$ and let $A,B$ be $G$-\textCstar-algebras.
Then there are canonical maps
\begin{align*}
\SimplMor{A\grtensor\Kom(\ell^2G)}{B\grtensor\Kom(\hat\Hilbert_1\grtensor\ell^2G)}
&\to \SimplMor{A\grtensor\Kom(\ell^2G)}{B\grtensor\Kom(\hat\Hilbert_2\grtensor\ell^2G)}\,,
\\\SimplMor[\fm]{A\grtensor\Kom(\ell^2G)}{B\grtensor\Kom(\hat\Hilbert_1\grtensor\ell^2G)}
&\to \SimplMor[\fm]{A\grtensor\Kom(\ell^2G)}{B\grtensor\Kom(\hat\Hilbert_2\grtensor\ell^2G)}
\end{align*}
induced by adjoining with an equivariant isometry.
Together they form a directed system of sets.\qed
\end{cor}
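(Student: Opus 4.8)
The plan is to produce each of the two maps as post-composition with the homotopy class of a $*$-ho\-mo\-mor\-phism induced by an equivariant isometry, and then to read off both well-definedness and the directed-system structure from \Cref{lem:isometriesexist,lem:isometriesarehomotopic}. First I would apply the graded form of \Cref{lem:isometriesexist} --- valid because the hypothesis $\dim(\Hilbert_2^\pm)\geq\dim(\Hilbert_1^\pm)$ is imposed on the even and odd parts separately --- to obtain an \emph{even} equivariant isometry
\[
V\colon\hat\Hilbert_1\grtensor\ell^2G\hookrightarrow\hat\Hilbert_2\grtensor\ell^2G\,,
\]
built from isometries of the even and odd summands. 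Conjugation by $V$ gives a grading-preserving equivariant $*$-ho\-mo\-mor\-phism $\Ad_V\colon\Kom(\hat\Hilbert_1\grtensor\ell^2G)\to\Kom(\hat\Hilbert_2\grtensor\ell^2G)$, and tensoring with $\id_B$ yields
\[
\id_B\grtensor\Ad_V\colon B\grtensor\Kom(\hat\Hilbert_1\grtensor\ell^2G)\to B\grtensor\Kom(\hat\Hilbert_2\grtensor\ell^2G)\,.
\]
The two asserted maps are then post-composition with its homotopy class, using the covariant functoriality of $\SimplMor{A\grtensor\Kom(\ell^2G)}{\blank}$ and $\SimplMor[\fm]{A\grtensor\Kom(\ell^2G)}{\blank}$ in the second variable under homotopy classes of $*$-ho\-mo\-mor\-phisms recorded in \Cref{cor:Fhomotopyclasscompositionproduct}.

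Next I would verify independence of the choice of $V$. Since $\dim(\Hilbert_2^\pm)\geq\aleph_0$, the space $\hat\Hilbert_2\grtensor\ell^2G$ is infinite dimensional, so the graded form of \Cref{lem:isometriesarehomotopic} applies: any other even equivariant isometry $V'$ yields an $\Ad_{V'}$ that is equivariantly homotopic to $\Ad_V$ through a $*$-ho\-mo\-mor\-phism $\Phi\colon\Kom(\hat\Hilbert_1\grtensor\ell^2G)\to\Ct([0,1];\Kom(\hat\Hilbert_2\grtensor\ell^2G))$. Tensoring $\Phi$ with $\id_B$ and using the identification $B\grtensor\Ct([0,1];\blank)\cong\Ct([0,1];B\grtensor\blank)$ produces a homotopy between $\id_B\grtensor\Ad_V$ and $\id_B\grtensor\Ad_{V'}$, whence the induced maps on the simpltotic morphism sets coincide by \Cref{cor:Fhomotopyclasscompositionproduct}. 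This makes both maps canonical.

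Finally, the directed-system structure is then formal. For a third $G$-Hilbert space $\Hilbert_3$ with $\dim(\Hilbert_3^\pm)\geq\max\{\dim(\Hilbert_2^\pm),\aleph_0\}$, even equivariant isometries $V$ into $\hat\Hilbert_2\grtensor\ell^2G$ and $W$ into $\hat\Hilbert_3\grtensor\ell^2G$ compose to an even equivariant isometry $WV$ with $\Ad_{WV}=\Ad_W\circ\Ad_V$; hence the map induced by $WV$ is the composite of those induced by $V$ and $W$, and by the independence just shown it agrees with the canonical map from $\Hilbert_1$ to $\Hilbert_3$. Reflexivity is the case $\Hilbert_1=\Hilbert_2$ with $V=\id$, and the indexing is directed because any two $G$-Hilbert spaces are dominated in each of their even and odd dimensions by a common third one. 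I expect the only delicate point to be the careful handling of grading and equivariance when invoking \Cref{lem:isometriesexist,lem:isometriesarehomotopic} in their graded forms; everything else is a routine consequence of homotopy invariance of the simpltotic morphism sets.
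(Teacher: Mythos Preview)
Your proposal is correct and matches the paper's approach exactly: the paper states the corollary as ``a direct consequence of the last two lemmas'' (namely \Cref{lem:isometriesexist,lem:isometriesarehomotopic}) and marks it with a \qed, while you have simply spelled out the details of that deduction. The only minor point worth noting is that when invoking \Cref{lem:isometriesarehomotopic} you should identify the target as $\ell^2G\grtensor\hat\Hilbert_2$ with $\hat\Hilbert_2$ playing the role of the infinite-dimensional $\Hilbert_2$ in that lemma (which is fine since $\dim(\Hilbert_2^\pm)\geq\aleph_0$), rather than merely observing that $\hat\Hilbert_2\grtensor\ell^2G$ is infinite dimensional.
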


\begin{lem}[{cf.\ \cite[Lemma 6.25]{GueHigTro}}]\label{lem:removecompactsfromfirstentry}
Let $\Hilbert$ be an infinite dimensional Hilbert space, $\hat\Hilbert\coloneqq\Hilbert\oplus\Hilbert$ the graded Hilbert space with even and odd part isomorphic to $\Hilbert$ and $p\in\Hilbert\subset\hat\Hilbert$ a rank one projection in the even part of $\hat\Hilbert$.
Then the canonical map
\[\SimplMor{A\otimes\Kom(\hat\Hilbert)}{B\otimes\Kom(\hat\Hilbert)}\to \SimplMor{A}{B\otimes\Kom(\hat\Hilbert)}\]
induced by the $*$-ho\-mo\-mor\-phism $\id\otimes p\colon A\to A\otimes\Kom(\hat\Hilbert)$ is a bijection.
\end{lem}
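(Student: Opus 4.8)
The plan is to produce an explicit two-sided inverse $S$ to the map $F\coloneqq(\id_A\otimes p)^*$ of the statement, exploiting that the \emph{second} entry $B\otimes\Kom(\hat\Hilbert)$ is stable. Fix a unit vector $e$ in the even part of $\hat\Hilbert$ with $p=|e\rangle\langle e|$, and choose a graded isometric isomorphism $W\colon\hat\Hilbert\otimes\hat\Hilbert\to\hat\Hilbert$, which exists because the even and odd parts of $\hat\Hilbert\otimes\hat\Hilbert$ are infinite-dimensional of the same dimension as those of $\hat\Hilbert$. It induces a $*$-homomorphism $\Ad_W\colon\Kom(\hat\Hilbert)\otimes\Kom(\hat\Hilbert)\cong\Kom(\hat\Hilbert\otimes\hat\Hilbert)\to\Kom(\hat\Hilbert)$. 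All the Hilbert spaces here carry the trivial $G$-action, so every isometry below is automatically $G$-equivariant.

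I would then define $S\colon\SimplMor{A}{B\otimes\Kom(\hat\Hilbert)}\to\SimplMor{A\otimes\Kom(\hat\Hilbert)}{B\otimes\Kom(\hat\Hilbert)}$ as the tensor-product functor $\blank\otimes\Kom(\hat\Hilbert)$ of \Cref{cor:simpltotictensorproductfunctors}, which sends a class represented by $\psi\colon A\to\Simpl^\fm(B\otimes\Kom(\hat\Hilbert))$ to the class of $\eta^{\Simpl^\fm}_{B\otimes\Kom(\hat\Hilbert)}\circ(\psi\otimes\id)$ in $\SimplMor{A\otimes\Kom(\hat\Hilbert)}{B\otimes\Kom(\hat\Hilbert)\otimes\Kom(\hat\Hilbert)}$, followed by postcomposition with the $*$-homomorphism $\id_B\otimes\Ad_W$. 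Being assembled from the established functorialities, $S$ is well defined on the direct limit $\SimplMor{\blank}{\blank}$.

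For $F\circ S$, unwinding the definitions and using naturality of $\eta^{\Simpl^\fm}$ together with the elementary identity $\Ad_W(T\otimes p)=\Ad_V(T)$, where $V\coloneqq W\circ(\blank\otimes e)$ is the graded isometry $\xi\mapsto W(\xi\otimes e)$ of $\hat\Hilbert$, shows that $F(S(\llbracket\psi\rrbracket))$ is represented by $(\id_B\otimes\Ad_V)\circ\psi$. Since $\hat\Hilbert$ is infinite-dimensional, \Cref{lem:isometriesarehomotopic} (in the present trivial-action case this is the classical fact that any two isometries of an infinite-dimensional Hilbert space are homotopic through isometries) provides a homotopy of isometries from $V$ to $\id_{\hat\Hilbert}$; hence $\id_B\otimes\Ad_V$ is homotopic to the identity and $F\circ S=\id$ by homotopy invariance of the composition product (\Cref{cor:Fhomotopyclasscompositionproduct}).

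The main work is $S\circ F=\id$. Representing a class by $\varphi\colon A\otimes\Kom(\hat\Hilbert)\to\Simpl^\fm(B\otimes\Kom(\hat\Hilbert))$, I would write $M\coloneqq\Simpl^\fm(\id_B\otimes\Ad_W)\circ\eta^{\Simpl^\fm}\circ(\varphi\otimes\id)$, a simpltotic morphism out of $A\otimes\Kom(\hat\Hilbert)\otimes\Kom(\hat\Hilbert)$. Then $S(F(\varphi))=M\circ j_1$ with $j_1\colon a\otimes k\mapsto a\otimes p\otimes k$, whereas the same computation as above gives $\varphi\simeq(\id_B\otimes\Ad_V)\circ\varphi=M\circ j_2$ with $j_2\colon a\otimes k\mapsto a\otimes k\otimes p$. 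These two $*$-homomorphisms are $\id_A\otimes\Ad_{s_1}$ and $\id_A\otimes\Ad_{s_2}$ for the graded isometries $s_1,s_2\colon\hat\Hilbert\to\hat\Hilbert\otimes\hat\Hilbert$, $\zeta\mapsto e\otimes\zeta$ and $\zeta\mapsto\zeta\otimes e$. Applying \Cref{lem:isometriesarehomotopic} to the infinite-dimensional target $\hat\Hilbert\otimes\hat\Hilbert$ makes $s_1$ and $s_2$ homotopic, so $j_1$ and $j_2$ are homotopic $*$-homomorphisms and induce the same map on simpltotic morphism sets, whence $S(F(\varphi))\simeq\varphi$. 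The step I expect to be most delicate is precisely this identification: one must track an element carefully through the natural transformation $\eta^{\Simpl^\fm}$ and the folding map $\Ad_W$ to recognize the two composites as differing only by the homotopic corner embeddings $s_1,s_2$, and check that every manipulation is compatible with passage to the direct limit over $\cD^\op$. Granting this, $F$ is a bijection with inverse $S$, which proves the lemma.
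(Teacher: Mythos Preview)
Your proof is correct and follows essentially the same approach as the paper: define the inverse $S$ via the tensor-product functor $\blank\grtensor\Kom(\hat\Hilbert)$ followed by the identification $\Ad_W$, and reduce both composites to conjugation by isometries into an infinite-dimensional graded Hilbert space, which are homotopic by \Cref{lem:isometriesarehomotopic} (applied with $G=1$). The only organizational difference is that for $S\circ F$ the paper identifies the composite directly as precomposition with $\id_A\otimes\Ad_{Ws_1}$ (a single isometry $\hat\Hilbert\to\hat\Hilbert$), whereas you factor through the auxiliary morphism $M$ and compare the two corner embeddings $j_1,j_2$; your route is slightly longer but equally valid, and the step you flag as ``delicate'' is in fact a routine unwinding of $\eta^{\Simpl^\fm}$ on elementary tensors.
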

\begin{proof}
We claim that an inverse is given by taking the tensor product with $\Kom(\hat\Hilbert)$ and identifying $\Kom(\hat\Hilbert\otimes\hat\Hilbert)$ with $\Kom(\hat\Hilbert)$ via an isomorphism $\hat\Hilbert\otimes\hat\Hilbert\cong\hat\Hilbert$, which exists because the even and odd parts of both sides have the same infinite cardinality.

Indeed, first precomposing with $\id\otimes p$ and then tensoring with $\Kom(\hat\Hilbert)$ has the same effect as precomposing with the $*$-ho\-mo\-mor\-phism $A\otimes\Kom(\hat\Hilbert)\to A\otimes\Kom(\hat\Hilbert)$ which is obtained by conjugation with the isometry $\hat\Hilbert\cong (p\hat\Hilbert)\otimes\hat\Hilbert \subset\hat\Hilbert\otimes\hat\Hilbert\cong\hat\Hilbert$ and thus homotopic to the identity by \Cref{lem:isometriesarehomotopic} (applied non-equivariantly, i.\,e.\ with $G=1$).
On the other hand, tensoring with $\Kom(\hat\Hilbert)$ first and then precomposing with $\id\otimes p$ has the same effect as postcomposing with the $*$-ho\-mo\-mor\-phism $B\otimes\Kom(\hat\Hilbert)\to B\otimes\Kom(\hat\Hilbert)$ obtained by conjugation with the isometry $\hat\Hilbert\cong \hat\Hilbert\otimes(p\hat\Hilbert) \subset\hat\Hilbert\otimes\hat\Hilbert\cong\hat\Hilbert$ and the claim follows.
\end{proof}

This last lemma implies that for countable $G$ and separable graded $G$-\textCstar-algebras $A,B$ the $\EE$-theory groups defined by \eqref{eq:Etheoryseparabledefinition} are in bijection with the sets
\[\SimplMor{\grS\grtensor A\grtensor\Kom(\ell^2G)}{B\grtensor\Kom(\hat\Hilbert\grtensor\ell^2G)}\,,\]
again with the same fixed $\hat\Hilbert\coloneqq\Hilbert\oplus\Hilbert$.
This prompts us to make the following definition for general $G,A,B$.

\begin{defn}\label{defn:Etheory}
Let $G$ be a discrete group and $A,B$ graded $G$-\textCstar-algebras. Then we define the \emph{$\EE$-theory of $A,B$} as the set
\[\EE_G(A,B)\coloneqq \varinjlim_{\hat\Hilbert}\SimplMor{\grS\grtensor A\grtensor\Kom(\ell^2G)}{B\grtensor\Kom(\hat\Hilbert\grtensor\ell^2G)}\]
where the limit runs over the directed system obtained from \Cref{cor:directedsystemoverGHilbert}.
\end{defn}

\begin{rem}\label{rem:GHilbertvsHilbert}
Note that by \Cref{lem:switchoffequivariance} we could equally well let $\hat\Hilbert$ in the definition run only over all non-equivariant graded Hilbert spaces.
This is useful for some of the upcoming proofs, but we do not want to restrict completely to this cofinal directed subsystem, because we cannot exclude that considering $G$-Hilbert spaces might provide the more natural set-up in future applications.
\end{rem}

We have to justify that the direct limit exists and that we retrieve the old definition for countable $G$ and separable $A,B$.
\begin{lem}\label{lem:directlimitoverHilbertexists}
Let $D\subset A\grtensor\Kom(\ell^2G)$ be a dense subset which is closed under involution, grading automorphism and $G$-action and let $\hat\Hilbert'$ be a graded $G$-Hilbert space whose even and odd parts have dimension greater or equal to the cardinality $|D|$ of $D$.
The direct limit 
\[\varinjlim_{\hat\Hilbert}\SimplMor{A\grtensor\Kom(\ell^2G)}{B\grtensor\Kom(\hat\Hilbert\grtensor\ell^2G)}\]
exists and is isomorphic to 
\[\SimplMor{A\grtensor\Kom(\ell^2G)}{B\grtensor\Kom(\hat\Hilbert'\grtensor\ell^2G)}\,.\]\end{lem}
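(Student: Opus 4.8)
The plan is to show that the directed system over $G$-Hilbert spaces becomes stationary once the Hilbert space is large enough, in direct analogy to the way \Cref{cor:existenceofsimpletoticmorphismset} deduces existence of the limit over $\cD^\op$ from \Cref{lem:directedsystemstationary}. First I would invoke \Cref{cor:existenceofsimpletoticmorphismset} to replace every set $\SimplMor{A\grtensor\Kom(\ell^2G)}{C}$ occurring in the system by $\SimplMor[\fm(D)]{A\grtensor\Kom(\ell^2G)}{C}$, naturally in the second variable $C$, so that the whole argument may be carried out over the single fixed directed set $\fm\coloneqq\fm(D)$. The purpose of this reduction is purely cardinal: one has $|\fm|=\max\{|D|,\aleph_0\}$, so the density character of the parameter space $|\Delta^\fm|$ is at most $|D|$. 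By \Cref{rem:GHilbertvsHilbert} I would moreover assume that $G$ acts trivially on the $\hat\Hilbert$-factor, so that the only $G$-action on $\hat\Hilbert\grtensor\ell^2G$ is the one on $\ell^2G$.

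The technical heart is a support estimate: every $*$-ho\-mo\-mor\-phism $\varphi\colon A\grtensor\Kom(\ell^2G)\to\Simpl^\fm(B\grtensor\Kom(\hat\Hilbert\grtensor\ell^2G))$ corestricts to a $*$-ho\-mo\-mor\-phism with values in $\Simpl^\fm(B\grtensor\Kom(\hat\Hilbert_0\grtensor\ell^2G))$, where $\hat\Hilbert_0\subset\hat\Hilbert$ is graded, $\hat\Hilbert_0\grtensor\ell^2G$ is $G$-invariant, and $\dim\hat\Hilbert_0^\pm\leq|D|$. To prove it I would lift $\varphi$ on the generating set $D$ to a map $\tilde\varphi$ into $\Cb(|\Delta^\fm|;B\grtensor\Kom(\hat\Hilbert\grtensor\ell^2G))$ compatible with involution, grading and $G$-action, exactly as in the proof of \Cref{lem:directedsystemstationary}. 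For each generator the function $\tilde\varphi(\tilde a)$ has image of density character at most $|\fm|$, and every value is a single element whose support in $\hat\Hilbert\grtensor\ell^2G$ is separable; since the compression $c\mapsto(\id\grtensor P)c(\id\grtensor P)$ is norm-continuous, the closure of the whole image is supported on a subspace of dimension at most $|D|$. Joining these supports over the at most $|D|$ many generators, choosing an orthonormal basis of the join and collecting its $\hat\Hilbert$-components, and using $|G|\leq|D|$ (which holds because $\Kom(\ell^2G)$, hence $A\grtensor\Kom(\ell^2G)$, has density character at least $|G|$) produces the required product-form subspace $\hat\Hilbert_0\grtensor\ell^2G$. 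Running the same argument with $B$ replaced by $\Ct([0,1];B)$ gives the analogous statement for $\fm$-homotopies.

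With this estimate I would then show that for any structure map $\hat\Hilbert_1\to\hat\Hilbert_2$ of the directed system of \Cref{cor:directedsystemoverGHilbert} with $\dim\hat\Hilbert_1^\pm\geq|D|$ the induced map on the morphism sets is a bijection. For surjectivity, a given morphism into $\hat\Hilbert_2$ factors through some $\hat\Hilbert_0\subset\hat\Hilbert_2$ of dimension at most $|D|$ by the support estimate, and $\hat\Hilbert_0$ embeds equivariantly into $\hat\Hilbert_1$ by \Cref{lem:isometriesexist}; the two resulting equivariant isometries $\hat\Hilbert_0\grtensor\ell^2G\to\hat\Hilbert_2\grtensor\ell^2G$ are homotopic by \Cref{lem:isometriesarehomotopic}, so their $\Ad$-maps agree up to homotopy and the class pushed forward from $\hat\Hilbert_1$ equals the given one. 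For injectivity, an $\fm$-homotopy in $\hat\Hilbert_2$ between the images of $\varphi$ and $\psi$ is, by the homotopy version of the estimate, supported on a subspace of dimension at most $|D|$; transporting it into $\hat\Hilbert_1$ along an equivariant isometry (\Cref{lem:isometriesexist}) and absorbing the resulting conjugations again via \Cref{lem:isometriesarehomotopic} yields an $\fm$-homotopy between $\varphi$ and $\psi$ themselves. Since the Hilbert spaces with $\dim^\pm\geq|D|$ form a cofinal subsystem on which every structure map is a bijection, the colimit exists and is canonically the value at $\hat\Hilbert'$; undoing the reduction via \Cref{cor:existenceofsimpletoticmorphismset} identifies this with $\SimplMor{A\grtensor\Kom(\ell^2G)}{B\grtensor\Kom(\hat\Hilbert'\grtensor\ell^2G)}$.

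The main obstacle is the support estimate, and specifically securing the \emph{uniform} dimension bound $|D|$ rather than one polluted by the cardinality of $|\Delta^\fm|$, which for uncountable $\fm$ can be far larger. The reduction to $\fm=\fm(D)$ is exactly what controls this: it forces the density character of the parameter space down to $|D|$, and together with the separability of the support of a single compact operator and the inequality $|G|\leq|D|$ it keeps every cardinal appearing in the estimate at $|D|$. A secondary point requiring care is arranging the support subspace in the $G$-invariant product form $\hat\Hilbert_0\grtensor\ell^2G$, so that the corestricted morphism genuinely lives at a smaller object of the directed system of \Cref{cor:directedsystemoverGHilbert}.
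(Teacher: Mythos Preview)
Your approach is essentially the same as the paper's: reduce to the fixed index set $\fm(D)$ via \Cref{cor:existenceofsimpletoticmorphismset}, use the density-character bound on $|\Delta^{\fm(D)}|$ to show that every simpltotic morphism (and every $\fm$-homotopy) factors through a Hilbert subspace of dimension at most $|D|$, and conclude stationarity. One small simplification: the inequality $|G|\leq|D|$ is not needed, because any subset of $\hat\Hilbert\grtensor\ell^2G$ of density character at most $|D|$ is automatically contained in $\hat\Hilbert_0\grtensor\ell^2G$ with $\dim\hat\Hilbert_0^\pm\leq|D|$ regardless of $|G|$ (each vector has only countably many nonzero $\ell^2G$-components, so the $\hat\Hilbert$-support of a set of cardinality $\leq|D|$ has dimension $\leq|D|\cdot\aleph_0=|D|$); the paper simply carries the whole $\ell^2G$-factor along without comment.
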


\begin{proof}
In this proof, we let the limit run only over non-equivariant Hilbert spaces, as proposed in \Cref{rem:GHilbertvsHilbert}. In particular, we also use \Cref{lem:switchoffequivariance} to assume w.l.o.g.\ that $G$ acts trivially on $\hat\Hilbert'$.

Note that the sets $\SimplMor[\fm]{A\grtensor\Kom(\ell^2G)}{B\grtensor\Kom(\hat\Hilbert\grtensor\ell^2G)}$ constitute a directed system indexed by pairs $(\fm,\hat\Hilbert)$ and the direct limit we are looking for is equal to the direct limit of this system.
We have seen that these sets are canonically isomorphic to $\SimplMor[\fm(D)]{A\grtensor\Kom(\ell^2G)}{B\grtensor\Kom(\hat\Hilbert\grtensor\ell^2G)}$ for all $\fm\geq\fm(D)$ and hence we only have to show that the latter become stationary for all $\hat\Hilbert$ large enough.

To this end, assume $A\not=0$. (Otherwise the claim is trivial.) Then $D$ is infinite and we note that the cardinality $|\fm(D)|$ of $\fm(D)$ is equal to $|D|$.
Consider a graded Hilbert space $\hat\Hilbert(D)$ whose even and odd part have dimension equal to $|D|$ and a graded Hilbert space $\hat\Hilbert$ whose even and odd part have dimension at least $|D|$.

Consider a simpltotic morphism $\varphi\colon A\grtensor\Kom(\ell^2G)\to\Simpl^{\fm(D)}(B\grtensor\Kom(\hat\Hilbert\grtensor\ell^2G))$. For each $d\in D$ we choose a representative $f_d\in\Cb(|\Delta^{\fm(D)}|;B\grtensor\Kom(\hat\Hilbert\grtensor\ell^2G))$ of $\varphi(d)$. 
Note that $|\Delta^{\fm(D)}|$ contains a dense subset with cardinality $|D|$, because each of its $k$-skeleta contains a dense subset of cardinality $|\fm(D)|^k\cdot\aleph_0=|D|$.
Therefore, the image of $f_d$ is contained in the sub-\textCstar-algebra $B\grtensor\Kom(\hat\Hilbert_d\grtensor\ell^2G)$ for a graded Hilbert subspace $\hat\Hilbert_d\subset\hat\Hilbert$ whose even and odd part have dimension at most $|D|$. 
This implies that the image of $\varphi$ is contained in $\Simpl^{\fm(D)}(B\grtensor\Kom(\hat\Hilbert'\grtensor\ell^2G))$
for $\hat\Hilbert'\coloneqq\bigoplus_{d\in D}\hat\Hilbert_d$.
Since even and odd part of $\hat\Hilbert'$ have dimension at most $|D|^2=|D|$, it is contained in the image of some isometry $\hat\Hilbert(D)\hookrightarrow\hat\Hilbert$, which shows that $\llbracket\varphi\rrbracket$ lies in the image of the map
\begin{equation}\label{eq:HilbertDtoHilbert}
\SimplMor[\fm(D)]{A\grtensor\Kom(\ell^2G)}{B\grtensor\Kom(\hat\Hilbert(D)\grtensor\ell^2G)}\to \SimplMor[\fm(D)]{A\grtensor\Kom(\ell^2G)}{B\grtensor\Kom(\hat\Hilbert\grtensor\ell^2G)}\,.
\end{equation}

The same argument applied to $\Simpl^{\fm(D)}$-homotopies shows that the map \eqref{eq:HilbertDtoHilbert} is not only surjective, but also injective, proving the stationarity of the directed system.

The claim now follows from \Cref{cor:existenceofsimpletoticmorphismset}.
\end{proof}

\begin{cor}\label{cor:Etheoryexistsandistheoldoneintheseparablecountablecase}
The $\EE$-theory $\EE_G(A,B)$ defined in \Cref{defn:Etheory} exists.
Furthermore, if $\EE_G^\Asymp(A,B)$ denotes the groups defined by \eqref{eq:Etheoryseparabledefinition}, then there is a canonical map $\EE_G^\Asymp(A,B)\to\EE_G(A,B)$. It is a bijection if $G$ is countable and $A$ is separable.
\end{cor}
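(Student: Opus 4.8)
The plan is to reduce the statement to the stabilization result of \Cref{lem:directlimitoverHilbertexists} together with the identification of $\EE_G^\Asymp(A,B)$ sketched in the paragraph preceding the corollary; throughout, write $\hat\Hilbert=\Hilbert\oplus\Hilbert$ for the fixed separable graded Hilbert space of \eqref{eq:Etheoryseparabledefinition}. For existence, I would observe that the colimit defining $\EE_G(A,B)$ in \Cref{defn:Etheory} is exactly the one treated in \Cref{lem:directlimitoverHilbertexists}, applied with $\grS\grtensor A$ in place of $A$. Picking a dense subset $D\subset\grS\grtensor A\grtensor\Kom(\ell^2G)$ invariant under involution, grading and the $G$-action, that lemma shows the colimit exists and is already computed by the single term indexed by any $\hat\Hilbert'$ whose even and odd parts have dimension at least $|D|$.

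For the canonical map I would factor it through three natural maps. First apply the functor $F$ of \Cref{thm:asymptoticsimpltoticfunctor} to pass from $\SimplMor[\infty]{\blank}{\blank}$ to $\SimplMor{\blank}{\blank}$. Then, using $\Kom(\hat\Hilbert\grtensor\ell^2G)\cong\Kom(\ell^2G)\grtensor\Kom(\hat\Hilbert)$, apply the bijection of \Cref{lem:removecompactsfromfirstentry} with $\grS\grtensor A\grtensor\Kom(\ell^2G)$ and $B\grtensor\Kom(\ell^2G)$ in the roles of $A$ and $B$ there, stripping the extra factor $\Kom(\hat\Hilbert)$ off the first entry and landing in $\SimplMor{\grS\grtensor A\grtensor\Kom(\ell^2G)}{B\grtensor\Kom(\hat\Hilbert\grtensor\ell^2G)}$. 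This is the term at $\hat\Hilbert$ of the directed system of \Cref{defn:Etheory}, so composing with the canonical map into the colimit produces $\EE_G^\Asymp(A,B)\to\EE_G(A,B)$. All three maps are natural in $A$ and $B$, and the remaining choices (the rank-one projection and the reordering of compact factors) are immaterial by \Cref{lem:isometriesarehomotopic}. Note that the composite is defined for arbitrary $A$ and $G$; only the bijectivity of $F$ can fail in general.

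For bijectivity, assume $G$ countable and $A$ separable, so that $\grS\grtensor A\grtensor\Kom(\hat\Hilbert\grtensor\ell^2G)$ is separable. Then $F$ is a bijection by \Cref{ex:asymptoticsimpltoticrelation,cor:asymptoticsimpltoticrelation}, the map of \Cref{lem:removecompactsfromfirstentry} is always a bijection, and since $\grS\grtensor A\grtensor\Kom(\ell^2G)$ now admits a countable invariant dense $D$, \Cref{lem:directlimitoverHilbertexists} shows the system already stabilizes at any $\hat\Hilbert'$ with $\aleph_0$-dimensional even and odd parts, in particular at the fixed separable $\hat\Hilbert$. Hence the canonical map from the $\hat\Hilbert$-term into the colimit is a bijection, and so is the composite. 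The main point to check is that this fixed $\hat\Hilbert$ is simultaneously large enough to index a stabilizing term in \Cref{lem:directlimitoverHilbertexists} and small enough (separable) for $F$ to be bijective; the two demands are compatible precisely because in the separable, countable regime $\aleph_0$ bounds both the density character of the first algebra and the dimension of $\Hilbert$. The rest is routine bookkeeping of graded tensor factors.
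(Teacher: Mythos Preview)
Your proposal is correct and follows essentially the same route as the paper: existence via \Cref{lem:directlimitoverHilbertexists}, the canonical map as the three-step composite $F$ from \Cref{thm:asymptoticsimpltoticfunctor}, the bijection of \Cref{lem:removecompactsfromfirstentry}, and the structure map into the colimit, with bijectivity in the separable/countable case checked step by step using \Cref{cor:asymptoticsimpltoticrelation} and \Cref{lem:directlimitoverHilbertexists}. Your additional remarks on why the choices of projection and tensor reordering are harmless and on the compatibility of the two constraints on $\hat\Hilbert$ are accurate elaborations of points the paper leaves implicit.
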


\begin{proof}
Existence follows directly from \Cref{lem:directlimitoverHilbertexists} for any suitable $D\subset \grS\grtensor A\grtensor\Kom(\ell^2G)$.

Let $\hat\Hilbert$ be any graded Hilbert space with infinite dimensional even and odd parts.
Then the functor from from \Cref{thm:asymptoticsimpltoticfunctor} and \Cref{lem:removecompactsfromfirstentry} provide us with a map
\begin{align*}
\EE_G^\Asymp(A,B)&\coloneqq \SimplMor[\infty]{\grS\grtensor A\grtensor\Kom(\hat\Hilbert\grtensor\ell^2G)}{B\grtensor\Kom(\hat\Hilbert\grtensor\ell^2G)}
\\&\to\SimplMor{\grS\grtensor A\grtensor\Kom(\hat\Hilbert\grtensor\ell^2G)}{B\grtensor\Kom(\hat\Hilbert\grtensor\ell^2G)}
\\&\cong\SimplMor{\grS\grtensor A\grtensor\Kom(\ell^2G)}{B\grtensor\Kom(\hat\Hilbert\grtensor\ell^2G)}
\to \EE_G(A,B)\,.
\end{align*}
If $G$ is countable and $A,\hat\Hilbert$ are separable, then the first map is a bijection by  \Cref{cor:asymptoticsimpltoticrelation}, because $\grS\grtensor A\grtensor\Kom(\hat\Hilbert\grtensor\ell^2G)$ is separable.
Furthermore, \Cref{lem:directlimitoverHilbertexists} implies that the last map is also a bijection, because a countable $D$ can be chosen in this case.
\end{proof}

\subsection{Algebraic structure}

In this section we are going to introduce the algebraic structure of $\EE$-theory: The group structure, composition product, exterior tensor product and descent functor. We will only give the constructions, which are very similar to the corresponding ones in \cite[Section 2]{HigGue} and \cite[Chapter 6]{GueHigTro}, but we omit the proofs, because they are essentially identical to the ones in the cited references.

We already mentioned in the proof of \Cref{lem:directlimitoverHilbertexists} that $\EE$-theory is actually a direct limit of a directed system indexed by pairs $(\fm,\hat\Hilbert)$ and hence we have
\[\EE_G(A,B)\cong \varinjlim_{\fm}\varinjlim_{\hat\Hilbert}\SimplMor[\fm]{\grS\grtensor A\grtensor\Kom(\ell^2G)}{B\grtensor\Kom(\hat\Hilbert\grtensor\ell^2G)}\,,\]
that is, we were allowed to interchange the order of the direct limits.
In order to define the group structure on $\EE_G(A,B)$ it is therefore sufficient to define compatible group structures on all direct limits of the form 
\begin{equation}\label{eq:limitonlyoverH}
\varinjlim_{\hat\Hilbert}\SimplMor[\fm]{\grS\grtensor A\grtensor\Kom(\ell^2G)}{B\grtensor\Kom(\hat\Hilbert\grtensor\ell^2G)}\,.
\end{equation}
To this end, note that there are obvious canonical injective $*$-ho\-mo\-mor\-phisms
\[\Simpl^\fm(B\grtensor\Kom(\ell^2G\grtensor\hat\Hilbert))\oplus \Simpl^\fm(B\grtensor\Kom(\ell^2G\grtensor\hat\Hilbert'))\to \Simpl^\fm(B\grtensor\Kom(\ell^2G\grtensor(\hat\Hilbert\oplus\hat\Hilbert')))\]
which immediately gives rise to well-defined addition maps
\begin{align*}
&\SimplMor[\fm]{\grS\grtensor A\grtensor\Kom(\ell^2G)}{B\grtensor\Kom(\ell^2G\grtensor\hat\Hilbert)}\times \SimplMor[\fm]{\grS\grtensor A\grtensor\Kom(\ell^2G)}{B\grtensor\Kom(\ell^2G\grtensor\hat\Hilbert')}
\\&\qquad\to \SimplMor[\fm]{\grS\grtensor A\grtensor\Kom(\ell^2G)}{B\grtensor\Kom(\ell^2G\grtensor(\hat\Hilbert\oplus\hat\Hilbert'))}
\end{align*}
for all graded Hilbert spaces $\hat\Hilbert,\hat\Hilbert'$.
They clearly induce an addition map on the direct limit \eqref{eq:limitonlyoverH}.

The following properties are straightforward to verify. In particular, the existence of the additive inverse is proven exactly as in \cite[Lemma 2.1]{HigGue}.

\begin{lem}
The addition on the sets \eqref{eq:limitonlyoverH} defined above is associative and commutative. The zero simpltotic morphism is a neutral element and an inverse of a class represented by $\varphi\colon \grS\grtensor A\grtensor\Kom(\ell^2G)\to \Simpl^\fm(B\grtensor\Kom(\hat\Hilbert\grtensor\ell^2G))$ is obtained by composing it with the grading automorphism $\grS\to\grS,f\mapsto (x\mapsto f(-x))$ and reversing the grading on $\hat\Hilbert$. Thus, these sets are abelian groups. 

Furthermore, the group structure is natural in $\fm$. \qed
\end{lem}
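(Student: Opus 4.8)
The plan is to treat the three group axioms and the naturality claim separately, deriving commutativity, associativity and the neutral element from formal manipulations of the underlying graded Hilbert spaces, and reserving the genuine work for the additive inverse, which I would handle by a rotation homotopy in the spirit of \cite[Lemma 2.1]{HigGue}.

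First, for \emph{commutativity} I would observe that the flip isometry $\hat\Hilbert\oplus\hat\Hilbert'\cong\hat\Hilbert'\oplus\hat\Hilbert$ intertwines the two block-diagonal inclusions used to define $[\varphi]+[\psi]$ and $[\psi]+[\varphi]$. Mapping both sums forward into a common larger graded Hilbert space along isometries, the two representatives differ by adjunction with an equivariant isometry; by \Cref{lem:isometriesarehomotopic} any two such isometries are equivariantly homotopic, so the induced maps on the direct limit \eqref{eq:limitonlyoverH} agree and the two sums coincide. \emph{Associativity} follows in exactly the same way, using the associator isometry $(\hat\Hilbert\oplus\hat\Hilbert')\oplus\hat\Hilbert''\cong\hat\Hilbert\oplus(\hat\Hilbert'\oplus\hat\Hilbert'')$ in place of the flip. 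For the \emph{neutral element}, adding the zero simpltotic morphism places $\varphi$ into the first summand of $\hat\Hilbert\oplus\hat\Hilbert'$ via the inclusion isometry $\hat\Hilbert\hookrightarrow\hat\Hilbert\oplus\hat\Hilbert'$, which once more induces the identity on the limit by \Cref{lem:isometriesarehomotopic}, so the class is unchanged.

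The substantial step is the \emph{additive inverse}. Writing $\psi$ for the class obtained from $\varphi$ by precomposing with the grading automorphism $\beta\colon\grS\to\grS$, $\beta(f)=(x\mapsto f(-x))$, and reversing the grading on $\hat\Hilbert$, the sum $\varphi+\psi$ is represented by a $*$-homomorphism into $\Simpl^\fm(B\grtensor\Kom((\hat\Hilbert\oplus\hat\Hilbert^{\mathrm{op}})\grtensor\ell^2G))$, where $\hat\Hilbert^{\mathrm{op}}$ denotes $\hat\Hilbert$ with reversed grading. I would show this sum is $\fm$-homotopic to the zero morphism by conjugating it with the family of rotations
\[\begin{pmatrix}\cos(\tfrac{\pi}{2}t)&-\sin(\tfrac{\pi}{2}t)\\\sin(\tfrac{\pi}{2}t)&\cos(\tfrac{\pi}{2}t)\end{pmatrix},\qquad t\in[0,1],\]
acting on the two copies of $\hat\Hilbert$, combined with the reflection of the $\R$-variable encoded by $\beta$. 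The reflection $\beta$ on $\grS$ together with the grading reversal on the second copy is precisely what makes this rotation grading-preserving and compatible with the functional calculus, so that the interpolation defines a genuine $*$-homomorphism into $\Simpl^\fm$ of the $[0,1]$-algebra, interpolating $\varphi+\psi$ at one endpoint and the zero morphism at the other. Since this argument is formally identical to \cite[Lemma 2.1]{HigGue} once the properties of the simpltotic category established in \Cref{sec:compositionproduct,sec:functors} are substituted for the corresponding properties of the asymptotic category, I would only record the construction and refer to \emph{loc.\ cit.}\ for the detailed verification.

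Finally, \emph{naturality in $\fm$} is immediate: the block-diagonal inclusions of compact operators are natural transformations of the functors $\Simpl^\fm$, hence commute with the pullback maps $\alpha^*$ along cofinal maps $\alpha$, so the addition maps are compatible with the transition maps of the directed system over $\cD^\op$. The main obstacle is the inverse: commutativity, associativity and the neutral element are purely formal consequences of the isometry calculus of \Cref{lem:isometriesarehomotopic}, whereas the inverse requires the grading-sensitive rotation homotopy and the careful bookkeeping of even and odd parts that the reflection $\beta$ and the grading reversal are designed to accommodate.
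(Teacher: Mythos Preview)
Your proposal is correct and follows essentially the same approach as the paper, which gives no proof beyond declaring the verification ``straightforward'' and referring to \cite[Lemma~2.1]{HigGue} for the inverse. Your expansion of the formal parts via \Cref{lem:isometriesarehomotopic} and your deferral to \cite{HigGue} for the rotation homotopy are exactly in this spirit.
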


\begin{defn}
We equip $\EE_G(A,B)$ with the group structure obtained by taking the direct limit over $\fm$ of the abelian groups described by the preceding lemma.
\end{defn}

Before we come to the products, we have to recall from \cite[Section 1.3]{HigGue} that the graded \textCstar-algebra $\grS$ is also a coalgebra with counit $\nu\colon \grS\to\C,f\mapsto f(0)$ and comultiplication $\Delta\colon\grS\to\grS\grtensor\grS,f\mapsto f(X\grtensor 1+1\grtensor X)$, where $X\colon\R\to\R$ denotes the identity function, considered as an degree one, essentially self-adjoint, unbounded multiplier of $\grS$.
Formulas involving $\Delta$ and $\nu$ are often readily verified on the generators $u(x)=\exp(-x^2)$ and $v(x)=x\exp(-x^2)$, for which $\Delta(u)=u\grtensor u$, $\Delta(v)=u\grtensor v+v\grtensor u$, $\eta(u)=1$ and $\eta(v)=0$.

\begin{thm}[{cf.\ \cite[Theorem 2.3]{HigGue}}]
The $\EE$-theory groups $\EE_G(A,B)$ are the morphism sets of an additive category $\EE_G$ whose objects are the graded $G$-\textCstar-algebras. 
The composition product 
\[\EE_G(A,B)\times\EE_G(B,C)\to\EE_G(A,C)\]
maps the classes represented by two morphisms
\begin{align*}
\varphi&\in\SimplMor{\grS\grtensor A\grtensor\Kom(\ell^2G)}{B\grtensor\Kom(\hat\Hilbert\grtensor\ell^2G)}
\\\psi&\in\SimplMor{\grS\grtensor B\grtensor\Kom(\ell^2G)}{C\grtensor\Kom(\hat\Hilbert'\grtensor\ell^2G)}
\end{align*}
in the simpltotic category to the element represented by the composition
\begin{align*}
\grS\grtensor A\grtensor\Kom(\ell^2G)&\xrightarrow{\Delta\grtensor\id_{\Kom(\ell^2G)}}\grS\grtensor \grS\grtensor A\grtensor\Kom(\ell^2G)
\\&\xrightarrow{\id_{\grS}\grtensor\varphi}\grS\grtensor B\grtensor\Kom(\hat\Hilbert\grtensor\ell^2G)
\\&\xrightarrow{\psi\grtensor\id_{\Kom(\hat\Hilbert)}}\grS\grtensor C\grtensor\Kom(\ell^2G\grtensor\hat\Hilbert\grtensor\hat\Hilbert')
\end{align*}
where $\Delta\colon\grS\grtensor\grS$ denotes the comultiplication.

There is a functor from the homotopy category of graded $G$-\textCstar-algebras and graded $*$-ho\-mo\-mor\-phisms to $\EE_G$ which is the identity on objects and maps the homotopy class of a $*$-ho\-mo\-mor\-phism $\varphi\colon A\to B$ to the $\EE_G$ represented by the composition 
\[\grS\grtensor A\grtensor\Kom(\ell^2G)\xrightarrow{\eta\otimes\id_{A\grtensor\Kom(\ell^2G)}}A\grtensor\Kom(\ell^2G)\xrightarrow{\varphi\otimes\id_{\grtensor\Kom(\ell^2G)}}B\grtensor\Kom(\ell^2G\otimes\C)\]
where $\eta\colon\grS\to\C$ denotes the counit of $\grS$.
\qed
\end{thm}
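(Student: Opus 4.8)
The plan is to transfer the arguments of \cite[Section 2]{HigGue} essentially verbatim, replacing the asymptotic category by the simpltotic category and invoking at each step the corresponding structural result we have already established. Since the three-step composite in the statement is built entirely from maximal tensor products and the coalgebra structure $(\grS,\Delta,\nu)$, the first thing I would record is that it is already well-defined \emph{at the level of the simpltotic category}: the middle arrow $\id_\grS\grtensor\varphi$ and the outer arrow $\psi\grtensor\id$ are instances of the tensor-product functors of \Cref{cor:simpltotictensorproductfunctors}, and their interaction is governed by \Cref{lem:leftrightmaxtensorcommute}. Hence the composite depends only on the simpltotic classes $\llbracket\varphi\rrbracket,\llbracket\psi\rrbracket$, not on their representatives.

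The key steps are then as follows. First I would check that the product descends through the two nested direct limits defining $\EE_G(A,B)$. Compatibility with the limit over $\fm$ is automatic, because composition in the simpltotic category is already defined on the sets $\SimplMor{\blank}{\blank}$; compatibility with the limit over Hilbert spaces $\hat\Hilbert$ uses that adjoining an isometry in the target commutes with $\psi\grtensor\id_{\Kom(\hat\Hilbert)}$ and that $\hat\Hilbert\grtensor\hat\Hilbert'$ is a legitimate object of the directed system, exactly the cofinality situation handled by \Cref{cor:directedsystemoverGHilbert,lem:directlimitoverHilbertexists}. \textbf{Associativity} reduces, after unwinding the two triple composites, to the coassociativity identity $(\Delta\grtensor\id)\circ\Delta=(\id\grtensor\Delta)\circ\Delta$ combined with associativity of the simpltotic composition (\Cref{thm:associativity}) and the left--right commutation of \Cref{lem:leftrightmaxtensorcommute}; the leftover reshuffling of the factors in $\Kom(\hat\Hilbert\grtensor\hat\Hilbert'\grtensor\hat\Hilbert'')$ is absorbed by \Cref{lem:isometriesarehomotopic}. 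The \textbf{identity} morphism is supplied by the homotopy-category functor: the image of $\id_A$ reintroduces $\nu\grtensor\id$ after the comultiplication, and the counit axiom $(\nu\grtensor\id)\circ\Delta=\id$ together with the observations following \Cref{thm:defSimpltoticCategory} shows it to be a two-sided unit. \textbf{Additivity} in each variable follows from the fact that the group structure is implemented by orthogonal direct sums of Hilbert spaces, which both $\Delta$ and the tensorings respect, and functoriality of the homotopy-category functor is the special case in which both arguments come from genuine $*$-ho\-mo\-mor\-phisms, where $\nu$ again collapses the $\grS$-factor.

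The main obstacle I anticipate is bookkeeping rather than conceptual: one must carefully track the natural transformations $\eta^{\Simpl^\fm}$ that move the $\Kom(\hat\Hilbert)$-factors past the simpltotic algebras, the reparametrization implicit in the simpltotic maps $\Theta^*$, and the Hilbert-space isometries used to reconcile the various targets. Because every coherence property that these diagram chases invoke --- associativity, tensor-functoriality, and left--right commutation of the simpltotic composition --- has already been proven, each required identity becomes formally identical to its counterpart in \cite[Section 2]{HigGue} and \cite[Chapter 6]{GueHigTro}; the only genuinely new ingredient is the stabilization over $\hat\Hilbert$, which \Cref{lem:directlimitoverHilbertexists} has already reduced to a cofinality statement.
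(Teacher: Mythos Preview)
Your proposal is correct and matches the paper's approach exactly: the paper omits the proof entirely (hence the \qed), stating explicitly that the constructions and their verifications are ``essentially identical'' to those in \cite[Section 2]{HigGue} and \cite[Chapter 6]{GueHigTro} once one substitutes the simpltotic category for the asymptotic one. Your sketch spells out precisely this transfer, correctly identifying the ingredients from the present paper (\Cref{cor:simpltotictensorproductfunctors}, \Cref{lem:leftrightmaxtensorcommute}, \Cref{thm:associativity}, \Cref{lem:isometriesarehomotopic}, \Cref{cor:directedsystemoverGHilbert}, \Cref{lem:directlimitoverHilbertexists}) that play the role of their asymptotic counterparts, and rightly noting that the only genuinely new wrinkle---the direct limit over Hilbert spaces---has already been handled.
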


Furthermore, we have some operations which come from continuous and exact functors presented in \ref{sec:functors}.

\begin{thm}[{cf.\ \cite[Theorem 2.4]{HigGue}}]\label{thm:maximaltensorproduct}
There is a functorial maximal tensor product
\[\EE_G(A,B)\otimes\EE_H(C,D)\to\EE_{G\times H}(A\grtensor C,B\grtensor D)\]
which maps the classes represented by two morphisms
\begin{align*}
\varphi&\in\SimplMor{\grS\grtensor A\grtensor\Kom(\ell^2G)}{B\grtensor\Kom(\hat\Hilbert\grtensor\ell^2G)}
\\\psi&\in\SimplMor{\grS\grtensor C\grtensor\Kom(\ell^2H)}{D\grtensor\Kom(\hat\Hilbert'\grtensor\ell^2H)}
\end{align*}
to the class represented by the composition
\begin{align*}
\grS\grtensor A\grtensor C\grtensor\Kom(\ell^2(G\times H))
&\xrightarrow{\Delta\grtensor\id}\grS\grtensor \grS\grtensor A\grtensor C\grtensor\Kom(\ell^2G\grtensor\ell^2H)
\\&\cong\grS\grtensor A\grtensor\Kom(\ell^2G)\grtensor \grS\grtensor C\grtensor\Kom(\ell^2H)
\\&\xrightarrow{\varphi\otimes\psi}C\grtensor\Kom(\ell^2G\grtensor\hat\Hilbert)\grtensor D\grtensor\Kom(\ell^2H\grtensor\hat\Hilbert')
\\&\cong C\grtensor D\grtensor\Kom(\ell^2(G\times H)\grtensor\hat\Hilbert\grtensor\hat\Hilbert')\,.
\end{align*}\qed
\end{thm}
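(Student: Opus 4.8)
The plan is to verify that the displayed composition descends to a well-defined, biadditive and functorial pairing. First I would fix finite-level representatives: since $\EE_G(A,B)$ and $\EE_H(C,D)$ are direct limits over the directed sets $\fm$ and over the $G$- (resp.\ $H$-)Hilbert spaces $\hat\Hilbert$, I may choose representing simpltotic morphisms $\varphi\colon\grS\grtensor A\grtensor\Kom(\ell^2G)\to\Simpl^\fm(B\grtensor\Kom(\hat\Hilbert\grtensor\ell^2G))$ and $\psi\colon\grS\grtensor C\grtensor\Kom(\ell^2H)\to\Simpl^\fn(D\grtensor\Kom(\hat\Hilbert'\grtensor\ell^2H))$ at some finite levels. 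The exterior product $\varphi\grtensor\psi$ is then the morphism in $(G\times H)\mbox{-}\Simpl$ supplied by the maximal tensor product functor established just above, and the two unlabelled isomorphisms in the display are the canonical reorderings of tensor factors together with $\ell^2(G\times H)\cong\ell^2G\grtensor\ell^2H$ and $\Kom(\hat\Hilbert)\grtensor\Kom(\hat\Hilbert')\cong\Kom(\hat\Hilbert\grtensor\hat\Hilbert')$.

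Second, I would check that the resulting class in $\EE_{G\times H}(A\grtensor C,B\grtensor D)$ is well defined. Independence of the chosen homotopy representatives is automatic, because every arrow in the display is a morphism in the simpltotic category, in which composition is already well defined. Independence of the finite directed sets $\fm,\fn$ follows from \Cref{thm:compprodpassestolimits}, which ensures that the composition products are compatible with the transition maps of the direct system. Finally, compatibility with the direct system over $\hat\Hilbert,\hat\Hilbert'$ from \Cref{cor:directedsystemoverGHilbert} reduces, by \Cref{lem:isometriesarehomotopic}, to the observation that enlarging $\hat\Hilbert$ or $\hat\Hilbert'$ along an equivariant isometry corresponds to enlarging the target space $\hat\Hilbert\grtensor\hat\Hilbert'$ by the induced isometry, leaving the class in the limit unchanged; this also settles independence of the representative Hilbert spaces via \Cref{lem:directlimitoverHilbertexists}.

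Third, biadditivity is immediate from the description of the group structure through the block-diagonal inclusions $\Kom(\hat\Hilbert)\oplus\Kom(\hat\Hilbert')\hookrightarrow\Kom(\hat\Hilbert\oplus\hat\Hilbert')$: the tensor construction commutes with these direct sums, so the pairing is additive in each variable and hence factors through $\EE_G(A,B)\otimes\EE_H(C,D)$. Functoriality then amounts to verifying that the tensor product respects the composition products of $\EE_G$ and $\EE_H$, which reduces to the coassociativity $(\Delta\grtensor\id)\circ\Delta=(\id\grtensor\Delta)\circ\Delta$ and the counit identities for the comultiplication on $\grS$, combined with \Cref{lem:leftrightmaxtensorcommute}, which says that the left and right tensor functors on the simpltotic category commute.

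I expect the main obstacle to be the functoriality in the last step. There one must juggle several flip and reassociation isomorphisms of the tensor factors while simultaneously invoking the coalgebra axioms of $\grS$, and then show that the two orders of composing three cycles agree \emph{in the simpltotic category}, that is, only up to $\Simpl^\fk$-homotopy after passing to a sufficiently large directed set $\fk$. All the requisite commutativity-up-to-homotopy statements are, however, already packaged in \Cref{lem:leftrightmaxtensorcommute,thm:compprodpassestolimits,lem:Thetaassociativity}, so the argument proceeds in complete parallel to \cite[Theorem 2.4]{HigGue}, with the properties of the asymptotic category replaced throughout by the corresponding ones of the simpltotic category.
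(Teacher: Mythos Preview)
Your proposal is correct and aligns with the paper's approach: the paper gives no proof at all for this theorem (it ends with \qed\ immediately after the statement), having announced at the start of the subsection that all proofs are omitted because they are ``essentially identical to the ones in the cited references'' once the asymptotic category is replaced by the simpltotic one. Your outline is precisely that replacement carried out in detail, invoking the correct ingredients from the paper---\Cref{thm:compprodpassestolimits}, \Cref{lem:leftrightmaxtensorcommute}, \Cref{lem:isometriesarehomotopic}, \Cref{lem:directlimitoverHilbertexists}, and the coalgebra axioms for $\grS$---so there is nothing to add.
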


In equivariant $\EE$-theory, one often considers a different maximal tensor product, namely a functor 
\[\EE_G(A,B)\otimes\EE_G(C,D)\to\EE_{G}(A\grtensor C,B\grtensor D)\,,\]
see \cite[Theorem 6.21]{GueHigTro}.
In order to obtain it, we only need to specialise the one from \Cref{thm:maximaltensorproduct} to the case $G=H$ and apply the following restriction functor to the diagonal inclusion $G\to G\times G$.

\begin{lem}
Let $f\colon H\to G$ be a homomorphism between discrete groups. Then there is a restriction functor $f^*\colon \EE_G\to \EE_H$ which maps a $G$-\textCstar-algebras to itself considered as an $H$-\textCstar-algebras via $f$ and whose image of a class in $\EE_G(A,B)$ represented by a $G$-equivariant morphism
\[\grS\grtensor A\grtensor\Kom(\ell^2G)\to B\grtensor\Kom(\hat\Hilbert\grtensor\ell^2G)\]
is obtained as follows: Tensoring $\varphi$ with $\Kom(\ell^2H)$ yields a $G\times H$-equivariant morphism 
\[\grS\grtensor A\grtensor\Kom(\ell^2G\grtensor\ell^2H)\to B\grtensor\Kom(\hat\Hilbert\grtensor\ell^2G\grtensor\ell^2H)\]
but we consider it only as a $H$-equivariant morphism via the homomorphism $H\to G\times H,h\mapsto (f(h),h)$.
Then \Cref{lem:switchoffequivariance} allows us to switch off the $H$-action on $\ell^2G$ and afterwards we apply   \Cref{lem:removecompactsfromfirstentry} to remove the Hilbert space $\ell^2G$ from the left hand side. As a result, we obtain a $H$-equivariant morphism
\[\grS\grtensor A\grtensor\Kom(\ell^2H)\to B\grtensor\Kom(\hat\Hilbert\grtensor\ell^2G\grtensor\ell^2H)\]
which represents the image in $\EE_H(A,B)$. \qed
\end{lem}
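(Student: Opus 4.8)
The plan is to recognise the four-step recipe in the statement as a composite of operations already available on simpltotic morphism classes, and then to check in turn that this composite descends to $\EE$-theory classes, is additive, and is functorial. All four steps are natural: tensoring a representative with $\Kom(\ell^2H)$ is the functor of \Cref{cor:simpltotictensorproductfunctors} applied to the $H$-\textCstar-algebra $D=\Kom(\ell^2H)$; restricting the $G\times H$-action to $H$ along $h\mapsto(f(h),h)$ is a restriction-of-scalars which commutes with each $\Simpl^\fm$ and preserves $\fm$-homotopies; switching off the $H$-action on the $\ell^2G$-factor is \Cref{lem:switchoffequivariance}; and removing that now trivially-acting factor from the first entry is \Cref{lem:removecompactsfromfirstentry}, which absorbs $\ell^2G$ into the second-entry Hilbert space and hence into the colimit over $\hat\Hilbert$ defining $\EE_H(A,B)$.

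For well-definedness I would argue that each step respects the equivalence relations and the two colimits (over $\fm$ and over $\hat\Hilbert$) appearing in \Cref{defn:Etheory}. Steps one and two are manifestly functorial on classes. For step three, the equivariant unitary furnished by \Cref{lem:switchoffequivariance} is unique up to equivariant homotopy by \Cref{lem:isometriesarehomotopic}, so the induced map on classes is independent of its choice. For step four the map of \Cref{lem:removecompactsfromfirstentry} is a canonical bijection; since the removed factor $\ell^2G$ carries the trivial $H$-action and enjoys the absorption $\ell^2G\grtensor\ell^2G\cong\ell^2G$, the same proof (whose only homotopy-theoretic input, \Cref{lem:isometriesarehomotopic}, is already equivariant) yields the $H$-equivariant statement we need. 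Independence of the chosen representative $\varphi$ is then automatic, and compatibility with the colimits follows because every step is natural in $\fm$ and in $\hat\Hilbert$.

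Additivity is immediate once one observes that the group structure on $\EE$-theory is built, as in the preceding lemmas, from the block-diagonal inclusions associated to $\hat\Hilbert\oplus\hat\Hilbert'$ in the second entry: tensoring and restriction distribute over $\oplus$, the switch-off isometry respects it, and the compact-removal bijection is natural, so $f^*$ is a homomorphism on each morphism group. Functoriality then reduces to preservation of identities and of the composition product of \cite[Theorem 2.3]{HigGue}. Identities are easy, since the recipe is compatible with the counit $\eta$ and with $\id\grtensor p$. For composition I would unwind the product into its constituents — the comultiplication $\Delta\colon\grS\to\grS\grtensor\grS$, the tensor-product functors of \Cref{cor:simpltotictensorproductfunctors}, and the simpltotic composition of \Cref{sec:compositionproduct} — and check each commutes with the four steps: $\Delta$ and $\eta$ carry trivial $G$-actions and so are untouched by restriction; the tensor functors are insensitive to the group by \Cref{lem:propertiesoftensorproductfunctors}, and left and right tensorings commute by \Cref{lem:leftrightmaxtensorcommute}.

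The main obstacle is the interaction of the first-entry removal (step four) with the composition product, because the product alters both entries simultaneously — via $\Delta$ on the first and via tensoring the Hilbert space on the second — so one must verify that removing the $\ell^2G$-factor before composing agrees with removing it afterwards. I would handle this by first passing to non-equivariant Hilbert spaces via \Cref{lem:switchoffequivariance} and \Cref{rem:GHilbertvsHilbert}, where the identifications $\Kom(\ell^2G)\grtensor\Kom(\ell^2G)\cong\Kom(\ell^2G\grtensor\ell^2G)$ and $\ell^2G\grtensor\ell^2G\cong\ell^2G$ reduce the two orders of operation to the single-factor statement of \Cref{lem:removecompactsfromfirstentry}, the requisite homotopies being supplied by \Cref{lem:isometriesarehomotopic}. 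The remaining book-keeping, tracking the four simultaneous tensor factors $\ell^2G$, $\ell^2H$, $\hat\Hilbert$, $\hat\Hilbert'$ and ensuring the switch-off is applied consistently to the $\ell^2G$-factors coming from both composed morphisms, is then the same formal manipulation as in \cite[Chapter 6]{GueHigTro}.
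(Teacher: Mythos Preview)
The paper gives no proof of this lemma: the statement ends with a \qed, and the surrounding section explicitly announces that proofs are omitted because they are ``essentially identical to the ones in the cited references'' \cite[Section 2]{HigGue} and \cite[Chapter 6]{GueHigTro}. So there is nothing to compare your proposal against except the general remark that the verification proceeds as in the asymptotic case.

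Your outline is reasonable and does exactly what one would expect such an omitted proof to contain: identify the four steps as natural operations on simpltotic classes, check well-definedness with respect to the two colimits, and verify additivity and functoriality. The one place where you should be a bit more careful is your appeal to $\ell^2G\grtensor\ell^2G\cong\ell^2G$: this isomorphism of Hilbert spaces holds when $G$ is infinite (since $|G|^2=|G|$), but not literally for finite $G$. In the finite case one instead absorbs the extra finite-dimensional factor into the $\hat\Hilbert$-colimit on the right-hand side, which is harmless; alternatively one can simply note that $\Kom(\ell^2G)$ is then a matrix algebra and invoke stability directly. Either way the argument goes through, but the phrasing should not rely on that isomorphism unconditionally.
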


\begin{thm}[{cf.\ \cite[Theorem 2.13]{HigGue}}]
There is a descent functor from the equivariant $\EE$-theory category $\EE_G$ to the non-equivariant $\EE$-theory category $\EE\coloneqq\EE_1$ which maps the $G$-\textCstar-algebra $A$ to the full crossed product $\Cstar(G;A)$ and which maps the $\EE$-theory class of a $G$-equivariant $*$-ho\-mo\-mor\-phism $A\to B$ to the $\EE$-theory class of the induced $*$-ho\-mo\-mor\-phism $\Cstar(G;A)\to \Cstar(G;B)$.\qed
\end{thm}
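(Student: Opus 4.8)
The plan is to obtain the descent functor as a direct consequence of descent on the simpltotic category, exactly as \cite[Theorem 2.13]{HigGue} obtains it from descent on the asymptotic category. Recall that the full crossed product functor is $\cI$-continuous, exact and cocontinuous under direct limits of ideals, so by \Cref{thm:functorsonsimpletoticcategory} it induces a functor $\Cstar(G,\blank)\colon G\mbox{-}\Simpl\to\C\mbox{-}\Simpl$ on the simpltotic categories which sends the class of a $G$-equivariant $*$-homomorphism to the class of the induced $*$-homomorphism between crossed products. The remaining work is to feed the cycles of \Cref{defn:Etheory} through this functor and to reshape the resulting crossed products into cycles computing $\EE(\Cstar(G,A),\Cstar(G,B))$.

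First I would take a cycle representing a class of $\EE_G(A,B)$, i.e.\ a simpltotic morphism
\[\varphi\colon \grS\grtensor A\grtensor\Kom(\ell^2 G)\to B\grtensor\Kom(\hat\Hilbert\grtensor\ell^2 G)\,,\]
and apply the simpltotic descent functor to it, yielding a morphism between $\Cstar(G,\grS\grtensor A\grtensor\Kom(\ell^2 G))$ and $\Cstar(G,B\grtensor\Kom(\hat\Hilbert\grtensor\ell^2 G))$ in $\C\mbox{-}\Simpl$. Both crossed products I would then rewrite via natural isomorphisms: since $\grS$ carries the trivial action, the source becomes $\grS\grtensor\Cstar(G,A\grtensor\Kom(\ell^2 G))$, and the standard untwisting (Fell absorption) isomorphism $\Cstar(G,C\grtensor\Kom(\hat\Hilbert\grtensor\ell^2 G))\cong\Cstar(G,C)\grtensor\Kom(\hat\Hilbert\grtensor\ell^2 G)$ — for the diagonal action using $\Ad$ of the regular representation on $\ell^2 G$ and a trivial action on $\hat\Hilbert$ supplied by \Cref{lem:switchoffequivariance} — turns $\varphi$ into a simpltotic morphism $\grS\grtensor\Cstar(G,A)\grtensor\Kom(\ell^2 G)\to\Cstar(G,B)\grtensor\Kom(\hat\Hilbert\grtensor\ell^2 G)$. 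Writing $\Kom(\hat\Hilbert\grtensor\ell^2 G)\cong\Kom(\hat\Hilbert)\grtensor\Kom(\ell^2 G)$ exhibits $\Kom(\ell^2 G)$ as a common tensor factor of both variables, so \Cref{lem:removecompactsfromfirstentry} deletes it from the source; passing to the colimit over $\hat\Hilbert$ (with $\hat\Hilbert\grtensor\ell^2 G$ now in the role of the ambient Hilbert space) produces the desired element of $\EE(\Cstar(G,A),\Cstar(G,B))$.

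Next I would check that this assignment descends to $\EE$-theory and is functorial. Well-definedness under $\fm$-homotopy and compatibility with the maps of the directed system over $\hat\Hilbert$ follow from naturality of the simpltotic descent functor together with naturality of the untwisting isomorphism and of \Cref{lem:removecompactsfromfirstentry,lem:switchoffequivariance}; additivity follows because the untwisting isomorphism is compatible with direct sums of Hilbert spaces, which is how the group law was defined. For multiplicativity I would verify that the chosen isomorphisms leave the tensor factor $\grS$ and its comultiplication $\Delta$ untouched and intertwine the tensoring maps used in the composition and exterior products, so that the descent of a composite cycle agrees with the composite of the descended cycles; this is the same bookkeeping as in \cite[Theorem 2.13]{HigGue}. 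On a $G$-equivariant $*$-homomorphism the construction reduces to $\Cstar(G,\blank)$ of the induced map, giving the asserted behaviour on the subcategory of $*$-homomorphisms.

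The hard part will be the untwisting isomorphism. One must produce $\Cstar(G,C\grtensor\Kom(\ell^2 G))\cong\Cstar(G,C)\grtensor\Kom(\ell^2 G)$ as a \emph{natural} isomorphism of crossed products that is, in addition, compatible with the simpltotic structure — i.e.\ with the natural transformations $\eta^{\Simpl^\fm}$ underlying \Cref{thm:functorsonsimpletoticcategory} — and with the coalgebra structure of $\grS$ entering the composition product and the tensor products of \Cref{thm:maximaltensorproduct}. Once this naturality is established, every remaining verification parallels the separable arguments of \cite{HigGue,GueHigTro} essentially line by line.
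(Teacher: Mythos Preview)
Your proposal is correct and follows exactly the route the paper intends: the paper gives no proof beyond the \qed and the citation, relying on the simpltotic descent functor $\Cstar(G,\blank)\colon G\mbox{-}\Simpl\to\C\mbox{-}\Simpl$ from the end of \Cref{sec:functors} together with the standard untwisting argument of \cite[Theorem~2.13]{HigGue}. Your outline is a faithful expansion of that reference, with the Fell absorption isomorphism and \Cref{lem:switchoffequivariance,lem:removecompactsfromfirstentry} used precisely as one would expect.
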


Comparing all of these algebraic structures with the corresponding ones defined in \cite{GueHigTro,HigGue}, the following becomes evident.
\begin{thm}
The canonical maps $\EE_G^\Asymp(A,B)\to\EE_G(A,B)$ defined in \Cref{cor:Etheoryexistsandistheoldoneintheseparablecountablecase}
constitute an additive functor which is compatible with exterior products, restriction functors, full crossed products and the canonical functors from the homotopy category $G\mbox{-}\mathrm{C^*\mbox{-}alg}$ of graded $G$-\textCstar-algebras and graded $*$-ho\-mo\-mor\-phisms to $\EE$-theory in the sense that the following diagrams of functors commute.
\[\xymatrix{
\EE_G^\Asymp\otimes\EE_H^\Asymp\ar[r]^{\otimes}\ar[d]&\EE_{G\times H}^\Asymp\ar[d]
&\EE_G^\Asymp\ar[r]\ar[d]&\EE_H^\Asymp\ar[d]
\\\EE_G\grtensor\EE_H\ar[r]^{\grtensor}&\EE_{G\times H}
&\EE_G\ar[r]&\EE_H
\\\EE_G^\Asymp\ar[r]^{\Cstar(G;\blank)}\ar[d]&\EE^\Asymp\ar[d]
&G\mbox{-}\mathrm{C^*\mbox{-}alg}\ar[r]\ar[dr]&\EE_G^\Asymp\ar[d]
\\\EE_G\ar[r]^{\Cstar(G;\blank)}&\EE
&&\EE_G
}\]
\qed
\end{thm}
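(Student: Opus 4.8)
The plan is to unwind the constructions and to check that the comparison map respects every elementary operation out of which the algebraic structures of both theories are assembled. Recall from the proof of \Cref{cor:Etheoryexistsandistheoldoneintheseparablecountablecase} that the map $\EE_G^\Asymp(A,B)\to\EE_G(A,B)$ is the composite of the functor $F$ of \Cref{thm:asymptoticsimpltoticfunctor} with the stabilisation bijection of \Cref{lem:removecompactsfromfirstentry} and the identification of \Cref{lem:directlimitoverHilbertexists}. The key observation is that the group structure, the composition product, the exterior product, the restriction functor and the descent functor are, in both $\EE^\Asymp$ and $\EE$, built from the same handful of primitives: composition of morphisms in the ambient (asymptotic resp.\ simpltotic) category, the tensor product functors $\blank\grtensor D$ and $D\grtensor\blank$, the images of genuine $*$-homomorphisms (the comultiplication $\Delta$, the counit $\eta$ and the various isometric identifications of Hilbert spaces) and finite direct sums. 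It therefore suffices to verify that $F$ intertwines each primitive with its simpltotic counterpart; compatibility with every derived operation then follows by writing out the defining composite and applying $F$ term by term.

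Functoriality and additivity are immediate from what is already available. \Cref{thm:asymptoticsimpltoticfunctor} shows that $F$ is a functor, and \eqref{eq:comparingasymptoticwithsimpltoticcomposition}, after passing to the limit, records its multiplicativity on the underlying categories. Because $F_0$ is the identity on homotopy classes of $*$-homomorphisms, both canonical functors out of $G\mbox{-}\mathrm{C^*\mbox{-}alg}$ agree after applying $F$, which disposes of the fourth diagram. For additivity one notes that the sum in either theory is formed by pairing two representatives and postcomposing with the canonical injection $\Kom(\ell^2G\grtensor\hat\Hilbert)\oplus\Kom(\ell^2G\grtensor\hat\Hilbert')\hookrightarrow\Kom(\ell^2G\grtensor(\hat\Hilbert\oplus\hat\Hilbert'))$; since $F$ is induced by the natural transformation $\tau^*$ of \Cref{ex:asymptoticsimpltoticrelation}, which respects finite direct sums and postcomposition by $*$-homomorphisms, it is a homomorphism of abelian groups.

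All of the remaining compatibilities reduce to the single statement that $F$ intertwines the tensor product functors, that is, that for each $D$ the functor $\blank\grtensor D$ on the asymptotic category corresponds under $F$ to the tensor-product functor of \Cref{cor:simpltotictensorproductfunctors} on the simpltotic category. Granting this, the exterior product of \Cref{thm:maximaltensorproduct} is respected, since it is assembled from $\Delta$, a tensor product and a Hilbert-space reshuffling; the restriction functor is respected because it is built from tensoring with $\Kom(\ell^2H)$, \Cref{lem:switchoffequivariance} and \Cref{lem:removecompactsfromfirstentry}, all of which $F$ commutes with; and the descent functor is respected because \Cref{thm:functorsonsimpletoticcategory} and its corollaries show $F$ to commute with the transformations $\eta^{\Simpl^\fm}$ underlying $\Cstar(G;\blank)$. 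In each case one simply replaces the asymptotic primitives in the defining composite by their simpltotic images and invokes the corresponding intertwining.

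The main obstacle is the tensor-functor compatibility invoked above, because the simpltotic tensor product is defined through the structural transformations $\eta^{\Simpl^\fm}$ while $F$ is built from $\tau^*$ and the $\sharp$-product of directed sets. To settle it I would track a representative $\varphi\colon A\to\Asymp^nB$ simultaneously through $F_n$ and through the formation of $\varphi\grtensor\id_D$, and verify that the two resulting morphisms into a sufficiently large $\Simpl^\fk(B\grtensor D)$ agree up to $\Simpl^\fk$-homotopy. This rests on the naturality of $\tau^*$ and $\iota^*$ in the coefficient algebra together with their compatibility with $\eta^{\Simpl^{\N_+}}$, and it is here that the bulk of the diagram chasing lives. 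Once it is in place, all four displayed squares commute and, since $F$ preserves the zero morphism and sums, it is the desired additive functor.
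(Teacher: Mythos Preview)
Your approach is correct and is precisely the verification the paper has in mind. Note that the paper does not supply a proof at all: the theorem is preceded by the sentence ``Comparing all of these algebraic structures with the corresponding ones defined in \cite{GueHigTro,HigGue}, the following becomes evident'' and is closed with a bare \qed. Your plan of reducing everything to the primitives (composition, tensoring with a fixed algebra, images of genuine $*$-homomorphisms, direct sums) and checking that $F$ intertwines each of them is exactly the content of ``evident'' here.

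One comment: you flag the tensor-functor compatibility as the ``main obstacle'' requiring the ``bulk of the diagram chasing'', but this is somewhat overstated. The functor $\tilde F_\Simpl$ of \Cref{thm:functorsonsimpletoticcategory} is built from the natural transformations $\eta^{\Simpl^\fm}$, and for $\tilde F=\blank\grtensor D$ these are simply the canonical maps $(\Simpl^\fm B)\grtensor D\to\Simpl^\fm(B\grtensor D)$ induced on representatives by the inclusion $\Cb(|\Delta^\fm|;B)\grtensor D\hookrightarrow\Cb(|\Delta^\fm|;B\grtensor D)$. The analogous maps $(\Asymp^n B)\grtensor D\to\Asymp^n(B\grtensor D)$ are used in the asymptotic theory, and $\tau^*$ visibly intertwines them because it is induced by a continuous map of parameter spaces. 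So the compatibility of $F$ with $\blank\grtensor D$ is immediate from naturality of $\tau^*$ in the coefficient algebra, together with the fact (already used in the proof of \Cref{thm:asymptoticsimpltoticfunctor}) that the iterated $\Theta^*$'s are natural transformations. No delicate $\Simpl^\fk$-homotopy needs to be constructed by hand.
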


Therefore, all calculations performed in the asymptotic picture of $\EE$-theory are still valid in the simpltotic picture. This is important, because it is a lot easier to work with the asymptotic version of the composition product than with the simpltotic one.

\subsection{Properties}
\label{sec:EtheoryProperties}
Most of the properties of $\EE$-theory given in \cite[Section 2]{HigGue} and \cite[Chapter 6,7]{GueHigTro} carry over to our new definition. We shall only list them, because their proofs are essentially identical to those in the asymptotic case.

The following stability property of $\EE$-theory is obvious from \Cref{lem:switchoffequivariance,lem:removecompactsfromfirstentry,defn:Etheory}.
As advertised at the beginning of the section, we do not require any separability assumption.
\begin{thm}\label{thm:Etheorystable}
For every graded $G$-Hilbert space $\hat\Hilbert$ there are canonical natural isomorphisms
\[\EE_G(A,B\grtensor\Kom(\hat\Hilbert))\cong\EE_G(A,B)\cong \EE_G(A\grtensor\Kom(\hat\Hilbert),B)\,.\]
\qed
\end{thm}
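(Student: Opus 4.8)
The plan is to read both isomorphisms off the defining direct limit in \Cref{defn:Etheory}: the right-hand (variable $B$) stability is essentially a cofinal reindexing of that limit, while the left-hand (variable $A$) stability is obtained from \Cref{lem:removecompactsfromfirstentry} after using \Cref{lem:switchoffequivariance} to remove the $G$-action from $\hat\Hilbert$. Throughout I may assume $\hat\Hilbert\neq 0$ (the statement is vacuous otherwise), and by \Cref{rem:GHilbertvsHilbert} I let the Hilbert spaces $\hat\Hilbert'$ in the limit carry the trivial $G$-action. All constructed bijections will be natural in $A$ and $B$, since every ingredient is.

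For $\EE_G(A,B\grtensor\Kom(\hat\Hilbert))\cong\EE_G(A,B)$ I would first use the canonical isomorphism $\Kom(\hat\Hilbert)\grtensor\Kom(\hat\Hilbert'\grtensor\ell^2 G)\cong\Kom(\hat\Hilbert\grtensor\hat\Hilbert'\grtensor\ell^2 G)$ to rewrite
\[\EE_G(A,B\grtensor\Kom(\hat\Hilbert))\cong\varinjlim_{\hat\Hilbert'}\SimplMor{\grS\grtensor A\grtensor\Kom(\ell^2 G)}{B\grtensor\Kom(\hat\Hilbert\grtensor\hat\Hilbert'\grtensor\ell^2 G)}\,.\]
It then remains to observe that $\hat\Hilbert'\mapsto\hat\Hilbert\grtensor\hat\Hilbert'$ is a cofinal reindexing of the directed system of \Cref{cor:directedsystemoverGHilbert}: given any $\hat\Hilbert''$ there is an equivariant isometry $\hat\Hilbert''\grtensor\ell^2 G\hookrightarrow\hat\Hilbert\grtensor\hat\Hilbert''\grtensor\ell^2 G$ by \Cref{lem:isometriesexist} (this is where $\hat\Hilbert\neq 0$ enters), and the transition maps agree because, by \Cref{lem:isometriesarehomotopic}, they are independent of the chosen isometry. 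Hence the two direct limits coincide.

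For $\EE_G(A\grtensor\Kom(\hat\Hilbert),B)\cong\EE_G(A,B)$ I would first treat the \emph{balanced} case, where the even and odd parts of $\hat\Hilbert$ have equal infinite dimension. Using $\Kom(\hat\Hilbert)\grtensor\Kom(\ell^2 G)\cong\Kom(\hat\Hilbert\grtensor\ell^2 G)$ together with \Cref{lem:switchoffequivariance}, I may assume the $G$-action on $\hat\Hilbert$ is trivial. Reindexing the target limit cofinally by $\hat\Hilbert'=\hat\Hilbert\grtensor\hat\Hilbert''$ exactly as above, each term becomes $\SimplMor{\tilde A\grtensor\Kom(\hat\Hilbert)}{\tilde B\grtensor\Kom(\hat\Hilbert)}$ with $\tilde A=\grS\grtensor A\grtensor\Kom(\ell^2 G)$ and $\tilde B=B\grtensor\Kom(\hat\Hilbert''\grtensor\ell^2 G)$, the two occurrences of $\Kom(\hat\Hilbert)$ being arranged by reordering the (symmetric) graded tensor factors. \Cref{lem:removecompactsfromfirstentry} — whose proof uses only $\hat\Hilbert\grtensor\hat\Hilbert\cong\hat\Hilbert$ and the homotopy uniqueness of isometries, both available for balanced infinite $\hat\Hilbert$ — then gives a natural bijection onto $\SimplMor{\tilde A}{\tilde B\grtensor\Kom(\hat\Hilbert)}=\SimplMor{\grS\grtensor A\grtensor\Kom(\ell^2 G)}{B\grtensor\Kom(\hat\Hilbert\grtensor\hat\Hilbert''\grtensor\ell^2 G)}$, and reindexing back recovers $\EE_G(A,B)$.

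Finally a general $\hat\Hilbert\neq 0$ reduces to the balanced case by absorption: fixing a graded Hilbert space $\hat\Hilbert_\infty$ with $\aleph_0\leq\dim\hat\Hilbert_\infty^+=\dim\hat\Hilbert_\infty^-$ sufficiently large, the space $\hat\Hilbert\grtensor\hat\Hilbert_\infty$ again has equal infinite even and odd parts, so applying the balanced case twice yields
\[\EE_G(A\grtensor\Kom(\hat\Hilbert),B)\cong\EE_G(A\grtensor\Kom(\hat\Hilbert\grtensor\hat\Hilbert_\infty),B)\cong\EE_G(A,B)\,,\]
where the first isomorphism is balanced left-stability for $\hat\Hilbert_\infty$ applied to $A\grtensor\Kom(\hat\Hilbert)$ (together with $\Kom(\hat\Hilbert)\grtensor\Kom(\hat\Hilbert_\infty)\cong\Kom(\hat\Hilbert\grtensor\hat\Hilbert_\infty)$) and the second is balanced left-stability for $\hat\Hilbert\grtensor\hat\Hilbert_\infty$. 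I expect the left-hand case to be the main obstacle: one must arrange the compacts to be cancelled so that they appear on both sides of the simpltotic morphism set, so that \Cref{lem:removecompactsfromfirstentry} applies, and must reduce an arbitrary $\hat\Hilbert$ — possibly equivariant, with unequal or finite even and odd parts — to the balanced infinite-dimensional form that lemma demands. The right-hand case, by contrast, is a pure reindexing.
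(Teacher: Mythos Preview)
Your proposal is correct and follows precisely the route the paper indicates: the paper gives no argument beyond declaring the result ``obvious from \Cref{lem:switchoffequivariance,lem:removecompactsfromfirstentry,defn:Etheory}'', and you have unpacked exactly these three ingredients --- cofinal reindexing of the defining direct limit for the $B$-side, and \Cref{lem:switchoffequivariance} followed by \Cref{lem:removecompactsfromfirstentry} for the $A$-side, with the reduction to the balanced infinite case handled by absorption. Your write-up is considerably more detailed than the paper's one-line justification, but the underlying strategy is identical.
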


Next we note that Bott periodicity passes over directly from the asymptotic case as follows. To see this, we recall the set-up from \cite[Section 1.11]{HigGue}: Let $V$ be a finite dimensional euclidean vector space and let $\Cl(V)$ denote the complex Clifford algebra over $V$. We choose the sign convention in which $v^2=\|v\|\cdot 1$ for all $v\in V$ and equip it with the grading in which a product $v_1\cdots v_n$ of vectors has degree $n$ modulo 2. Let $\cC(V)\coloneqq \Cz(V;\Cl(V))$ equiped with the grading coming from the grading of $\Cl(V)$ alone. The function $C\colon V\to\Cl(V),v\mapsto v$ is a degree one, essentially self-adjoint, unbounded operator on $\cC(V)$ with domain the compactly supported functions, the so-called \emph{Clifford operator}.

\begin{thm}[{cf.\ \cite[Theorem 2.6]{HigGue}}]
\label{thm:finitedimBottperiodicity}
For all finite dimensional Euclidean vector spaces $V$, the $*$-ho\-mo\-mor\-phism
\[\beta\colon\grS\to\cC(V)\,,\quad f\mapsto f(C)\]
represents an isomorphism in $\EE^\Asymp(\C,\cC(V))$ and hence also in $\EE(\C,\cC(V))$. \qed
\end{thm}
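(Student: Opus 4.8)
The statement is really two claims, of which only the second uses anything specific to this paper. The first claim, that $\beta$ represents an isomorphism in $\EE^\Asymp(\C,\cC(V))$, is precisely the classical finite dimensional Bott periodicity, and my plan is to take it directly from \cite[Theorem 2.6]{HigGue} (ultimately \cite{HigsonKasparovTrout}). The second claim, that the isomorphism survives in $\EE(\C,\cC(V))$, I would obtain by a purely formal transport along the functor from the asymptotic to the simpltotic picture of $\EE$-theory. The upshot is that essentially no new work is needed here: the hard analysis lives in the cited asymptotic statement, and the passage to our setting is soft.

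For the asymptotic claim, if one wanted a self-contained argument rather than a citation, the plan would be the standard Dirac/dual-Dirac scheme: construct a Dirac morphism $\alpha\in\EE^\Asymp(\cC(V),\C)$ from the Dirac operator on $V$ acting on $L^2(V)\grtensor\Cl(V)$, and verify $\alpha\circ\beta=\id_\C$ and $\beta\circ\alpha=\id_{\cC(V)}$. One first reduces to $V=\R$ using multiplicativity of $\beta$ under $\cC(V\oplus W)\cong\cC(V)\grtensor\cC(W)$ together with the comultiplication $\Delta$ on $\grS$; then $\alpha\circ\beta$ is computed via the harmonic oscillator (morally $C^2+D^2$ up to the natural grading term), whose ground state is the one dimensional span of the Gaussian, so that the composite collapses onto a rank one projection realising $\id_\C$; and $\beta\circ\alpha$ is treated by a rotation homotopy of the Bott--Dirac operator. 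This harmonic oscillator computation together with the rotation argument is the only substantial analytic input and would be the genuine obstacle in a from-scratch proof, but in the present context it is entirely subsumed by the reference.

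For the simpltotic claim, which is the only part the new framework contributes, I would argue as follows. By \Cref{cor:Etheoryexistsandistheoldoneintheseparablecountablecase} there is a canonical map $\EE^\Asymp(\C,\cC(V))\to\EE(\C,\cC(V))$, and we have established that these maps underlie a functor from the asymptotic $\EE$-theory category to the simpltotic one which is compatible with composition products and with the functors from the homotopy category of \textCstar-algebras. Since $\beta$ is an honest $*$-ho\-mo\-mor\-phism, this compatibility forces the map to send the class of $\beta$ in $\EE^\Asymp(\C,\cC(V))$ to the class of $\beta$ in $\EE(\C,\cC(V))$. A functor sends isomorphisms to isomorphisms, so invertibility of the former class yields invertibility of the latter. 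There is no obstacle here, and indeed no separability issue arises: because $\C$ is separable and the trivial group is countable, the comparison map is even bijective, though functoriality alone already suffices.
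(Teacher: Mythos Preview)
Your proposal is correct and matches the paper's approach exactly: the paper gives no proof at all beyond the citation to \cite[Theorem 2.6]{HigGue} and the implicit ``hence'' via the functor $\EE^\Asymp\to\EE$ (the theorem carries a bare \qed). Your explanation of why the second claim follows---functoriality of the comparison map of \Cref{cor:Etheoryexistsandistheoldoneintheseparablecountablecase} sending the $*$-homomorphism $\beta$ to itself and preserving invertibility---is precisely the intended reasoning, and your optional sketch of the Dirac/dual-Dirac argument is more than the paper itself offers.
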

The inverse of this isomorphism is the fundamental class of the Dirac operator on $V$, which is represented by an asymptotic morphism 
\[\alpha\colon\grS\grtensor\cC(V)\to \Kom(L^2(V;\Cl(V)))\,,\]
see \cite[Proposition 1.5]{HigGue}.
We will review this element in more detail later on in \Cref{sec:infinitedimBott}.

By taking exterior tensor products with the classes represented by $\alpha$ and $\beta$ we immediately obtain the following corollary.
\begin{cor}
There are canonical natural isomorphisms
\[\EE_G(A,B\grtensor\cC(V))\cong\EE_G(A,B)\cong \EE_G(A\grtensor\cC(V),B)\]
for all finite dimensional Euclidean vector spaces $V$.
\qed
\end{cor}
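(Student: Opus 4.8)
The plan is to deduce the corollary formally from finite-dimensional Bott periodicity (\Cref{thm:finitedimBottperiodicity}) together with the functorial exterior tensor product (\Cref{thm:maximaltensorproduct}). By \Cref{thm:finitedimBottperiodicity} and the ensuing discussion, the classes $\beta\in\EE(\C,\cC(V))$ and $\alpha\in\EE(\cC(V),\C)$ are mutually inverse in the category $\EE=\EE_1$, i.e.\ $\alpha\circ\beta=\id_{\C}$ and $\beta\circ\alpha=\id_{\cC(V)}$. First I would promote this pair to an invertible morphism in $\EE_G$ by taking the exterior product with an identity morphism.

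Concretely, applying the functor of \Cref{thm:maximaltensorproduct} with $H$ the trivial group (so that $G\times H=G$ canonically) to the pair $(\id_B,\beta)$, and using the identification $B\grtensor\C\cong B$, yields a class $\id_B\grtensor\beta\in\EE_G(B,B\grtensor\cC(V))$; likewise one obtains $\id_B\grtensor\alpha\in\EE_G(B\grtensor\cC(V),B)$. Since the exterior tensor product is a functor, it preserves composition and identities, whence
\[(\id_B\grtensor\alpha)\circ(\id_B\grtensor\beta)=\id_B\grtensor(\alpha\circ\beta)=\id_B\grtensor\id_{\C}=\id_B\]
and symmetrically $(\id_B\grtensor\beta)\circ(\id_B\grtensor\alpha)=\id_{B\grtensor\cC(V)}$. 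Thus $\id_B\grtensor\beta$ is an isomorphism $B\xrightarrow{\cong}B\grtensor\cC(V)$ in $\EE_G$ with inverse $\id_B\grtensor\alpha$.

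The left-hand isomorphism of the corollary is then obtained by postcomposition with $\id_B\grtensor\beta$, an additive bijection $\EE_G(A,B)\to\EE_G(A,B\grtensor\cC(V))$ whose inverse is postcomposition with $\id_B\grtensor\alpha$. Running the same construction with $A$ in place of $B$ produces an $\EE_G$-isomorphism $\id_A\grtensor\beta\colon A\xrightarrow{\cong}A\grtensor\cC(V)$, and precomposition with it gives the right-hand bijection $\EE_G(A\grtensor\cC(V),B)\to\EE_G(A,B)$. Naturality in both variables is automatic, because $\id_{(\blank)}\grtensor\beta$ is by construction a natural isomorphism from the identity functor to $(\blank)\grtensor\cC(V)$ on $\EE_G$, and every map above is composition with a fixed isomorphism of this category (which is additive since composition in $\EE_G$ is bilinear).

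The argument is entirely formal once \Cref{thm:finitedimBottperiodicity} is granted; the only point requiring care is that the exterior tensor product genuinely respects composition and carries $(\id_B,\id_{\C})$ to $\id_B$ under $B\grtensor\C\cong B$, which is precisely the functoriality asserted in \Cref{thm:maximaltensorproduct}. The substantive mathematical content — the invertibility of $\beta$ with Dirac-operator inverse $\alpha$ — is supplied by \Cref{thm:finitedimBottperiodicity} and \cite[Proposition 1.5]{HigGue}, and is assumed here.
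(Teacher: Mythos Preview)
Your proof is correct and follows exactly the approach indicated in the paper: the corollary is obtained by taking exterior tensor products with the mutually inverse classes $\alpha$ and $\beta$ from \Cref{thm:finitedimBottperiodicity}, using the functoriality of the exterior product from \Cref{thm:maximaltensorproduct}. You have simply spelled out in detail what the paper compresses into a single sentence before the statement.
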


Moreover, the isomorphism 
\[\Cl(\R^{2k})\cong\Cz(\R^{2k})\otimes \Kom(\C^{2^{k-1}}\oplus\C^{2^{k-1}})\cong\Sigma^k\Kom(\C^{2^{k-1}}\oplus\C^{2^{k-1}})\]
and \Cref{thm:Etheorystable} yield the better-known form of Bott periodicity.
\begin{cor}
There are canonical natural isomorphisms
\[\EE_G(A,\Sigma^{2k}B)\cong\EE_G(A,B)\cong \EE_G(\Sigma^{2k}A,B)\]
for all $k\in\N$.
\qed
\end{cor}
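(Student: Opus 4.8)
The plan is to deduce this from the preceding corollary, specialised to the even-dimensional Euclidean space $V=\R^{2k}$, together with the stability statement of \Cref{thm:Etheorystable}. The only genuinely new ingredient is an explicit \emph{graded} identification of $\cC(\R^{2k})$ with an iterated suspension of a finite-dimensional graded matrix algebra. First I would record this identification. As a complex $\Z_2$-graded algebra, the Clifford algebra $\Cl(\R^{2k})$ is isomorphic to $\Kom(\hat\Hilbert_0)$, where $\hat\Hilbert_0\coloneqq\C^{2^{k-1}}\oplus\C^{2^{k-1}}$ is the finite-dimensional graded Hilbert space whose even and odd parts are the two summands; this is the standard fact that the complex Clifford algebra of an even-dimensional space is a graded full matrix algebra, and the point to check is that the grading matches rather than merely the ungraded isomorphism type. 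Since $\cC(\R^{2k})=\Cz(\R^{2k};\Cl(\R^{2k}))\cong\Cz(\R^{2k})\grtensor\Cl(\R^{2k})$ and the homeomorphism $\R\cong(0,1)$ gives $\Cz(\R^{2k})\cong\Cz((0,1))^{\grtensor 2k}=\Sigma^{2k}\C$, combining these yields a graded isomorphism
\[\cC(\R^{2k})\cong\Sigma^{2k}\C\grtensor\Kom(\hat\Hilbert_0)\,.\]

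Next I would feed this into the previous corollary. Taking $V=\R^{2k}$ there provides natural isomorphisms $\EE_G(A,B\grtensor\cC(\R^{2k}))\cong\EE_G(A,B)$ and $\EE_G(A\grtensor\cC(\R^{2k}),B)\cong\EE_G(A,B)$. Substituting the identification above and using associativity and symmetry of $\grtensor$, the outer entries become $\EE_G(A,\Sigma^{2k}B\grtensor\Kom(\hat\Hilbert_0))$ and $\EE_G(\Sigma^{2k}A\grtensor\Kom(\hat\Hilbert_0),B)$, respectively. Finally I would apply \Cref{thm:Etheorystable} to the graded $G$-Hilbert space $\hat\Hilbert_0$, equipped with the trivial $G$-action, to strip off the factor $\Kom(\hat\Hilbert_0)$ in each case, obtaining $\EE_G(A,\Sigma^{2k}B)$ and $\EE_G(\Sigma^{2k}A,B)$. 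Chaining the isomorphisms then gives the two claimed identifications.

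I do not expect a serious obstacle: the argument is a bookkeeping assembly of facts already established. The only points requiring care are (i) that the Clifford isomorphism is an isomorphism of graded algebras, with the grading on $\hat\Hilbert_0$ exactly as specified, and (ii) that \Cref{thm:Etheorystable} may be applied even though $\hat\Hilbert_0$ is finite-dimensional. The latter is immediate, since that theorem is stated for \emph{arbitrary} graded $G$-Hilbert spaces. Naturality in $A$ and $B$ is inherited at every step, because each isomorphism used is induced functorially by a $*$-homomorphism (tensoring with the fixed algebra $\cC(\R^{2k})$, and the stability maps), so no verification beyond tracking functoriality is needed.
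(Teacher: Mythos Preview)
Your proposal is correct and follows exactly the route indicated in the paper: identify $\cC(\R^{2k})\cong\Sigma^{2k}\C\grtensor\Kom(\C^{2^{k-1}}\oplus\C^{2^{k-1}})$ via the graded Clifford isomorphism, plug this into the preceding corollary, and then invoke \Cref{thm:Etheorystable} to remove the matrix factor. In fact your write-up is more careful than the paper's one-line sketch, which contains two typos (it writes $\Cl(\R^{2k})$ where $\cC(\R^{2k})$ is meant, and $\Sigma^k$ where $\Sigma^{2k}$ is meant); your version correctly spells these out.
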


Just as in the asymptotic case, Bott periodicity also implies the following result about suspension.
\begin{cor}[{cf.\ \cite[Theorem 2.7]{HigGue}}]
The \emph{suspension map}
\[\EE_G(A,B)\xrightarrow{\blank\grtensor\id_{\Cz(0,1)}}\EE_G(\Sigma A,\Sigma B)\]
is an isomorphism.\qed
\end{cor}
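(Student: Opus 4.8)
The plan is to deduce the statement from Bott periodicity by showing that the \emph{double} suspension map is an isomorphism and then invoking a purely formal two-out-of-three argument. Write $S_{A,B}\colon\EE_G(A,B)\to\EE_G(\Sigma A,\Sigma B)$ for the suspension map. By the functorial maximal tensor product of \Cref{thm:maximaltensorproduct} it is exterior multiplication by the identity class $\id_{\Sigma\C}\in\EE(\Sigma\C,\Sigma\C)$, that is $x\mapsto x\grtensor\id_{\Sigma\C}$ under the identification $\Sigma A\cong A\grtensor\Sigma\C$. Iterating, the composite $S_{\Sigma A,\Sigma B}\circ S_{A,B}$ is exterior multiplication by $\id_{\Sigma^2\C}$.

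First I would show that this double suspension map is an isomorphism for \emph{all} $A,B$. Here $\Sigma^2\C\cong\Cz(\R^2)$, and the point is that $\Sigma^2\C$ is invertible in $\EE$. Indeed, \Cref{thm:finitedimBottperiodicity} applied to $V=\R^2$ provides mutually inverse classes in $\EE(\C,\cC(\R^2))$ and $\EE(\cC(\R^2),\C)$; combining these with the stability isomorphism \Cref{thm:Etheorystable} and the identification $\cC(\R^2)\cong\Sigma^2\C\grtensor\Kom(\C\oplus\C)$ yields mutually inverse classes $\beta_2\in\EE(\C,\Sigma^2\C)$ and $\alpha_2\in\EE(\Sigma^2\C,\C)$, i.e.\ $\alpha_2\circ\beta_2=\id_\C$ and $\beta_2\circ\alpha_2=\id_{\Sigma^2\C}$. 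Exterior multiplication by $\alpha_2$ then furnishes a two-sided inverse to $x\mapsto x\grtensor\id_{\Sigma^2\C}$: using the functoriality of the exterior product and the commutativity of left and right tensoring from \Cref{lem:leftrightmaxtensorcommute}, one rearranges the relevant composites so that they reduce to exterior multiplication by $\alpha_2\circ\beta_2=\id_\C$ and by $\beta_2\circ\alpha_2=\id_{\Sigma^2\C}$, respectively.

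With double suspension known to be an isomorphism, I would finish formally. Set $S_n\coloneqq S_{\Sigma^n A,\Sigma^n B}$, so that both $S_1\circ S_0$ and $S_2\circ S_1$ are double suspension maps and hence isomorphisms. From $S_1\circ S_0$ being an isomorphism we get that $S_0$ is injective and $S_1$ surjective; from $S_2\circ S_1$ being an isomorphism we get that $S_1$ is injective. Hence $S_1$ is bijective, and then $S_0=S_1^{-1}\circ(S_1\circ S_0)$ is a composite of isomorphisms, which proves that $S_{A,B}=S_0$ is an isomorphism.

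The genuine mathematical content lies entirely in Bott periodicity, which has already been established, so I expect no real obstacle. The one point to handle with care is the bookkeeping identifying the double suspension with exterior multiplication by the invertible class $\id_{\Sigma^2\C}$, where the compatibility of the exterior product with composition and the commuting square of \Cref{lem:leftrightmaxtensorcommute} are exactly the tools needed to verify that multiplication by $\alpha_2$ inverts it.
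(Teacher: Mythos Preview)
Your proposal is correct and is essentially the standard argument the paper has in mind: the paper gives no proof here, merely writing ``Just as in the asymptotic case, Bott periodicity also implies the following result'' and deferring to \cite[Theorem~2.7]{HigGue}, so your argument is precisely the kind of detail being suppressed. The two-out-of-three reduction to the double suspension and the identification of the latter with tensoring by the invertible object $\Sigma^2\C$ is exactly how it goes.

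One small point of phrasing: saying that ``exterior multiplication by $\alpha_2$ furnishes a two-sided inverse'' to $x\mapsto x\grtensor\id_{\Sigma^2\C}$ is slightly off as written, since $y\mapsto y\grtensor\alpha_2$ lands in $\EE_G(\Sigma^4A,\Sigma^2B)$ rather than $\EE_G(A,B)$. The clean way to say what you mean is that the invertible element $\beta_2\in\EE(\C,\Sigma^2\C)$ yields isomorphisms $\id_A\grtensor\beta_2\colon A\to\Sigma^2A$ in $\EE_G$, and then functoriality of the exterior product gives $x\grtensor\id_{\Sigma^2\C}=(\id_B\grtensor\beta_2)\circ x\circ(\id_A\grtensor\beta_2)^{-1}$, which is manifestly a bijection on morphism sets. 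This is exactly the bookkeeping you flag at the end, and \Cref{lem:leftrightmaxtensorcommute} together with bifunctoriality of $\grtensor$ are indeed what make it work.
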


Using the lemmas and propositions from \Cref{sec:extensions} instead of the corresponding ones from \cite[Chapter 5]{GueHigTro}, we now also get the long exact sequences in full generality.

\begin{thm}[{cf.\ \cite[Theorems 6.15, 6.18 \& 6.20]{GueHigTro}}]
Let $D$ be a graded $G$-\textCstar-algebra and let
\[0\to J\to B\to A\to 0\]
be a short exact sequence of $G$-\textCstar-algebras.
Denote by $\llbracket\sigma\rrbracket\in\EE_G(\Sigma A,J)$ the $\EE$-theory class represented by the associated simpltotic morphism $\sigma\colon\Sigma A\to\Simpl^\fm J$ from \Cref{prop:extensionsimpletoticmorphism}.
Then there are two long exact sequences
\begin{gather*}
\dots\to\EE_G(D,\Sigma A)\xrightarrow{\llbracket\sigma\rrbracket\circ\blank}\EE_G(D,J)\to\EE_G(D,B)\to \EE_G(D,A)
\\\EE_G(A,D)\to\EE_G(B,D)\to\EE_G(J,D)\xrightarrow{-\circ\llbracket\sigma\rrbracket}\EE_G(\Sigma A,D)\to\dots
\end{gather*}
which become six-term exact sequences after applying Bott periodicity.
\qed
\end{thm}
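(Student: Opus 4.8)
The plan is to transport the derivation of the long exact sequences in \cite[Chapter 6]{GueHigTro} essentially verbatim, replacing each use of the asymptotic extension results of \cite[Chapter 5]{GueHigTro} by its simpltotic counterpart from \Cref{sec:extensions} and each use of asymptotic Bott periodicity by \Cref{thm:finitedimBottperiodicity}. The one genuinely new ingredient, namely that the extension class $\llbracket\sigma\rrbracket$ is a bona fide simpltotic morphism $\Sigma A\to\Simpl^\fm J$ even when the approximate unit is indexed by an uncountable directed set $\fm$, has already been supplied by \Cref{prop:extensionsimpletoticmorphism}; everything that follows is formal.

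For the covariant sequence I would fix $D$ and first stabilise. Since $\blank\grtensor\Kom(\hat\Hilbert\grtensor\ell^2 G)$ is exact by \Cref{lem:propertiesoftensorproductfunctors}, the sequence stays short exact, and the simpltotic analogue of \cite[Proposition 5.16]{GueHigTro} (proved in \Cref{sec:extensions}), applied with first slot $\grS\grtensor D\grtensor\Kom(\ell^2 G)$, gives exactness of the three-term mapping-cone sequence at $B$. Taking the filtered colimit over $\hat\Hilbert$ — which preserves exactness — yields exactness of $\EE_G(D,\Cone(\pi))\to\EE_G(D,B)\to\EE_G(D,A)$. I would then identify $\Cone(\pi)$ with $J$ in $\EE_G$: the inclusion $\gamma\colon J\hookrightarrow\Cone(\pi)$ fits into
\[0\to J\xrightarrow{\gamma}\Cone(\pi)\to\Cz([0,1);A)\to 0,\]
and the simpltotic analogue of \cite[Proposition 5.14]{GueHigTro} shows $\Sigma\gamma$ is invertible, whence $\gamma$ is an $\EE_G$-equivalence by the suspension isomorphism (the corollary of \Cref{thm:finitedimBottperiodicity}). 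Prolonging to the left by the Puppe iteration along the cofibre triple
\[\Sigma A\xrightarrow{\beta}\Cone(\pi)\xrightarrow{\alpha}B\xrightarrow{\pi}A,\]
together with the standard homotopy-theoretic identification $\Cone(\alpha)\simeq\Sigma A$, produces the full sequence; the connecting map $\EE_G(D,\Sigma A)\to\EE_G(D,J)$ is identified with $\llbracket\sigma\rrbracket\circ\blank$ via the simpltotic analogue of \cite[Lemma 5.15]{GueHigTro}, its naturality and tensor compatibility being exactly \Cref{prop:extensionsimpltoticmorphismnatural,prop:extensionsimpltoticmorphismtensorproduct}.

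For the contravariant sequence I would run the dual argument through the simpltotic analogue of \cite[Proposition 5.18]{GueHigTro}. Its hypothesis — that the suspension maps are bijective — is precisely the suspension isomorphism obtained from \Cref{thm:finitedimBottperiodicity} after stabilising and passing to the colimit over $\hat\Hilbert$, so the bottom row becomes exact and the connecting map is $\blank\circ\llbracket\sigma\rrbracket$ up to sign. Finally, Bott periodicity in the form $\Sigma^2\simeq\id$ in $\EE_G$ (again \Cref{thm:finitedimBottperiodicity}) folds each semi-infinite sequence into the asserted six-term exact sequence.

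The main obstacle is conceptual and lies entirely in half-exactness, that is, in the mapping-cone exact sequence: this is the step that breaks down for the naive non-separable asymptotic theory and is rescued here only because $\sigma$ lives in the simpltotic category. Since that proposition is already in hand, the remaining work is bookkeeping — checking that the two filtered colimits (over $\hat\Hilbert$ and over $\fm$) commute with the exactness statements, that stability (\Cref{thm:Etheorystable}) lets one strip off the compact factors, and that the connecting maps assemble coherently — all of which is routine and parallel to \cite{GueHigTro,HigGue}.
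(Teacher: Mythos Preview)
Your proposal is correct and follows precisely the route the paper intends: the theorem is stated with a \qed because, as the paper says just before it, one simply reruns the arguments of \cite[Chapter~6]{GueHigTro} with the simpltotic analogues from \Cref{sec:extensions} in place of the asymptotic ones from \cite[Chapter~5]{GueHigTro}, together with Bott periodicity. Your sketch spells out exactly this substitution, including the correct identification of the connecting maps via the analogues of \cite[Proposition~5.14, Lemma~5.15, Propositions~5.16 and~5.18]{GueHigTro}.
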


Another property which can be proven exactly as in the asymptotic case is additivity, but here it holds even for uncountable directs sums.

\begin{prop}[{cf.\ \cite[Proposition 7.1]{GueHigTro}}]
Let $I$ be an index set and let $A_i$ for $i\in I$ and $B$ be $G$-\textCstar-algebras. Then the group homomorphism
\[\EE_G\left(\bigoplus_{i\in I}A_i,B\right)\to\prod_{i\in I}\EE_G(A_i,B)\]
induced by the projections is an isomorphism.
\end{prop}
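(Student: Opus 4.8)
The plan is to reduce the statement to a concrete analysis of $*$-ho\-mo\-mor\-phisms out of a $c_0$-direct sum and then to control all the relevant data over a single directed set and a single Hilbert space. First I would use that the maximal graded tensor product commutes with $c_0$-direct sums (immediate from its universal property), so that with the abbreviation $C_i\coloneqq\grS\grtensor A_i\grtensor\Kom(\ell^2G)$ there is a canonical isomorphism $\grS\grtensor(\bigoplus_{i\in I}A_i)\grtensor\Kom(\ell^2G)\cong\bigoplus_{i\in I}C_i$. Writing $D_{\hat\Hilbert}\coloneqq B\grtensor\Kom(\hat\Hilbert\grtensor\ell^2G)$, the map under consideration is the one whose $i$-th component is precomposition with the inclusion $\iota_i\colon C_i\hookrightarrow\bigoplus_jC_j$, and the whole claim becomes a statement about the sets $\SimplMor[\fm]{\bigoplus_iC_i}{D_{\hat\Hilbert}}$ and their two direct limits (over $\fm$ and over $\hat\Hilbert$) defining $\EE_G$.

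The structural heart of the argument, which I would establish first, is the elementary observation that for any target $Q$ (in our case $Q=\Simpl^\fm D$ or $Q=\Simpl^\fm(\Ct([0,1];D))$) giving a $*$-ho\-mo\-mor\-phism $\varphi\colon\bigoplus_iC_i\to Q$ is the same as giving a family $(\varphi_i\colon C_i\to Q)_i$ with \emph{pairwise orthogonal ranges}, the correspondence being $\varphi_i=\varphi\circ\iota_i$ and $\varphi((c_i)_i)=\sum_i\varphi_i(c_i)$; the sum converges and is multiplicative precisely because orthogonality forces $\|\sum_i\varphi_i(c_i)\|=\sup_i\|\varphi_i(c_i)\|\le\sup_i\|c_i\|$ while the $c_i$ tend to zero. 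The key point is that the orthogonality constraint is harmless: given a family $\varphi_i$ with values in $\Simpl^\fm(B\grtensor\Kom(\hat\Hilbert_i\grtensor\ell^2G))$, one replaces $\hat\Hilbert$ by the orthogonal direct sum $\bigoplus_i\hat\Hilbert_i$ and composes each $\varphi_i$ with the induced inclusion; since $\Kom(\hat\Hilbert_i)\cdot\Kom(\hat\Hilbert_j)=0$ for $i\ne j$ the ranges become orthogonal, while by \Cref{cor:directedsystemoverGHilbert} the class of each $\varphi_i$ is unchanged in the Hilbert-space direct limit. This is the very orthogonalisation device underlying the additive group structure, so compatibility of the assembly with addition will be routine.

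For surjectivity I would, given $(x_i)_i\in\prod_i\EE_G(A_i,B)$, first put all the $x_i$ over one common directed set. Choosing dense invariant subsets $C_i'\subset C_i$ and letting $A'$ be the dense invariant subset of $\bigoplus_iC_i$ of finitely supported elements with entries in the $C_i'$, \Cref{cor:existenceofsimpletoticmorphismset} identifies $\SimplMor{\bigoplus_iC_i}{D_{\hat\Hilbert}}$ with $\SimplMor[\fm(A')]{\bigoplus_iC_i}{D_{\hat\Hilbert}}$ for each $\hat\Hilbert$; moreover, since $(\Q+i\Q)\langle C_i'\rangle\subset(\Q+i\Q)\langle A'\rangle$, the assignment $(F,k)\mapsto(F\cap(\Q+i\Q)\langle C_i'\rangle,k)$ is an order-preserving cofinal map $\fm(A')\to\fm(C_i')$, so by \Cref{lem:independenceofcofinalmap,lem:directedsystemstationary} each $x_i$ has a representative $\varphi_i\colon C_i\to\Simpl^{\fm(A')}(B\grtensor\Kom(\hat\Hilbert_i\grtensor\ell^2G))$ over this same $\fm(A')$. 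Orthogonalising over $\hat\Hilbert\coloneqq\bigoplus_i\hat\Hilbert_i$ and assembling yields $\varphi\colon\bigoplus_iC_i\to\Simpl^{\fm(A')}D_{\hat\Hilbert}$ whose class restricts to $x_i$ in each coordinate, giving an element of $\EE_G(\bigoplus_iA_i,B)$ mapping to $(x_i)_i$.

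For injectivity I would run the identical coordination argument with $\fm$-homotopies in place of $*$-ho\-mo\-mor\-phisms: a class restricting to $0$ in every coordinate is represented by $\varphi$ whose components $\varphi\circ\iota_i$ are $\fm$-null-homotopic via homotopies $\Phi_i\colon C_i\to\Simpl^{\fm}(\Ct([0,1];\blank))$, and assembling these, again after orthogonalisation and passage to a common directed set, produces a single $\fm$-homotopy killing $\varphi$. The main obstacle is exactly this simultaneous control, over a possibly uncountable index set $I$, of both the reparametrisation datum $\fm$ and the Hilbert space $\hat\Hilbert$: neither can be handled factor-by-factor. The two stabilisation results \Cref{cor:existenceofsimpletoticmorphismset} (stationarity in $\fm$) and \Cref{lem:directlimitoverHilbertexists} (stationarity in $\hat\Hilbert$), together with the orthogonalisation afforded by \Cref{lem:isometriesarehomotopic,cor:directedsystemoverGHilbert}, provide precisely the room needed to carry it out.
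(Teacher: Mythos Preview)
Your argument is correct and follows essentially the same route as the paper's own proof: both orthogonalise over the Hilbert space $\bigoplus_{i\in I}\hat\Hilbert_i$ and both pass to a single directed set controlling all components at once. The only noteworthy difference is in how that common directed set is produced: the paper simply replaces the $\fm_i$ by the product $\fm\coloneqq\prod_{i\in I}\fm_i$, whereas you use the stationarity result \Cref{cor:existenceofsimpletoticmorphismset} together with the cofinal maps $\fm(A')\to\fm(C_i')$ to work at the fixed stage $\fm(A')$. Your choice has the mild advantage of making well-definedness of the assembly automatic (everything lives at a canonical stage), while the paper's product construction is shorter to write down; either way one then argues exactly as in \cite[Proposition~7.1]{GueHigTro}. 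The paper also packages the argument as the construction of an explicit two-sided inverse rather than as separate surjectivity and injectivity verifications, but this is purely cosmetic.

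One small point you gloss over (and which the paper likewise leaves to the reference \cite{GueHigTro}): in the injectivity step, the assembled homotopy $\Phi$ starts not at $\varphi$ itself but at the ``spread-out'' map $\psi$ whose $i$-th component is $\varphi_i$ pushed into the $i$-th block of $\bigoplus_i\hat\Hilbert_i'$. One still needs a homotopy between $\psi$ and the image of $\varphi$ under a single isometric inclusion. This follows from a rotation argument using that the $\varphi_i$ already have pairwise orthogonal ranges, so it is routine, but strictly speaking it is the one place where your phrase ``produces a single $\fm$-homotopy killing $\varphi$'' hides a short extra step.
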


\begin{proof}
The inverse can be defined as follows. Given an element 
\[\left(\llbracket \varphi_i\colon \grS\grtensor A_i\grtensor\Kom(\ell^2G)\to \Simpl^{\fm_i}(B\grtensor\Kom(\ell^2G\grtensor\hat\Hilbert_i))\rrbracket\right)_{i\in I}\in \prod_{i\in I}\EE_G(A_i,B)\,,\]
we may assume that all $\fm_i$ are equal. Otherwise we just replace them with $\fm\coloneqq \prod_{i\in I}\fm_i$.
Summing up the $\varphi_i$ yields a simpltotic morphism
\begin{align*}
\grS\grtensor \bigoplus_{i\in I}A_i\grtensor\ell^2G&\cong\bigoplus_{i\in I}(\grS\grtensor A_i\grtensor\ell^2G)
\\&\to \bigoplus_{i\in I}\Simpl^{\fm}(B\grtensor\Kom(\ell^2G\grtensor\hat\Hilbert_i))
\\&\hookrightarrow \Simpl^{\fm}(B\grtensor\Kom(\ell^2G\grtensor\hat\bigoplus_{i\in I}\Hilbert_i))
\end{align*}
and one can show exactly as in the proof of \cite[Proposition 7.1]{GueHigTro} that this construction yields the inverse 
\[\prod_{i\in I}\EE_G(A_i,B)\to\EE_G\left(\bigoplus_{i\in I}A_i,B\right)\]
we are looking for.
\end{proof}

\section{Infinite dimensional Bott periodicity in the light of simpltotic morphisms}
\label{sec:infinitedimBott}
Higson, Kasparov and Trout developed in \cite{HigsonKasparovTrout} a version of Bott periodicity for infinite dimensional Euclidean spaces, which is similar to the one we reviewed in \Cref{thm:finitedimBottperiodicity}. We will revisit it in this final section and see that it naturally fits in the language of simpltotic morphisms.
This will illustrate the concept mentioned at the end of the introduction, that rescalling differential operators differently in different directions should enable us to obtain simpltotic morphisms.

Recall from \cite[Section 2]{HigsonKasparovTrout} and \Cref{sec:EtheoryProperties} that for a finite dimensional Euclidean vector spaces $V$ we denoted by $\Cl(V)$ the complex Clifford algebra over $V$ which is subject to the defining relation $v^2=\|v\|\cdot 1$ for all $v\in V$ and in which a product of vectors $v_1\cdots v_n$ has grading degree $n$ modulo 2.
Furthermore one defines $\cC(V)\coloneqq \Cz(V;\Cl(V))$ equiped with the grading coming from the grading on $\Cl(V)$ alone. The function $C_V\colon V\to\Cl(V),v\mapsto v$ is a degree one, essentially self-adjoint, unbounded multiplier of $\cC(V)$ and gives rise to the $*$-ho\-mo\-mor\-phism 
\[\beta_V\colon\grS\to\cC(V)\,,\quad f\mapsto f(C_V)\,.\]
Note that this is not the $\beta\colon\grS\to\grS\grtensor\cC(V)$ defined in \cite[Section 2]{HigsonKasparovTrout}. The latter $*$-ho\-mo\-mor\-phism would be $(\id_\grS\grtensor\beta)\circ\Delta$ in our terminology.

Now, let $E$ be an infinite dimensional Euclidean vector space and let $\fm(E)$ denote the directed set of all finite dimensional subspaces of $E$. For any two finite dimensional subspaces $U\subset V\subset E$ one defines a $*$-ho\-mo\-mor\-phism $\beta_{U,V}$ as the composition 
\begin{align*}
\grS\grtensor\cC(U)&\xrightarrow{\Delta\grtensor\id_{\cC(U)}}\grS\grtensor\grS\grtensor\cC(U)
\\&\xrightarrow{\id_\grS\grtensor\beta_{U^\perp\cap V}\grtensor\id_{\cC(U)}}\grS\grtensor\cC(U^\perp\cap V)\grtensor\cC(U)
\\&\cong\grS\grtensor\cC((U^\perp\cap V)\oplus U)\cong\grS\grtensor\cC(V)
\end{align*}
and readily verifies $\beta_{U,W}=\beta_{V,W}\circ\beta_{U,V}$ for any three finite dimensional subspaces $U\subset V\subset W$ (cf.\ \cite[Proposition 3.2]{HigsonKasparovTrout}).
Hence the \textCstar-algebras $\grS\grtensor\cC(V)$ for finite dimensional $V\subset E$ form a directed system over $\fm(E)$.
Moreover, if a discrete group $G$ acts isometrically on $E$, then it also acts on the directed system via canonical $*$-ho\-mo\-mor\-phisms $\grS\grtensor\cC(V)\to\grS\grtensor\cC(gV)$ induced by $g\in G$.
\begin{defn}[{cf.\ \cite[Definition 3.3]{HigsonKasparovTrout}}]
For an infinite dimensional Euclidean vector space with isometric action by a discrete group $G$ one defines
\[\grS\cC(E)\coloneqq \varinjlim_{V\in\fm(E)}\grS\grtensor\cC(V)\,,\]
which is a $G$-\textCstar-algebra.
\end{defn}
This $G$-\textCstar-algebra $\grS\cC(E)$ is not of the form $\grS\grtensor\cC(E)$ for some $G$-\textCstar-algebra $\cC(E)$. The main result of \cite{HigsonKasparovTrout} is the following.
\begin{thm}[{\cite[Theorem 3.5]{HigsonKasparovTrout}}]
If $E$ has countable dimension and $G$ is a countable discrete group, then the canonical map
\[\beta\colon\grS\cong\grS\grtensor\cC(0)\to\grS\cC(E)\]
induces an isomorphism on the equivariant $\K$-theory of the underlying ungraded $G$-\textCstar-algebras.
\end{thm}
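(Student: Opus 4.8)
The plan is to re-derive this statement inside the simpltotic framework, realising the Dirac element inverse to $\beta$ not as a single asymptotic morphism (which would force one to pick a cofinal exhaustion of $E$) but as a genuine simpltotic morphism over the directed set $\fm(E)$, where the rescaling of the Dirac operator is performed independently in the various finite dimensional directions. The starting observation is that, by its very definition,
\[\grS\cC(E)=\varinjlim_{V\in\fm(E)}\grS\grtensor\cC(V)\]
is a colimit indexed precisely by the kind of directed set for which the simpltotic morphism sets were built, with the Bott homomorphisms $\beta_{U,V}$ as structure maps and $\beta$ the canonical map out of the least vertex $V=0$.

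First I would assemble the finite dimensional Dirac elements into one simpltotic morphism. For each $V\in\fm(E)$ the Clifford--Dirac asymptotic morphism $\alpha_V\colon\grS\grtensor\cC(V)\to\Kom(L^2(V;\Cl(V)))$ of \cite[Proposition 1.5]{HigGue} represents the inverse of $\beta_V$ by \Cref{thm:finitedimBottperiodicity}. Fixing the graded inductive-limit Hilbert space $\Hilbert_E$ of \cite{HigsonKasparovTrout} (built from the $L^2(V;\Cl(V))$ via the Gaussian connecting maps and carrying the $G$-action), I would define a bounded continuous family over $|\Delta^{\fm(E)}|$ by sending a point $\sum_i\lambda_i[V_i]$ to the functional calculus of the Dirac operator on the span $\sum_iV_i$, rescaled by an affine-linear interpolation of rescaling factors attached to the vertices and tending to infinity out along the tails $|\Delta^{\fm(E)\ab V}|$. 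Passing to the quotient by $\Cz(|\Delta^{\fm(E)}|;\Kom(\Hilbert_E))$ then yields a $G$-equivariant $*$-homomorphism
\[\alpha\colon\grS\cC(E)\to\Simpl^{\fm(E)}\bigl(\Kom(\Hilbert_E)\bigr)\,,\]
that is, a class in $\SimplMor[\fm(E)]{\grS\cC(E)}{\Kom(\Hilbert_E)}$. The conceptual point is that the filter on $|\Delta^{\fm(E)}|$ encodes exactly the requirement that the rescaling blow up on every direction contained in a sufficiently large subspace, which is what no single real parameter can achieve simultaneously.

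Granting $\alpha$, the inversion is reduced to the finite dimensional case: on each vertex $V$ the compositions of $\alpha_V$ with $\beta_V$ are homotopic to the identity by \Cref{thm:finitedimBottperiodicity}, and the coherence $\beta_{U,W}=\beta_{V,W}\circ\beta_{U,V}$ together with the functoriality of the simpltotic category lets one glue these finite dimensional homotopies into $\fm(E)$-homotopies, showing that $\beta$ becomes invertible in $\EE_G$ with inverse represented by $\alpha$ after the stability isomorphism of \Cref{thm:Etheorystable}. For the precise $\K$-theoretic conclusion it is cleanest to combine this with continuity of equivariant $\K$-theory: since $\mathrm{K}^G_*$ commutes with the colimit above, $\mathrm{K}^G_*\bigl((\grS\cC(E))^u\bigr)\cong\varinjlim_V\mathrm{K}^G_*\bigl((\grS\grtensor\cC(V))^u\bigr)$, and each $\beta_{U,V}$ induces an isomorphism there because the ungraded $\K$-theory of $\grS\grtensor B$ computes the graded $\K$-theory of $B$ and $\beta_{U,V}$ acts on the latter through the Bott equivalence $\beta_{U^\perp\cap V}$ coming from $\cC(V)\cong\cC(U)\grtensor\cC(U^\perp\cap V)$. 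Because $\fm(E)$ has least element $0$ and all transition maps are isomorphisms, the colimit collapses to $\mathrm{K}^G_*(\grS^u)$ and $\beta_*$ is the resulting structure isomorphism.

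The main obstacle is the honest construction of $\alpha$: one must check that the interpolated, rescaled Dirac family is asymptotically multiplicative and essentially self-adjoint \emph{along the filter}, uniformly over the simplices of $\Delta^{\fm(E)}$, reorganising the analytic estimates of \cite{HigsonKasparovTrout} so that the error terms are controlled over the tails $|\Delta^{\fm(E)\ab V}|$ rather than as $t\to\infty$. The second delicate point, and the place where the simpltotic model genuinely pays off, is equivariance: because $G$ permutes the vertices of $|\Delta^{\fm(E)}|$ one never disposes of a $G$-invariant cofinal sequence of finite dimensional subspaces of $E$, so the asymptotic picture would break equivariance, whereas working over the full directed set $\fm(E)$ keeps the construction $G$-equivariant throughout.
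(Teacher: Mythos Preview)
Your plan diverges substantially from what the paper actually does with this theorem. The paper does not give a self-contained proof; it recalls the architecture of the Higson--Kasparov--Trout argument and then revisits only one ingredient in simpltotic language. That architecture is indirect: one constructs an asymptotic morphism $\varphi\colon\grS\cC(E)\to\Asymp\bigl(\varinjlim_V\grS\grtensor\Kom(\Hilbert(V))\bigr)$ and a $*$-homomorphism $\gamma\colon\grS\to\varinjlim_V\grS\grtensor\Kom(\Hilbert(V))$, proves separately that $\gamma_*$ is an isomorphism on $\K^G_*$, so that $\gamma_*^{-1}\circ\varphi_*$ is a left inverse to $\beta_*$, and finishes with Atiyah's rotation trick for the right inverse. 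The paper's simpltotic contribution concerns only the step ``$\gamma_*$ is an isomorphism'': the asymptotic morphism $\psi$ used there is replaced by a simpltotic morphism $\Psi$ over $\fm(E)$, built on the barycentric subdivision of $\Delta^{\fm(E)\ab V}$ by rescaling the operators $B_{W_k}$ with simplex-dependent factors, after which \Cref{lem:simpltoticmorphismgivenbystarhomomorphism} gives $\Psi\circ\gamma\simeq\sigma$ and the rotation argument is left to the reader. Your simpltotic $\alpha$ is closer in spirit to $\varphi$ than to $\Psi$, and you attempt to bypass both $\gamma$ and the rotation argument.

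Both halves of your inversion argument have a genuine equivariance gap. The continuity argument breaks immediately: the individual terms $\grS\grtensor\cC(V)$ are $G$-\textCstar-algebras only when $G$ fixes $V$, so $\K^G_*(\grS\grtensor\cC(V))$ is undefined and the formula $\K^G_*(\varinjlim_V A_V)\cong\varinjlim_V\K^G_*(A_V)$ has no meaning here; the $G$-action on $\grS\cC(E)$ exists only because $G$ permutes the terms of the system, and there need be no cofinal family of $G$-invariant finite-dimensional subspaces (think of $\Z$ acting by shift on $\ell^2(\Z)$). The same obstruction undermines ``glue the finite-dimensional homotopies'': the inverses $\alpha_V$ from \Cref{thm:finitedimBottperiodicity} and the homotopies $\alpha_V\circ\beta_V\simeq\id$ are at best equivariant for the stabiliser of $V$, and there is no mechanism to assemble them into a single $G$-equivariant $\fm(E)$-homotopy. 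The detour through the intermediate algebra $\varinjlim_V\grS\grtensor\Kom(\Hilbert(V))$, the map $\gamma$, and the rotation trick exists precisely to sidestep this problem. A smaller correction: in the paper's conventions the $G$-action on $\Simpl^\fm B$ comes from $B$ alone and $G$ does \emph{not} act on $|\Delta^{\fm}|$ (see the remark after \Cref{defn:generalizedasymptoticalgebras}), so your framing of how simpltotic morphisms help with equivariance is off.
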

The proof works by constructing an asymptotic morphism
\[\varphi\colon \grS\cC(E)\to\Asymp\left(\varinjlim_{V\in\fm(E)}\grS\grtensor\Kom(\Hilbert(V))\right)\]
and a $*$-ho\-mo\-mor\-phism
\begin{equation}\label{defn:HKTgamma}
\gamma\colon \grS\to\varinjlim_{V\in\fm(E)}\grS\grtensor\Kom(\Hilbert(V))
\end{equation}
such that $\gamma$ induces an isomorphism on equivariant $\K$-theory and $(\gamma)_*^{-1}\circ\varphi_*$ is a left inverse to $\beta_*$.
Finally, an adaption of Atiyah's ``rotation argument'' \cite{Atiyah_Bottperiodicity} is used to prove that it is also a right inverse.

In this section, we want to revisit the technical proof from \cite[Section 5]{HigsonKasparovTrout} that $\gamma$ induces an isomorphism on equivariant $\K$-theory, because it is here that our notion of simpltotic morphism shows its advantages.
The same ideas can also be applied to the proof of \cite[Proposition 5.1(ii)]{HigsonKasparovTrout}, that is, that a similar $*$-ho\-mo\-mor\-phism 
\[\gamma\grtensor 1\colon \grS\cC(E)\to\varinjlim_{V\in\fm(E)}\grS\grtensor\Kom(\Hilbert(V))\grtensor\cC(V)\]
induces an isomorphism, too. The latter is needed for the rotation argument.

We start by recalling that $\Hilbert(V)$ in the formulas above denotes the $\Z_2$-graded Hilbert space
\[\Hilbert(V)\coloneqq L^2(V,\Cl(V))\,.\]
The Dirac operator on $\Hilbert(V)$ is  defined by
\[D_V\coloneqq\sum_{i=1}^{\dim V}\hat e_i\frac{\partial}{\partial x_i}\]
where $e_1,\dots,e_n$ is an orthonormal basis of $V$, $x_1,\dots,x_n$ is the dual coordinate system on $V$ and $\hat e_i$ denotes the twisted right multiplication
\(\hat e_i\xi=(-1)^{\deg\xi}\xi e_i\).
Furthermore one defines the operator
\[B_V\coloneqq D_V+C_V=\sum_{i=1}^{\dim V}(\hat e_i\frac{\partial}{\partial x_i}+e_ix_i)\,,\]
where $C_V$ also denotes left multiplication with the function $C_V\colon V\to\Cl(V),v\mapsto v$.
All of $D_V,C_V,B_V$ are essentially self-adjoint, degree one, unbounded operator $D_V$ with domain the Schwartz subspace of $\Hilbert(V)$ and do not depend on the choice of the orthonormal basis.

Functional calculus yields the $*$-ho\-mo\-mor\-phism
\[\gamma_V\colon\grS\to\Kom(\Hilbert(V))\,,\quad f\mapsto f(B_V)\]
for all $V\in\fm(E)$ and for $U\subset V\in\fm(E)$ one defines furthermore a $*$-ho\-mo\-mor\-phism $\gamma_{U,V}$ as the composition
\begin{align*}
\grS\grtensor\Kom(\Hilbert(U))&\xrightarrow{\Delta\grtensor\id_{\Kom(\Hilbert(U))}}\grS\grtensor\grS\grtensor\Kom(\Hilbert(U))
\\&\xrightarrow{\id_\grS\grtensor\gamma_{U^\perp\cap V}\grtensor\id_{\Kom(\Hilbert(U))}}\grS\grtensor\Kom(\Hilbert(U^\perp\cap V))\grtensor\Kom(\Hilbert(U))
\\&\cong\grS\grtensor\Kom(\Hilbert((U^\perp\cap V)\oplus U))\cong\grS\grtensor\Kom(\Hilbert(V))\,.
\end{align*}
These turn the $G$-\textCstar-algebras $\grS\grtensor\Kom(\Hilbert(V))$ into a directed system over $\fm(E)$.
The map $\gamma$ mentioned in \eqref{defn:HKTgamma} is now simply the canonical map from $\grS\cong\grS\grtensor\Kom(\Hilbert(0))$ into this direct limit.

The proof in \cite[Section 5]{HigsonKasparovTrout} that $\gamma$ induces an isomorphism on $\K$-theory for countable dimensional $E$ and countable $G$ works as follows.
For each $V\in\fm(E)$, the function 
\[\zeta_V\colon v\mapsto \pi^{-\dim(V)/4}\exp(-\frac12\|v\|^2)\]
is a vector of norm one in $\Hilbert(V)$.
Then one obtains isometries 
\begin{align*}
T_{U,V}\colon\Hilbert(U)&\to\Hilbert(U^\perp\cap V)\grtensor\Hilbert(U)\cong\Hilbert((U^\perp\cap V)\oplus U)=\Hilbert(V)
\\\xi&\mapsto \zeta_{U^\perp\cap V}\grtensor \xi
\end{align*}
for all $U\subset V\in\fm(E)$.
These turn the Hilbert spaces $\Hilbert(V)$ into a directed system over $\fm(E)$ and one defines the $\Z_2$-graded Hilbert space
\[\Hilbert(E)\coloneqq\varinjlim_{V\in\fm(E)}\Hilbert(V)\,.\]
The group $G$ acts on the directed system via canonical isometries $g_*\colon\Hilbert(V)\to\Hilbert(gV)$ for $g\in G$ and hence it also acts isometrically on its direct limit $\Hilbert(E)$. Note that all $\zeta_V$ are mapped to the same unit vector in the direct limit, which we denote by $\zeta\in\Hilbert(E)$, and this vector is invariant under the $G$-action. Let $P\in\Kom(\Hilbert(E))$ denote the projection onto the subspace spanned by $\zeta$.

Now, the idea of the proof is to construct an asymptotic morphism 
\[\psi\colon \varinjlim_{V\in\fm(E)}\grS\grtensor \Kom(\Hilbert(V))\to\Asymp(\grS\grtensor\Kom(\Hilbert(E)))\]
such that $\psi\circ\gamma$ is $1$-homotopic to the $*$-ho\-mo\-mor\-phism $\sigma\colon f\mapsto f\grtensor P$. Since $\sigma$ induces an isomorphism on $\K$-theory, $\sigma_*^{-1}\circ\psi_*$ is a left inverse to $\gamma_*$ and another rotation argument is used to show that it is also a right inverse.

This $\psi$ is defined as follows: Fix a cofinal sequence $V_0\subset V_1\subset V_2\subset\dots$ of finite dimensional subspaces of $E$ such that each $g\in G$ maps each $V_n$ into some $V_{n+1}$. Note that this is where the countability assumption is used.
Let $\Hilbert(V_n^\perp)$ for the infinite dimensional Euclidean space $V_n^\perp\subset E$ be defined in complete analogy to $\Hilbert(E)$, let $W_n\coloneqq V_n\cap V_{n-1}^\perp$ and $t_n\coloneqq 1+t^{-1}n$. Then
\[B_{n,t}\coloneqq t_{n+1}B_{W_{n+1}}+t_{n+2}B_{W_{n+2}}+t_{n+3}B_{W_{n+3}}+\dots\]
can be interpreted as a degree one, essentially self-adjoint, unbounded operator on $\Hilbert(V_n^\perp)$ with compact resolvent and hence it yields an asymptotic morphism
\[\psi_n'\colon\grS\to\Asymp(\Kom(\Hilbert(V_n^\perp)))\,,\quad f\mapsto [t\mapsto f(B_{n,t})]\]
One obtains asymptotic morphisms $\psi_n$ as the compositions
\begin{align*}
\grS\grtensor\Kom(\Hilbert(V_n))&\xrightarrow{\Delta\grtensor\id_{\Kom(\Hilbert(V_n))}}\grS\grtensor\grS\grtensor\Kom(\Hilbert(V_n))
\\&\xrightarrow{\id_\grS\grtensor\psi_n'\grtensor\id_{\Kom(\Hilbert(V_n))}}\grS\grtensor\Asymp(\Kom(\Hilbert(V_n^\perp)))\grtensor\Kom(\Hilbert(V_n))
\\&\to \Asymp(\grS\grtensor\Kom(\Hilbert(V_n^\perp)\grtensor\Hilbert(V_n)))
\cong \Asymp(\grS\grtensor\Kom(\Hilbert(E)))
\end{align*}
and together they give rise to the equivariant asymptotic morphism 
\[\psi\colon \varinjlim_{V\in\fm(E)}\grS\grtensor \Kom(\Hilbert(V))\cong\varinjlim_{n\in\N}\grS\grtensor \Kom(\Hilbert(V_n))\to\Asymp(\grS\grtensor\Kom(\Hilbert(E)))\,.\]

Let us also briefly recall why $\sigma_*^{-1}\circ\psi_*$ is indeed a left inverse for $\gamma_*$:
Since the kernel of $B_{0,t}$ is spanned by the vector $\zeta$, $\psi_0'$ is $1$-homotopic to the $*$-ho\-mo\-mor\-phism $f\mapsto \eta(f)P$ via the $1$-homotopy
\[f\mapsto \left[t\mapsto\left(s\mapsto \begin{cases}f(s^{-1}B_{0,t})&s>0\\f(0)P&s=0\end{cases}\right)\right]\]
and hence $\psi\circ\gamma=\psi_0$ is $1$-homotopic to $\sigma$. We omit recalling the rotation argument which shows that it is also a right inverse.

As we can see, the operators $B_{n,t}$ used in \cite[Section 5]{HigsonKasparovTrout} already incorporate the idea of rescaling by different factors in different directions. 
Using this rescaling trick, we will now present a more sophisticated construction in which the asymptotic morphism $\psi$ will be replaced by a simpltotic morphism 
\begin{equation}
\label{eq:Psidef}
\Psi\colon \varinjlim_{V\in\fm(E)}\grS\grtensor \Kom(\Hilbert(V))\to\Simpl^{\fm(E)}(\grS\grtensor\Kom(\Hilbert(E)))\,.
\end{equation}
In contrast to the definition of $\psi$, this does not require the choice of a cofinal sequence of finite dimensional subspaces and hence we can dispose of the countability assumption on $\dim(E)$ and $G$.

We define $\Psi$ by constructing in a compatible way for each $V\in\fm(E)$ an asymptotic morphism
\[\Psi_V\colon \grS\grtensor \Kom(\Hilbert(V))\to\Simpl^{\fm(E)\ab V}(\grS\grtensor\Kom(\Hilbert(E)))\cong\Simpl^{\fm(E)}(\grS\grtensor\Kom(\Hilbert(E)))\,.\]
To do so, we consider the barycentric subdivision of $\Delta^{\fm(E)\ab V}$. The barycenter of vertices $U_0,\dots,U_n\in\fm(E)$ will be labeled with the subspace $U_0+\dots+U_n\in\fm(E)$.
This way, the vertices of each $n$-simplex $\sigma=(v_0,\dots,v_n)$ of the barycentric subdivision will be labeled by an increasing sequence of finite dimensional subspaces $V_0\subset V_1\subset\dots\subset V_n\in\fm(E)$, all of which contain $V$. Define $W_0\coloneqq V^\perp\cap V_0$ and $W_k\coloneqq V_{k-1}^\perp\cap V_k$ for $k=1,\dots,n$ and let $P_{W_k}$ denote the projection onto the one-dimensional subspace spanned by the unit vector $\zeta_{W_k}$. It is readily checked that for each $f\in\grS$ the maps
\begin{align*}
\Psi_{V,\sigma,k}(f)\colon|\Delta^\sigma|&\to\Kom(\Hilbert(W_k)))
\\\sum_{i=0}^n\lambda_iv_i&\mapsto\begin{cases}f\left(\frac{1}{\lambda_k+\dots+\lambda_n}B_{W_k}\right)&\lambda_k+\dots+\lambda_n>0\\P_{W_k}&\lambda_k+\dots+\lambda_n=0\end{cases}
\end{align*}
are continuous and hence we obtain $*$-ho\-mo\-mor\-phisms 
\[\Psi_{V,\sigma,k}\colon\grS\to\Cb(|\Delta^\sigma|;\Kom(\Hilbert(W_k)))\,.\]
If we let $P_{V_n^\perp}$ denote the projection onto the one-dimensional subspace spanned by the canonical unit vector $\zeta_{V_n^\perp}\in\Hilbert(V_n^\perp)$, defined in complete analogy to $P\in\Kom(\Hilbert(E))$ and $\zeta\in\Hilbert(E)$, then we obtain another $*$-ho\-mo\-mor\-phism $\Psi_{V,\sigma}'$ as the composition
\begin{align*}
\grS&\xrightarrow{\Delta^{n}}\grS\grtensor\dots\grtensor\grS
\\&\xrightarrow{\Psi_{V,\sigma,0}\grtensor\dots\grtensor\Psi_{V,\sigma,n}\grtensor P_{V_n^\perp}}\widehat\bigotimes\vphantom{\bigotimes}_{k=0}^n\Cb(|\Delta^\sigma|;\Kom(\Hilbert(W_k)))\grtensor\Kom(\Hilbert(V_n^\perp))
\\&\to \Cb(\Kom(\Hilbert(W_0\oplus\dots\oplus W_n\oplus V_n^\perp)))=\Cb(|\Delta^\sigma|;\Kom(\Hilbert(V^\perp)))\,.
\end{align*}
The similarity to the asymptotic morphisms $\psi_n'$ defined earlier is that the value $\Psi_{V,\sigma}'(f)(\sum_{i=0}^n\lambda_i v_i)$ should be seen as a way of giving a meaning to the meaningless formula
\[f\left(\frac{1}{\lambda_0+\dots+\lambda_n}B_{W_0}+\frac{1}{\lambda_1+\dots+\lambda_n}B_{W_1}+\dots+\frac1{\lambda_n}B_{W_n}+\infty\cdot B_{V_n^\perp}\right)\,.\]
The $\Psi_{V,\sigma}'$ fit together along the faces of the simplices in the barycentric subdivision, yielding a $*$-ho\-mo\-mor\-phism
\[\Psi_V'\colon \grS\to \Cb(|\Delta^{\fm(E)\ab V}|;\Kom(\Hilbert(V^\perp)))\,.\]
We now define $\Psi_V$ as the quotient of the $*$-ho\-mo\-mor\-phism $\Psi_V''$ defined as the composition
\begin{align*}
\grS\grtensor\Kom(\Hilbert(V))&\xrightarrow{\Delta\grtensor\id_{\Kom(\Hilbert(V))}}\grS\grtensor\grS\grtensor\Kom(\Hilbert(V))
\\&\xrightarrow{\id_\grS\grtensor\Psi_V'\grtensor\id_{\Kom(\Hilbert(V))}}\grS\grtensor\Cb(|\Delta^{\fm(E)\ab V}|;\Kom(\Hilbert(V^\perp)))\grtensor\Kom(\Hilbert(V))
\\&\to \Cb(|\Delta^{\fm(E)\ab V}|;\grS\grtensor\Kom(\Hilbert(V^\perp)\grtensor\Hilbert(V)))
\\&\cong \Cb(|\Delta^{\fm(E)\ab V}|;\grS\grtensor\Kom(\Hilbert(E)))\,.
\end{align*}
A straightforward computation shows that these $\Psi_V$ are compatible with the maps $\gamma_{U,V}$ and thus they induce a $*$-ho\-mo\-mor\-phism $\Psi$ as in \eqref{eq:Psidef}.

It follows from \Cref{lem:simpltoticmorphismgivenbystarhomomorphism} that $\Psi\circ\gamma=\Psi_0$ is $\fm(E)$-homotopic to the class represented by the $*$-ho\-mo\-mor\-phism $\ev{0}\circ\Psi_0''=\sigma$ and hence $\sigma_*^{-1}\circ\Psi_*$ is left inverse to $\gamma_*$.
No countability assumption on $\dim(E)$ and $G$ were needed, because our constructions did not depend on any choices.
We leave it to the reader to carry out the rotation argument analogous to the one on \cite[p.\ 31]{HigsonKasparovTrout} to show that $\sigma_*^{-1}\circ\Psi_*$ is also a right inverse.

\begin{conclusion}
This illustrates our point that simpltotic morphisms provide the most natural framework to carry out the idea of obtaining $\EE$-theory elements by rescaling differential operators differently in different directions.
This rescaling was necessary in the infinite dimensional case to obtain compact operators via functional calculus.

Therefore we believe that our model of $\EE$-theory is the correct entity to do index theory on infinite dimensional manifolds.
\end{conclusion}

\bibliographystyle{alpha}

\begin{thebibliography}{GHT00}

\bibitem[Arv77]{Arveson}
William Arveson.
\newblock Notes on extensions of {{\(C^*\)}}-algebras.
\newblock {\em Duke Math. J.}, 44:329--355, 1977.

\bibitem[Ati68]{Atiyah_Bottperiodicity}
Michael~F. Atiyah.
\newblock Bott periodicity and the index of elliptic operators.
\newblock {\em Q. J. Math., Oxf. II. Ser.}, 19:113--140, 1968.

\bibitem[BEL21]{BunkeEngelLand}
Ulrich Bunke, Alexander Engel, and Markus Land.
\newblock A stable $\infty$-category for equivariant $kk$-theory, 2021.

\bibitem[Bla06]{BlackadarOA}
B.~Blackadar.
\newblock {\em Operator algebras}, volume 122 of {\em Encyclopaedia of
  Mathematical Sciences}.
\newblock Springer-Verlag, Berlin, 2006.
\newblock Theory of $C^*$-algebras and von Neumann algebras, Operator Algebras
  and Non-commutative Geometry, III.

\bibitem[CH90a]{ConHig}
Alain Connes and Nigel Higson.
\newblock Almost homomorphisms and {$KK$}-theory.
\newblock
  \href{http://personal.psu.edu/ndh2/math/Unpublished_files/Connes,%20Higson%20-%201990%20-%20Almost%20homomorphisms%20and%20K-theory.pdf}{unpublished preprint}, 1990.
\bibitem[CH90b]{ConnesHigson_french}
Alain Connes and Nigel Higson.
\newblock Deformations, asymptotic morphisms and bivariant {{\(K\)}}-theory.
\newblock {\em C. R. Acad. Sci., Paris, S{\'e}r. I}, 311(2):101--106, 1990.

\bibitem[Dum05]{Dumitrascu}
Dorin Dumitra{\c{s}}cu.
\newblock Asymptotic morphisms, {$K$}-homology and {D}irac operators.
\newblock {\em Pacific J. Math.}, 218(1):53--74, 2005.

\bibitem[EM06]{EmeMeyDualizing}
Heath Emerson and Ralf Meyer.
\newblock Dualizing the coarse assembly map.
\newblock {\em J. Inst. Math. Jussieu}, 5(2):161--186, 2006.

\bibitem[GHT00]{GueHigTro}
Erik Guentner, Nigel Higson, and Jody Trout.
\newblock Equivariant {$E$}-theory for {$C^*$}-algebras.
\newblock {\em Mem. Amer. Math. Soc.}, 148(703):viii+86, 2000.

\bibitem[HG04]{HigGue}
Nigel Higson and Erik Guentner.
\newblock Group {$C^\ast$}-algebras and {$K$}-theory.
\newblock In {\em Noncommutative geometry}, volume 1831 of {\em Lecture Notes
  in Math.}, pages 137--251. Springer, Berlin, 2004.

\bibitem[Hig87]{HigsonKKchar}
Nigel Higson.
\newblock A characterization of {KK}-theory.
\newblock {\em Pac. J. Math.}, 126(2):253--276, 1987.

\bibitem[HKT98]{HigsonKasparovTrout}
Nigel Higson, Gennadi Kasparov, and Jody Trout.
\newblock A {B}ott periodicity theorem for infinite-dimensional {E}uclidean
  space.
\newblock {\em Adv. Math.}, 135(1):1--40, 1998.

\bibitem[HR00]{HigRoe}
Nigel Higson and John Roe.
\newblock {\em Analytic {$K$}-homology}.
\newblock Oxford Mathematical Monographs. Oxford University Press, Oxford,
  2000.
\newblock Oxford Science Publications.

\bibitem[Kas80]{KasparovOperatorKFunctor}
G.~G. Kasparov.
\newblock The operator {$K$}-functor and extensions of {$C^{\ast}$}-algebras.
\newblock {\em Izv. Akad. Nauk SSSR Ser. Mat.}, 44(3):571--636, 719, 1980.

\bibitem[MP84]{MingoPhillips}
J.~A. Mingo and W.~J. Phillips.
\newblock Equivariant triviality theorems for {H}ilbert {$C^{\ast}$}-modules.
\newblock {\em Proc. Amer. Math. Soc.}, 91(2):225--230, 1984.

\bibitem[Ska88]{Skandalis_unenotion}
Georges Skandalis.
\newblock Une notion de nucl{\'e}arit{\'e} en {K}-th{\'e}orie (d'apr{\`e}s {J}.
  {Cuntz}). ({A} notion of nuclearity in {K}-theory (in the sense of {J}.
  {Cuntz})).
\newblock {\em \(K\)-Theory}, 1(6):549--573, 1988.

\bibitem[Was94]{Wassermann}
Simon Wassermann.
\newblock {\em Exact {{\(C^*\)}}-algebras and related topics}, volume~19 of
  {\em Lect. Notes Ser., Seoul}.
\newblock Seoul: Seoul National University, 1994.

\bibitem[Wul16]{WulffFoliations}
Christopher Wulff.
\newblock Ring and module structures on {$K$}-theory of leaf spaces and their
  application to longitudinal index theory.
\newblock {\em J. Topol.}, 9(4):1074--1108, 2016.

\bibitem[Wul19]{WulffTwisted}
Christopher Wulff.
\newblock Coarse indices of twisted operators.
\newblock {\em J. Topol. Anal.}, 11(4):823--873, 2019.

\end{thebibliography}

\ \\
\textsc{
Mathematisches Institut, 
Georg--August--Universit\"at G\"ottingen,
Bunsenstr. 3-5, 
D-37073 G\"ottingen, 
Germany}

\noindent
\textit{E-mail address:} \url{christopher.wulff@mathematik.uni-goettingen.de}

\end{document}